\providecommand{\U}[1]{\protect\rule{.1in}{.1in}}
\newtheorem{theorem}{Theorem}[section]
\newtheorem{corollary}[theorem]{Corollary}
\newtheorem{definition}[theorem]{Definition}
\newtheorem{example}{Example}
\newtheorem{lemma}[theorem]{Lemma}
\newtheorem{proposition}[theorem]{Proposition}
\newtheorem{remark}[theorem]{Remark}
\numberwithin{equation}{section}
\def\fps@figure{htbp} 
\def\fnum@figure{\textbf{Fig. \thefigure}} 
\begin{document}

\title[Monodromy equivalence]{Monodromy Equivalence for Lam\'{e}-type Equations I: Finite-gap Structures and Cone Spherical Metrics.}

\author{Ting-Jung Kuo}
\address[Ting-Jung Kuo]{Department of mathematics, National Taiwan Normal University, Taipei, 11677, Taiwan \& National Center for Theoretical Sciences, No.1 Sec.4 Roosevelt Rd., National Taiwan University, Taipei 10617, Taiwan.}
\email{tjkuo1215@ntnu.edu.tw, tjkuo1215@gmail.com}

\author{Xuanpu Liang}
\address[Xuanpu Liang]{Laboratory of Mathematics and Complex Systems (Ministry of Education), School of Mathematical Sciences, Beijing Normal University, Beijing 100875, China.}
\email{xuanpuliang@mail.bnu.edu.cn}

\author{Ping-Hsiang Wu}
\address[Ping-Hsiang Wu]{Department of Mathematics, National Taiwan Normal University, Taipei 11677, Taiwan.}
\email{phsiangwu@gmail.com}

\begin{abstract}
Motivated by the finite-gap structure of the classical Lam\'{e} equation
(\ref{Lame n}) and its central role in mathematical physics, generalized
Lam\'{e}-type equations (\ref{GLE 3}) are investigated. For the fundamental
case $n=1$, a monodromy equivalence between the classical Lam\'{e} equation
(\ref{Lame 1}) and the generalized Lam\'{e}-type equation (\ref{GLE 4}) is
established. Two main applications are obtained: (i) the finite-gap structure
of \ (\ref{GLE 4}) is derived, together with a complete classification of the
spectral curves $\sigma_{1}$ and $\sigma_{2}$ for $\tau\in i\mathbb{R}_{>0}$;
and (ii) the monodromy equivalence is applied to the construction of cone
spherical metrics with three large conical singularities, each with cone angle
exceeding $2\pi$. A family of such metrics is shown to exhibits a blow-up
configuration, which is described explicitly in terms of the monodromy data.

\end{abstract}

\maketitle
\tableofcontents
	

\section{Introduction}

\label{Introduction}

Let $\omega_{0}$ $=$ $0,$ $\omega_{1}$ $=$ $1,$ $\omega_{2}$ $=$ $\tau$,
$\omega_{3}$ $=$ $1+\tau$, $\Lambda_{\tau}$ $=$ $\mathbb{Z}$ $\mathbb{\oplus}$
$\mathbb{\tau Z}$, and $E_{\tau}:=\mathbb{C}/\Lambda_{\tau}$ where $\tau$
$\in$ $\mathbb{H}$ $=$ $\left\{  \tau|\operatorname{Im}\tau>0\right\}  $. Let
$\wp\left(  z;\tau\right)  $ be the Weierstrass elliptic function with periods
$1$ and $\tau$. Let $\zeta\left(  z;\tau\right)  $ and $\sigma\left(  z;\tau\right)  $ be the Weierstrass zeta and sigma
function, defined by
\begin{align*}
\zeta\left(  z;\tau\right)  &:=-\int^{z}\wp(\xi;\tau)\,d\xi,\\
\sigma(z;\tau) &:=\exp\int^{z}\zeta(\xi;\tau)\,d\xi,
\end{align*}
which is entire on $\mathbb{C}$ and has a simple zero at $z=0$. 

For brevity,
we write $\wp\left(  z;\tau\right)  $, $\zeta\left(  z;\tau\right)  $, and
$\sigma(z;\tau)$ simply as $\wp\left(  z\right)  $, $\zeta(z)$, and
$\sigma(z)$, respectively. Let $p\in E_{\tau}$. Throughout this paper, we
assume
\begin{equation}
p\in E_{\tau}\backslash\left\{ \frac{\omega_{k}}{2},k=0,1,2,3\right\}
\label{Assumption 1}%
\end{equation}

Let $n\in\mathbb{N}$. We consider two linear elliptic Fuchsian differential
equations on elliptic curve $E_{\tau}$. The first is the classical Lam\'{e}
equation \cite{Ince, Whittaker-Watson}:
\begin{equation}
y^{\prime\prime}(z)=\left[  n(n+1)\wp\left(  z\right)  +\tilde{B}\right]
y(z),\text{ }\tilde{B}\in\mathbb{C}, \label{Lame n}%
\end{equation}
and the second is a generalized Lam\'{e}-type equation:
\begin{equation}
y^{\prime\prime}(z)=q_{n}(z;p,T_{1},T_{2},B)\,y(z), \label{GLEn}%
\end{equation}
where
\begin{align*}
q_{n}(z;p,T,B)  &  =n(n+1)\wp(z)+\frac{3}{4}(\wp(z+p)+\wp(z-p))\\
&  +T_{1}(\zeta(z+p)-\zeta(z))+T_{2}(\zeta(z-p)-\zeta(z))+B
\end{align*}
with parameters $T_{1},$ $T_{2},$ $B\in\mathbb{C}$, and $p\in E_{\tau
}\backslash\{\frac{\omega_{k}}{2},k=0,1,2,3\}$. Equation (\ref{Lame n}) is
known as classical Lam\'{e} equation in the literature, while (\ref{GLEn}) can
be viewed as its natural variant, distinguished by the presence of nonzero
residues at each singularity.

The monodromy problem for the integer Lam\'{e} equation (\ref{Lame n}) and its
Treibich--Verdier generalizations, is considered fundamental due to its
\textit{finite-gap} \textit{structure} or \textit{algebro-geometric
structure}. See \cite{Maier, TV}.

Finite-gap integration theory, also known as algebro-geometric integration, is
a powerful method in the theory of integrable systems for constructing and
analyzing solutions of certain nonlinear partial differential equations
(PDEs), such as: the Korteweg-de Vries (KdV) equation \cite{BBEIM, EK-CMH,
GH-Book, GH, GUW1, GW4, GW2, GW3, GW1, GW5, kri, Smirnov1989, Smirnov1994},
nonlinear Schr\"{o}dinger equation (NLS) \cite{BBEIM, ItM, ItRS, kir1, kri2,
ZS}, and the Sine-Gordon equation \cite{K,KK}. This theory is deeply rooted in
algebraic geometry, particularly the theory of compact Riemann surfaces and
their Jacobians \cite{Matveev}. Beyond mathematics, it has found significant
applications in physics, such as the stability analysis of critical droplets
in bounded spatial domains \cite{Maier, Maier-Stein}.

A central question motivating this work is whether the generalized
Lam\'{e}-type equation (\ref{GLEn}) also admits a finite-gap structure,
analogous to the classical Lam\'{e} equation (\ref{Lame n}). This series of
studies develops a rigorous framework establishing the monodromy equivalence
between the classical Lam\'{e} equation and the generalized Lam\'{e}-type
equation in the \textit{punctured non-even case} (see Case II below). This
equivalence not only resolves the finite-band question but also yields new
applications to spherical metrics with conical singularities.

The monodromy theory for the classical Lam\'{e} equation (\ref{Lame n}) had
been thoroughly studied in \cite{CLW, LW}. In this work, we focus our
attention on the investigation of the monodromy theory associated with the
generalized Lam\'{e}-type equation (\ref{GLEn}). In the present
paper\,---\,constituting Part I of our study\,---\,we focus on the case $n=1$, which
allows explicit computations and reveals the core ideas underlying the general theory.

A Lam\'{e}-type equation is called \textit{apparent }if it is free of
logarithmic singularity at each singularity. It is well known that the
classical Lam\'{e} equation (\ref{Lame n}) is apparent for every $\tilde{B}
\in\mathbb{C}$, due to the evenness of the potential $n(n+1)\wp\left(
z;\tau\right)  $ and the fact $z=0$ is a symmetric point on the elliptic curve
$E_{\tau}$. In contrast, the generalized equation (\ref{GLEn}) involves
nonzero residues and fails to be apparent unless certain explicit algebraic
conditions are satisfied. See (\ref{AP condition 1}) below.

Denote by $AP(\tau,p)$ the space of apparent parameters. For a given parameter
$\mathbf{T}$ $\mathbf{=}$ $\left(  T_{1},T_{2}\right)  $ $\in$ $AP(\tau,p)$,
we define the monodromy representation of the generalized Lam\'{e}-type
equation (\ref{GLEn}): Note that the local indices at $z=0$, $\pm p$ are $-n,$
$n+1$ and $-1/2,$ $3/2$, respectively, so the corresponding local monodromy
matrices at $z=0$, $\pm p$ are
\[
N_{0}=Id_{2\times2}\text{ and }N_{\pm p}=-Id_{2\times2}.
\]
We choose a base point $z_{0}\not \in \{0,\pm p\}$ such that $\{0,\pm
p\}+\Lambda_{\tau}$ $\not \in $ $z_{0}+\mathbb{R}$ and $z_{0}+\tau
\cdot\mathbb{R}$. Take a fundamental system of solution $Y(z;z_{0})$ of
equation (\ref{GLEn}) in a small neighborhood $U_{z_{0}}$ of $z_{0}$. Define
two global monodromy matrices $M_{1}(\mathbf{T};p)$ and $M_{2}(\mathbf{T};p)$
corresponding to the generators of the fundamental group $\pi_{1}(E_{\tau})$
via analytic continuation of $Y(z;z_{0})$ along loops associated with the
periods $\omega_{1}=1$ and $\omega_{2}=\tau$, respectively:%

\[
Y(z+\omega_{i})=M_{j}(\mathbf{T};p)Y(z).
\]
These matrices satisfy the monodromy relation
\[
M_{1}M_{2}M_{1}^{-1}M_{2}^{-1}=N_{p}N_{0}N_{-p}=Id_{2\times2}.
\]
Thus, the monodromy representation $\rho_{\tau,p}(\mathbf{T}):\pi_{1}(E_{\tau
})\rightarrow SL(2,\mathbb{C})$, is abelian, leading to two cases:

\noindent(i) \textbf{Completely Reducible case}:\textit{\ }$M_{i}
(\mathbf{T};p),$ $i$ $=$ $1,2$ can be simultaneously diagonalized as follows
\begin{equation}
M_{1}(\mathbf{T};p)=\left(
\begin{array}
[c]{cc}%
e^{-2\pi is} & 0\\
0 & e^{2\pi is}%
\end{array}
\right)  ,\text{ }M_{2}(\mathbf{T};p)=\left(
\begin{matrix}
e^{2\pi ir} & 0\\
0 & e^{-2\pi ir}%
\end{matrix}
\right)  , \label{m1}%
\end{equation}
for some $r,s\in$ $\mathbb{C}$.\textit{\medskip}

\noindent(ii) \textbf{Non-Completely Reducible case}: $M_{i}(\mathbf{T};p),$
$i$ $=$ $1,2,$ cannot be simultaneously diagonalized. Instead, they can be
normalized to:
\begin{equation}
M_{1}(\mathbf{T};p)=\pm\left(
\begin{array}
[c]{cc}%
1 & 0\\
1 & 1
\end{array}
\right)  \text{, }M_{2}(\mathbf{T};p)=\pm\left(
\begin{matrix}
1 & 0\\
\mathcal{D} & 1
\end{matrix}
\right)  ,\text{ } \label{m2}%
\end{equation}
where $\mathcal{D}$ $\in$ $\mathbb{C\cup\{ \infty\}}$. In the case
$\mathcal{D=\infty}$, it should be understood as
\[
M_{1}(\mathbf{T};p)=\pm\left(
\begin{array}
[c]{cc}%
1 & 0\\
0 & 1
\end{array}
\right)  \text{, }M_{2}(\mathbf{T};p)=\pm\left(
\begin{matrix}
1 & 0\\
1 & 1
\end{matrix}
\right)  .
\]
\textit{\medskip}

The pair $\left(  r,s\right)  $ $\in$ $\mathbb{C}$ in case (i) and
$\mathcal{D}$ $\in$ $\mathbb{C\cup\{ \infty\}}$ in case (ii) , are
respectively referred to as the monodromy data. A natural question is the
following:\textit{\medskip}

\noindent\textbf{Question:}\textit{ Let} $p$ $\in$ $E_{\tau}\backslash
\{\frac{\omega_{k}}{2},k=0,1,2,3\}$. \textit{Given }$\left(  r,s\right)
$\textit{ }$\in$\textit{ }$\mathbb{C}^{2}$\textit{ in case (i), and }
$D$\textit{ }$\in$\textit{ }$\mathbb{C}\cup\{\infty\}$\textit{ in case (ii),
does there exist a parameter }$\mathbf{T}$ $\in$ $AP(\tau,p)$\textit{ such
that the corresponding monodromy matrices }$M_{i}(\mathbf{T},p),$\textit{ }
$i$\textit{ }$=$\textit{ }$1,2$\textit{, are of the forms given in (\ref{m1})
and (\ref{m2}), respectively?\medskip}

To address the above question, we first examine the apparentness condition.
The apparentness condition for the Lam\'{e}-type equation (\ref{GLEn}) imposes
an explicit algebraic relation among the parameters $T_{1}$, $T_{2}$ and $B$.
Specifically, the condition that all singularities are apparent is equivalent
to the system:
\begin{equation}
(T_{1}+T_{2})\left(  T_{1}-T_{2}+\frac{\wp^{\prime\prime}(p)}{2\wp^{\prime
}(p)}\right)  =0 \label{AP condition 1}%
\end{equation}
and $B$ is determined by
\begin{equation}
B=\frac{T_{1}^{2}+T_{2}^{2}}{2}-\frac{T_{1}-T_{2}}{2}\zeta(2p)-\frac{3}{4}
\wp(2p)-n(n+1)\wp(p). \label{B condition 1}%
\end{equation}
Since the present paper deals with the case $n=1$, we will provide a direct
proof of these relations via explicit computation in Section
\ref{Spectral Theory Section}. The general case for arbitrary $n\in\mathbb{N}$
will be addressed in Part II of this series.

From (\ref{AP condition 1}), we have
\[
AP(\tau,p)=\left\{  (T_{1},T_{2})|\text{(\ref{AP condition 1}) holds}\right\}
\]
naturally decomposes into two components:
\[
AP_{0}(\tau,p)=\left\{  (T_{1},T_{2})|T_{1}+T_{2}=0\right\}  ,
\]
\[
AP_{1}(\tau,p)=\left\{  (T_{1},T_{2})\left\vert T_{1}-T_{2}+\frac{\wp
^{\prime\prime}(p)}{2\wp^{\prime}(p)}=0\right.  \right\}  .
\]
Clearly,
\[
AP(\tau,p)=AP_{0}(\tau,p)\oplus AP_{1}(\tau,p)
\]
and
\[
AP_{0}(\tau,p)\cap AP_{1}(\tau,p)=\left\{  \left(  -\frac{\wp^{\prime\prime
}(p)}{4\wp^{\prime}(p)},\frac{\wp^{\prime\prime}(p)}{4\wp^{\prime}(p)}\right)
\right\}
\]
is the only singular point of $AP(\tau,p)$. Accordingly, we have the following
two cases:\textit{\medskip}

\textbf{Case I (Even symmetry): }If $\mathbf{T}$ $=$ $(T_{1},T_{2})$ $\in$
$AP_{0}(\tau,p)$, then we may set
\[
\mathbf{T}=\left(  A,-A\right)  ,\text{ }A\in\mathbb{C}.
\]
\newline Under this assumption, the equation (\ref{GLEn}) becomes
\begin{equation}
y^{\prime\prime}(z)=\left[
\begin{array}
[c]{l}%
n(n+1)\wp(z)+\frac{3}{4}(\wp(z+p)+\wp(z-p))\\
+A(\zeta(z+p)-\zeta(z-p))+B
\end{array}
\right]  y(z),\text{ }z\in E_{\tau}, \label{GLE 2}%
\end{equation}
and by (\ref{B condition 1}) $B$ is rephrased as follows
\begin{equation}
B=A^{2}-\zeta(2p)A-\frac{3}{4}\wp(2p)-n(n+1)\wp(p). \label{APeven}%
\end{equation}
It is easy to verify that, in this case, the potential is always an
\textit{even elliptic} function for any $A\in\mathbb{C}$. Since the potential
is an even function, equation (\ref{GLE 2}) is invariant under the
transformation $z\rightarrow-z$. Under this transformation, the monodromy data
change as follows:

\begin{itemize}
\item In completely reducible case (i):
\begin{equation}
\left(  r(A),s(A)\right)  \rightarrow-\left(  r(A),s(A)\right)
\label{Monodeomy symmetry1}%
\end{equation}

\item In non-completely reducible case (ii):
\begin{equation}
\mathcal{D(}A\mathcal{)\rightarrow-D(}A\mathcal{)} \label{Monodromy symmetry2}%
\end{equation}
Accordingly, these monodromy data are regarded as equivalent under the
symmetry $z\longmapsto-z$.\textit{\medskip}
\end{itemize}

\textbf{Case II (Punctured Non-even symmetry): }If $\mathbf{T}$ $=$
$(T_{1},T_{2})$ $\in$ $AP_{1}(\tau,p)$, then we can write
\[
\mathbf{T}=\left(  T-\frac{\wp^{\prime\prime}(p)}{4\wp^{\prime}(p)}
,T+\frac{\wp^{\prime\prime}(p)}{4\wp^{\prime}(p)}\right)  ,\text{ }
T\in\mathbb{C}.
\]
Thus, (\ref{GLEn}) becomes
\begin{equation}
y^{\prime\prime}(z)=\left[
\begin{array}
[c]{l}%
n(n+1)\wp(z)+\frac{3}{4}(\wp(z+p)+\wp(z-p))\\
+T(\zeta(z+p)+\zeta(z-p)-2\zeta(z))\\
-\frac{\wp^{\prime\prime}(p)}{4\wp^{\prime}(p)}(\zeta(z+p)-\zeta(z-p))+B
\end{array}
\right]  y(z),\text{ }z\in E_{\tau}, \label{GLE 3}%
\end{equation}
and
\begin{equation}
B=T^{2}+\frac{\wp^{\prime\prime}(p)}{2\wp^{\prime}(p)}\zeta(p)-\frac{1}
{2}(2n^{2}+2n-3)\wp(p). \label{APnoneven}%
\end{equation}
It is clear that the potential remains elliptic, but---unlike in Case
I---\textit{it is not even unless }$T=0$\textit{. }

For generalized Lam\'{e}-type equation (\ref{GLE 3}), the transformation
$z\longmapsto-z$ corresponds to $T\longmapsto-T$, since $B(T)$ $=$ $B(-T)$. As
a result, the monodromy data of the equation with respect to $T$ and $-T$
relate as follows:

\begin{itemize}
\item In completely reducible case (i):
\begin{equation}
\left(  r(-T),s(-T)\right)  =-\left(  r(T),s(T)\right)  .
\label{Monodromy symmetry3}%
\end{equation}

\item In non-completely reducible case (ii):
\begin{equation}
\mathcal{D(-}T\mathcal{)=-D(}T\mathcal{)}\text{.} \label{Monodromy symmetry4}%
\end{equation}

\end{itemize}

When $T=0$, equation (\ref{GLE 3}) yields to an even potential, equivalent to
Case I (Even symmetry) with
\[
A=-\frac{\wp^{\prime\prime}(p)}{4\wp^{\prime}(p)}.
\]
Notably, $T=0$, which corresponds to the intersection point
\begin{equation}
\mathbf{T}_{\ast}\mathbf{=}\left(  -\frac{\wp^{\prime\prime}(p)}{4\wp^{\prime
}(p)},\frac{\wp^{\prime\prime}(p)}{4\wp^{\prime}(p)}\right)  ,
\label{singular point}%
\end{equation}
is the unique point in $AP_{1}(\tau)$ for which equation (\ref{GLE 3}) yields
an \textit{even} potential. This fact motivates the designation
\textquotedblleft\textit{Punctured Non-even Symmetry}\textquotedblright\ for
this case. A generalized Lam\'{e}-type equation (\ref{GLE 3}) associated with
$\mathbf{T}$ $\mathbf{\in}$ $AP_{1}(\tau,p)$ $\backslash$ $\{\mathbf{T}_{\ast
}\}$ is called \textbf{non-even symmetry}.\textit{\medskip}

The monodromy question above naturally divides into two distinct
cases:\textit{\medskip}

\textbf{Case I.} $\mathbf{T}$ $\mathbf{\in}$ $AP_{0}(\tau,p)$. The monodromy
theory for potentials in this case has been extensively developed in
\cite{Chen-Kuo-Lin-Hamiltonian, Chen-Kuo-Lin-Lame I, Chen-Kuo-Lin-Painleve
VI}, where deep connections were established between isomonodromic
deformations of the family (\ref{GLE 2}) and the elliptic form of the
Painlev\'{e} VI equation (EPVI).

The EPVI associated to the equation (\ref{GLE 2}) is given by
\begin{equation}
\frac{d^{2}p\left(  \tau\right)  }{d\tau^{2}}=\frac{-1}{4\pi^{2}}\sum
_{i=0}^{3}\alpha_{i}\wp^{\prime}\left(  p\left(  \tau\right)  +\frac
{\omega_{i}}{2}|\tau\right)  \label{EPVI 1}%
\end{equation}
with parameters
\[
\left(  \alpha_{0},\alpha_{1},\alpha_{2},\alpha_{3}\right)  =\left(  \frac
{1}{2}\left(  n+\frac{1}{2}\right)  ^{2},\frac{1}{8},\frac{1}{8},\frac{1}
{8}\right)  .
\]
\textit{\medskip}

\noindent\textbf{Theorem A. }\textit{(\cite{Chen-Kuo-Lin-Hamiltonian}
)}\textbf{ }\textit{The generalized Lam\'{e}-type equation in the even
symmetry case (\ref{GLE 2}) with parameters }$\left(  p(\tau),A(\tau)\right)
$, \textit{preserves its monodromy as the moduli parameter }$\tau$\textit{
varies if and only if }$p(\tau)$\textit{ is a solution of EPVI (\ref{EPVI 1}
).\medskip}

According to the characterization of the monodromy matrices described in (i)
and (ii), any solution $p^{(n)}(\tau)$ to EPVI (\ref{EPVI 1}) can be
classified as follows:\textit{\medskip}

(a) \textbf{Completely reducible solution: }$p(\tau)=p_{r,s}^{(n)}(\tau)$,
where the associated equation (\ref{GLE 2}) is completely reducible, and
$\left(  r,s\right)  \in$\textit{ }$\mathbb{C}^{2}$ is the monodromy
data.\textit{\medskip}

(b) \textbf{Non-completely reducible solution: }$p(\tau)=p_{\mathcal{D}}
^{(n)}(\tau)$, where the associated equation (\ref{GLE 2}) is non-completely
reducible and $\mathcal{D}\in$\textit{ }$\mathbb{C}\cup\{\infty\}$ is the
monodromy data.\textit{\medskip}

\noindent\textbf{Theorem B. }\textit{(\cite{Chen-Kuo-Lin-Hamiltonian}) Given
}$\left(  r,s\right)  $\textit{ }$\in$\textit{ }$\mathbb{C}^{2}$ \textit{in
case (i), or }$\mathcal{D}$\textit{ }$\in$\textit{ }$\mathbb{C}\cup\{\infty
\}$\textit{ in case (ii), there exists }$\left(  p,\text{ }A\right)  $\textit{
such that the corresponding monodromy matrices }$M_{i}(p,A),$\textit{ }
$i$\textit{ }$=$\textit{ }$1,2$\textit{, are of the forms given in (\ref{m1})
or (\ref{m2}), respectively, if and only if }$p=p_{r,s}^{(n)}(\tau)$\textit{
or }$p=p_{\mathcal{D}}^{(n)}(\tau)$\textit{, evaluated at }$\tau
$\textit{.\medskip}

\textbf{Case II.} $\mathbf{T}$ $\mathbf{\in}$ $AP_{1}(\tau,p)$. In this case,
the analysis of the associated monodromy theory becomes considerably more
intricate due to the non-vanishing residues at $\pm p$ and the lack of even
symmetry in the potential if $T\not =0$.

The primary goal of the present paper is to address the monodromy question for
$\mathbf{T}$ $\mathbf{\in}$ $AP_{1}(\tau,p)$ in the case $n$ $=$
$1$.\textit{\medskip}

When $n=1$, the classical Lam\'{e} equation and the Lam\'{e}-type equation in
Case II take the following forms respectively:
\begin{equation}
y^{\prime\prime}(z)=\left[  2\wp\left(  z\right)  +\tilde{B}\right]
y(z),\text{ }\tilde{B}\in\mathbb{C}, \label{Lame 1}%
\end{equation}
and
\begin{equation}
y^{\prime\prime}(z)=\left[
\begin{array}
[c]{l}%
2\wp(z)+\frac{3}{4}(\wp(z+p)+\wp(z-p))\\
+T(\zeta(z+p)+\zeta(z-p)-2\zeta(z))\\
-\frac{\wp^{\prime\prime}(p)}{4\wp^{\prime}(p)}(\zeta(z+p)-\zeta(z-p))+B
\end{array}
\right]  y(z) \label{GLE 4}%
\end{equation}
with
\begin{equation}
B=T^{2}+\frac{\wp^{\prime\prime}(p)}{2\wp^{\prime}(p)}\zeta(p)-\frac{1}{2}
\wp(p). \label{B condition 2}%
\end{equation}

Our main result asserts that, for any $p$ satisfying (\ref{Assumption 1}), the
generalized Lam\'{e}-type equation (\ref{GLE 4})$_{p}$ parameter $T$ and $B$
given by (\ref{B condition 2}) is monodromy equivalent to the classical
Lam\'{e} equation (\ref{Lame 1}) in the following sense:

\begin{theorem}
\label{Main Theorem1}Assume (\ref{Assumption 1}). Given\textit{ }$\left(
r,s\right)  $\textit{ }$\in$\textit{ }$\mathbb{C}^{2}$ \textit{in case (i)
(\ref{m1}), or }$\mathcal{D}$\textit{ }$\in$\textit{ }$\mathbb{C}\cup
\{\infty\}$\textit{ in case (ii) (\ref{m2}), the following statements are
equivalent:}

(1) \textit{There exists }$T$ $\in\mathbb{C}$, with $B$ given by
(\ref{B condition 2}) such that the monodromy matrices\textit{ }$M_{i}(p,T)$
$\in$ $SL(2,\mathbb{C}),$\textit{ }for $i$\textit{ }$=$\textit{ }
$1,2$\textit{, of }(\ref{GLE 4})$_{p}$ correspond to the given $\left(
r,s\right)  $\textit{ or }$\mathcal{D}$.

(2)\textit{ There exists }$\tilde{B}\in\mathbb{C}$ such that the
\textit{monodromy matrices }$M_{i}(\tilde{B})$ $\in$ $SL(2,\mathbb{C})$, for
$i=1,2,$\ of the classical Lam\'{e} equation (\ref{Lame 1}) correspond to the
same $\left(  r,s\right)  $\textit{ or }$\mathcal{D}$\textit{.}

Moreover, the correspondence between (1) and (2) is a two-to-one mapping
\[
\mathbb{C\longrightarrow C}\text{, }~~T\longmapsto\tilde{B}\text{,}
\]
given by the formula
\begin{equation}
\tilde{B}=T^{2}-2\wp(p). \label{correspondence}%
\end{equation}

\end{theorem}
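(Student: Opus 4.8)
The plan is to construct an explicit transformation connecting solutions of the two equations, then read off the correspondence of monodromy matrices. The key structural observation is that both equations are second-order Fuchsian on $E_\tau$ with abelian monodromy, and the singular points of (\ref{GLE 4})$_p$ at $z = \pm p$ carry local exponents $-1/2, 3/2$ (so local monodromy $-\mathrm{Id}$), while the singularity at $z=0$ has exponents $-1, 2$ (local monodromy $+\mathrm{Id}$). The classical Lamé equation (\ref{Lame 1}) has a single singularity at $z=0$ with the same exponents $-1, 2$. Since the monodromy data $(r,s)$ or $\mathcal{D}$ are defined purely via the representation $\rho_{\tau,p}: \pi_1(E_\tau) \to SL(2,\mathbb{C})$ on the two period loops $\omega_1, \omega_2$, and the local matrices at $\pm p$ are scalar $-\mathrm{Id}$, these extra singularities do not obstruct the comparison: what must be matched is how analytic continuation along the two period cycles acts on a suitable common solution space.

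\medskip\noindent\textbf{First}, I would seek a gauge/substitution of the form $y(z) = e^{c\,\eta(z)}\,\Phi(z)\,w(z)$, where $w$ solves the Lamé equation (\ref{Lame 1}) with parameter $\tilde B$, the factor $\Phi(z)$ is an explicit product of ratios of sigma functions $\sigma(z\pm p)/\sigma(z)$ designed to produce the $3/4$-type poles and the first-order (residue) terms at $\pm p$, and $c\,\eta(z)$ (with $\eta$ the Weierstrass zeta-related quasi-period correction) absorbs the $T$-dependent $\zeta$-terms. Substituting this ansatz into (\ref{GLE 4})$_p$ and demanding that $w$ satisfy (\ref{Lame 1}) will force algebraic relations among the multiplier $c$, the exponents in $\Phi$, the parameter $T$, and $\tilde B$. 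The payoff is twofold: the pole structure at $\pm p$ must cancel (this is exactly where the apparentness condition (\ref{AP condition 1}) for $AP_1(\tau,p)$ is consumed), and matching the constant term will yield precisely the relation $\tilde B = T^2 - 2\wp(p)$ claimed in (\ref{correspondence}).

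\medskip\noindent\textbf{Next}, with the substitution in hand, I would track how the prefactor $e^{c\,\eta(z)}\Phi(z)$ transforms under $z \mapsto z + \omega_1$ and $z \mapsto z + \omega_2$. Using the quasi-periodicity of $\zeta$ and $\sigma$, namely $\zeta(z+\omega_i) = \zeta(z) + \eta_i$ and the corresponding sigma-translation formulas, the prefactor picks up explicit scalar multipliers along each period. Consequently the monodromy matrices $M_i(p,T)$ of (\ref{GLE 4})$_p$ equal those of (\ref{Lame 1}) up to multiplication by these scalars. Because the scalars are roots of unity (forced by the condition $N_{\pm p} = -\mathrm{Id}$ and the fact that the overall solution is single-valued around each puncture), and because the diagonalization (\ref{m1}) or normalization (\ref{m2}) absorbs a common scalar into the data $(r,s)$ or $\mathcal{D}$, the two representations yield the \emph{same} monodromy data. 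This gives the equivalence of statements (1) and (2). The two-to-one nature of $T \mapsto \tilde B$ is immediate from (\ref{correspondence}), since $T$ and $-T$ give the same $\tilde B$, consistent with the symmetry (\ref{Monodromy symmetry3})--(\ref{Monodromy symmetry4}) that identifies the data of $T$ and $-T$ up to sign.

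\medskip\noindent\textbf{The main obstacle} I anticipate is verifying that the gauge prefactor $e^{c\,\eta(z)}\Phi(z)$ is genuinely single-valued (or picks up only the controlled scalar factors) around \emph{each} of the three singular points simultaneously, and that the residue term with coefficient $-\tfrac{\wp''(p)}{4\wp'(p)}$ is correctly reproduced rather than introducing a spurious logarithmic branch. This requires careful bookkeeping of the local expansions at $z = 0$ and $z = \pm p$, and it is exactly here that the choice $\mathbf{T} \in AP_1(\tau,p)$ — as opposed to $AP_0$ — is essential: only this branch of the apparentness locus makes the prefactor compatible with a clean reduction to the even Lamé potential. Once the local cancellations are confirmed, the global monodromy comparison is a routine consequence of the quasi-periodicity formulas.
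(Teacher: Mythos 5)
Your proposal rests on a mechanism that cannot exist. You propose a multiplicative gauge $y(z)=e^{c\,\eta(z)}\Phi(z)\,w(z)$ carrying the full solution space of the Lam\'{e} equation (\ref{Lame 1}) to that of (\ref{GLE 4})$_{p}$. But both equations are in normal (Schr\"{o}dinger) form $y''=qy$, and no nonconstant multiplicative gauge can intertwine two such equations: writing $y=gw$ with $w''=\tilde{q}w$ gives $g''w+2g'w'+g\tilde{q}w=qgw$, and since for each fixed $z$ the pair $\left(  w(z),w'(z)\right)  $ ranges over all of $\mathbb{C}^{2}$ as $w$ runs through the two-dimensional solution space, one is forced to $g'\equiv0$ and $q=\tilde{q}$. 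So the local cancellations you plan to verify at $0,\pm p$ can never be made to work for the whole solution space, and the relation $\tilde{B}=T^{2}-2\wp(p)$ cannot be extracted from such an ansatz. What is true --- and what the paper actually uses --- is a correspondence at the level of \emph{individual} solutions: both equations admit solutions elliptic of the second kind (Baker--Akhiezer functions, written in Hermite form as in (\ref{BA function written in Hermite})), but the prefactors there depend on the spectral point (the zeros $a_{1}(P),a_{2}(P)$ and the exponent $c(P)$ vary with $T$), so there is no universal prefactor of the kind you posit. The paper instead derives, via Theorem \ref{a1,a2 alge equ} (the constraints $\wp(a_{1})+\wp(a_{2})=2\wp(p)$ and $T=\frac{\wp'(a_{1})-\wp'(a_{2})}{2(\wp(a_{1})-\wp(a_{2}))}$, combined through the addition formula), the characterization system (\ref{system of wp,wp',kappa*}): $\wp(r+s\tau)=T^{2}-2\wp(p)$, $\wp'(r+s\tau)=\sqrt{-Q(T)}$, $Z(r,s,\tau)=0$, and observes that it coincides with the classical system (\ref{system of wp,wp',kappa, for Lame n=1}) precisely under $\tilde{B}=T^{2}-2\wp(p)$. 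That identification of constraint systems, not a function-level transformation, is the content of the theorem in the completely reducible case.

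Two further gaps. First, your treatment of case (ii) is missing entirely: the data $\mathcal{D}$ is not a multiplier that a scalar prefactor could match; the paper must compute it as $\mathcal{D}=\chi_{2}(T_{0})/\chi_{1}(T_{0})$ via period integrals of $1/\Phi_{e}$ (Lemma \ref{0822lemma1}) and then identify it as the limit $\lim_{P\to P_{0}}\frac{-r(P_{0})+r(P)}{s(P_{0})-s(P)}$ along the spectral curve (Lemma \ref{0822lemma2}), after which the equivalence follows because the correspondence (\ref{correspondence, spectral curve}) matches the approximating data of the two spectral curves. Second, your claim that scalar multipliers picked up along the periods are "absorbed into the data" is false in general: a sign $-1$ on a period shifts $(r,s)$ by a half-lattice vector and genuinely changes the monodromy data --- this is exactly the phenomenon in Theorem \ref{Main Theorem3}, where the limits $p\to\frac{\omega_{k}}{2}$ ($k=1,2,3$) produce monodromies differing by the signs $\varepsilon_{i}^{(k)}$ and correspondingly shifted data (\ref{rsk}). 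So even if a gauge-type comparison were available, the sign bookkeeping you wave through would be where the argument lives or dies.
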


\begin{remark}
Theorem \ref{Main Theorem1} holds, in fact, for arbitrary $n\in\mathbb{N}$. A
complete proof will be presented in Part II. In this paper, we focus on the
case $n=1$, providing explicit formulas and computations that serve as the
foundation for the general theory.
\end{remark}

A direct corollary of Theorem \ref{Main Theorem1} is stated as follows.

\begin{corollary}
\label{Main Corollary1}Suppose $\left(  r,s\right)  $\textit{ }$\in$\textit{
}$\mathbb{C}^{2}$ \textit{in case (i), or }$\mathcal{D}$\textit{ }$\in
$\textit{ }$\mathbb{C}\cup\{\infty\}$\textit{ in case (ii), represents the
monodromy data of }(\ref{Lame 1}) for some $\tilde{B}$\textit{ }$\in$\textit{
}$\mathbb{C}$. For each $p$ satisfying (\ref{Assumption 1}), let the
parameters $T(p)$ and $B(p)$ be given by (\ref{correspondence}) and
(\ref{B condition 2}), respectively. Then monodromy matrices\textit{ }
\[
M_{i}(p,T(p))\in SL(2,\mathbb{C}),~i=1,2,
\]
of (\ref{GLE 4})$_{p}$\textit{ }remain invariant\textit{ and coincide with the
corresponding monodromy matrices of (\ref{Lame 1}) associated with }$\tilde
{B}$\textit{.}
\end{corollary}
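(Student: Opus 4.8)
The plan is to deduce Corollary \ref{Main Corollary1} directly from Theorem \ref{Main Theorem1}, reading it as an isomonodromy statement in the parameter $p$. The crucial observation I would start from is that the classical Lam\'{e} equation (\ref{Lame 1}), $y''=[2\wp(z)+\tilde{B}]y$, carries no dependence on $p$ whatsoever: its potential involves only $\wp(z)$ and the constant $\tilde{B}$. Consequently, once $\tilde{B}$ is fixed, the monodromy representation of (\ref{Lame 1})\,---\,and hence the realized monodromy data, whether $(r,s)$ in the completely reducible case or $\mathcal{D}$ in the non-completely reducible case\,---\,is a fixed object, entirely independent of $p$. I would make this the anchor of the argument: the right-hand side of the desired identity $M_i(p,T(p))=M_i(\tilde{B})$ is constant in $p$ by construction.

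Next, for each $p$ satisfying (\ref{Assumption 1}), I would define $T(p)$ by inverting the correspondence formula (\ref{correspondence}). Since $\tilde{B}$ is held fixed, solving $\tilde{B}=T^2-2\wp(p)$ for $T$ gives $T(p)=\sqrt{\tilde{B}+2\wp(p)}$ (a branch of the square root to be fixed shortly), so that (\ref{correspondence}) holds identically in $p$, and I would take $B(p)$ from (\ref{B condition 2}). Note that every such $T(p)\in\mathbb{C}$ automatically yields an apparent parameter $\mathbf{T}\in AP_1(\tau,p)$ under the Case II parametrization, so equation (\ref{GLE 4})$_p$ is genuinely of the form covered by Theorem \ref{Main Theorem1}. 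The pair $(T(p),\tilde{B})$ is thus precisely a pair related by the two-to-one correspondence of that theorem.

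The heart of the argument is then a single application of Theorem \ref{Main Theorem1}: because $(T(p),\tilde{B})$ satisfies (\ref{correspondence}), the theorem guarantees that the monodromy matrices $M_i(p,T(p))$ of (\ref{GLE 4})$_p$ realize the same monodromy data as the Lam\'{e} equation (\ref{Lame 1}) with parameter $\tilde{B}$. The point I would emphasize is that, since the Lam\'{e} data do not depend on $p$, this coincidence forces $M_i(p,T(p))$ to be independent of $p$ as well\,---\,that is, the family (\ref{GLE 4})$_p$ with the prescribed $T(p)$ is isomonodromic as $p$ varies\,---\,and to equal the fixed Lam\'{e} monodromy $M_i(\tilde{B})$. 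This is exactly the invariance asserted in the corollary.

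The one subtlety I expect to handle\,---\,rather than a genuine obstacle\,---\,is the sign ambiguity in $T(p)=\pm\sqrt{\tilde{B}+2\wp(p)}$. Passing from $T(p)$ to $-T(p)$ sends the monodromy data by the sign flip (\ref{Monodromy symmetry3})--(\ref{Monodromy symmetry4}), which is precisely the ambiguity induced by the $z\longmapsto-z$ symmetry of the even Lam\'{e} potential. Since the Lam\'{e} data for a fixed $\tilde{B}$ are intrinsically defined only up to this sign, I would fix, for each $p$, the branch of the square root so that $M_i(p,T(p))$ matches the chosen representatives $M_i(\tilde{B})$; the invariance statement then holds as an exact identity of matrices. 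The degenerate value $T(p)=0$, occurring when $\tilde{B}=-2\wp(p)$, presents no difficulty: it simply returns the even case already covered, and (\ref{correspondence}) continues to hold there.
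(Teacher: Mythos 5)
Your proposal is correct and matches the paper's treatment: the paper offers no separate proof, presenting the corollary as an immediate consequence of Theorem \ref{Main Theorem1}, exactly as you argue via the $p$-independence of the Lam\'{e} side of the correspondence (\ref{correspondence}). Your explicit handling of the sign ambiguity $T(p)\mapsto -T(p)$ through (\ref{Monodromy symmetry3})--(\ref{Monodromy symmetry4}) is a worthwhile clarification the paper leaves implicit.
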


The next theorem emphasizes that the singular case $T(p)=0$ must occurs for
some $p$ in Corollary \ref{Main Corollary1}.

\begin{theorem}
\label{Main Theorem at singular}Let $\tau\in\mathbb{H}$ and let $\left(
T(p),B(p)\right)  $ be as in Corollary \ref{Main Corollary1}, with
(\ref{GLE 4})$_{p}$ completely reducible and having monodromy data $\left(
r,s\right)  $. Then
\[
T(p)=0\,\Leftrightarrow\,\tilde{B}=-2\wp(p)\,\Leftrightarrow\, p=\pm p_{r,s}
^{(1)}(\tau)
\]
where $p_{r,s}^{(1)}(\tau)$ denotes the solution of EPVI (\ref{EPVI 1}) with
$n$ $=$ $1$, \textit{evaluated at }$\tau$.
\end{theorem}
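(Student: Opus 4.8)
The statement is a chain of two equivalences, and my plan is to dispatch the first purely algebraically and to reduce the second to Theorem B. The equivalence $T(p)=0\Leftrightarrow\tilde B=-2\wp(p)$ is immediate from the correspondence (\ref{correspondence}): the parameters of Corollary \ref{Main Corollary1} satisfy $\tilde B=T(p)^{2}-2\wp(p)$, so $T(p)=0$ gives $\tilde B=-2\wp(p)$, and conversely $\tilde B=-2\wp(p)$ forces $T(p)^{2}=0$, hence $T(p)=0$. No spectral input is needed here.

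For the forward half of the second equivalence I would exploit the degeneration at $T=0$. When $T(p)=0$ the parameter $\mathbf{T}$ collapses to the singular point $\mathbf{T}_{\ast}$ of (\ref{singular point}), so that (\ref{GLE 4})$_{p}$ becomes the even-symmetric equation (\ref{GLE 2}) with $n=1$ and $A=-\wp^{\prime\prime}(p)/(4\wp^{\prime}(p))$. By Corollary \ref{Main Corollary1} this even equation is completely reducible with the prescribed data $(r,s)$. Theorem B, applied in the even case (Case I) with $n=1$, then forces the $p$-parameter of any such equation to be $p_{r,s}^{(1)}(\tau)$. Because (\ref{GLE 2}) is invariant under $(p,A)\mapsto(-p,-A)$, and the chosen $A$ satisfies $A(-p)=-A(p)$, the two points $p$ and $-p$ give literally the same equation; Theorem B therefore determines $p$ only up to sign, yielding $p=\pm p_{r,s}^{(1)}(\tau)$.

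The converse I would obtain by a cardinality count, which sidesteps any need to match the accessory parameter $A$ directly. By the first equivalence, $\{p:T(p)=0\}=\{p:\wp(p)=-\tilde B/2\}$; under (\ref{Assumption 1}) this fibre is a symmetric pair $\{p_{0},-p_{0}\}$ of two distinct points, the degenerate case $-\tilde B/2\in\{e_{1},e_{2},e_{3}\}$ (with $e_{i}=\wp(\omega_{i}/2)$) being excluded since it would force $p_{0}$ to be a half-period. The forward implication already proved shows $\{p_{0},-p_{0}\}\subseteq\{\pm p_{r,s}^{(1)}(\tau)\}$, and since both are symmetric pairs of the same cardinality they coincide. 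In particular $\wp(p_{r,s}^{(1)}(\tau))=-\tilde B/2$, so every $p=\pm p_{r,s}^{(1)}(\tau)$ satisfies $\tilde B=-2\wp(p)$ and hence $T(p)=0$, completing the chain.

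I expect the main obstacle to be bookkeeping rather than a new estimate: one must check that the monodromy normalization and base-point conventions of Corollary \ref{Main Corollary1} (Case II) agree with those of Theorem B (Case I), so that ``completely reducible with data $(r,s)$'' is genuinely inherited when (\ref{GLE 4})$_{p}$ collapses to (\ref{GLE 2}), and one must track the $\pm$ ambiguity from the $(p,A)\mapsto(-p,-A)$ symmetry so that Theorem B delivers exactly the pair $\{\pm p_{r,s}^{(1)}(\tau)\}$. The essential mathematical content is already carried by Theorem \ref{Main Theorem1} and Theorem B, both of which are available.
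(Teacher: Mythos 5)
Your first equivalence and your forward implication coincide with the paper's own proof: the paper likewise reads $T(p)=0\Leftrightarrow\tilde{B}=-2\wp(p)$ directly off the relation $\tilde{B}=T(p)^{2}-2\wp(p)$ from (\ref{correspondence}), and for $T(p)=0$ it observes that (\ref{GLE 4})$_{p}$ collapses to the even-symmetric case and invokes Theorem B to conclude $p=\pm p_{r,s}^{(1)}(\tau)$. Where you genuinely diverge is the converse. The paper does not count points: it uses the explicit formula (\ref{PV6-1}) for $\wp(p_{r,s}^{(1)}(\tau))$, which under $Z(r,s,\tau)=0$ (guaranteed by complete reducibility, Corollary \ref{pre-modular form lemma}) reduces to $\wp(p_{r,s}^{(1)}(\tau))=-\tfrac{1}{2}\wp(r+s\tau)$, and combines this with the system of Theorem \ref{main theorem for Lame-type ODE}, $\wp(r+s\tau)=T(p)^{2}-2\wp(p)$, to obtain $T(p)^{2}=0$ in one line, with no genericity hypothesis.

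Your counting argument for the converse has a genuine gap in the degenerate case $-\tilde{B}/2\in\{e_{1},e_{2},e_{3}\}$, which you dismiss but which is not excluded by the hypotheses: assumption (\ref{Assumption 1}) constrains $p$, not $\tilde{B}$. Indeed, for any $\tau$ one may pick $\sigma_{0}$ with $\wp(\sigma_{0})=-2e_{k}$ and solve the linear system $r+s\tau=\sigma_{0}$, $r\eta_{1}+s\eta_{2}=\zeta(\sigma_{0})$ (its determinant is $-2\pi i$ by the Legendre relation (\ref{Legendre relation})), producing data $(r,s)\notin\tfrac{1}{2}\mathbb{Z}^{2}$ with $Z(r,s,\tau)=0$ and $\tilde{B}=\wp(r+s\tau)=-2e_{k}$; generically $Q(\tilde{B})\neq0$, so the hypotheses of the theorem are met at any admissible $p$. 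In that situation the fibre $\{p\text{ admissible}:\wp(p)=-\tilde{B}/2\}$ is \emph{empty} (its only candidate is the half-period $\tfrac{\omega_{k}}{2}$), your inclusion $\{p_{0},-p_{0}\}\subseteq\{\pm p_{r,s}^{(1)}(\tau)\}$ is vacuous, and the cardinality count collapses: you can no longer deduce $\wp(p_{r,s}^{(1)}(\tau))=-\tilde{B}/2$, which is exactly what is needed to see that $p_{r,s}^{(1)}(\tau)$ is itself the half-period $\tfrac{\omega_{k}}{2}$, so that both sides of the equivalence are unrealizable by admissible $p$ and the statement holds vacuously. The theorem remains true in this case, but your proof is silent on it; the repair is precisely the step you never use, namely the identity $\wp(p_{r,s}^{(1)}(\tau))=-\tfrac{1}{2}\wp(r+s\tau)$ extracted from (\ref{PV6-1}), which your argument obtains only as a by-product of the nondegenerate case. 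Everything else, including the $\pm$ bookkeeping via (\ref{Equivalent of PVI}), is sound.
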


We now investigate the limiting behavior as $p\rightarrow\frac{\omega_{k}}{2}
$. Although the singularity $p$ in the equation (\ref{GLE 4})$_{p}$ is not
well-defined when
\[
p\in\left\{  \frac{\omega_{k}}{2},k=0,1,2,3\right\}  ,
\]
we will show that the generalized Lam\'{e}-type equation (\ref{GLE 4})$_{p}$
with $\left(  T(p),B(p)\right)  $ chosen to preserve the monodromy matrices,
converges uniformly on every compact subset of $E_{\tau}\backslash\{0\}$ as
$p\rightarrow\frac{\omega_{k}}{2}$, for any $k=0,1,2,3.$

Moreover, the monodromy matrices
\[
M_{i}^{(k)}\in SL(2,\mathbb{C}),~i=1,2,
\]
associated with the limiting Lam\'{e}-type equation as $p\rightarrow
\frac{\omega_{k}}{2},$ are determined by $M_{i}(p,T(p))$, which, by Corollary
\ref{Main Corollary1}, are independent of $p$.

\begin{theorem}
\label{Main Theorem3}Let $\left(  T(p),B(p)\right)  $ be as in Corollary
\ref{Main Corollary1}. Namely,
\[
T(p)^{2}-2\wp(p)=\tilde{B}
\]

(i) As $p\rightarrow0$, the generalized Lam\'{e}-type equation in the non-even
case (\ref{GLE 4})$_{p}$, with parameters $\left(  T(p),B(p)\right)  $
converges uniformly to the classical Lam\'{e} equation (\ref{Lame 1})
\textit{associated with }$\tilde{B}$. The associated monodromy matrices
$M_{i}^{(0)}(\tilde{B})$ of the limiting Lam\'{e} equation satisfy
\[
M_{i}^{(0)}(\tilde{B})=M_{i}(p,T(p))\text{, }i=1,2.
\]

(ii) As $p\rightarrow\frac{\omega_{k}}{2}$, for any $k=1,2,3$, the generalized
Lam\'{e}-type equation in the non-even case (\ref{GLE 4})$_{p}$, with $\left(
T(p),B(p)\right)  $ converges uniformly to the Lam\'{e}-type equation
\begin{equation}
y^{\prime\prime}(z)=\left\{  2\left(  \wp(z)+\wp(z-\frac{\omega_{k}}%
{2})\right)  +2\tilde{T}_{k}\left(  \zeta(z-\frac{\omega_{k}}{2}%
)-\zeta(z)\right)  +\tilde{B}_{k}\right\}  y(z) \label{limitting k}%
\end{equation}
where
\[
\tilde{T}_{k}=\lim_{p\rightarrow\frac{\omega_{k}}{2}}T(p),
\]
and
\[
\tilde{B}_{k}=\tilde{T}_{k}^{2}+\eta_{k}\tilde{T}_{k}-e_{k}.
\]
Furthermore, the monodromy matrices $M_{i}^{(k)}(\tilde{T}_{k}),$ $i=1,2,$
associated with (\ref{limitting k}) satisfy
\begin{equation}
M_{i}^{(k)}(\tilde{T}_{k})=\varepsilon_{i}^{(k)}M_{i}(p,T(p))\text{, }i=1,2,
\label{limiting M}%
\end{equation}
where the signs are given by
\[
\left(  \varepsilon_{1}^{(k)},\varepsilon_{2}^{(k)}\right)  =\left\{
\begin{array}
[c]{c}%
\left(  -1,1\right)  \text{ \ if \ }k=1\\
\left(  1,-1\right)  \text{ \ if \ }k=2\\
\left(  -1,-1\right)  \text{ \ if \ }k=3
\end{array}
\right.  .
\]

\end{theorem}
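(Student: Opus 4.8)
The plan is to prove both parts from a single convergence statement for the potential, and then to transfer that convergence to the monodromy matrices using the $p$-independence supplied by Corollary~\ref{Main Corollary1}; the only genuinely delicate point will be the signs $\varepsilon_i^{(k)}$ in part (ii). First I would denote by $Q_p(z)$ the potential of (\ref{GLE 4})$_p$ and expand it as $p\to\tfrac{\omega_k}{2}$, handling $k=0$ and $k\in\{1,2,3\}$ by the same Laurent bookkeeping but with different local models. For $k\neq0$ set $p=\tfrac{\omega_k}{2}+\delta$ and use that $\wp$ is even and $\wp'$ odd about a half-period, so $\wp(p)=e_k+O(\delta^2)$, $\wp'(p)=\wp''(\tfrac{\omega_k}{2})\delta+O(\delta^3)$, and $\frac{\wp''(p)}{4\wp'(p)}=\frac{1}{4\delta}\bigl(1+O(\delta^2)\bigr)$. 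With the quasi-periodicity jump $\zeta(z+\omega_k)=\zeta(z)+\eta_k$ one gets
\[
\zeta(z+p)-\zeta(z-p)=\eta_k-2\delta\,\wp(z-\tfrac{\omega_k}{2})+O(\delta^3),
\]
\[
\zeta(z+p)+\zeta(z-p)-2\zeta(z)=2\bigl(\zeta(z-\tfrac{\omega_k}{2})-\zeta(z)\bigr)+\eta_k+O(\delta^2).
\]

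The decisive feature of this expansion is a cancellation of divergent constants. The term $-\frac{\wp''(p)}{4\wp'(p)}\bigl(\zeta(z+p)-\zeta(z-p)\bigr)$ produces a divergent constant $-\frac{\eta_k}{4\delta}$ together with the finite contribution $\tfrac12\wp(z-\tfrac{\omega_k}{2})$, while $\frac{\wp''(p)}{2\wp'(p)}\zeta(p)=\frac{\eta_k}{4\delta}-\frac{e_k}{2}+o(1)$ (using $\zeta(\tfrac{\omega_k}{2})=\tfrac{\eta_k}{2}$), so the two $\frac{\eta_k}{4\delta}$ terms cancel in $Q_p$. Collecting finite parts, the $\frac34\bigl(\wp(z+p)+\wp(z-p)\bigr)\to\frac32\wp(z-\tfrac{\omega_k}{2})$ contribution adds to the $\tfrac12\wp(z-\tfrac{\omega_k}{2})$ above to give the coefficient $2$, the residue term $2\tilde T_k\bigl(\zeta(z-\tfrac{\omega_k}{2})-\zeta(z)\bigr)$ with $\tilde T_k=\lim T(p)$, and the constant $\tilde B_k=\tilde T_k^2+\eta_k\tilde T_k-e_k$, i.e.\ exactly (\ref{limitting k}). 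For $k=0$ the computation is cleaner: $\wp(p)\sim p^{-2}$ forces $\frac{\wp''(p)}{4\wp'(p)}\sim-\frac{3}{4p}$, the merged-pole contributions $\tfrac32\wp(z)$ and $-\tfrac32\wp(z)$ cancel, and $T(p)^2=\tilde B+2\wp(p)$ together with $\frac{\wp''(p)}{2\wp'(p)}\zeta(p)\sim-\tfrac32\wp(p)$ sends $B\to\tilde B$, so $Q_p\to2\wp(z)+\tilde B$, the classical Lam\'e potential. In every case I would conclude uniform convergence on compact subsets of $E_\tau\setminus\{0,\tfrac{\omega_k}{2}\}$ (resp.\ $E_\tau\setminus\{0\}$ when $k=0$).

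Next I would pass from potentials to monodromy. Fixing the base point $z_0$ and loops $\gamma_1,\gamma_2$ in the common punctured region, uniform convergence of the coefficients along these loops forces the normalized fundamental system $Y(z;z_0)$ to converge to one for the limiting equation, hence the analytic continuations converge. Since Corollary~\ref{Main Corollary1} makes $M_i(p,T(p))$ independent of $p$, their limit is the same constant matrix. For $k=0$ the limiting loops are exactly the period loops of (\ref{Lame 1}), giving $M_i^{(0)}(\tilde B)=M_i(p,T(p))$ and proving (i) with no sign. For $k\in\{1,2,3\}$ the limiting equation has only the punctures $\{0,\tfrac{\omega_k}{2}\}$, and the single source of the signs is that the period loops of (\ref{limitting k}) differ from the pre-limit ones through the collision of the pair $\pm p$ at $\tfrac{\omega_k}{2}$: each of $\pm p$ carries local monodromy $-\mathrm{Id}$, and $\gamma_i$ acquires a factor $-\mathrm{Id}$ exactly when, in the degeneration, it must be slid past one member of the pair, so that $\varepsilon_i^{(k)}=(-1)^{N_i}$ where $N_i$ is the number of points of $\{p,-p\}$ crossing $\gamma_i$ as $p\to\tfrac{\omega_k}{2}$.

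The hard part will be computing these crossing numbers rigorously, and this is where I expect the main obstacle. I would make the count analytic through the Hermite--Krichever form of the solutions, in which the half-integer exponents $-\tfrac12,\tfrac32$ at $\pm p$ enter via the two-valued factor $\sqrt{\sigma(z-p)\sigma(z+p)}=\sigma(z)\sigma(p)\sqrt{\wp(p)-\wp(z)}$, whose sign under $z\mapsto z+\omega_i$ is controlled in the limit by the quasi-periodicity of
\[
\sqrt{\wp(z)-e_k}=\frac{e^{\eta_k z/2}\,\sigma(z-\tfrac{\omega_k}{2})}{\sigma(z)\,\sigma(\tfrac{\omega_k}{2})},
\]
which transforms by $e^{(\eta_k\omega_i-\eta_i\omega_k)/2}$. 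Writing $\tfrac{\omega_k}{2}=\tfrac{c_1\omega_1+c_2\omega_2}{2}$ with $(c_1,c_2)\in\{0,1\}^2$ and $\eta_k=c_1\eta_1+c_2\eta_2$, the Legendre relation $\eta_1\omega_2-\eta_2\omega_1=2\pi i$ reduces this factor to $\pm1$. The subtlety is that the \emph{naive} substitution $p=\tfrac{\omega_k}{2}$ into this factor is not enough: the global branch of $\sqrt{\wp(p)-\wp(z)}$ jumps as the branch points $\pm p$ cross the generating loops, so the correct sign must be obtained by tracking the branch continuously along a chosen path $p\to\tfrac{\omega_k}{2}$. Carrying out this bookkeeping should show $N_i\equiv c_i\pmod 2$, hence $\varepsilon_i^{(k)}=(-1)^{c_i}$, which is precisely $(-1,1),(1,-1),(-1,-1)$ for $k=1,2,3$; reconciling the branch-crossing contribution with the quasi-periodicity factor (which by itself selects the complementary index $c_{3-i}$) is the crux of the argument.
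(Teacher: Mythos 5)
Your potential-convergence computation is correct and essentially identical to the paper's Lemma \ref{convergence lemma}: the same Laurent expansions at $p=\tfrac{\omega_k}{2}$, the same cancellation of the divergent constants $\pm\tfrac{\eta_k}{4p_k}$ between $B(p)$ and the $-\tfrac{\wp''(p)}{4\wp'(p)}\bigl(\zeta(z+p)-\zeta(z-p)\bigr)$ term, and the same cancellation of $\pm\tfrac32\wp(z)$ in the $k=0$ case; likewise, transferring convergence to the monodromy via Corollary \ref{Main Corollary1} is how both you and the paper set up the second half. The genuine gap is the sign table. You reduce $\varepsilon_i^{(k)}$ to crossing numbers $N_i$ and then assert, without computation, that ``carrying out this bookkeeping should show $N_i\equiv c_i\pmod 2$'' --- but that assertion \emph{is} the content of (\ref{limiting M}); nothing in your proposal establishes it. Worse, the one sign computation you do carry out, the quasi-periodicity factor $e^{(\eta_k\omega_i-\eta_i\omega_k)/2}$ of $\sqrt{\wp(z)-e_k}$, which by the Legendre relation equals $-1$ exactly for the complementary index, points to the transposed table, and you explicitly defer the reconciliation (``the crux''). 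So the decisive step of part (ii) is missing, and as written your two partial computations are mutually inconsistent rather than combined into a proof.

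The paper avoids loop-crossing bookkeeping altogether and determines the signs algebraically: it writes the Baker--Akhiezer function in Hermite form (\ref{BA function written in Hermite}), uses Proposition \ref{0807prop1} together with the $p$-independence of $(r,s)$ to fix $a_1^{(p)}+a_2^{(p)}=r+s\tau$ and $c^{(p)}=r\eta_1+s\eta_2$ for all $p$, passes to the limit $\psi_p\to\psi_k$, and then compares $\psi_k$ with the canonical second-kind solution $\hat\psi_k$ of the limit equation via $\sigma\bigl(z+\tfrac{\omega_k}{2}\bigr)\sigma\bigl(z-\tfrac{\omega_k}{2}\bigr)=-e^{\eta_k z}\,\sigma\bigl(z-\tfrac{\omega_k}{2}\bigr)^2$, which yields $c_k=c-\tfrac{\eta_k}{2}$; solving the two linear relations (\ref{0826equ14}) with the Legendre relation produces a half-period shift of the monodromy data, from which the signs are read off through the convention (\ref{m1}). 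If you wish to rescue your route, you must do the analogue of this computation: your factor $\sqrt{\wp(z)-e_k}$ is precisely the limit of the two-valued factor $\sqrt{\sigma(z+p)\sigma(z-p)}=\pm i\,\sigma(z)\sigma(p)\sqrt{\wp(z)-\wp(p)}$ in the Hermite form, and the honest version of the Proposition \ref{0807prop1} relations carries the continuation sign of $\sqrt{\wp(z)-\wp(p)}$ along each period path, whose behavior through the degeneration is exactly what you left open. Be aware the bookkeeping is delicate even in the paper: the shift it derives, e.g.\ $\bigl(r(\tilde B_1),s(\tilde B_1)\bigr)=\bigl(r+\tfrac12,s\bigr)$ for $k=1$, flips the multiplier $e^{2\pi i r}$, i.e.\ the matrix $M_2$, giving $(\varepsilon_1,\varepsilon_2)=(1,-1)$, which is the transpose of the $(-1,1)$ stated in the theorem for $k=1$ --- precisely the tension your ``complementary index'' remark detects. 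This confirms that the signs must be computed, not asserted, and that your proposal stops short of the one step where the theorem's content actually lies.
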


\textbf{Organization of the Paper:}\textit{\medskip}

\begin{itemize}
\item Section \ref{Finite-Gap Structure Section}. We extend the notion of
finite-gap potentials to include the generalized Lam\'{e}-type equation
(\ref{GLEn}), and for $n=1$, establish the finite-gap structure via Theorem
\ref{Main Theorem1} and describe the spectral arcs for $\tau\in i\mathbb{R}%
_{>0}$.\textit{\medskip}

\item Section \ref{Cone Spherical Metrics Section}. Applications to cone
spherical metrics with three large odd conical singularities are presented. A
family of such metrics exhibiting a blow-up configuration is analyzed via the
associated multiple Green function, and the configuration is described
explicitly in terms of the corresponding monodromy data.\textit{\medskip}

\item Section \ref{Spectral Theory Section}. We introduce spectral polynomials
and Baker-Akhiezer functions, adapted from KdV theory, and employ them to
study the monodromy representations for Case II (punctured non-even
case).\textit{\medskip}

\item Section \ref{Monodromy Theory and The Proofs Section}. We resolves the
monodromy problem outlined above and gives the complete proof of Theorem
\ref{Main Theorem1}, Theorem \ref{Main Theorem at singular} and Theorem
\ref{Main Theorem3}.\textit{\medskip}
\end{itemize}
\vspace{5pt}

\textbf{Acknowledgement:} Ting-Jung Kuo was supported by NSTC 113-2628-M-003-001-MY4. He is also grateful to the National Center for Theoretical Sciences (NCTS) for its constant support.
\vspace{5pt}

\section{Finite-Gap Structure}

\label{Finite-Gap Structure Section}

Finite-gap integration theory plays a central role in the analysis of
nonlinear partial differential equations arising in mathematical physics.
Motivated by the seminal works of Gesztesy and Weikard \cite{GW1, GW2, GW3,
GW4, GW5} on the KdV and AKNS hierarchies; Gesztesy, Holden, Michor, and
Teschl \cite{GH2, GHMT} on the Camassa--Holm and Ablowitz--Ladik hierarchies;
and Gesztesy, Unterkofler, and Weikard \cite{GUW1} on Calogero--Moser systems,
we extend the concept of finite-gap potentials to encompass the generalized
Lam\'{e}-type equation in the punctured non-even case (\ref{GLE 4})$_{p}$ and
establish its finite-gap property. In addition, we describe the spectral arcs
for the case $\tau\in i\mathbb{R}_{>0}$.

Throughout this section, we denote the Case II potential in equation
\eqref{GLE 4}$_{p}$ by $q(z;T)$, where $B$ is determined by \eqref{B condition 2}.

\subsection{Finite-Gap Theory}

Let $T\in\mathbb{C}$. Consider a nonconstant elliptic function $h(z;T)$ in the
variable $z$ with periods $1$ and $\tau$, subject to the following two assumptions:

\begin{itemize}
\item[(1)] For fixed $z$, $h(z;\cdot)$ is meromorphic on $\mathbb{C}$;

\item[(2)] The second-order differential equation
\begin{equation}
y^{\prime\prime}(z)=h(z;T)y(z) \label{finite gpa ode}%
\end{equation}
is assumed to be an apparent Fuchsian equation, that is, all of its
singularities are regular and the local solutions are single-valued (i.e.,
free of logarithmic terms).
\end{itemize}

Fix $z_{0}\in\mathbb{C}$ such that $h(x+z_{0};T)$ is smooth on both
$\mathbb{R}$ and $\tau\cdot\mathbb{R}$. Consider the following Hill's
equation
\begin{equation}
\label{hill equ}y^{\prime\prime}(x) =h(x+z_{0};T)y(x),\quad x\in\mathbb{R}.
\end{equation}

\begin{definition}
\label{sigma spectrum} Equation \eqref{hill equ} is said to be
\textbf{conditionally stable} if it admits a nontrivial bounded solution on
$\mathbb{R}$ (or $\tau\cdot\mathbb{R}$). The associated \textbf{spectral sets}
are defined as
\begin{align*}
\sigma_{1}  &  :=\{T\in\mathbb{C}\,|\,\text{Equation }\eqref{hill equ}\text{
admits a nontrivial bounded solution on }\mathbb{R}.\},\\
\sigma_{2}  &  :=\{T\in\mathbb{C}\,|\,\text{Equation }\eqref{hill equ}\text{
admits a nontrivial bounded solution on }\tau\cdot\mathbb{R}.\},
\end{align*}
that is, the sets of parameters $T$ for which \eqref{hill equ} is
conditionally stable on the respective domains.
\end{definition}

Let $M_{j}(T)$ denote the monodromy matrix of equation \eqref{finite gpa ode}
with respect to the shift $z\rightarrow z+\omega_{j}$. The \textit{Hill
discriminant} of \eqref{finite gpa ode} along the $\omega_{j}$ direction is
defined by
\begin{equation}
\Delta_{j}(T):=\frac{1}{2}\mathrm{tr}\,M_{j}(T),\quad T\in\mathbb{C},\quad
j=1,2.
\end{equation}
Note that $\Delta_{j}(T)$ is independent of the choice of fundamental system
of solutions. By the classical Floquet theory, $\sigma_{j}$ can be
characterized as
\begin{equation}
\sigma_{j}=\{T\in\mathbb{C}\,|\,-1\leqslant\Delta_{j}(T)\leqslant1\},\,j=1,2.
\label{spectrum lame-type}%
\end{equation}

\begin{definition}
\label{endpoint's definition} A point $T_{0}\in\sigma_{j}$ is called an
endpoint if an odd number of semiarcs of $\sigma_{j}$ meet at $T_{0}$.
\end{definition}

For any $T\in\mathbb{C}$, set
\begin{equation}
d_{j}(T):=\mathrm{ord}_{T}\left(  \Delta_{j}^{2}(\cdot)-1\right)  .
\label{degree of spectrum}%
\end{equation}
The following lemma shows that $d_{j}(T)$ completely characterizes the finite
endpoints of $\sigma_{j}$.

\begin{lemma}\label{lemma: odd degree } $T_{0}\in\mathbb{C}$ is an endpoint of $\sigma_{j}$
if and only if $d_{j}(T_{0})$ is odd.
\end{lemma}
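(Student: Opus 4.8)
The plan is to reduce the statement to a purely local analytic-geometric computation near $T_0$, exploiting that $\Delta_j$ is a holomorphic function of the spectral parameter $T$. First I would record that, since the potential $h(z;\cdot)$ is meromorphic in $T$ by assumption (1) and equation \eqref{finite gpa ode} is Fuchsian with apparent singularities, the fundamental solution and hence the monodromy matrix $M_j(T)$ depend holomorphically on $T$ on the region where $h(z;\cdot)$ is holomorphic; consequently $\Delta_j(T)=\tfrac12\mathrm{tr}\,M_j(T)$ is holomorphic there, and $\Delta_j^2-1$ is a holomorphic function whose order of vanishing $d_j(T_0)$ is a well-defined nonnegative integer (assuming $\Delta_j\not\equiv\pm1$). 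Because $\sigma_j=\Delta_j^{-1}([-1,1])$ is closed, any endpoint must lie in $\sigma_j$, i.e. $w_0:=\Delta_j(T_0)\in[-1,1]$; a point with $w_0\notin[-1,1]$ has $d_j(T_0)=0$ and is not an endpoint, so the equivalence holds trivially there.

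The heart of the argument is a local normal form. Suppose $w_0=1$ (the case $w_0=-1$ is identical, with $\Delta_j+1$ in place of $\Delta_j-1$). Writing $u=T-T_0$, I would factor $\Delta_j(T_0+u)-1=u^m g(u)$ with $g$ holomorphic and $g(0)\neq0$, where $m=\mathrm{ord}_{T_0}(\Delta_j-1)$; since $\Delta_j+1\to2$ near $T_0$, this gives $d_j(T_0)=m$. Choosing a holomorphic $m$-th root of $g$ and setting $v=u\,g(u)^{1/m}$ produces a biholomorphism of a neighborhood of $0$ satisfying $v^m=\Delta_j-1$ \emph{exactly}. In the coordinate $v$, for $v$ small the constraint $\Delta_j\ge-1$ is automatic, so locally $\sigma_j=\{v:v^m\in\mathbb{R}_{\le0}\}$, which is precisely the union of the $m$ rays $\arg v=(2k+1)\pi/m$, $k=0,\dots,m-1$. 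Pulling back through the biholomorphism, $\sigma_j$ near $T_0$ is a union of exactly $m=d_j(T_0)$ analytic semiarcs, so $T_0$ is an endpoint iff $d_j(T_0)$ is odd.

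It remains to dispose of the interior case $w_0\in(-1,1)$, where $d_j(T_0)=0$. The same normal-form device applied to $\Delta_j-w_0=v^k$, with $k=\mathrm{ord}_{T_0}(\Delta_j-w_0)\ge1$, shows that both inequalities defining $[-1,1]$ are locally slack, hence $\sigma_j$ is locally $\{v:v^k\in\mathbb{R}\}$, a union of $2k$ rays. Thus an even number of semiarcs meet at $T_0$, consistent with $d_j(T_0)=0$ being even. Combining the three cases yields the parity equivalence in all situations.

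I expect the main obstacle to be the rigorous justification that the \emph{global} interval constraint $-1\le\Delta_j\le1$ collapses locally to a single binding condition, together with the bookkeeping needed to guarantee that the normal-form rays are genuine, distinct semiarcs rather than artifacts that might merge under higher-order perturbations. The exactness of the substitution $v^m=\Delta_j-1$ is precisely what removes this latter difficulty, since no leading-order/asymptotic approximation is then involved. A secondary point to verify carefully is the holomorphic dependence of $M_j$ on $T$ and the finiteness of $d_j(T_0)$, which requires $\Delta_j^2-1\not\equiv0$.
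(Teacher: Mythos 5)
Your proof is correct and follows essentially the same route as the paper: both reduce the statement to the vanishing order of $\Delta_j\mp 1$ at $T_0$ (noting $\Delta_j\pm 1$ is a local unit, so this order equals $d_j(T_0)$) and then count the semiarcs of $\Delta_j^{-1}([-1,1])$ meeting at $T_0$. The only difference is completeness: the paper simply asserts that exactly $k=d_j(T_0)$ semiarcs meet at $T_0$, whereas your exact normal form $v^m=\Delta_j-1$ rigorously justifies that count, and you additionally treat the cases $\Delta_j(T_0)\in(-1,1)$ (locally $2k$ semiarcs, so never an endpoint) and $\Delta_j(T_0)\notin[-1,1]$, which the paper leaves implicit.
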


\begin{proof}
Suppose $T_{0}\in\mathbb{C}$ is an endpoint of $\sigma_{j}$. Without the loss
of generality, assume $\Delta_{j}(T_{0})=1$. Then locally,
\[
\Delta_{j}(T)-1=l(T-T_{0})^{k}(1+O(|T-T_{0}|)),\quad l\neq0,\ k\text{ is an
odd integer}.
\]

Hence by \eqref{degree of spectrum}, we obtain
\[
d_{j}(T)=k,
\]
which is odd.

Conversely, assume $d_{j}(T_{0})=k$ is odd. Then $\Delta_{j}(T_{0})=\pm1,$ and
near $T_{0}$,
\begin{equation}
\Delta_{j}^{2}(T)-1=l(T-T_{0})^{k}(1+O(T-T_{0})),\quad l\neq0. \label{delta 1}%
\end{equation}
On the other hand,%
\[
\Delta_{j}(T)\mp1=l^{\prime}(T-T_{0})^{k^{\prime}}(1+O(T-T_{0})),\quad
l^{\prime}\neq0,\ k^{\prime}\in\mathbb{N},
\]
which implies
\begin{equation}
\Delta_{j}^{2}(T)-1=2l^{\prime}(T-T_{0})^{k^{\prime}}(1+O(T-T_{0})).
\label{delta 2}%
\end{equation}
Comparing \eqref{delta 1} and \eqref{delta 2}, we conclude that $k^{\prime}=k$
and $l^{\prime}=\frac{l}{2}$. Thus, exactly $k$ semiarcs of $\sigma_{j}$ meet
at $T_{0}$ and hence $T_{0}$ is an endpoint of $\sigma_{j}$.
\end{proof}

\begin{definition}
[finite-gap potential]\label{def finite-gap}The potential $h(z;T)$ is called
\textbf{finite-gap}, if for each j=1,2, the spectral set $\sigma_{j}$ consists
of finitely many bounded analytic arcs and finitely many semi-unbounded
analytic arcs whose infinite endpoints tend to $\infty$.
\end{definition}

\begin{example}
\cite{Matveev,TV,Weikard1999,Whittaker-Watson} Let $n_{k}\in\mathbb{N}$,
$k\in\{0,1,2,3\}$ with $\underset{0\leqslant k\leqslant3}{max}n_{k}\geqslant
1$. The Darboux-Treibich-Verdier potential $\sum_{k=0}^{3}n_{k}(n_{k}%
+1)\wp(z+\frac{\omega_{k}}{2};\tau)+T$ is a finite-gap potential in the sense
of Definition \ref{def finite-gap}. In particular, the classical Lam\'{e}
potential $n(n+1)\wp(z;\tau)+T$ with $n\in\mathbb{Z}_{>0}$ is a finite-gap
potential in the sense of Definition \ref{def finite-gap}.
\end{example}

In this section, we apply Theorem \ref{Main Theorem1} to establish the
finite-gap property of the potential $q(z;T)$ for the generalized
Lam\'{e}-type equation (\ref{GLE 4})$_{p}$. The main result is as follows.

\begin{theorem}
\label{thm finite gap} Let $\tau\in\mathbb{H}.$ Suppose $p$ satisfies
(\ref{Assumption 1}). Then $q(z;T)$ is a finite-gap potential in the sense of
Definition \ref{def finite-gap}. More precisely, for each $j\in\{1,2\}$, the
following statements hold.

\begin{itemize}
\item[(i)] The spectral set $\sigma_{j}$ admits the decomposition
\[
\sigma_{j}=\left(  \bigcup_{k=1}^{g_{j}}\sigma_{j,k}\right)  \cup\left(
\bigcup_{k=1}^{\tilde{g}_{j}}\sigma_{j,k,\infty}\right)  ,\quad g_{j}%
\in\{1,2,3\},\quad\tilde{g}_{j}\in\{1,2\},
\]
where

\item each $\sigma_{j,k}$ is a bounded simple analytic arc;

\item each $\sigma_{j,k,\infty}$ is a semi-infinite simple analytic arc
tending to $\infty$;

\item the set $\sigma_{j}$ is summetric with respect to the origin:
$\sigma_{j}=-\sigma_{j}$.

\item[(ii)] All finite endpoints of $\sigma_{j}$ are given by the zeros of
$\mathit{spectral\ polynomial}$ $Q(T)$ $(\text{The explicit expression of
}Q(T)\text{ is given in }\eqref{spectral polynomial in noneven case})$. In
particular,
\[
\left\{  T\in\sigma_{j}|\,T\text{ is an endpoint}\right\}  =\left\{
T\in\mathbb{C}|\,Q(T)=0\right\}  \setminus\{0\}.
\]


\end{itemize}

Here $T=0$ might be a double zero of $Q(T)$ in the degenerate case.
\end{theorem}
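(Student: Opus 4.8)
The plan is to leverage the monodromy equivalence of Theorem \ref{Main Theorem1}, which identifies the monodromy data of the generalized Lam\'{e}-type equation \eqref{GLE 4}$_{p}$ (as a function of $T$) with that of the classical Lam\'{e} equation \eqref{Lame 1} (as a function of $\tilde{B}$) via the two-to-one map $\tilde{B}=T^{2}-2\wp(p)$. The key observation is that the spectral sets $\sigma_{j}$ depend only on the Hill discriminant $\Delta_{j}(T)=\tfrac12\operatorname{tr}M_{j}(T)$, which by the equivalence satisfies
\begin{equation}
\Delta_{j}(T)=\tilde{\Delta}_{j}(T^{2}-2\wp(p)),
\end{equation}
where $\tilde{\Delta}_{j}$ is the Hill discriminant of the classical Lam\'{e} equation. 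Since the classical Lam\'{e} potential $2\wp(z)+\tilde{B}$ is a finite-gap (indeed Darboux--Treibich--Verdier) potential by the cited Example, its spectral structure in the $\tilde{B}$-plane is completely understood: $\tilde{\Delta}_{j}^{2}-1$ is (up to normalization) the classical spectral polynomial in $\tilde{B}$, with the arcs and finitely many endpoints governed by its zeros. I would then transport this structure back to the $T$-plane through the quadratic substitution.

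First I would make the dependence $\Delta_{j}(T)=\tilde{\Delta}_{j}(T^{2}-2\wp(p))$ precise, noting that $\Delta_{j}$ is entire in $T$ (the monodromy matrices depend analytically on the parameter once apparentness holds, and $B$ is given by \eqref{B condition 2}). Next I would define the spectral polynomial $Q(T)$ via \eqref{spectral polynomial in noneven case} and relate it to the classical Lam\'{e} spectral polynomial $\tilde{Q}(\tilde{B})$ under $\tilde{B}=T^{2}-2\wp(p)$; concretely $Q(T)$ should be (a constant multiple of) $\tilde{Q}(T^{2}-2\wp(p))$, so that $Q(T)=0$ iff $T^{2}-2\wp(p)$ is a classical endpoint. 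The factor of $2$ in the degree of $Q$ versus $\tilde{Q}$, together with the possibility that $\tilde{B}=-2\wp(p)$ is itself a classical endpoint (forcing $T=0$ to be a double zero of $Q$), accounts for the degenerate-case remark. Then, using Lemma \ref{lemma: odd degree }, a point $T_{0}\neq0$ is an endpoint of $\sigma_{j}$ iff $d_{j}(T_{0})=\operatorname{ord}_{T_{0}}(\Delta_{j}^{2}-1)$ is odd; by the chain rule through the substitution, since $T^{2}-2\wp(p)$ is a local biholomorphism away from $T=0$, the order of vanishing at $T_{0}\neq0$ equals the order of $\tilde{\Delta}_{j}^{2}-1$ at the image point, so endpoints of $\sigma_{j}$ in the $T$-plane are exactly the preimages of classical endpoints, i.e.\ the zeros of $Q(T)$ other than $0$. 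This establishes part (ii).

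For part (i), I would pull back the known arc decomposition of the classical spectrum through the map $T\mapsto T^{2}-2\wp(p)$. Each classical arc and semi-infinite arc lifts to (one or two) analytic arcs in the $T$-plane, and the symmetry $\sigma_{j}=-\sigma_{j}$ follows immediately from the evenness of the substitution, consistent with the monodromy symmetry \eqref{Monodromy symmetry3}--\eqref{Monodromy symmetry4} under $T\mapsto -T$. Counting the resulting bounded and semi-infinite arcs and verifying that their number lies in the stated ranges $g_{j}\in\{1,2,3\}$ and $\tilde{g}_{j}\in\{1,2\}$ reduces to the known arc count for the classical $n=1$ Lam\'{e} spectrum, doubled and adjusted at the branch point $T=0$. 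The main obstacle I anticipate is the careful bookkeeping at $T=0$: because the substitution is two-to-one with a ramification point there, I must analyze separately whether $\tilde{B}=-2\wp(p)$ is a classical endpoint (the degenerate case, where two lifted semiarcs merge and $T=0$ becomes a double zero of $Q$) or an interior point of the classical spectrum, since this determines how the arcs from the upper and lower branches are glued and hence the precise values of $g_{j}$ and $\tilde{g}_{j}$.
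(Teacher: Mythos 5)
Your proposal is correct and follows essentially the same route as the paper's proof: the identity $\Delta_{j}(T)=\tilde{\Delta}_{j}(T^{2}-2\wp(p))$ from Theorem \ref{Main Theorem1}, the resulting pullback description $\sigma_{j}=\{T\,|\,T^{2}-2\wp(p)\in\tilde{\sigma}_{j}\}$, the odd-order endpoint criterion of Lemma \ref{lemma: odd degree } combined with the order-doubling at the ramification point $T=0$ (Lemma \ref{lemma : two degree}, yielding Theorem \ref{thm :endpoints of spectrum}), and Theorem C for the classical arc structure and symmetry. One parenthetical slip worth noting: $\tilde{\Delta}_{j}^{2}-1$ is an entire function with infinitely many zeros (all but finitely many of even order), not the spectral polynomial up to normalization; since your argument only uses that its odd-order zeros coincide with the zeros of the classical spectral polynomial (which is exactly what Theorem C supplies), this does not affect the proof.
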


We first recall the spectral theory of classical Lam\'{e} equation. Consider
the following Hill's equation
\begin{equation}
y^{\prime\prime}(x)=(2\wp(x+x_{0})+\tilde{B})y(x),\quad x\in\mathbb{R},
\label{lame on R}%
\end{equation}
where $\wp(x+x_{0})$ is continuous on both $\mathbb{R}$ and $\tau\mathbb{R}$.
Let $L(x)$ be a fundamental system of solution of \eqref{lame on R}. Then also
$L(x+\omega_{j})$ is a fundamental system, so there exists a monodromy matrix
$M_{j}(\tilde{B})$ with
\[
L(x+\omega_{j})=M_{j}(\tilde{B})L(x),\quad j=1,2.
\]

The Hill discriminant for each $j$ of \eqref{lame on R} is defined as
\[
\tilde{\Delta}_{j}(\tilde{B}):=\frac{1}{2}\mathrm{tr}M_{j}(\tilde{B}),
\]
which is an entire function in $\tilde{B}$. The asssociated
$\mathit{spectral\ set}$ of \eqref{lame on R} is then
\begin{equation}
\tilde{\sigma}_{j}:=\tilde{\Delta}_{j}^{-1}([-1,1])=\{\tilde{B}\in
\mathbb{C}|\,-1\leqslant\tilde{\Delta}_{j}(\tilde{B})\leqslant1\},\quad j=1,2.
\label{spectrum lame}%
\end{equation}

It is known that the Lam\'{e} potential $2\wp(z)+\tilde{B}$ is an
$\mathit{algebro}$-$\mathit{geometric}$ $\mathit{finite}$-$\mathit{gap}$
potential, equivalently a solution of the stationary KdV hierarchy. Its
spectral polynomial $Q(\tilde{B})$ is
\[
Q(\tilde{B})=-4\prod_{j=1}^{3}(\tilde{B}-e_{j}).
\]
The following fundamental result was proved in \cite{GW1}.

\noindent\textbf{Theorem C.}\textit{ (\cite{GW1})} The spectral set
$\tilde{\sigma}_{j}$ consists of one bounded spectral arc $\tilde{\sigma
}_{j,1}$ and one semi-infinite arc $\tilde{\sigma}_{j,\infty}$ which tends to
$-\infty+\langle q_{j}\rangle$, with
\[
\langle q_{j}\rangle=\frac{1}{\omega_{j}}\int_{\tilde{x}_{0}}^{\tilde{x}%
_{0}+\omega_{j}}2\wp(x)dx.
\]
\textit{That is,}%
\[
\tilde{\sigma}_{j}=\tilde{\sigma}_{j,1}\cup\tilde{\sigma}_{j,\infty},
\]
\textit{and the finite endpoints of these arcs coincide precisely with the
zeros of the spectral polynomial }$Q(\tilde{B})$\textit{.\medskip}

We now turn to the investigation of the finite-gap property of the generalized
Lam\'{e}-type potential $q(z;T)$.

\begin{lemma}
\label{spectrum +-symmetric} For potential $q(z;T)$, the Hill discriminant
satisfies $\Delta_{j}(T)=\Delta_{j}(-T)$. Consequently, $\sigma_{j}%
=-\sigma_{j}$ is symmetric with respect to the origin.
\end{lemma}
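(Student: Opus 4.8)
The plan is to derive the symmetry of the Hill discriminant from a parity relation for the potential, and then read off $\sigma_j=-\sigma_j$ directly from the Floquet characterization \eqref{spectrum lame-type}.

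The first step I would record is the identity $q(-z;T)=q(z;-T)$. Since $\wp$ is even and $\zeta$ is odd, under $z\mapsto -z$ the terms $2\wp(z)$ and $\tfrac{3}{4}(\wp(z+p)+\wp(z-p))$ are unchanged, and the skew term $-\tfrac{\wp''(p)}{4\wp'(p)}(\zeta(z+p)-\zeta(z-p))$ is also unchanged (both $\zeta(z\pm p)$ flip sign and swap), whereas the factor $\zeta(z+p)+\zeta(z-p)-2\zeta(z)$ multiplying $T$ changes sign. As the constant $B$ in \eqref{B condition 2} depends on $T$ only through $T^{2}$, we have $B(T)=B(-T)$, and the identity follows. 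This is precisely the origin of the symmetry $T\mapsto-T$ already noted in \eqref{Monodromy symmetry3}--\eqref{Monodromy symmetry4}.

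Next I would transport this to the monodromy level. If $Y(z)$ is a fundamental system of \eqref{finite gpa ode} for the parameter $-T$, then $V(z):=Y(-z)$ satisfies $V''(z)=q(-z;-T)V(z)=q(z;T)V(z)$, i.e.\ $V$ is a fundamental system for the parameter $T$. The automorphism $z\mapsto -z$ of $E_\tau$ reverses the orientation of the generating loop $\gamma_j$ attached to $\omega_j$, so continuing $V$ once around $\gamma_j$ amounts to continuing $Y$ once around $\gamma_j^{-1}$; tracking base points this yields $V(z+\omega_j)=Y(-z-\omega_j)=M_j(-T)^{-1}V(z)$. Hence the monodromy matrix of the $T$-equation relative to $V$ is $M_j(-T)^{-1}$, which is conjugate to the matrix $M_j(T)$ attached to any other fundamental system. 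Taking traces and using $\operatorname{tr}(M^{-1})=\operatorname{tr}(M)$ for $M\in SL(2,\mathbb{C})$, I obtain $\Delta_j(T)=\tfrac{1}{2}\operatorname{tr}M_j(T)=\tfrac{1}{2}\operatorname{tr}M_j(-T)=\Delta_j(-T)$ for $j=1,2$.

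The consequence is then immediate: by \eqref{spectrum lame-type}, $T\in\sigma_j$ iff $-1\le\Delta_j(T)\le 1$ iff $-1\le\Delta_j(-T)\le 1$ iff $-T\in\sigma_j$, so $\sigma_j=-\sigma_j$. The only point needing genuine care is the orientation bookkeeping in the middle step, namely confirming that $z\mapsto -z$ sends $\gamma_j$ to $\gamma_j^{-1}$ and thereby replaces $M_j(T)$ by $M_j(-T)^{-1}$ up to conjugacy; once this is pinned down, the passage to traces simultaneously removes the conjugation and the inverse, and the rest is routine.
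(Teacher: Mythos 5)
Your proof is correct and follows essentially the same route as the paper: both rest on the identity $q(-z;T)=q(z;-T)$, transport it to the monodromy level via $z\mapsto -z$ (yielding the inverse monodromy matrix up to the choice of fundamental system), and conclude with $\operatorname{tr}M^{-1}=\operatorname{tr}M$ for $M\in SL(2,\mathbb{C})$ together with the Floquet characterization of $\sigma_j$. The only difference is cosmetic --- you spell out the parity check on each term of the potential and the orientation bookkeeping, which the paper leaves implicit.
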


\begin{proof}
Firstly we observe that $q(-z;T)=q(z;-T)$. Let $F_{0}(z)$ be a fundamental
system of solution to equation $y^{\prime\prime}(z)=q(z;T)y(z)$ such that
$F_{0}(z+\omega_{j})=M_{j}(T)F_{0}(z)$, then $F_{1}(z):=F_{0}(-z)$ solves
$y^{\prime\prime}(z)=q(z;-T)y(z)$ and satisfies $F_{1}(z+\omega_{j}%
)=M_{j}(T)^{-1} F_{1}(z)$. Since $M_{j}(T)\in\mathrm{SL}(2,\mathbb{C})$, then
we have
\[
\Delta_{j}(-T)=\frac{1}{2}\mathrm{tr}\,M_{j}(-T)=\frac{1}{2}\mathrm{tr}%
\,M_{j}(T)^{-1}=\frac{1}{2}\mathrm{tr}\,M_{j}(T)=\Delta_{j}(T).
\]

\end{proof}

Recall the definition of $d_{j}(T)$ from \eqref{degree of spectrum} and
define
\[
\tilde{d}_{j}(\tilde{B}):=\mathrm{ord}_{\tilde{B}}(\tilde{\Delta}_{j}%
^{2}(\cdot)-1)
\]
respectively. The following lemma establishes the relationship between
$d_{j}(T)$ and $\tilde{d}_{j}(\tilde{B})$.

\begin{lemma}
\label{lemma : two degree} Adapt notations above. Assume $T$ and $\tilde{B}%
\in\mathbb{C}$ satisfy \eqref{correspondence}. Then
\begin{equation}
\label{degree's relation}d_{j}(T)=
\begin{cases}
\tilde{d}_{j}(\tilde{B})\quad\text{if }T\neq0;\\
2\tilde{d}_{j}(\tilde{B})\quad\text{if }T=0.
\end{cases}
\end{equation}

\end{lemma}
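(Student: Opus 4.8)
The plan is to reduce the statement to a single functional identity between the two Hill discriminants and then read off the orders of vanishing from the ramification of the quadratic substitution \eqref{correspondence}.

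First I would record the input furnished by Theorem \ref{Main Theorem1} and Corollary \ref{Main Corollary1}. For every $T \in \mathbb{C}$ the equation \eqref{GLE 4}$_p$ (with $B$ given by \eqref{B condition 2}) has abelian, hence well-defined, monodromy data; by the equivalence, the classical Lam\'e equation \eqref{Lame 1} with $\tilde B = T^2 - 2\wp(p)$ realizes the same data and its monodromy matrices coincide with $M_j(p,T)$. Taking traces\,---\,which are independent of the chosen fundamental system\,---\,gives the identity
\[
\Delta_j(T) = \tilde\Delta_j\!\left(T^2 - 2\wp(p)\right), \qquad T \in \mathbb{C},\ j = 1,2.
\]
The right-hand side depends on $T$ only through $T^2$, consistent with Lemma \ref{spectrum +-symmetric}. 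Writing $\phi(T) := T^2 - 2\wp(p)$ and subtracting $1$ after squaring, this becomes the pullback relation
\[
\Delta_j^2(T) - 1 = \big(\tilde\Delta_j^2 - 1\big)\circ \phi(T).
\]

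Second, I would compute orders of vanishing through $\phi$. Since $\Delta_j$ is holomorphic in $T$ and $\tilde\Delta_j$ is entire in $\tilde B$, both $\Delta_j^2 - 1$ and $\tilde\Delta_j^2 - 1$ are analytic, so $d_j$ and $\tilde d_j$ are genuine orders of vanishing. Under a pullback the order multiplies by the local ramification index of $\phi$. As $\phi'(T) = 2T$, the map $\phi$ is a local biholomorphism at each $T_0 \neq 0$ and ramifies with index exactly $2$ at $T_0 = 0$, where $\phi(T) - \phi(0) = T^2$. Hence for $T \neq 0$ one gets $d_j(T) = \tilde d_j(\tilde B)$, while at $T = 0$, substituting $\tilde B = -2\wp(p) + T^2$ into a local expansion $\tilde\Delta_j^2(\tilde B) - 1 = c(\tilde B + 2\wp(p))^{m}\big(1 + O(\tilde B + 2\wp(p))\big)$ with $m = \tilde d_j(-2\wp(p))$ yields $cT^{2m}(1 + O(T^2))$, so $d_j(0) = 2m = 2\tilde d_j(\tilde B)$. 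This is precisely \eqref{degree's relation}.

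The lemma is essentially a formal consequence of the monodromy equivalence once the discriminant identity is in hand, so the only point demanding care\,---\,and the true content of the statement\,---\,is the branch-point bookkeeping at $T = 0$: the factor of $2$ is exactly the ramification index of the two-to-one correspondence \eqref{correspondence} at its unique critical value $\tilde B = -2\wp(p)$. The one thing I would double-check is that $\Delta_j(T)$ is indeed holomorphic across $T = 0$ (so that $d_j(0)$ is finite), which follows from the holomorphic dependence of the Fuchsian equation \eqref{GLE 4}$_p$ on $T$ with its singular points $\{0, \pm p\}$ and local exponents held fixed.
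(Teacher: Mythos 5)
Your proposal is correct and follows essentially the same route as the paper: both derive the trace identity $\Delta_{j}(T)=\tilde{\Delta}_{j}(T^{2}-2\wp(p))$ from Theorem \ref{Main Theorem1} and then extract \eqref{degree's relation} from a local expansion under the quadratic substitution, the factor $2$ coming from the ramification of $T\mapsto T^{2}-2\wp(p)$ at $T=0$. Your phrasing via the pullback of $\Delta_{j}^{2}-1$ is a marginally cleaner bookkeeping (it matches the definition \eqref{degree of spectrum} directly, where the paper expands $\Delta_{j}(T)-\Delta_{j}(T_{0})$), but the argument is the same.
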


\begin{proof}
By Theorem \ref{Main Theorem1}, we have
\begin{equation}
\label{Delta T and Delta B}\Delta_{j}(T) = \frac{1}{2}\mathrm{tr}M_{j}(T) =
\frac{1}{2}\mathrm{tr}M_{j}(\tilde{B}) = \tilde{\Delta}_{j}(\tilde{B}).
\end{equation}
Let $\tilde{B}_{0}=T_{0}^{2}-2\wp(p)$. Consider the local behavior of
$\tilde{\Delta}_{j}$ at $\tilde{B}_{0}$:
\[
\tilde{\Delta}_{j}(\tilde{B})-\tilde{\Delta}_{j}(\tilde{B}_{0})=l(\tilde
{B}-\tilde{B}_{0})^{k}(1+O(\tilde{B}-\tilde{B}_{0})),\quad l\neq
0,\ k\in\mathbb{N}.
\]
Using \eqref{Delta T and Delta B}, we obtain
\begin{align*}
\Delta_{j}(T)-\Delta_{j}(T_{0})  &  =l(T^{2}-T_{0}^{2})^{k}+\cdots\\
&  =
\begin{cases}
2lT_{0}(T-T_{0})^{k}(1+O(T-T_{0}))\quad & \text{if }T_{0}\neq0;\\
lT^{2k}(1+O(T)) & \text{if }T_{0}=0,
\end{cases}
\end{align*}
which implies \eqref{degree's relation} holds.
\end{proof}

Recall the spectral polynomial $Q(T)$ in
\eqref{spectral polynomial in noneven case}:
\[
Q(T)=-4\prod_{k=1}^{3}\left(  T^{2}-2\wp(p)-e_{k}\right)  .
\]
The discriminant of $Q(T)$ is given by
\[
D=\prod_{k=1}^{3}\left(  2\wp(p)+e_{k}\right)  .
\]
Thus, $Q(T)$ admits a multiple root at $T=0$ if and only if $\wp
(p)=-\frac{e_{k}}{2}$ for some $k\in\{1,2,3\}$. These observations lead
directly to the following result on the endpoints of $\sigma_{j}$.

\begin{theorem}
\label{thm :endpoints of spectrum} All endpoints of $\sigma_{j}$ are zeros of
$Q(T)$. In particular,

(i) If $Q(0)\not =0$, then%
\[
\left\{  T\in\sigma_{j}|\,T\text{ is a finite endpoint}\right\}  =\left\{
T\in\mathbb{C}|\,Q(T)=0\right\}  .
\]

(ii) If $Q(0)=0$, then%
\[
\left\{  T\in\sigma_{j}|\,T\text{ is a finite endpoint}\right\}  =\left\{
T\in\mathbb{C}|\,Q(T)=0\right\}  \setminus\{0\}
\]
Moreover, every endpoint $T$ of $\sigma_{j}$ satisfies $d_{j}(T)=1$.
\end{theorem}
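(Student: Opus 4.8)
The plan is to transport the endpoint structure of the classical Lam\'{e} equation, supplied by Theorem C, to the generalized Lam\'{e}-type potential $q(z;T)$ through the two degree lemmas above, using the correspondence \eqref{correspondence} as the bridge.

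First I would recast Theorem C in terms of the order $\tilde{d}_{j}$. By Lemma \ref{lemma: odd degree }, whose proof applies verbatim to the discriminant $\tilde{\Delta}_{j}$ of \eqref{lame on R}, a point $\tilde{B}_{0}$ is an endpoint of $\tilde{\sigma}_{j}$ (equivalently $\tilde{d}_{j}(\tilde{B}_{0})$ is odd) precisely when it is a zero of $Q$, i.e. $\tilde{B}_{0}\in\{e_{1},e_{2},e_{3}\}$, by Theorem C. Since $e_{1},e_{2},e_{3}$ are pairwise distinct (as is always the case for $\tau\in\mathbb{H}$), they are simple zeros of the cubic $Q$. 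Moreover, Theorem C writes $\tilde{\sigma}_{j}$ as one bounded arc (with two finite endpoints) together with one semi-infinite arc (with one finite endpoint), so its three finite endpoints are exactly $e_{1},e_{2},e_{3}$, each the terminus of a single semiarc; by the semiarc count established in the proof of Lemma \ref{lemma: odd degree }, this forces $\tilde{d}_{j}(e_{k})=1$ for $k=1,2,3$.

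Next I would classify the endpoints of $\sigma_{j}$ for $q(z;T)$ using Lemma \ref{lemma : two degree}. For $T_{0}=0$ that lemma gives $d_{j}(0)=2\tilde{d}_{j}(-2\wp(p))$, which is even, so Lemma \ref{lemma: odd degree } shows $T=0$ is never an endpoint of $\sigma_{j}$, whether or not $Q(0)=0$. For $T_{0}\neq0$, put $\tilde{B}_{0}=T_{0}^{2}-2\wp(p)$ as in \eqref{correspondence}; then $d_{j}(T_{0})=\tilde{d}_{j}(\tilde{B}_{0})$, so $T_{0}$ is an endpoint of $\sigma_{j}$ if and only if $\tilde{d}_{j}(\tilde{B}_{0})$ is odd, if and only if $\tilde{B}_{0}\in\{e_{1},e_{2},e_{3}\}$, which by the explicit form of $Q$ is exactly $Q(T_{0})=0$. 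This already shows every endpoint of $\sigma_{j}$ is a zero of $Q(T)$; and since the corresponding $\tilde{B}_{0}$ is then one of the simple zeros $e_{k}$, it yields $d_{j}(T_{0})=\tilde{d}_{j}(e_{k})=1$ at every endpoint.

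Finally I would assemble the two cases. If $Q(0)\neq0$ then $0$ is not a root of $Q$, so every root is nonzero and hence an endpoint, giving (i). If $Q(0)=0$, equivalently $\wp(p)=-e_{k}/2$ for some $k$ (so that $D=0$ and $0$ is a double root of $Q$), then $0$ is a root of $Q$ that, by the previous step, is not an endpoint; discarding it yields (ii), while each surviving nonzero endpoint still satisfies $d_{j}(T)=1$. I expect the delicate point to be exactly the behavior at the origin: the correspondence \eqref{correspondence} is two-to-one and ramifies over $T=0$, which is why the algebraic double root of $Q$ there produces no spectral endpoint. The degree-doubling $d_{j}(0)=2\tilde{d}_{j}$ in Lemma \ref{lemma : two degree} is precisely the analytic trace of this ramification, and carrying it through is what separates cases (i) and (ii).
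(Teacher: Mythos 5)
Your proposal is correct and follows essentially the same route as the paper's proof: both transport the endpoint structure of the classical Lam\'{e} spectrum (Theorem C) to $\sigma_{j}$ via Lemma \ref{lemma: odd degree } and the degree relation of Lemma \ref{lemma : two degree}, with the ramification of $T\mapsto T^{2}-2\wp(p)$ at $T=0$ accounting for the excluded origin in case (ii). Your two additions\,---\,explicitly ruling out $T_{0}=0$ as an endpoint in the forward direction (the paper leaves this implicit, since oddness of $d_{j}(T_{0})$ forces $T_{0}\neq0$) and deriving $\tilde{d}_{j}(e_{k})=1$ from the semiarc count rather than citing Theorem C directly\,---\,are merely careful elaborations of steps the paper states without detail.
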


\begin{proof}
From Theorem \ref{Main Theorem1}, we have

Suppose $T_{0}$ is an endpoint of $\sigma_{j}$. We claim that $\tilde{B}%
_{0}=T_{0}^{2}-2\wp(p)$ is also an endpoint of $\tilde{\sigma}_{j}$. By Lemma
\ref{lemma: odd degree } and Lemma \ref{lemma : two degree}, we have
$d_{j}(T_{0})$ is an odd integer and
\[
\tilde{d}_{j}(\tilde{B}_{0})=d_{j}(T_{0}).
\]
Hence, Lemma \ref{lemma: odd degree } implies that $\tilde{B}_{0}$ is an
endpoint of $\tilde{\sigma}_{j}$. By Theorem C, it follows that $\tilde{B}%
_{0}\in\{e_{1},e_{2},e_{3}\}$, which is equivalent to
\[
T_{0}^{2}\in\{2\wp(p)+e_{j}|\,j=1,2,3\}.
\]
Therefore, $Q(T_{0})=0$ by \eqref{spectral polynomial in noneven case}.

Conversely, suppose $T$ is a zero of $Q(T)$. Then $\tilde{B}=T^{2}-2\wp(p)$
satisfies $\tilde{Q}(\tilde{B})=0$. By Theorem C, we have $\tilde{d}%
_{j}(\tilde{B})=1$.

(i) If $T\neq0$, then Lemma \ref{lemma : two degree} gives
\[
d_{j}(T)=\tilde{d}_{j}(\tilde{B})=1,
\]
so by Lemma \ref{lemma: odd degree }, $T$ is an endpoint of $\sigma_{j}$.
\textit{\medskip}

(ii)If $T=0$, then Lemma \ref{lemma : two degree} implies
\[
d_{j}(T)=2\tilde{d}_{j}(\tilde{B})=2,
\]
which is even, and hence $T$ is not an endpoint of $\sigma_{j}$ by Lemma
\ref{lemma: odd degree }.

This completes the proof.
\end{proof}

\begin{proof}
[Proof of Theorem \ref{thm finite gap}]Assume $T$ and $\tilde{B}$ are related
by \eqref{correspondence}. Combining this with \eqref{spectrum lame-type} and
\eqref{spectrum lame}, we obtain
\begin{equation}
\label{sigma-describe}\sigma_{j}=\Delta_{j}^{-1}([-1,1])=\{T|\, T^{2}%
-2\wp(p)\in\tilde{\sigma}_{j}\}.
\end{equation}

Therefore, Theorem \ref{thm finite gap} follows directly from Theorem C ,
Lemma \ref{spectrum +-symmetric} and Theorem \ref{thm :endpoints of spectrum}.
\end{proof}

\subsection{Deformation of Spectral Sets as \textit{p} Varies}


Let $\tau\in i\mathbb{R}_{>0}$. Assume
\[
p\in(0,\frac{\tau}{2}]\cup\lbrack\frac{\tau}{2},\frac{1+\tau}{2}]\cup
\lbrack\frac{1+\tau}{2},\frac{1}{2}]\cup\lbrack\frac{1}{2},0),
\]
which is equivalent to requiring$\wp(p)\in(-\infty,+\infty)$. In this
subsection, we provide a complete description of the spectral sets $\sigma
_{1}$ and $\sigma_{2}$ for generic $p$. In particular, we prove that, as
$\wp(p)$ varies along the real axis, $\sigma_{j}$ exhibits exactly seven
distinct types of graphs. The main theorems are stated below.


\begin{figure}[htbp] 
		\centering 
		\includegraphics[width=0.8\textwidth]{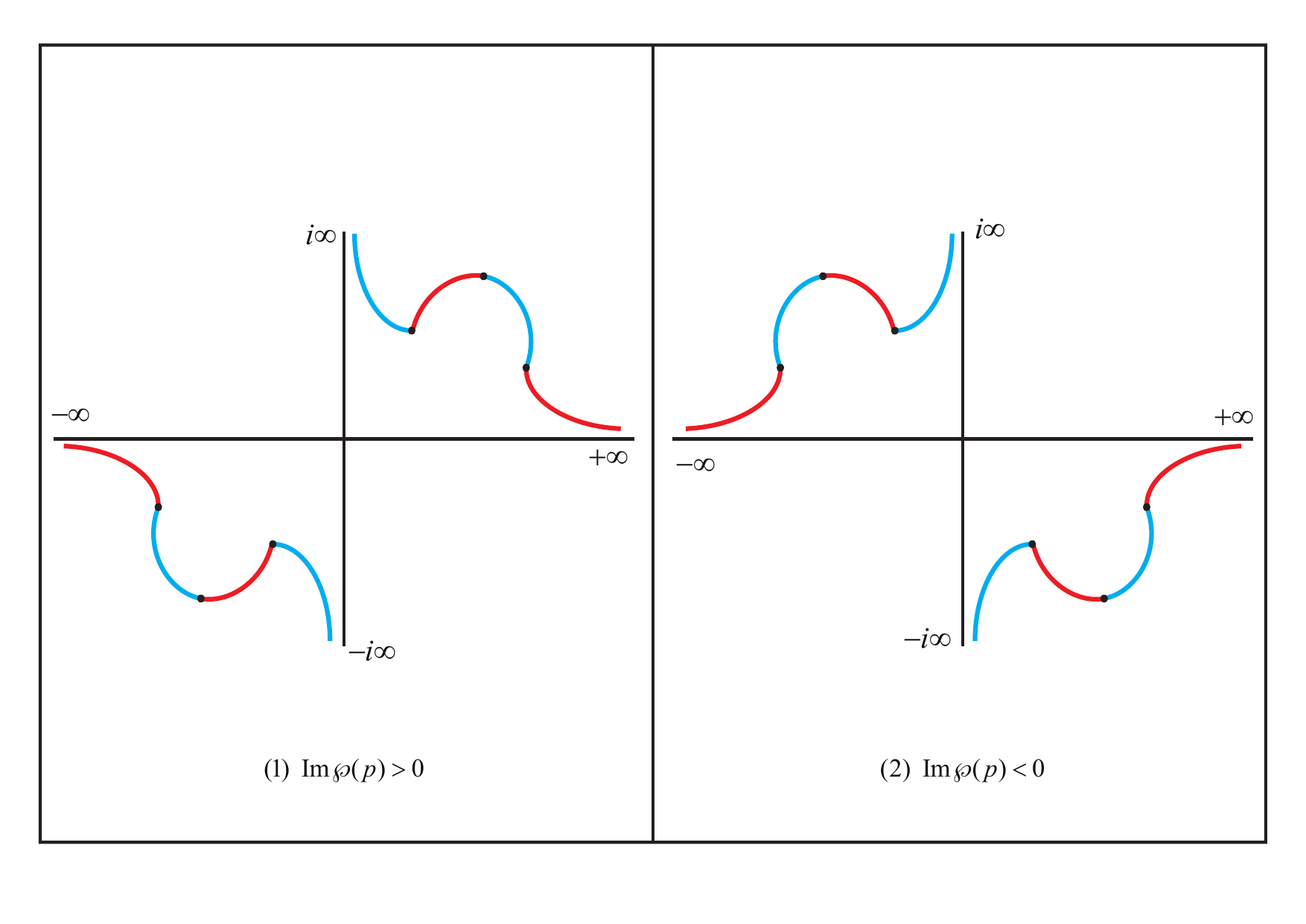} 
		\caption{Classification graphs of  of the spectral sets $\sigma_{j}$ as $\wp(p)\notin\mathbb{R}$. The blue curves denote $\sigma_{1}$ and the red curves denote $\sigma_{2}$. The dark points denote zeros of spectral polynomial $Q(T)$. }
	\label{fig:graphs of generic p} 
		
\end{figure}

\begin{theorem}
\label{thm: spectrum of lame-type, generic p} Fix $\tau\in i\mathbb{R}_{>0}$.
Suppose $\wp(p)\notin\mathbb{R}$. Then the spectral sets $\sigma_{j}$
decompose as
\[
\sigma_{j}=\left(  \bigcup_{k=1}^{2}\sigma_{j,k}\right)  \cup\left(
\bigcup_{k=1}^{2}\sigma_{j,k,\infty}\right)  ,\quad j=1,2,
\]
where each $\sigma_{j,k}$ is a bounded simple analytic arc and each
$\sigma_{j,k,\infty}$ is a semi-infinite simple analytic arc. Moreover, the
following properties hold:

\begin{itemize}
\item For $j=1,2,$ the spectral sets $\sigma_{j}$ do not intersect either the
real or the imaginary axis, namely,
\[
\sigma_{j}\cap\mathbb{R}=\sigma_{j}\cap i\mathbb{R}=\emptyset;
\]

\item Symmetry: $\sigma_{j,1}=-\sigma_{j,2}$, $\sigma_{j,1,\infty}%
=-\sigma_{j,2,\infty}$;

\item Asymptotics: the infinite endpoints of $\sigma_{1,k,\infty}$ tend to
$\pm i\infty$, while the infinite endpoints of $\sigma_{2,k,\infty}$ tend to
$\pm\infty$, respectively.
\end{itemize}

More precisely, let $\sigma_{j}=\sigma_{j}^{\prime}\cup-\sigma_{j}^{\prime}$,
where $\mathrm{Im}\sigma_{j}^{\prime}>0$. Then

\begin{itemize}
\item[(1)] If $\mathrm{Im}\wp(p)>0$, then we have $\mathrm{Re}\sigma
_{j}^{\prime}>0$;

\item[(2)] If $\mathrm{Im}\wp(p)<0$, then we have $\mathrm{Re}\sigma
_{j}^{\prime}<0$.
\end{itemize}
\end{theorem}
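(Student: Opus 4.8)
The plan is to deduce the entire statement from the classical Lam\'{e} case by pulling back along the branched double cover $T\mapsto \tilde B=T^{2}-2\wp(p)$, exactly as encoded in the identity \eqref{sigma-describe}, namely $\sigma_{j}=\{T\mid T^{2}-2\wp(p)\in\tilde\sigma_{j}\}$. Thus, once the geometry of $\tilde\sigma_{j}$ is pinned down for $\tau\in i\mathbb R_{>0}$, every assertion about $\sigma_{j}$ --- the number and nature of its arcs, its symmetry, its avoidance of the axes, the asymptotic directions, and the sign refinements (1)--(2) --- becomes a matter of tracking the two branches of $T=\pm\sqrt{\tilde B+2\wp(p)}$.

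First I would record the precise shape of $\tilde\sigma_{1}$ and $\tilde\sigma_{2}$ for a rectangular lattice. By Theorem C each $\tilde\sigma_{j}$ is the union of one bounded arc and one semi-infinite arc, with finite endpoints among the (now real) branch values $e_{1},e_{2},e_{3}$. For $\tau\in i\mathbb R_{>0}$ the lattice is conjugation-invariant, so along $\omega_{1}=1$ the potential $2\wp(z+\tfrac{\tau}{2})$ is real on $\mathbb R$, and along $\omega_{2}=\tau$ the potential is real on $\tau\mathbb R$; in both cases $\tilde\Delta_{j}(\bar{\tilde B})=\overline{\tilde\Delta_{j}(\tilde B)}$, and Floquet theory forces $\tilde\sigma_{j}\subset\mathbb R$. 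The crucial point is the orientation of the two semi-infinite arcs: writing the equation along $\omega_{j}\mathbb R$ in the rescaled variable introduces the factor $\omega_{j}^{2}$, with $\omega_{1}^{2}=1>0$ but $\omega_{2}^{2}=\tau^{2}<0$. A large-$\tilde B$ (WKB) analysis of the resulting real Hill problem then shows that the oscillatory regime is $\tilde B\to-\infty$ for $j=1$ but $\tilde B\to+\infty$ for $j=2$; hence $\tilde\sigma_{1,\infty}\to-\infty$ and $\tilde\sigma_{2,\infty}\to+\infty$ on the real axis. I expect this sign reversal --- reconciling the ``$-\infty+\langle q_{j}\rangle$'' of Theorem C with the $\tau^{2}<0$ rescaling --- to be the main obstacle, since it is exactly what produces the qualitatively different asymptotics $\pm i\infty$ versus $\pm\infty$ in the conclusion.

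With $\tilde\sigma_{j}\subset\mathbb R$ in hand I would carry out the pullback. Set $c=2\wp(p)$; the hypothesis $\wp(p)\notin\mathbb R$ gives $\operatorname{Im}c\neq0$, so for every $\tilde B\in\tilde\sigma_{j}$ the point $w=\tilde B+c$ lies on the horizontal line $\operatorname{Im}w=\operatorname{Im}c$, which misses $0$ and the entire real axis. Consequently $w$ avoids the branch point and branch cut of the square root, $T=\pm\sqrt{w}$ splits into two globally defined analytic branches, and the branch point $T=0$ is never attained (consistent with $Q(0)\neq0$). Each of the two components of $\tilde\sigma_{j}$ therefore lifts to two disjoint simple analytic arcs, doubling the count to $g_{j}=\tilde g_{j}=2$; the two lifts are interchanged by $T\mapsto-T$, giving $\sigma_{j,1}=-\sigma_{j,2}$ and $\sigma_{j,1,\infty}=-\sigma_{j,2,\infty}$ (this also re-derives Lemma \ref{spectrum +-symmetric}). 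Since $w\notin\mathbb R$, neither branch of $\sqrt w$ is real or purely imaginary, so $\sigma_{j}\cap\mathbb R=\sigma_{j}\cap i\mathbb R=\emptyset$. For the asymptotics, as $\tilde B\to-\infty$ along $\tilde\sigma_{1,\infty}$ one has $\arg(\tilde B+c)\to\pm\pi$, whence the two branches $\pm\sqrt{\tilde B+c}$ tend to $\pm i\infty$; as $\tilde B\to+\infty$ along $\tilde\sigma_{2,\infty}$ one has $\arg(\tilde B+c)\to0$, whence the branches tend to $\pm\infty$.

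Finally the refinements (1)--(2) come from reading off the quadrant of the principal root. If $\operatorname{Im}\wp(p)>0$ then $\operatorname{Im}w>0$, so $w$ lies in the upper half-plane and its principal root $\sqrt w$ lies in the open first quadrant ($\operatorname{Re}>0,\ \operatorname{Im}>0$) while $-\sqrt w$ lies in the third; hence the part $\sigma_{j}'$ with $\operatorname{Im}\sigma_{j}'>0$ satisfies $\operatorname{Re}\sigma_{j}'>0$. If $\operatorname{Im}\wp(p)<0$ then $w$ lies in the lower half-plane, $\sqrt w$ is in the fourth quadrant and $-\sqrt w$ in the second, so the upper piece $\sigma_{j}'$ has $\operatorname{Re}\sigma_{j}'<0$. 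Assembling the arc count, the symmetry, the axis-avoidance, the two asymptotic regimes, and the sign rule completes the proof; apart from the orientation subtlety flagged above, every step is elementary branch bookkeeping built on \eqref{sigma-describe} and Theorem C.
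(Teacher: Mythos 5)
Your proposal is correct, and its backbone---pulling $\tilde{\sigma}_{j}$ back through \eqref{sigma-describe} along the two branches of $T=\pm\sqrt{\tilde{B}+2\wp(p)\,}$---is exactly the paper's strategy; the difference is where the input about $\tilde{\sigma}_{j}$ comes from. The paper simply quotes Theorem D of Chen--Lin for the explicit real band structure $\tilde{\sigma}_{1}=(-\infty,e_{2}]\cup[e_{3},e_{1}]$, $\tilde{\sigma}_{2}=[e_{2},e_{3}]\cup[e_{1},+\infty)$ on rectangular tori, supplemented by Lemma \ref{spectrum +-symmetric} and the endpoint count of Theorem \ref{thm :endpoints of spectrum} (via $\wp(p)\notin\mathbb{R}\Rightarrow Q(0)\neq0$, so $d_{j}=1$ at all six endpoints), whereas you re-derive the only two facts actually needed---$\tilde{\sigma}_{j}\subset\mathbb{R}$ and the opposite orientations of the two unbounded arcs---from self-adjointness and the $\omega_{2}^{2}=\tau^{2}<0$ rescaling, on top of Theorem C. This buys self-containedness, your branch bookkeeping makes the $d_{j}$-machinery of Theorem \ref{thm :endpoints of spectrum} dispensable (the lifted arcs and their endpoints come for free from the two analytic branches of $\sqrt{w}$ on a half-plane), and your pointwise quadrant argument for (1)--(2) is slightly cleaner than the paper's, which verifies $\operatorname{Re}T>0$ only at finite endpoints and then invokes connectivity plus axis-avoidance; the cost is re-proving a quotable result. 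One step as written is too quick: the reflection symmetry $\tilde{\Delta}_{j}(\overline{\tilde{B}})=\overline{\tilde{\Delta}_{j}(\tilde{B})}$ by itself only shows $\tilde{\sigma}_{j}$ is conjugation-symmetric, not real. The correct mechanism is that $\tilde{\Delta}_{j}(\tilde{B})\in[-1,1]$ forces a nontrivial bounded solution (even at $\tilde{\Delta}_{j}=\pm1$ there is a (anti)periodic one), which places the rescaled spectral parameter in the $L^{2}$-spectrum of a self-adjoint Hill operator with smooth real periodic potential---choose $z_{0}=\tfrac{\tau}{2}$ for $j=1$ and $z_{0}=\tfrac{1}{2}$ for $j=2$, legitimate since $\tilde{\Delta}_{j}$ is base-point independent---and that spectrum is real and bounded below, which simultaneously yields your orientation claim ($\tilde{B}\to-\infty$ for $j=1$, $\tilde{B}\to+\infty$ for $j=2$ after the $\tau^{2}<0$ flip) without any WKB expansion. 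With that patch your argument is complete and reproduces all of the paper's conclusions.
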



\begin{proof}
Lemma \ref{spectrum +-symmetric} implies that $\sigma_{j}=-\sigma_{j}$. Since
$\wp(p)\notin\mathbb{R}$, then $\wp(p)\neq-\frac{e_{j}}{2}$, which implies
$Q(0)\neq0$. By Theorem \ref{thm :endpoints of spectrum}, the finite endpoints
of $\sigma_{j}$ coincide with the zeros of $Q(T)$, namely the set $\{\pm
\sqrt{2\wp(p)+e_{j}},j=1,2,3\}$, and satisfy
\[
d_{j}\left(  \pm\sqrt{2\wp(p)+e_{j}}\right)  =1,\quad j=1,2,3.
\]
On the other hand, from Theorem D and \eqref{sigma-describe}, we have
\[
\lim\limits_{T\in\sigma_{1,k,\infty},T\to\infty}T^{2}-2\wp(p)=-\infty
\]
and
\[
\lim\limits_{T\in\sigma_{2,k,\infty},T\to\infty}T^{2}-2\wp(p)=+\infty,
\]
Then we have
\begin{equation}
\label{sigma1 infty}\lim\limits_{T\in\sigma_{1,k,\infty},T\to\infty}T=\pm
i\infty\
\end{equation}
and
\begin{equation}
\label{sigma2 infty}\lim\limits_{T\in\sigma_{2,k,\infty},T\to\infty}%
T=\pm\infty,
\end{equation}
respectively. From the above discussions, we conclude that

\begin{itemize}
\item $\sigma_{j}$ consists of two bounded spectral arcs $\sigma_{j,k}$ and
two infinte spectral arcs $\sigma_{j,k,\infty}$, $k=1,2$;

\item The infinite endpoints of $\sigma_{1,k,\infty}$ tend to $\pm i\infty$,
and the infinite endpoints of $\sigma_{2,k,\infty}$ tend to $\pm\infty$;

\item $\sigma_{j,1}=-\sigma_{j,2}$ and $\sigma_{j,1,\infty}=-\sigma
_{j,2,\infty}$.
\end{itemize}

Now assume $\mathrm{Im}\,\wp(p)>0$ and let $\sigma_{j}=\sigma_{j}^{\prime}%
\cup-\sigma_{j}^{\prime}$, where $\mathrm{Im}\,\sigma_{j}^{\prime}>0$. Then
$\mathrm{Im}\,(2\wp(p)+e_{k})>0$. By Theorem \ref{thm :endpoints of spectrum},
any finite endpoint $T$ of $\sigma_{j}^{\prime}$ satisfies
\[
\mathrm{Im}\,T^{2}=\mathrm{Im}\,(2\wp(p)+e_{k})>0,\quad\mathrm{Im}\,T>0,
\]
which implies $\mathrm{Re}\,T>0$. Since $\sigma_{j}^{\prime}\cap
\mathbb{R}=\emptyset=\sigma_{j}^{\prime}\cap i\mathbb{R}$, we conclude that
$\mathrm{Re}\,\sigma_{j}^{\prime}>0$. The same arguments applies to the case
$\mathrm{Im}\wp(p)<0$. This completes the proof.
\end{proof}

\begin{figure}[htbp] 
	\centering 
	\includegraphics[width=1.0\textwidth]{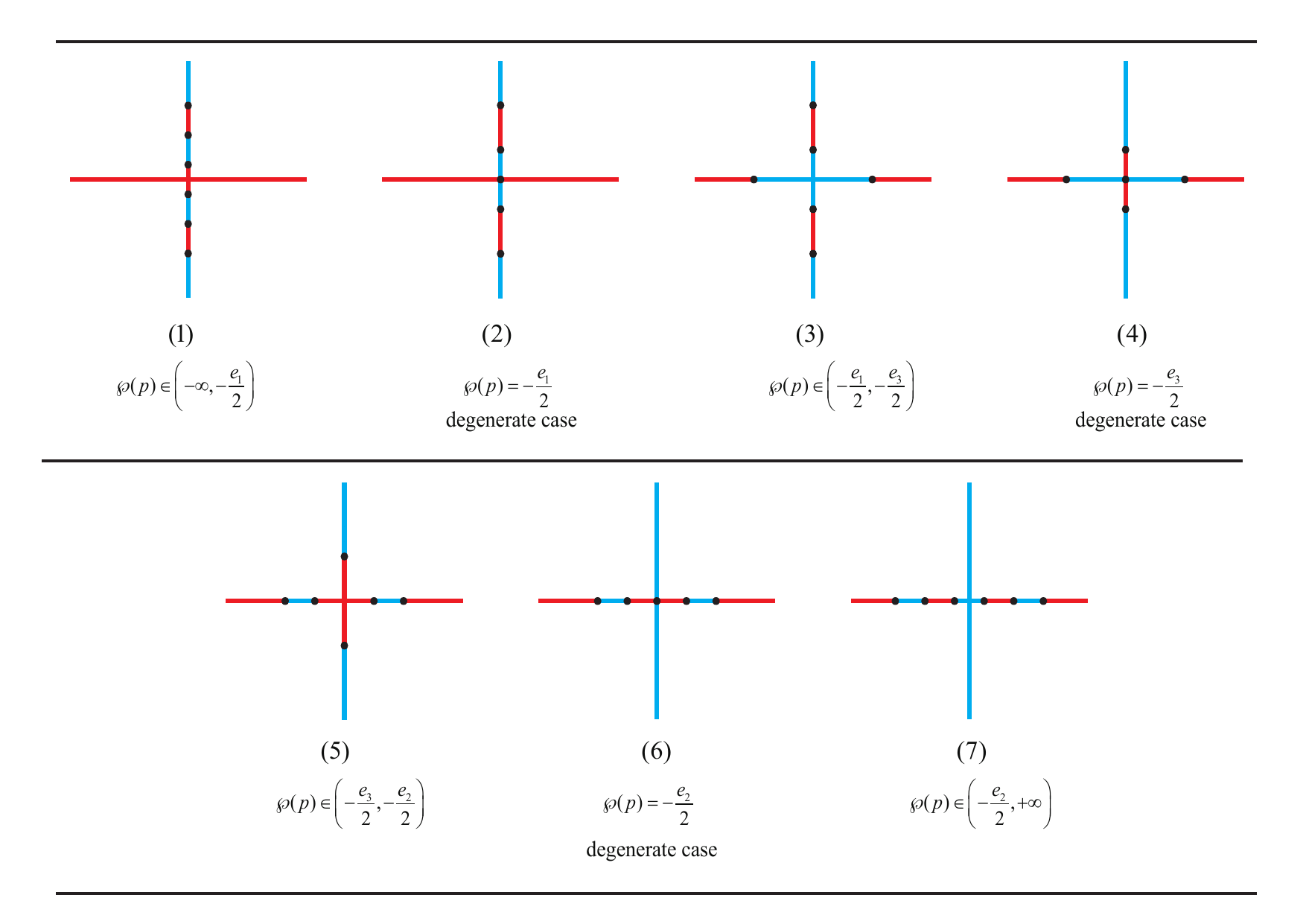} 
	\caption{Seven schematic graphs of the continuous deformation of the spectral sets $\sigma_{j}$ as stated in Theorem \ref{thm: spectrum of lame-type}. The blue curves denote $\sigma_{1}$ and the red curves denote $\sigma_{2}$. The dark points denote zeros of spectral polynomial $Q(T)$. }
	\label{fig:graphs of spectrum} 
\end{figure}

\begin{theorem}
\label{thm: spectrum of lame-type}

Fix $\tau\in i\mathbb{R}_{>0}$ and let $\wp(p)\in\mathbb{R}$. Then the
spectral arcs $\sigma_{j}$ lie in $\mathbb{R}\cup i\mathbb{R}$ and satisfy
\[
\sigma_{1}\cup\sigma_{2}=\mathbb{R}\cup i\mathbb{R},\quad\sigma_{1}\cap
\sigma_{2}=\{T\in\mathbb{C}|\,Q(T)=0\}.
\]
Moreover, the following statements hold.

\item[(1)] If $\wp(p)\in\left(  -\infty,-\frac{e_{1}(\tau)}{2}\right)  $,
then
\begin{align*}
\sigma_{1}  &  =\left(  -i\infty,-i\sqrt{-(2\wp(p)+e_{2}(\tau))}\right] \\
&  \cup\left[  -i\sqrt{-(2\wp(p)+e_{3}(\tau))},-i\sqrt{-(2\wp(p)+e_{1}(\tau
))}\right] \\
&  \cup\left[  i\sqrt{-(2\wp(p)+e_{1}(\tau))},i\sqrt{-(2\wp(p)+e_{3}(\tau
))}\right] \\
&  \cup\left[  i\sqrt{-(2\wp(p)+e_{2}(\tau))}, i\infty\right)  .
\end{align*}

\item[(2)] If $\wp(p)=-\frac{e_{1}(\tau)}{2}$, then
\begin{align*}
\sigma_{1}  &  =\left(  -i\infty,-i\sqrt{-(2\wp(p)+e_{2}(\tau))}\right] \\
&  \cup\left[  -i\sqrt{-(2\wp(p)+e_{3}(\tau))},i\sqrt{-(2\wp(p)+e_{3}(\tau
))}\right] \\
&  \cup\left[  i\sqrt{-(2\wp(p)+e_{2}(\tau))}, i\infty\right)  .
\end{align*}

\item[(3)] If $\wp(p)\in\left(  -\frac{e_{1}(\tau)}{2},-\frac{e_{3}(\tau)}%
{2}\right)  $, then
\begin{align*}
\sigma_{1}  &  =\left(  -i\infty,-i\sqrt{-(2\wp(p)+e_{2}(\tau))}\right] \\
&  \cup\left[  -i\sqrt{-(2\wp(p)+e_{3}(\tau))},i\sqrt{-(2\wp(p)+e_{3}(\tau
))}\right] \\
&  \cup\left[  i\sqrt{-(2\wp(p)+e_{2}(\tau))},i\infty\right) \\
&  \cup\left[  -\sqrt{2\wp(p)+e_{1}(\tau)},\sqrt{2\wp(p)+e_{1}(\tau)}\right]
.
\end{align*}

\item[(4)] If $\wp(p)=-\frac{e_{3}(\tau)}{2}$, then
\begin{align*}
\sigma_{1}  &  =\left(  -i\infty,-i\sqrt{-(2\wp(p)+e_{2}(\tau))}\right] \\
&  \cup\left[  i\sqrt{-(2\wp(p)+e_{2}(\tau))}, i\infty\right) \\
&  \cup\left[  -\sqrt{2\wp(p)+e_{1}(\tau)},\sqrt{2\wp(p)+e_{1}(\tau)}\right]
.
\end{align*}

\item[(5)] If $\wp(p)\in\left(  -\frac{e_{3}(\tau)}{2},-\frac{e_{2}(\tau)}%
{2}\right)  $, then
\begin{align*}
\sigma_{1}  &  =\left(  -i\infty,-i\sqrt{-(2\wp(p)+e_{2}(\tau))}\right] \\
&  \cup\left[  i\sqrt{-(2\wp(p)+e_{2}(\tau))}, i\infty\right) \\
&  \cup\left[  -\sqrt{2\wp(p)+e_{1}(\tau)},-\sqrt{2\wp(p)+e_{3}(\tau)}\right]
\\
&  \cup\left[  \sqrt{2\wp(p)+e_{3}(\tau)},-\sqrt{2\wp(p)+e_{1}(\tau)}\right]
.
\end{align*}

\item[(6)] If $\wp(p)=-\frac{e_{2}(\tau)}{2}$, then
\begin{align*}
\sigma_{1}  &  =i\mathbb{R}\cup\left[  -\sqrt{2\wp(p)+e_{1}(\tau)},-\sqrt
{2\wp(p)+e_{3}(\tau)}\right] \\
&  \cup\left[  \sqrt{2\wp(p)+e_{3}(\tau)},-\sqrt{2\wp(p)+e_{1}(\tau)}\right]
.
\end{align*}

\item[(7)] If $\wp(p)\in\left(  -\frac{e_{2}(\tau)}{2},+\infty\right)  $,
then
\begin{align*}
\sigma_{1}  &  =i\mathbb{R}\cup\left[  -\sqrt{2\wp(p)+e_{1}(\tau)},-\sqrt
{2\wp(p)+e_{3}(\tau)}\right] \\
&  \cup\left[  -\sqrt{2\wp(p)+e_{2}(\tau)},\sqrt{2\wp(p)+e_{2}(\tau)}\right]
\\
&  \cup\left[  \sqrt{2\wp(p)+e_{3}(\tau)},-\sqrt{2\wp(p)+e_{1}(\tau)}\right]
.
\end{align*}

\end{theorem}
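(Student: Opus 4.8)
The plan is to reduce the entire classification to the classical Lam\'e spectrum via the quadratic substitution already recorded in \eqref{sigma-describe}, namely $\sigma_j=\{T\in\mathbb{C}:T^2-2\wp(p)\in\tilde\sigma_j\}$, and then to read off the seven pictures by taking preimages under $T\mapsto T^2-2\wp(p)$. The first and essential step is to write $\tilde\sigma_1,\tilde\sigma_2$ explicitly for $\tau\in i\mathbb{R}_{>0}$. In this regime $e_1,e_2,e_3$ are real with $e_1>e_3>e_2$, and the three finite band edges of \eqref{Lame 1} are exactly $\tilde B=e_1,e_2,e_3$ (the zeros of $Q(\tilde B)=-4\prod_j(\tilde B-e_j)$), realized by the Lam\'e functions $\sqrt{\wp-e_k}$. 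Combining Theorem C with its refinement locating the infinite ends (Theorem D) and the elementary parity fact that $\sqrt{\wp-e_k}$ is periodic in the $\omega_j$-direction precisely when $k=j$, one obtains
\begin{equation}
\tilde\sigma_1=(-\infty,e_2]\cup[e_3,e_1],\qquad \tilde\sigma_2=[e_2,e_3]\cup[e_1,+\infty).\label{classical-explicit-sigma}
\end{equation}
Thus $\tilde\sigma_1$ escapes to $-\infty$ while $\tilde\sigma_2$ escapes to $+\infty$, consistent with the asymptotics already invoked in the proof of Theorem \ref{thm: spectrum of lame-type, generic p}.

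With \eqref{classical-explicit-sigma} in hand the three global assertions are immediate. Since $\tilde\sigma_j\subset\mathbb{R}$ and $\wp(p)\in\mathbb{R}$, membership $T^2-2\wp(p)\in\tilde\sigma_j$ forces $T^2\in\mathbb{R}$, hence $T\in\mathbb{R}\cup i\mathbb{R}$, giving $\sigma_j\subset\mathbb{R}\cup i\mathbb{R}$. Because $\tilde\sigma_1\cup\tilde\sigma_2=\mathbb{R}$ and $T\mapsto T^2$ maps $\mathbb{R}\cup i\mathbb{R}$ onto all of $\mathbb{R}$, one gets $\sigma_1\cup\sigma_2=\mathbb{R}\cup i\mathbb{R}$. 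Finally $\tilde\sigma_1\cap\tilde\sigma_2=\{e_1,e_2,e_3\}$, and $T^2-2\wp(p)\in\{e_1,e_2,e_3\}$ is exactly the equation $Q(T)=0$; hence $\sigma_1\cap\sigma_2=\{T:Q(T)=0\}$, recovering the displayed identities.

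It remains to produce the explicit arcs in (1)--(7), which I would obtain by a direct preimage computation, treating $\sigma_1$ (the blue curves) in detail and deducing $\sigma_2$ either by the same recipe applied to $\tilde\sigma_2$ or by complementation through $\sigma_1\cup\sigma_2=\mathbb{R}\cup i\mathbb{R}$. Writing $c=2\wp(p)$, one has $c+\tilde\sigma_1=(-\infty,c+e_2]\cup[c+e_3,c+e_1]$, and $\{T:T^2\in c+\tilde\sigma_1\}$ splits into a real part (where the corresponding value of $T^2$ is $\ge 0$) and an imaginary part (where it is $\le 0$), with endpoints $\pm\sqrt{2\wp(p)+e_k}$ when $2\wp(p)+e_k>0$ and $\pm i\sqrt{-(2\wp(p)+e_k)}$ when $2\wp(p)+e_k<0$. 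The seven cases are governed entirely by the signs of the shifted edges $c+e_k$: as $\wp(p)$ increases across the thresholds $-e_1/2<-e_3/2<-e_2/2$, the three endpoint pairs $\pm\sqrt{2\wp(p)+e_k}$ migrate one at a time from the imaginary axis to the real axis, each passing through $T=0$ at its threshold. Carrying this out reproduces the formulas in (1)--(7); the boundary cases (2), (4), (6), where some $2\wp(p)+e_k=0$, are exactly the instants at which an endpoint pair crosses the origin, forcing $Q$ to acquire a double zero at $T=0$ and two arcs to merge there.

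The last step is routine once \eqref{classical-explicit-sigma} is secured, so the genuine content—and the only delicate point—is the explicit determination of the classical spectra in \eqref{classical-explicit-sigma}, in particular the correct matching of each edge $e_k$ to a spectral direction and the identification of which semi-infinite ray runs to $+\infty$ rather than $-\infty$. This rests on the monodromy-sign computation $\epsilon_j^{(k)}=+1\Leftrightarrow k=j$ for $\sqrt{\wp-e_k}$, together with the observation that the sign flip $\tau^2=-\beta^2$ along the $\omega_2$-direction is what sends $\tilde\sigma_2$ to $+\infty$ (the feature easiest to get backwards). Once this edge-to-direction dictionary is pinned down, the seven-fold classification is a bookkeeping exercise in extracting square roots.
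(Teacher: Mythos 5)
Your proposal is correct, and it rests on the same two pillars as the paper's proof: the identity \eqref{sigma-describe}, $\sigma_j=\{T:\,T^2-2\wp(p)\in\tilde\sigma_j\}$, and the explicit classical spectra of Theorem D, $\tilde\sigma_1=(-\infty,e_2]\cup[e_3,e_1]$, $\tilde\sigma_2=[e_2,e_3]\cup[e_1,+\infty)$. Where you diverge is in the final step. The paper never carries out the preimage computation explicitly: after establishing $\sigma_1\cup\sigma_2=\mathbb{R}\cup i\mathbb{R}$, $\sigma_1\cap\sigma_2=\{Q=0\}$, the symmetry $\sigma_j=-\sigma_j$, and the asymptotic directions \eqref{sigma1 infty}--\eqref{sigma2 infty}, it pins down each of the seven configurations by computing the endpoint degrees $d_1$ at the zeros of $Q(T)$ via Theorem \ref{thm :endpoints of spectrum} (in particular $d_1(0)=2$ at the thresholds $\wp(p)=-e_k/2$, which is how the arc mergers in cases (2), (4), (6) are detected), letting Lemmas \ref{lemma: odd degree } and \ref{lemma : two degree} do the combinatorial work. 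You instead read $\sigma_1$ off directly as the preimage of $c+\tilde\sigma_1$ under $T\mapsto T^2$, splitting by the signs of the shifted edges $2\wp(p)+e_k$; this is more elementary and arguably cleaner, since the degenerate cases fall out automatically as the endpoint pairs cross the origin, with no degree bookkeeping needed. The two arguments buy the same theorem; the paper's version has the merit of reusing machinery (Lemma \ref{lemma : two degree}) already needed for Theorem \ref{thm finite gap}, while yours minimizes the apparatus.

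One caveat on your closing paragraph: you present the derivation of \eqref{classical-explicit-sigma} as the "genuine content," but in the paper this is simply Theorem D, quoted from \cite{Chen-Lin-sharp nonexistence}, so no re-derivation is required. Moreover, your sketch would not suffice on its own: the periodicity dictionary for $\sqrt{\wp-e_k}$ only shows $\tilde\Delta_j(e_k)=\pm1$, i.e., that all three edges $e_k$ lie in \emph{both} spectral sets (they form $\tilde\sigma_1\cap\tilde\sigma_2$); it does not by itself decide which intervals between the edges belong to $\tilde\sigma_1$ versus $\tilde\sigma_2$, which requires the additional analysis of \cite{GW1, Chen-Lin-sharp nonexistence}. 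Your heuristic that the sign flip of the potential along the $\tau$-direction (for $\tau\in i\mathbb{R}_{>0}$) sends $\tilde\sigma_2$ to $+\infty$ is the right intuition, but as written it is an observation, not a proof; citing Theorem D, as the paper does, closes this cleanly.
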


It is known that
\[
e_{1}(\tau)>e_{3}(\tau)>e_{2}(\tau) \quad\text{for }\tau\in i\mathbb{R}_{>0}.
\]
Recall the following important conclusion concerning the spectral sets of
Lam\'{e} equation \eqref{lame on R}.

\noindent\textbf{Theorem D. }\cite{Chen-Lin-sharp nonexistence} Assume
$\tau\in i\mathbb{R}_{>0}$. Then the spectral sets $\tilde{\sigma}_{j}$ of
Lam\'{e} equation \eqref{lame on R} satisfy

\begin{itemize}
\item[(i)] $\tilde{\sigma}_{1}=(-\infty,e_{2}(\tau)]\cup[e_{3}(\tau
),e_{1}(\tau)]$;

\item[(ii)] $\tilde{\sigma}_{2}=[e_{2}(\tau),e_{3}(\tau)]\cup[e_{1}%
(\tau),+\infty)$.
\end{itemize}

Consequently, the monodromy representation of classical Lam\'{e} equation
\eqref{Lame 1} cannot be unitary for any $\tilde{B}\in\mathbb{C}$.

\begin{proof}
[Proof of Theorem \ref{thm: spectrum of lame-type}]Since $\wp(p)\in\mathbb{R}%
$, any root of $Q(T)$ is either real or purely imaginary. By Theorem C, we see
that
\[
\tilde{\sigma}_{1}\cup\tilde{\sigma}_{2}=\mathbb{R}\quad\text{and}\quad
\tilde{\sigma}_{1}\cap\tilde{\sigma}_{2}=\{e_{k},k=1,2,3\}.
\]
Then \eqref{sigma-describe} implies
\[
\sigma_{1}\cup\sigma_{2}=\mathbb{R}\cup i\mathbb{R}\quad\text{and}\quad
\sigma_{1}\cap\sigma_{2}=\{T|\,Q(T)=0\}.
\]
Analogous to the proof of Theorem \ref{thm: spectrum of lame-type, generic p},
we also have \eqref{sigma1 infty} and \eqref{sigma2 infty} hold. Together with
Lemma \ref{spectrum +-symmetric}, we conclude that

\begin{itemize}
\item $\sigma_{j}$ is symmetric with respect to $\mathbb{R}$ and $i\mathbb{R}$;

\item The infinite spectral arcs of $\sigma_{1}\ (\sigma_{2},\text{
respectively})$ tend to $\pm i\infty\ (\pm\infty)$.
\end{itemize}

Then for each case, it suffices to compute $d_{j}$ at endpoints.

\begin{itemize}
\item[(1)] It follows from $\wp(p)\in\left(  -\infty,-\frac{e_{1}(\tau)}%
{2}\right)  $ that all roots of $Q(T)$ are simple and purely imaginary. Then
we obtain from Theorem \ref{thm :endpoints of spectrum} that
\[
d_{1}(\pm\sqrt{2\wp(p)+e_{k}})=1,\quad k=1,2,3.
\]
This proves Theorem \ref{thm: spectrum of lame-type} (1). The same argument
applies to (3), (5) and (7).

\item[(2)] $\wp(p)=-\frac{e_{1}(\tau)}{2}$ implies $Q(0)=0$. From here and
Theorem \ref{thm :endpoints of spectrum}, we immediately obtain
\[
d_{1}(0)=2,\quad d_{1}(\pm\sqrt{2\wp(p)+e_{k}})=1,\quad k=2,3.
\]
This proves Theorem \ref{thm: spectrum of lame-type} (2). The same argument
applies to (4) and (6). This completes the proof.
\end{itemize}
\end{proof}

\begin{corollary}
For $\tau\in i\mathbb{R}_{>0}$ , we have $\sigma_{1}\cap\sigma_{2}%
\setminus\{T|\,Q(T)=0\}=\emptyset$. Then the monodromy representation of the
generalized Lam\'{e}-type equation (\ref{GLE 4})$_{p}$ cannot be unitary for
any $T\in\mathbb{C}$.
\end{corollary}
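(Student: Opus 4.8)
The plan is to read off both assertions from the spectral correspondence \eqref{sigma-describe}, which expresses $\sigma_{j}$ as the preimage of the classical Lam\'{e} spectrum $\tilde{\sigma}_{j}$ under the two-to-one map $T\mapsto\tilde{B}=T^{2}-2\wp(p)$ of \eqref{correspondence}, combined with the exact monodromy coincidence of Corollary \ref{Main Corollary1} and the non-unitarity of the Lam\'{e} representation recorded in Theorem D.

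For the set identity, I would start from \eqref{sigma-describe}: one has $T\in\sigma_{1}\cap\sigma_{2}$ precisely when $\tilde{B}=T^{2}-2\wp(p)$ lies in $\tilde{\sigma}_{1}\cap\tilde{\sigma}_{2}$. For $\tau\in i\mathbb{R}_{>0}$, Theorem D gives $\tilde{\sigma}_{1}=(-\infty,e_{2}]\cup[e_{3},e_{1}]$ and $\tilde{\sigma}_{2}=[e_{2},e_{3}]\cup[e_{1},+\infty)$ with $e_{1}>e_{3}>e_{2}$, so that $\tilde{\sigma}_{1}\cap\tilde{\sigma}_{2}=\{e_{1},e_{2},e_{3}\}$. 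Hence $T\in\sigma_{1}\cap\sigma_{2}$ forces $T^{2}-2\wp(p)=e_{k}$ for some $k$, i.e. $Q(T)=-4\prod_{k=1}^{3}(T^{2}-2\wp(p)-e_{k})=0$. This is exactly the asserted inclusion $\sigma_{1}\cap\sigma_{2}\subseteq\{T\,|\,Q(T)=0\}$, and therefore $\sigma_{1}\cap\sigma_{2}\setminus\{T\,|\,Q(T)=0\}=\emptyset$.

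For the non-unitarity I would argue by contradiction. If the representation were unitary for some $T$, then each monodromy matrix $M_{j}(T)$ would be conjugate into $SU(2)$, so its eigenvalues would lie on the unit circle and $\Delta_{j}(T)=\tfrac{1}{2}\mathrm{tr}\,M_{j}(T)\in[-1,1]$; thus $T\in\sigma_{1}\cap\sigma_{2}$. By the identity just proved, $Q(T)=0$, whence $\tilde{B}=T^{2}-2\wp(p)\in\{e_{1},e_{2},e_{3}\}$. But Corollary \ref{Main Corollary1} identifies $M_{j}(T)$ with the monodromy $M_{j}(\tilde{B})$ of the classical Lam\'{e} equation \eqref{Lame 1}, and Theorem D asserts that the Lam\'{e} representation is never unitary. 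This contradiction shows that (\ref{GLE 4})$_{p}$ admits no unitary $T$.

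The step I expect to carry the real weight is the last one: one must know that the monodromy genuinely fails to be unitary at the isolated band edges $\tilde{B}=e_{k}$, not merely that $\sigma_{1}\cap\sigma_{2}$ is discrete. This is precisely the \emph{consequently} clause of Theorem D: at each $e_{k}$ the two Bloch solutions of \eqref{Lame 1} coalesce and the complementary solution grows, so $M_{j}(e_{k})$ is a nontrivial unipotent and the representation is non-completely-reducible. Since any unitary representation of the abelian group $\pi_{1}(E_{\tau})$ is simultaneously diagonalizable, hence completely reducible, unitarity is excluded at exactly the points that pass the eigenvalue-modulus test, which closes the argument.
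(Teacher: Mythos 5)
Your proof is correct, and it takes a genuinely different (more self-contained) route than the paper's. The paper's own proof is two lines: it asserts without argument that unitarity of the monodromy of \eqref{GLE 4}$_{p}$ is equivalent to $(r,s)\in\mathbb{R}^{2}\setminus\frac{1}{2}\mathbb{Z}^{2}$, hence to $T\in\sigma_{1}\cap\sigma_{2}\setminus\{T\,|\,Q(T)=0\}$, and then deduces the emptiness of that set by citing the classification results Theorem \ref{thm: spectrum of lame-type, generic p} and Theorem \ref{thm: spectrum of lame-type}. You instead derive the set identity directly and uniformly in $p$ from \eqref{sigma-describe} together with Theorem D, via $T\in\sigma_{1}\cap\sigma_{2}\Leftrightarrow T^{2}-2\wp(p)\in\tilde{\sigma}_{1}\cap\tilde{\sigma}_{2}=\{e_{1},e_{2},e_{3}\}\Leftrightarrow Q(T)=0$; this is in fact exactly how the paper obtains $\sigma_{1}\cap\sigma_{2}=\{T\,|\,Q(T)=0\}$ inside the proof of Theorem \ref{thm: spectrum of lame-type}, and since Theorem \ref{thm: spectrum of lame-type, generic p} never states the intersection explicitly for $\wp(p)\notin\mathbb{R}$, your one-line argument is if anything tidier than the paper's citation. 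For the non-unitarity, where the paper leaves the equivalence with $(r,s)\in\mathbb{R}^{2}\setminus\frac{1}{2}\mathbb{Z}^{2}$ unproved, you supply the two elementary implications it encodes: a representation conjugate into $SU(2)$ has $\Delta_{j}(T)=\frac{1}{2}\mathrm{tr}\,M_{j}(T)\in[-1,1]$ for $j=1,2$, forcing $T\in\sigma_{1}\cap\sigma_{2}$ and hence $Q(T)=0$; and since commuting unitary matrices are simultaneously diagonalizable, a unitary representation of the abelian group $\pi_{1}(E_{\tau})$ is completely reducible, which is incompatible with $Q(T)=0$. Two small adjustments would make this airtight within the paper's framework: your band-edge picture of coalescing Bloch solutions is only heuristic, and the rigorous backing you should cite is Theorem \ref{main thm for spectral theory} (complete reducibility if and only if $Q(T)\neq0$), applied directly to \eqref{GLE 4}$_{p}$ so that you do not even need to pass through the Lam\'{e} side; consequently the appeal to Corollary \ref{Main Corollary1} and the non-unitarity clause of Theorem D becomes redundant rather than load-bearing (Theorem \ref{Main Theorem1} already matches the monodromy data in any case). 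The trade-off is clear: the paper's proof is shorter but leans on the full spectral classification and an unproved equivalence, while yours is longer but makes every step rest on results proved in the text.
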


\begin{proof}
Note that the monodromy of the generalized Lam\'{e}-type equation
(\ref{GLE 4})$_{p}$ is unitary if and only if monodromy data $(r,s)\in
\mathbb{R}^{2}\setminus\frac{1}{2}\mathbb{Z}^{2}$, which is equivalent to
\[
T\in\sigma_{1}\cap\sigma_{2}\setminus\{T|\,Q(T)=0\}.
\]
Together Theorem \ref{thm: spectrum of lame-type, generic p} and Theorem
\ref{thm: spectrum of lame-type}, we conclude that $\sigma_{1}\cap\sigma
_{2}\setminus\{T|\,Q(T)=0\}=\emptyset$. Therefore, the monodromy cannot be
unitary for any $T\in\mathbb{C}$.
\end{proof}


\section{Cone Spherical Metrics}

\label{Cone Spherical Metrics Section}

\subsection{Existence of Cone Spherical Metrics}

\label{Main results for cone spherical metrics}

Recently, cone spherical metrics have been studied extensively; see \cite{CLW,
CCChen-Lin 1,Chen-Lin-sharp nonexistence,Chen-Kuo-Lin-8pi+8pi, Bergweiler-Eremenko-dynamics, Chen-Fu-Lin-Hitchin,
Chen-Kuo-Lin-16pi, CWWXu, Eremenko-.Gabrielov-on metrics,
Eremenko-Mondello-Panov, EGMP, LW-AnnMath, LW, LSXu} for significant
contributions in this direction. Building upon the monodromy equivalence
established in Theorem \ref{Main Theorem1}, we now apply these results to
construct cone spherical metrics with prescribed conical singularities. We
also examine related analytical aspects, highlighting how the integrable
structure of the generalized Lam\'{e}-type equation naturally governs the
geometry of such metrics.

The generalized Lam\'{e}-type equation (\ref{GLEn}) is closely related to the
following curvature equation (PDE):
\begin{equation}
\Delta u+e^{u}=8\pi n\delta_{0}+4\pi(\delta_{p}+\delta_{-p})\text{ on }%
E_{\tau}\text{,} \label{Curvature n}%
\end{equation}
which arises in the study of spherical metrics with conical singularities.

In conformal geometry, it corresponds to the problem of finding a conformal
metric of constant positive curvature on the elliptic curve $E_{\tau}$, with
conical singularities of cone angle $2\pi(2n+1)$ at $0$, and and cone angle
$4\pi$ at $\pm p$.

The approach to study the curvature equation (\ref{Curvature n}) with an odd
total angle relies on its integrability via Liouville Theorem, which states
that any solution $u(z)$ of equation (\ref{Curvature n}) is given as the form
\begin{equation}
u(z)=\log\frac{8|f^{\prime}(z)|^{2}}{(1+|f(z)|^{2})^{2}}. \label{502}%
\end{equation}
See \cite{CLW,Prajapat -Tarantello} for a proof. The function $f(z)$ is
locally meromorphic and is commonly referred to as the developing map in the literature.

Through the representation (\ref{502}), the problem of solving the curvature
equation (\ref{Curvature n})$_{p}$ reduces to studying the generalized
Lam\'{e}-type equation (\ref{GLEn})$_{p}$, with parameters, $\mathbf{T}$, $B$
satiafy the apparency condition, such that the corresponding monodromy
matrices $M_{i}(\mathbf{T;}p),$ for $i$ $=$ $1,2,$ are \textbf{unitary} up to
conjugation. The correspondence between the nonlinear curvature equation and
the linear generalized Lam\'{e}-type equation---namely, the transition from
PDE to ODE and vice versa---has become a well-established and widely used
approach in the study of spherical metrics with conical singularities.

A notable special case is the curvature equation
\begin{equation}
\Delta v+e^{v}=8\pi n\delta_{0}\text{ on }E_{\tau} \label{Curvature-Lame n}%
\end{equation}
which is associated with the classical Lam\'{e} equation (\ref{Lame n}). The
correspondence between these two equations has been established in detail in
the seminal works \cite{CLW, LW}.\textit{\medskip}

We briefly summarize the above discussion as follows.\textit{\medskip}

\noindent\textbf{Theorem E. }\textit{Assume (\ref{Assumption 1}). Then the
curvature equation associated with the singularity position }$p$\textit{,
denoted by (\ref{Curvature n})}$_{p}$\textit{ admits a solution if and only if
there exists parameters }$\mathbf{T}$\textit{, }$B\in\mathbb{C}$ \textit{such
that the corresponding generalized Lam\'{e}-type equation (\ref{GLEn})}$_{p}
$\textit{ is completely reducible, with associated monodromy data }$\left(
r,s\right)  $ $\in$ $\mathbb{R}^{2}$ $\setminus$ $\frac{1}{2}\mathbb{Z}^{2}$.

\begin{proof}
The proof of Theorem C. follows directly from the case of the model equation
(\ref{Curvature-Lame n}) and can be obtained by a straightforward
modification; we therefore omit the details. See \cite{CLW, LW, Kuo} for the proofs.
\end{proof}

It is also known that, if $u(z)$ of the form (\ref{502}) is a solution, then
the one-parameter family of functions
\begin{equation}
u_{\beta}(z)=\log\frac{8e^{2\beta}\left\vert f^{\prime}\right\vert ^{2}%
}{\left(  1+e^{2\beta}\left\vert f\right\vert ^{2}\right)  ^{2}},\text{ }%
\beta\in\mathbb{R} \label{family}%
\end{equation}
also consists of solutions to the same equation. This family of solutions
$\{u_{\beta}(z)|\beta$ $\in$ $\mathbb{R}\}$ exhibits blow-up behavior as
described below:

\begin{itemize}
\item As $\beta\rightarrow+\infty$, it blows up at \textbf{zeros} of the
developing map $f(z)$; while

\item As $\beta\rightarrow-\infty$, it blows up at \textbf{poles} of the
developing map $f(z)$.
\end{itemize}

This behavior is characteristic of bubbling solutions in geometric analysis,
and is closely related to:

\begin{itemize}
\item The study of moduli spaces of flat connections or projective structures
on Riemann surfaces;

\item The analysis of holomorphic quadratic differentials and their associated
monodromy representations;

\item Mean field equations arising in statistical mechanics models and
Chern--Simons--Higgs theory.
\end{itemize}

In particular, the parameter $\beta$ $\in$ $\mathbb{R}$ can be interpreted as
a scaling parameter along a one-parameter family in the moduli space, where
the geometry of the solution degenerates into concentrated curvature
(delta-mass) configurations.

Suppose this family of solutions $\{u_{\beta}(z)|\beta$ $\in$ $\mathbb{R}\}$
contains at least one even solution. By scaling, we may assume that the
solution corresponding to $\beta=0$, denoted $u_{0}(z)$, is even. In this
case, it can be shown that $u_{0}(z)$ is the only even solution in the family.

Accordingly, we classify the family $\{u_{\beta}(z)|\beta$ $\in$
$\mathbb{R}\}$ as follows:

\begin{itemize}
\item \noindent If it contains an even solution (necessarily unique), it is
called an \textbf{even family of solutions}.

\item \noindent Otherwise, it is called a \textbf{non-even family of
solutions}.
\end{itemize}

We remark that for the curvature equation (\ref{Curvature-Lame n}), due to the
presence of only a single conical singularity at the origin, every solution
family $\{u_{\beta}(z)|\beta$ $\in$ $\mathbb{R}\}$ must necessarily be even.

It is straightforward to verify that

\begin{enumerate}
\item Every even family of solutions arises from the generalized Lam\'{e}-type
equation (\ref{GLE 2}) in the even symmetry case, for some $A$ and $B$
satisfying (\ref{APeven}); while

\item Every non-even family of solutions corresponds to the generalized
Lam\'{e}-type equation (\ref{GLE 3}) in the non-even symmetry case, for some
$T\not =0$ and $B$ satisfying (\ref{APnoneven}).
\end{enumerate}

Now, let us focus on the fundamental case $n=1$. In this setting, the two
curvature equations take the form:
\begin{equation}
\Delta v+e^{v}=8\pi\delta_{0}\text{ on }E_{\tau}, \label{Curvature-Lame 1}%
\end{equation}
and
\begin{equation}
\Delta u+e^{u}=8\pi\delta_{0}+4\pi(\delta_{p}+\delta_{-p})\text{ on }E_{\tau
}\text{.} \label{Curvature 1}%
\end{equation}

Recall (\ref{Assumption 1}) and define
\begin{equation}
\Lambda_{even}^{(1)}=\left\{  (\tau,p)\in\mathbb{H\times}E_{\tau}\text{,
(\ref{Curvature 1})}_{\tau}\text{ admits an even family}\right\}  ,
\label{Even set}%
\end{equation}
and
\begin{equation}
\Lambda_{Noneven}^{(1)}=\left\{  (\tau,p)\in\mathbb{H\times}E_{\tau}\text{,
(\ref{Curvature 1})}_{\tau}\text{ admits a Non-even family}\right\}  .
\label{Noneven set}%
\end{equation}
Obviously, from the structure of the equation (\ref{Curvature 1}), the points
$\pm p$ must be identified in both sets. Henceforth, we always treat $p$ and
$-p$ as equivalent.

The main objective of this section is to characterize these two sets.

As a direct consequence of Theorems B and E, we obtain the following
characterization for even families to the curvature equation
(\ref{Curvature 1})$_{p}$.\textit{\medskip}

\noindent\textbf{Theorem F.(}\textit{\cite{Chen-Kuo-Lin-Painleve VI}}\textbf{)
}\textit{For each} $\tau\in\mathbb{H}$, \textit{the equation}
(\ref{Curvature 1})$_{p}$ \textit{has an even family if and only if the
singularity} $p$ \textit{is given by}
\[
p=p_{r,s}^{(1)}(\tau)\text{ for some }\left(  r,s\right)  \in\mathbb{R}%
^{2}\setminus\frac{1}{2}\mathbb{Z}^{2},
\]
\textit{where }$p_{r,s}^{(1)}(\tau)$\textit{ denotes the solution of the
Painlev\'{e} VI equation (\ref{EPVI 1})}$_{n=1}$\textit{, evaluated at }$\tau
$, \textit{corresponding to }$\left(  r,s\right)  $. \textit{Consequently,}
\begin{equation}
\Lambda_{even}^{(1)}=\left\{  (\tau,p)|\tau\in\mathbb{H}\text{, }%
p=p_{r,s}^{(1)}(\tau)\text{, }\left(  r,s\right)  \in\mathbb{R}^{2}%
\setminus\frac{1}{2}\mathbb{Z}^{2}\right\}  . \label{Even singular set}%
\end{equation}

In fact, for any $\left(  r,s\right)  \in\mathbb{C}^{2}\setminus\frac{1}
{2}\mathbb{Z}^{2}$, the solution $p_{r,s}^{(1)}(\tau)$ can be expressed
explicitly. To this end, for such a pair $(r,s)$, we introduce the fundamental
function
\begin{equation}
Z(r,s,\tau):=\zeta(r+s\tau)-r\eta_{1}(\tau)-s\eta_{2}(\tau), \label{zrs}%
\end{equation}
which depends meromorphically on $(r,s)$. Since $(r,$ $s)$ $\not \in \frac
{1}{2}\mathbb{Z}^{2}$, it is evident that $Z(r,s,\tau)$ $\not \equiv 0,\infty$
as a function of $\tau$, and is meromorphic in $\tau$.

This meromorphic function $Z(r,s,\tau)$ was initially introduced by Hecke in
\cite{Heck}. Hecke demonstrated that it is a modular form of weight one with
respect to $\Gamma(n)$ whenever $(r,$ $s)$ is an $n$-torsion point. For this
reason, $Z(r,s,\tau)$ is referred to as a premodular form.

The premodular form $Z(r,s,\tau)$ plays a central role in the monodromy
problem for the classical Lam\'{e} equation (\ref{Lame 1}) in completely
reducible case, as stated below.\textit{\medskip}

\noindent\textbf{Theorem G.}\textit{ (\cite{LW-AnnMath, LW}) Given any}
$(r,s)$ $\in\mathbb{C}^{2}\setminus\frac{1}{2}\mathbb{Z}^{2}$. \textit{There
exists }$\tilde{B}\in\mathbb{C}$ \textit{such that the classical Lam\'{e}
equation} (\ref{Lame 1})$_{\tau}$ \textit{is completely reducible and adimts
monodromy data} $\left(  r,s\right)  $\textit{ if and only if}
\begin{equation}
Z(r,s,\tau)=0. \label{Zero equation}%
\end{equation}
\textit{Consequently, the curvature equation (\ref{Curvature-Lame 1})}$_{\tau
}$\textit{ has an even family if and only if there exists }$(r,s)$\textit{
}$\in\mathbb{R}^{2}\setminus\frac{1}{2}\mathbb{Z}^{2}$\textit{ such that
(\ref{Zero equation}) above holds.\medskip}

We remark that more general premodular forms characterizing the monodromy
problem in the completely reducible case have been constructed for
(\ref{Lame n}) in \cite{LW}, and for the Treibich--Verdier generalization in
\cite{Chen-Kuo-Lin-Lame II}.

Let us denote
\[
Z=Z(r,s,\tau),\text{ }\wp=\wp(r+s\tau;\tau),\text{ }\wp^{\prime}=\wp^{\prime
}(r+s\tau;\tau).
\]
Then the solution $p_{r,s}^{(1)}(\tau)$ can be explicitly expressed by
\begin{equation}
\wp(p_{r,s}^{(1)}(\tau);\tau)=\wp+\frac{3\wp^{\prime}Z^{2}+(12\wp^{2}
-g_{2})Z+3\wp\wp^{\prime}}{2(Z^{3}-3\wp Z-\wp^{\prime})}. \label{PV6-1}%
\end{equation}
By (\ref{zrs}), we have
\begin{equation}
Z(r,s,\tau)=\pm Z(r^{\prime},s^{\prime},\tau),\text{ whenever }\left(
r^{\prime},s^{\prime}\right)  \equiv\pm\left(  r,s\right)  \text{ mod
}\mathbb{Z}^{2}. \label{Property1}%
\end{equation}
Consequently, by (\ref{PV6-1}) and the evenness of $\wp$, the solutions
satisfy
\begin{equation}
p_{r,s}^{(1)}(\tau)=\pm p_{r^{\prime},s^{\prime}}^{(1)}(\tau),
\label{Equivalent of PVI}%
\end{equation}
so $\pm p_{r^{\prime},s^{\prime}}^{(1)}(\tau)$ are identified as the same
solution for any $\left(  r^{\prime},s^{\prime}\right)  \equiv\pm\left(
r,s\right)  $ mod $\mathbb{Z}^{2}$.

We now turn our attention to the study of non-even families of solutions to
the curvature equation (\ref{Curvature 1})$_{p}$. Theorem E implies that
finding such a family is equivalent to finding a parameter $T\not =0$, with
$B$ determined by (\ref{APnoneven}), such that the equation (\ref{GLE 4}
)$_{p}$ has unitary monodromy, i.e., $(r,s)$ $\in$ $\mathbb{R}^{2}%
\setminus\frac{1}{2}\mathbb{Z}^{2}$.

\begin{theorem}
\label{Main Theorem2}Let $\tau\in\mathbb{H}$. The curvature equation
(\ref{Curvature-Lame 1}) admits an even family if and only if there exists $p$
such that the curvature equation (\ref{Curvature 1})$_{p}$ admits a non-even family.
\end{theorem}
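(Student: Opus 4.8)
The plan is to funnel both families through a single $p$-independent invariant, namely unitary monodromy data $(r,s)\in\mathbb{R}^{2}\setminus\frac{1}{2}\mathbb{Z}^{2}$. The dictionary I will use is as follows. By Theorem E together with the even/non-even dichotomy recorded above, the curvature equation (\ref{Curvature 1})$_{p}$ admits a non-even family if and only if there exists $T\neq0$, with $B$ given by (\ref{B condition 2}), such that the generalized Lam\'{e}-type equation (\ref{GLE 4})$_{p}$ is completely reducible with monodromy data $(r,s)\in\mathbb{R}^{2}\setminus\frac{1}{2}\mathbb{Z}^{2}$; and by Theorem G, the curvature equation (\ref{Curvature-Lame 1}) admits an even family if and only if there is $\tilde{B}$ for which the classical Lam\'{e} equation (\ref{Lame 1}) is completely reducible with such unitary data, equivalently $Z(r,s,\tau)=0$ for some $(r,s)\in\mathbb{R}^{2}\setminus\frac{1}{2}\mathbb{Z}^{2}$.

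For the forward implication I would begin with an even family of (\ref{Curvature-Lame 1}). By Theorem G this produces $(r,s)\in\mathbb{R}^{2}\setminus\frac{1}{2}\mathbb{Z}^{2}$ and a value $\tilde{B}$ for which (\ref{Lame 1}) is completely reducible with monodromy data $(r,s)$. I then select any $p$ satisfying (\ref{Assumption 1}) with $p\neq\pm p_{r,s}^{(1)}(\tau)$, and set $T(p)$ through (\ref{correspondence}) and $B(p)$ through (\ref{B condition 2}). By Corollary \ref{Main Corollary1} the monodromy matrices of (\ref{GLE 4})$_{p}$ coincide with those of (\ref{Lame 1}), so the data stay equal to $(r,s)$ and hence remain unitary; and by Theorem \ref{Main Theorem at singular} the exclusion $p\neq\pm p_{r,s}^{(1)}(\tau)$ guarantees $T(p)\neq0$. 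Thus (\ref{GLE 4})$_{p}$ is completely reducible with $T\neq0$ and unitary data, which is exactly the condition for (\ref{Curvature 1})$_{p}$ to admit a non-even family.

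For the reverse implication I would start from a singularity position $p$ for which (\ref{Curvature 1})$_{p}$ admits a non-even family. The dictionary supplies $T\neq0$, with $B$ as in (\ref{B condition 2}), making (\ref{GLE 4})$_{p}$ completely reducible with $(r,s)\in\mathbb{R}^{2}\setminus\frac{1}{2}\mathbb{Z}^{2}$. Invoking the monodromy equivalence of Theorem \ref{Main Theorem1} with $\tilde{B}=T^{2}-2\wp(p)$, the classical Lam\'{e} equation (\ref{Lame 1}) acquires the same completely reducible monodromy with data $(r,s)$. Theorem G then yields $Z(r,s,\tau)=0$ with $(r,s)\in\mathbb{R}^{2}\setminus\frac{1}{2}\mathbb{Z}^{2}$, which is precisely the criterion for (\ref{Curvature-Lame 1}) to admit an even family.

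The one point requiring care is the choice of $p$ in the forward direction, where I must simultaneously avoid the four half-periods excluded by (\ref{Assumption 1}) and the locus $\pm p_{r,s}^{(1)}(\tau)$ at which $T$ degenerates to $0$. Since these comprise only finitely many points of $E_{\tau}$, admissible $p$ are plentiful and no genuine obstacle appears; the remainder of the argument is bookkeeping that converts the geometric notions \emph{even family} and \emph{non-even family} into the single algebraic condition $(r,s)\in\mathbb{R}^{2}\setminus\frac{1}{2}\mathbb{Z}^{2}$, which the monodromy equivalence preserves in both directions.
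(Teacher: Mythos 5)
Your proposal is correct and follows essentially the same route as the paper: both directions funnel the even/non-even dichotomy through Theorem E and the monodromy equivalence of Theorem \ref{Main Theorem1}, with unitary data $(r,s)\in\mathbb{R}^{2}\setminus\frac{1}{2}\mathbb{Z}^{2}$ preserved under the correspondence $\tilde{B}=T^{2}-2\wp(p)$. The only cosmetic difference is in the forward direction, where the paper excludes the degenerate locus directly by requiring $\wp(p)\neq-\frac{1}{2}\tilde{B}$ (so that $T^{2}=\tilde{B}+2\wp(p)\neq0$), while you exclude $p=\pm p_{r,s}^{(1)}(\tau)$ and invoke Theorem \ref{Main Theorem at singular}; by that theorem these are the same condition.
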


\begin{proof}
Suppose the curvature equation (\ref{Curvature-Lame 1}) admits an even family.
There is $\tilde{B}\in\mathbb{C}$ such that the associated classical Lam\'{e}
equation (\ref{Lame 1}) is completely reducible with unitary monodromy. By the
correspondecne (\ref{correspondence}), for any $p$ satisfying $\wp
(p)\not =-\frac{1}{2}\tilde{B}$, the generalized Lam\'{e}-type equation with
parameter $T$ determined by $T^{2}=B+2\wp(p)\not =0$ is completely reducible
and admits the same unitary monodromy data. Consequently, the curvature
equation (\ref{Curvature 1})$_{p}$ admits a non-even family.

Converesly, suppose there exists $p$ such that (\ref{Curvature 1})$_{p}$
admits a non-even family. By Theorem E, there is $T\not =0\in\mathbb{C}$ such
that the generalized Lam\'{e}-type equation is completely reducible with
unitary monodromy data. Applying Theorem \ref{Main Theorem1} again, we
conclude that the curvature equation (\ref{Curvature-Lame 1}) admits an even family.
\end{proof}

When $T=0$, the equation (\ref{GLE 4})$_{p}$ with unitary monodromy admits
only an even family of solutions of the curvature (\ref{Curvature 1})$_{p}$.
In this situation, by Theorem F, the singularity $p$ is given by
$p=p_{r,s}^{(1)}(\tau),$ where $(r,s)$ $\in$ $\mathbb{R}^{2}\setminus\frac
{1}{2}\mathbb{Z}^{2}$ is the monodromy data, which by (\ref{correspondence}),
coincide with that of the classical Lam\'{e} equation (\ref{Lame 1}) with
parameter $\tilde{B}=-2\wp(p).$

Now, suppose the classical Lam\'{e} equation (\ref{Lame 1}) with parameter
$\tilde{B}$ has unitary monodromy data $(r,s)$ $\in$ $\mathbb{R}^{2}
\setminus\frac{1}{2}\mathbb{Z}^{2}$. By Corollary \ref{Main Corollary1}, for
each $p$, the parameter $T(p)$ of the generalized Lam\'{e}-type equation
(\ref{GLE 4})$_{p}$ yielding the same unitary monodromy is determined by
(\ref{correspondence}). By Theorem \ref{Main Theorem at singular}, when
$p=p_{r,s}^{(1)}(\tau)$ (equivalently, when $-2\wp(p)=\tilde{B}$), we have
$T(p)=0$. Thus, the equation equation (\ref{GLE 4})$_{p}$ reduces to the even
symmetry case.

In view of the notation (\ref{singular point}), we denote this special point
by
\begin{equation}
p=p_{\ast}:=p_{r,s}^{(1)}(\tau). \label{special point}%
\end{equation}
That is, $p_{\ast}$ is the unique point on $E_{\tau}$ such that the equation
(\ref{GLE 4})$_{p_{\ast}}$ with parameter $T=0$ shares the same monodromy data
$\left(  r,s\right)  $ as the classical Lam\'{e} equation (\ref{Lame 1}) with
parameter $\tilde{B}$. In particular, for any $p\not =p_{r,s}^{(1)}(\tau)$,
$T(p)\not =0$ and equation (\ref{GLE 4})$_{p}$ preserves exactly the same
unitary monodromy as the classical Lam\'{e} equation (\ref{Lame 1}) with
parameter $\tilde{B}$.

Now, suppose the curvature equation (\ref{Curvature 1})$_{p}$ admits a
\textbf{non-even} family $\left\{  u_{\beta}(z;p)\right\}  $, associated with
the generalized Lam\'{e}-type equation (\ref{GLE 4})$_{p}$, which has
parameter $T(p)$ and monodromy data $\left(  r,s\right)  $ $\in$
$\mathbb{R}^{2}\setminus\frac{1}{2}\mathbb{Z}^{2}$.

According to Theorem \ref{Main Theorem1}, the classical Lam\'{e} equation
(\ref{Lame 1}) with parameter
\[
\tilde{B}=T(p)^{2}-2\wp(p)
\]
also has the same monodromy data. Since both $\pm T(p)$ yield the same
$\tilde{B}$, and the symmetry $z\longmapsto-z$ of the generalized
Lam\'{e}-type equation (\ref{GLE 4})$_{p}$ is equivalent to $T\longmapsto-T$,
it follows that $\left\{  u_{\beta}(-z;p)\right\}  $ also forms a non-even
family of the curvature equation (\ref{Curvature 1})$_{p}$. Moreover, by
(\ref{Monodromy symmetry3}), this second family corresponds to the monodromy
data $\left(  -r,-s\right)  $ of (\ref{GLE 4})$_{p}$ with parameter $-T(p)$.
This relationship is illustrated in the following diagram:
\begin{equation}%
\begin{array}
[c]{ccccc}%
\text{PDE (\ref{Curvature 1})}_{p} &  & \text{GLE (\ref{GLE 4})}_{p} &  &
\text{Lam\'{e} (\ref{Lame 1})}\\
&  &  &  & \\
\left\{  u_{\beta}(z;p)\right\}  & \longleftrightarrow & \left\{
p,T(p),\left(  r,s\right)  \right\}  & \longleftrightarrow & \left\{
\tilde{B},(r,s)\right\} \\
&  & \Updownarrow &  & \\
\left\{  u_{\beta}(-z;p)\right\}  & \longleftrightarrow & \left\{
p,-T(p),\left(  -r,-s\right)  \right\}  & \longleftrightarrow & \left\{
\tilde{B},(r,s)\right\}
\end{array}
. \label{Relation}%
\end{equation}

\begin{remark}
The equality $\left\{  u_{\beta}(z;p)\right\}  =\left\{  u_{\beta
}(-z;p)\right\}  $ holds if and only if $T(p)=0$; in this case, the family is even.
\end{remark}

In the sequel, we identify $\left\{  u_{\beta}(z;p)\right\}  $ with $\left\{
u_{\beta}(-z;p)\right\}  $, as they are connected via the transformation
$z\rightarrow-z$. We denote this non-even family by
\[
\left\{  u_{\beta}(z;p)\right\}  _{(r,s)},
\]
where $\left(  r,s\right)  \in\mathbb{R}^{2}\setminus\frac{1}{2}\mathbb{Z}%
^{2}$ is the monodromy data of the associated generalized Lam\'{e}-type
equation (\ref{GLE 4})$_{p}$. With this convention, the preceding discussion
yields the following theorem.

\begin{theorem}
\label{Main Theorem4}Let $\tau\in\mathbb{H}$. Suppose the curvature equation
(\ref{Curvature-Lame 1})$_{\tau}$ admits an even family (necessarily unique)
with monodromy data $(r,s)$ $\in$ $\mathbb{R}^{2}\setminus\frac{1}
{2}\mathbb{Z}^{2}$. Then:

(i) For any $p\not =p_{r,s}^{(1)}(\tau)$, the curvature equation
(\ref{Curvature 1})$_{p}$ admits exact one non-even familiey $\left\{
u_{\beta}(z;p)\right\}  _{(r,s)}$, in the sense of (\ref{Relation}),
corresponding to $(r,s)$.

(ii) For $p=p_{\ast}:=p_{r,s}^{(1)}(\tau)$, the curvature equation
(\ref{Curvature 1})$_{p}$ admits exact one \textbf{even} family $\left\{
u_{\beta}(z;p_{\ast})\right\}  _{(r,s)}$, whose monodromy data is $(r,s)$.
\end{theorem}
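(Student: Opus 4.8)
The plan is to pull every datum back to the classical Lam\'{e} equation (\ref{Lame 1}) and push it forward through the monodromy equivalence, so that both parts reduce to bookkeeping of the sign ambiguity in (\ref{correspondence}). First I would use Theorem G to rephrase the hypothesis: the even family of (\ref{Curvature-Lame 1})$_{\tau}$ with data $(r,s)\in\mathbb{R}^{2}\setminus\frac12\mathbb{Z}^{2}$ exists precisely when there is a $\tilde{B}\in\mathbb{C}$ making (\ref{Lame 1}) completely reducible with unitary monodromy $(r,s)$ (equivalently $Z(r,s,\tau)=0$). A preliminary observation records that this $\tilde{B}$ is uniquely determined by the class of $(r,s)$ modulo $\pm$: since every completely reducible Lam\'{e} equation with unitary data yields an even family and the even family is unique, two distinct values of $\tilde{B}$ would produce two even families, a contradiction. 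This uniqueness is exactly what upgrades the later existence statements to ``exactly one.''

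For part (i), fix $p\neq p_{r,s}^{(1)}(\tau)$. By Corollary \ref{Main Corollary1}, choosing $T(p)$ with $T(p)^{2}=\tilde{B}+2\wp(p)$ and $B(p)$ by (\ref{B condition 2}) produces an equation (\ref{GLE 4})$_p$ whose monodromy matrices agree with those of (\ref{Lame 1}); in particular it is completely reducible with the same unitary data $(r,s)$, so Theorem E (in the case $n=1$) furnishes a solution of (\ref{Curvature 1})$_p$. Theorem \ref{Main Theorem at singular} gives $T(p)\neq0$ precisely because $p\neq\pm p_{r,s}^{(1)}(\tau)$, and hence the resulting family is non-even. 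For the ``exactly one,'' I would run the equivalence backwards: any completely reducible (\ref{GLE 4})$_p$ with data $(r,s)$ satisfies $T^{2}-2\wp(p)=\tilde{B}$ by Theorem \ref{Main Theorem1}, so $T=\pm T(p)$; the two signs are exchanged by $z\mapsto-z$ and therefore name the same family after the identification (\ref{Relation}), leaving a single non-even family $\{u_{\beta}(z;p)\}_{(r,s)}$.

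For part (ii), set $p=p_{\ast}=p_{r,s}^{(1)}(\tau)$, equivalently $\tilde{B}=-2\wp(p_{\ast})$. Theorem \ref{Main Theorem at singular} now forces $T(p_{\ast})=0$, so (\ref{GLE 4})$_{p_{\ast}}$ degenerates to the even-symmetry equation (\ref{GLE 2}) while retaining the unitary data $(r,s)$; Theorem E again produces a solution of (\ref{Curvature 1})$_{p_{\ast}}$, which is even since $T=0$. Uniqueness is immediate: the determination of $\tilde{B}$ together with $\tilde{B}=-2\wp(p_{\ast})$ pins $T$ to $0$, and within a family the even representative is unique, so there is exactly one even family with data $(r,s)$; this is consistent with Theorem F, which independently guarantees an even family at $p=p_{r,s}^{(1)}(\tau)$.

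The main obstacle I anticipate is the uniqueness bookkeeping rather than existence. One must check carefully that the two roots $\pm T(p)$ of (\ref{correspondence}) correspond to a single non-even family after the $z\mapsto-z$ identification of (\ref{Relation}), and that no completely reducible (\ref{GLE 4})$_p$ with data $(r,s)$ can arise from a value of $T$ outside $\{\pm T(p)\}$. Both points hinge on two inputs that I would isolate and state cleanly at the outset: the uniqueness of $\tilde{B}$ given $(r,s)$, and the exhaustiveness direction of the monodromy equivalence in Theorem \ref{Main Theorem1}. Once these are in place, the counting in (i) and (ii) is routine.
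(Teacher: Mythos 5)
Your proposal is correct and follows essentially the same route as the paper, whose proof of Theorem \ref{Main Theorem4} is precisely the discussion preceding its statement: Theorem E converts everything to unitary monodromy, Corollary \ref{Main Corollary1} with the correspondence (\ref{correspondence}) produces $T(p)$ with the two signs $\pm T(p)$ identified via $z\mapsto-z$ as in (\ref{Relation}), and Theorem \ref{Main Theorem at singular} separates the cases $p\neq p_{r,s}^{(1)}(\tau)$ (non-even, $T(p)\neq0$) from $p=p_{\ast}$ (even, $T(p)=0$), exactly as you do. The one soft spot is your indirect argument that $\tilde{B}$ is determined by $(r,s)$, which tacitly assumes that distinct values of $\tilde{B}$ with the same monodromy data yield distinct solution families; this detour is unnecessary, since Theorem I (equivalently Theorem G) gives $\tilde{B}=\wp(r+s\tau)$ outright, which is how the paper pins $\tilde{B}$ down (cf.\ (\ref{Recall correspondence})).
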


By Theorem \ref{Main Theorem2} and Theorem \ref{Main Theorem4}, for each
$\tau\in\mathbb{H}$, the classification of solutions to the curvature equation
(\ref{Curvature 1})$_{p}$ with $p$ away from the symmetric points of the
elliptic curve $E_{\tau}$, reduces to the analysis of the simpler curvature
equation (\ref{Curvature-Lame 1}) with a single singular point.

The curvature equation (\ref{Curvature-Lame 1}) has been fully investigated in
\cite{LW-AnnMath}. According to Theorem E, this analysis can be reduced to
studying the associated equation (\ref{Zero equation}) for $(r,s)$\textit{
}$\in\mathbb{R}^{2}$ $\setminus\frac{1}{2}\mathbb{Z}^{2}$, which has already
been carried out in \cite{Chen-Kuo-Lin-Wang-JDG}. We will review the relevant
results below.

We first observe that the function $Z(r,s,\tau)$, as defined in (\ref{zrs}),
possesses certain modularity properties (see \cite[(4.4)]%
{Chen-Kuo-Lin-Wang-JDG} or \cite[(6.13)]{Kuo}) and satisfies (\ref{Property1}
). Consequently, it suffices to consider $Z(r,s,\tau)$ for $\left(
r,s\right)  $ $\in$ $\square$ and $\tau$ $\in$ $F_{0}$, where
\[
\square=[0,1/2]\times\lbrack0,1]\setminus\frac{1}{2}\mathbb{Z}^{2},\text{
}F_{0}=\left\{  \tau\in\mathbb{H}|0\leq\operatorname{Re}\tau\leq1,\left\vert
\tau-\frac{1}{2}\right\vert \geq\frac{1}{2}\right\}  .
\]
We further define
\[
\Delta_{0}=\left\{  \left(  r,s\right)  \left\vert 0<r,s<\frac{1}{2}%
,r+s>\frac{1}{2}\right.  \right\}  .
\]

\noindent\textbf{Theorem} \textbf{H. }\textit{(\cite[Theorem 1.3.]%
{Chen-Kuo-Lin-Wang-JDG}) Let }$\left(  r,s\right)  $ $\in$ $\square$\textit{.
Then }$Z(r,s,\tau)$\textit{\ has a zero in }$\tau\in F_{0}$\textit{\ if and
only if }$\left(  r,s\right)  $ $\in$ $\Delta_{0}$\textit{. Moreover, for each
such }$\left(  r,s\right)  $,\textit{ the zero }$\tau$ $\in$ $F_{0}%
$\textit{\ is unique. }\medskip

Define
\[
\Omega=\left\{  \tau\in F_{0}\left\vert Z(r,s,\tau)=0\text{ for some }\left(
r,s\right)  \in\Delta_{0}\right.  \right\}  .
\]
By Theorem H, there exists a real-analytic map
\[
\tau:\Delta_{0}\longrightarrow\Omega,\text{ }Z(r,s,\tau_{0}(r,s))=0,
\]
which is a bijection.

For $\left(  r,s\right)  \in\Delta_{0}$, let $\tau(r,s)\in\Omega$ denote the
unique solution of $Z(r,s,\tau)=0$. In view of the notation in
(\ref{special point}), we set
\[
p_{\ast}(\tau(r,s)):=p_{r,s}^{(1)}(\tau(r,s))\text{ for }\tau=\tau
(r,s)\in\Omega.
\]
Since $p_{\ast}(\tau(r,s))=p_{r,s}^{(1)}(\tau(r,s))$ satisfies the expression
(\ref{PV6-1}) and $Z(r,s,$ $\tau(r,s))$ $=$ $0$, it follows that
\begin{equation}
\wp(p_{\ast}(\tau(r,s)))=-\frac{1}{2}\wp(r+s\cdot\tau(r,s)).
\label{exception point}%
\end{equation}

We thus obtain the following characterization of $\Lambda_{Noneven}^{(1)}$.

\begin{theorem}
\label{Main Theorem5}The set $\Lambda_{Noneven}^{(1)}$ can be characterized as follows:

(i) $\tau=\tau(r,s)\in\Omega$ for some $\left(  r,s\right)  \in\Delta_{0}$,

(ii) For such $\tau\left(  r,s\right)  \in\Omega$, the admissible singularity
$p$ satisfies
\[
p\in E_{\tau(r,s)}\setminus\{\frac{\omega_{k}}{2},k=0,1,2,3\}\text{, }
p\not =p_{\ast}(\tau(r,s)).
\]

Equivalently,
\[
\Lambda_{Noneven}^{(1)}=\left\{  (\tau,p)\left\vert \tau=\tau(r,s)\in
\Omega\text{, }\left(  r,s\right)  \in\Delta_{0}\text{, }p\not =p_{\ast}
(\tau(r,s))\right.  \right\}  .
\]
Furthermore, the exceptional point $p_{\ast}(\tau(r,s))$ is characterized by
(\ref{exception point}).
\end{theorem}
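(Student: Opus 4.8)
The plan is to treat Theorem \ref{Main Theorem5} as a synthesis of the structural results already in place, rather than as a fresh computation. The key decoupling comes from Theorem \ref{Main Theorem2}: a pair $(\tau,p)$ lies in $\Lambda_{Noneven}^{(1)}$ for \emph{some} admissible $p$ if and only if the single-singularity curvature equation (\ref{Curvature-Lame 1})$_\tau$ admits an even family. This splits the problem cleanly into first identifying the admissible moduli $\tau$, and then, for each such $\tau$, determining the admissible positions $p$.

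For the first task I would invoke Theorem G to replace the statement ``(\ref{Curvature-Lame 1})$_\tau$ admits an even family'' by the condition that $Z(r,s,\tau)=0$ for some $(r,s)\in\mathbb{R}^2\setminus\frac12\mathbb{Z}^2$. Using the modularity of $Z$ together with the symmetry (\ref{Property1}), this condition normalizes to $(r,s)\in\square$ and $\tau\in F_0$. Theorem H then shows that such a zero exists precisely when $(r,s)\in\Delta_0$, in which case $\tau=\tau(r,s)\in\Omega$ is the unique solution; equivalently, the bijection $\tau\colon\Delta_0\to\Omega$ attaches to each admissible $\tau\in\Omega$ a unique monodromy datum $(r,s)\in\Delta_0$. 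This establishes item (i).

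For the second task I would fix $\tau=\tau(r,s)\in\Omega$ and apply Theorem \ref{Main Theorem4}: the even family of (\ref{Curvature-Lame 1})$_\tau$ carries monodromy data $(r,s)$, and for every $p\in E_\tau\setminus\{\omega_k/2\}$ with $p\neq p_\ast:=p_{r,s}^{(1)}(\tau)$ the equation (\ref{Curvature 1})$_p$ admits exactly one non-even family, whereas at $p=p_\ast$ the family is even and hence excluded. This pins down the admissible set of $p$ and gives item (ii). The explicit description of the excluded point then follows by substituting $Z(r,s,\tau(r,s))=0$ into the uniformization formula (\ref{PV6-1}): the numerator collapses to $3\wp\wp'$ and the denominator to $-2\wp'$, so that $\wp(p_\ast)=\wp-\tfrac32\wp=-\tfrac12\wp(r+s\tau(r,s))$, which is exactly (\ref{exception point}).

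The step I expect to require the most care is the modular normalization bridging Theorem G and Theorem H: one must check that reducing $(r,s)$ into $\square$ and $\tau$ into $F_0$ through the symmetries of $Z$ neither creates nor destroys zeros, so that the existence region inside the fundamental domain faithfully represents $\Lambda_{Noneven}^{(1)}$, and that the uniqueness clause of Theorem H renders $p_\ast(\tau(r,s))$ a single well-defined point once $p$ and $-p$ are identified as in the standing convention. Once this bookkeeping is in place, each remaining step is either a direct citation of Theorem \ref{Main Theorem2}, Theorem \ref{Main Theorem4}, Theorem G, and Theorem H, or the short substitution displayed above.
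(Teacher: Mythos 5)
Your proposal is correct and takes essentially the same route as the paper, which presents Theorem \ref{Main Theorem5} as a direct synthesis of Theorem \ref{Main Theorem2}, Theorems G and H (via the bijection $\tau:\Delta_{0}\rightarrow\Omega$), and Theorem \ref{Main Theorem4}, with the exceptional point identified exactly as you compute it, by substituting $Z(r,s,\tau(r,s))=0$ into (\ref{PV6-1}) to obtain $\wp(p_{\ast})=-\tfrac{1}{2}\wp(r+s\cdot\tau(r,s))$. The normalization bookkeeping you flag is handled in the paper by the symmetry (\ref{Property1}) and the modularity properties of $Z$ cited from \cite{Chen-Kuo-Lin-Wang-JDG, Kuo}, together with the identification (\ref{Equivalent of PVI}) of $\pm p_{r^{\prime},s^{\prime}}^{(1)}(\tau)$, so your argument is complete as written.
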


\subsection{Blow-up Analysis of the Family in Theorem
\ref{Main Theorem4}}

\label{Blowup analysis of the non-even family}

Let $\tau=\tau(r,s)\in\Omega$. For each
\[
p\in E_{\tau(r,s)}\setminus\{\frac{\omega_{k}}{2},k=0,1,2,3\},
\]
Theorem \ref{Main Theorem4} ensures the existence of a unique non-even family
of solutions to the curvature equation (\ref{Curvature 1})$_{p}$ if
$p\not =p_{\ast}(\tau(r,s))$; while it is an even family if $p$ $=$ $p_{\ast
}(\tau(r,s))$. We denote the family obtained in Theorem \ref{Main Theorem4} by
$\left\{  u_{\beta}(z;p)\right\}  _{(r,s)}.$ As noted earlier, this family
exhibits blow-up behavior as $\beta\rightarrow\pm\infty$, respectively. The
natural problem is to determine the corresponding blow-up sets.

In this section, we investigate the blow-up sets associated with the non-even
family of the curvature equation (\ref{Curvature 1})$_{p}$, obtained in
Theorem \ref{Main Theorem4}. Specifically, we ask: for which $p$ does the
curvature equation (\ref{Curvature 1})$_{p}$ admit a family that blows up at
the singularities $\pm p$? Equivalently, when can a blow-up configuration
concentrate precisely at $p$ or $-p$?

The following theorem provides a definitive criterion for the family \\
$\left\{
u_{\beta}(z;p)\right\}  _{(r,s)}$ obtained in Theorem \ref{Main Theorem4}.

\begin{theorem}
\label{blowup at p}Let $\tau=\tau(r,s)\in\Omega$ and
\[
p\in E_{\tau(r,s)}\setminus\{\frac{\omega_{k}}{2},k=0,1,2,3\}.
\]
Then the unique family of solutions $\left\{  u_{\beta}(z;p)\right\}
_{(r,s)}$ of the curvature equation (\ref{Curvature 1})$_{p}$ obtained in
Theorem \ref{Main Theorem4} blows up at the singularity $p$ as $\beta
\rightarrow+\infty$ if and only if $p$ is determined by
\[
2p=\pm(r+s\tau)\text{ (mod }\Lambda_{\tau}\text{)}.
\]

\end{theorem}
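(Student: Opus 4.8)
The plan is to reduce the blow-up statement to a vanishing condition for the developing map and then to locate that vanishing through the explicit Floquet (Baker--Akhiezer) solution of (\ref{GLE 4})$_p$. By (\ref{family}), the family concentrates as $\beta\to+\infty$ exactly at the zeros of the developing map $f$. Writing $f=y_1/y_2$ for the ratio of the two Floquet solutions of (\ref{GLE 4})$_p$ normalized as in (\ref{m1}), a zero of $f$ is a zero of the numerator $y_1$. Because the family is identified with $\{u_\beta(-z;p)\}$, whose developing map is $f(-\,\cdot\,)$, blow-up \emph{at the singularity} $p$ for the identified family means $f(p)=0$ \emph{or} $f(-p)=0$; the sign $\pm$ in the conclusion will arise precisely from this identification.

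Next I would analyze $f$ locally at the cone point $p$, where the exponents are $-\tfrac12,\tfrac32$ and the local monodromy is $-\mathrm{Id}$. Generically $y_1,y_2\sim(z-p)^{-1/2}$, so $f(p)$ is finite and nonzero; hence $f(p)=0$ forces $y_1$ to be the subdominant solution $y_1\sim(z-p)^{3/2}$. Using the form
\[
y_1(z)=e^{\lambda z}\,\frac{\sigma(z-a_1)\sigma(z-a_2)}{\sigma(z)\sqrt{\sigma(z-p)\sigma(z+p)}},
\]
forced by the index data ($-1$ at $0$, $-\tfrac12$ at $\pm p$, and exactly two free zeros $a_1,a_2$ because ellipticity of $y_1'/y_1$ demands two numerator factors), the generic exponent $-\tfrac12$ at $p$ is raised to $\tfrac32$ precisely when the numerator has a double zero there. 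A single coincidence $a_i=p$ would give the inadmissible exponent $\tfrac12$, so the only possibility is $a_1=a_2=p$; thus $f(p)=0\iff a_1=a_2=p$.

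Then I would fix $a_1+a_2$ from the monodromy. Rewriting the branching factor via $\sigma(z-p)\sigma(z+p)=-\sigma(z)^2\sigma(p)^2(\wp(z)-\wp(p))$ replaces it by $\sqrt{\wp(p)-\wp(z)}$, the square root of a \emph{periodic} function, so under $z\mapsto z+\omega_k$ the multiplier of $y_1$ is $\pm\,e^{\lambda\omega_k-\eta_k(a_1+a_2)}$. Matching these to the eigenvalues $e^{-2\pi i s}$, $e^{2\pi i r}$ of (\ref{m1}) and eliminating $\lambda$ through the Legendre relation $\eta_1\tau-\eta_2=2\pi i$ gives $a_1+a_2\equiv r+s\tau\ (\mathrm{mod}\ \Lambda_\tau)$. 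With the previous step this yields $f(p)=0\iff 2p\equiv r+s\tau$; the parallel computation $f(-p)=0\iff 2p\equiv-(r+s\tau)$ together with the $z\mapsto-z$ identification produces $2p\equiv\pm(r+s\tau)\ (\mathrm{mod}\ \Lambda_\tau)$. For the converse, starting from $2p\equiv\pm(r+s\tau)$ I would exhibit the candidate $e^{\lambda z}\sigma(z-p)^{3/2}/(\sigma(z)\sqrt{\sigma(z+p)})$ (resp.\ its reflection), verify it solves (\ref{GLE 4})$_p$ with the prescribed monodromy, and invoke uniqueness of the Floquet solution.

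The hard part is concentrated in the third step and is twofold. First, the half-integer exponents at $\pm p$ make $y_1$ genuinely two-valued, so the sign of the square-root factor along each period loop must be tracked with care; that sign is exactly what decides whether $a_1+a_2$ is determined modulo the full lattice $\Lambda_\tau$, as required, or only modulo $\tfrac12\Lambda_\tau$. Second, the monodromy fixes only the \emph{sum} $a_1+a_2$, so to force the coincident double zero $a_1=a_2=p$ (rather than a split $p\pm c$) I would use the apparentness relation and the accessory parameter $B$ from (\ref{B condition 2}); matching the residue of $y_1'/y_1$ to that of the potential at $p$ is the computation that closes this gap.
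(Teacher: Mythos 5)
Your proposal is correct and follows essentially the same route as the paper: the paper's proof is exactly your chain \emph{blow-up at $p$ $\Leftrightarrow$ the Floquet (Baker--Akhiezer) solution has the coincident double zero $a_1=a_2=\pm p$ $\Leftrightarrow$ $T(p)=\pm\wp''(p)/(2\wp'(p))$}, which it gets by citing Theorem \ref{a1,a2 alge equ}(ii) (whose proof is precisely your residue-matching computation at $z=p$), and it then converts this to $2p\equiv\pm(r+s\tau) \pmod{\Lambda_\tau}$ via $\wp(r+s\tau)=\tilde{B}=T(p)^2-2\wp(p)$ together with the duplication formula $\wp(2p)+2\wp(p)=\wp''(p)^2/(4\wp'(p)^2)$, while your alternative bookkeeping $a_1+a_2\equiv r+s\tau$ is the paper's Proposition \ref{0807prop1}. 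The two ``hard parts'' you flag---tracking the sign of the square-root factor along the period cycles, and forcing the coincident zero rather than a split pair summing to $2p$---are exactly what the paper's Baker--Akhiezer framework (the multipliers $\lambda_j(P)$ defined by integrating $\phi(P;\cdot)$ along fixed cycles, and the dichotomy in Theorem \ref{a1,a2 alge equ}) already settles, so your plan closes with the same ingredients the paper uses.
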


\begin{proof}
According to Theorem \ref{a1,a2 alge equ} (ii), the assertion is equivalent to
the existence of a point $p$ such that the corresponding
\[
T(p)=\frac{\wp^{\prime\prime}(p)}{2\wp^{\prime}(p)}\text{ (up to a sign).}%
\]
Recall the relation
\[
\wp(r+s\tau)=\tilde{B}=T(p)^{2}-2\wp(p).
\]
By choosing $p$ such that
\[
2p=\pm(r+s\tau)\text{ (mod }\Lambda_{\tau}\text{),}%
\]
we obtain
\[
T(p)^{2}=\wp(2p)+2\wp(p)=\frac{\wp^{\prime\prime}(p)^{2}}{4\wp^{\prime}%
(p)^{2}},
\]
and hence, equivalently,
\[
T(p)=\frac{\wp^{\prime\prime}(p)}{2\wp^{\prime}(p)}\text{ (up to a sign).}%
\]
This proves the theorem.
\end{proof}

Next, we analyze the family of solutions of the curvature equation that blows
up at the set $\{a_{1},a_{2}\}$, where $a_{j}\not =\pm p$. In this case, we
have
\begin{equation}
a_{j}\not \in \left\{  \pm p,\frac{\omega_{k}}{2},k=0,1,2,3,\right\}  \text{
and }a_{1}\not =\pm a_{2}. \label{blowup sets cond}%
\end{equation}
When the blow-up points do not coincide with the singularities $\pm p$, the
blow-up analysis of the curvature equation (\ref{Curvature 1})$_{p}$, relies
on the classical Pohozaev identity, which characterizes the blow-up points
\cite{CCChen-Lin 1}. To make this more precise, we first introduce the Green
function $G(z;\tau)$ on $E_{\tau}$, defined by
\[
-\Delta G(z;\tau)=\delta_{0}-\frac{1}{\left\vert E_{\tau}\right\vert }\text{,
}\int_{E_{\tau}}G(z;\tau)=0,
\]
where $\left\vert E_{\tau}\right\vert $ denotes the area of the torus. This
function is even and has its only singularity at $z=0$. In the following, we
omit the dependence on $\tau$ and simply write $G(z;\tau)=G(z)$.

Let $\eta$ be the linear map
\[
\eta:E_{\tau}\rightarrow\mathbb{C}\text{ , }z\longmapsto\eta(z)
\]
defined by
\[
\eta(z):=r\eta_{1}(\tau)+s\eta_{2}(z)\text{ if }z=r+s\tau\text{, }%
r,s\in\mathbb{R}.
\]
Recall from \cite{LW-AnnMath}\ the following identity
\begin{equation}
-4\pi\partial_{z}G(z;\tau)=\zeta(z;\tau)-\eta(z):=Z(z,\tau),
\label{derivatives of G}%
\end{equation}
where
\[
Z(z,\tau)=Z(r,s,\tau)\text{ if }z=r+s\tau\text{, }r,s\in\mathbb{R}.
\]

It follows from (\ref{derivatives of G}) that for $\left(  r,s\right)  $ $\in$
$\{\left(  1/2,0\right)  ,$ $(0,1/2),$ $(1/2,1/2)\}$ mod $\mathbb{Z}^{2}$,%
\[
Z\left(  \frac{\omega_{k}}{2},\tau\right)  \equiv0
\]
for any $\tau\in\mathbb{H}$. In other words, each half-period $\frac
{\omega_{k}}{2}$ is always a critical point of $G(z;\tau)$, and these are
referred to as the \textit{trivial} critical points. As shown in
\cite{LW-AnnMath}, for any torus $E_{\tau}$, the Green function $G(z;\tau)$
either has exactly three trivial critical points, or it additionally admits a
pair of \textit{nontrivial} critical points $\pm\sigma\in E_{\tau}$, where
\[
\sigma=r+s\tau,\text{ }\left(  r,s\right)  \in\mathbb{R}^{2}\setminus\frac
{1}{2}\mathbb{Z}^{2}.
\]
Theorem G further indicates that the presence of nontrivial critical points of
$G(z;\tau)$ corresponds to flat tori characterized by the existence of even
solutions to the curvature equation (\ref{Curvature-Lame 1})$_{\tau}$. The
pair of nontrivial critical points $\sigma$ and $-\sigma$ correspond to the
blow-up points of the unique even family $\left\{  v_{\beta}(z)|\beta
\in\mathbb{R}\right\}  $ of solutions, associated with the limits\ $\beta
\rightarrow+\infty$ and $\beta\rightarrow-\infty$, respectively.

The blow-up analysis of the associated multiple Green function plays a crucial
role in understanding cone spherical metrics. Very recently, Chen, Fu and Lin
in \cite{Chen-Fu-Lin-Hitchin} studied the curvature equation
\begin{equation}
\Delta v+e^{v}=4\pi(\delta_{p}+\delta_{-p})\text{ on }E_{\tau}%
.\label{curvature2}%
\end{equation}
Since the total curvature of the curvature equation (\ref{curvature2}) is
$8\pi$, any blow-up family of solutions admits only a single blow-up point.
This allows the application of the method of anti-holomorphic dynamics
developed by Bergweiler and Eremenko \cite{Bergweiler-Eremenko-dynamics},
together with Hitchin's formula, to obtain refined results concerning the
critical points of the associated multiple Green function%
\[
\frac{1}{2}\left(  G(z-p)+G(z+p)\right)  .
\]
In contrast, for the curvature equation (\ref{Curvature 1}), the total
curvature is $16\pi$, and, generically, any blow-up family involves two
blow-up points. Hence, the analysis becomes significantly more intricate.

For each $p\in E_{\tau(r,s)}\setminus\{\frac{\omega_{k}}{2},k=0,1,2,3\}$, we
then introduce the associated \textbf{multiple Green function}, defined by
\[
G_{p}(z_{1},z_{2}):=G(z_{1}-z_{2})-\sum_{i=1}^{2}(G(z_{i})+\frac{1}{2}%
G(z_{i}-p)+\frac{1}{2}G(z_{i}+p)).
\]
This multiple Green function naturally arises in the analysis of blow-up
configurations via the Pohozaev identity: its critical points characterize the
locations of blow-up points whenever they are not equal to $\pm p$.

By differentiating and applying (\ref{derivatives of G}), we obtain that the
pair $\{a_{1},a_{2}\}$ satisfies the following relations
\begin{equation}
\frac{\wp^{\prime}(a_{1})}{2(\wp(a_{1})-\wp(p))}+\frac{\wp^{\prime}(a_{2}%
)}{2(\wp(a_{2})-\wp(p))}-\frac{\wp^{\prime}(a_{1})-\wp^{\prime}(a_{2})}%
{\wp(a_{1})-\wp(a_{2})}=-2Z(\sigma),\label{22}%
\end{equation}
and
\begin{equation}
\frac{\wp^{\prime}(a_{1})}{2(\wp(a_{1})-\wp(p))}-\frac{\wp^{\prime}(a_{2}%
)}{2(\wp(a_{2})-\wp(p))}-\frac{\wp^{\prime}(a_{1})+\wp^{\prime}(a_{2})}%
{\wp(a_{1})-\wp(a_{2})}=0,\label{23}%
\end{equation}
where
\[
\sigma=a_{1}+a_{2}=r+s\tau
\]
for a unique pair $\left(  r,s\right)  \in\square$. Moreover, by Theorem
\ref{blowup at p}, we have%
\[
2p\not =\pm\sigma.
\]
The above equations (\ref{22}) and (\ref{23}) represent the critical point
conditions for the multiple Green function $G_{p}(z_{1},z_{2})$.

To study the system of equations (\ref{22}) and (\ref{23}), we introduce the
following notations: For $j=1,2,$ set%
\begin{equation}
\left(  x_{j},y_{j}\right)  :=\left(  \wp(a_{j}),\wp^{\prime}(a_{j})\right)  .
\label{Notation}%
\end{equation}
By the classical relation for the Weierstrass elliptic function, each pair
$\left(  x_{j},y_{j}\right)  $ lies on the elliptic curve and satisfies%
\begin{equation}
y_{j}^{2}=4x_{j}^{3}-g_{2}x_{j}-g_{3},\text{ }j=1,2.
\label{elliptic relations}%
\end{equation}
With this notation, equations (\ref{22}) and (\ref{23}) can be rewritten as
the following linear system in $y_{1}$, $y_{2}:$%
\begin{equation}
\left\{
\begin{array}
[c]{l}%
\left(  \frac{1}{2(x_{1}-\wp(p))}-\frac{1}{x_{1}-x_{2}}\right)  y_{1}+\left(
\frac{1}{2(x_{2}-\wp(p))}+\frac{1}{x_{1}-x_{2}}\right)  y_{2}=-2Z(\sigma),\\
\\
\left(  \frac{1}{2(x_{1}-\wp(p))}-\frac{1}{x_{1}-x_{2}}\right)  y_{1}-\left(
\frac{1}{2(x_{2}-\wp(p))}+\frac{1}{x_{1}-x_{2}}\right)  y_{2}=0,
\end{array}
\right.  \label{Green system}%
\end{equation}
together with the elliptic curve relations (\ref{elliptic relations}).

Hence, the existence of a blow-up family of solutions to the curvature
equation (\ref{Curvature 1})$_{p}$ blowing up at $\{a_{1},a_{2}\}$ subject to
condition (\ref{blowup sets cond}) as $\beta\rightarrow\pm\infty$ is
equivalent to the existence of points $\left(  x_{1},y_{1}\right)  $ and
$\left(  x_{2},y_{2}\right)  $ on the elliptic curve defined by
(\ref{elliptic relations}) , such that $(y_{1},y_{2})$ forms a nontrivial
solution of the linear system (\ref{Green system}).

A straightforward computation shows that the determinant of the linear system
(\ref{Green system}) is given by%
\[
\det=\frac{\left(  x_{1}+x_{2}-2\wp(p)\right)  ^{2}}{2(x_{1}-x_{2})(x_{1}%
-\wp(p))(x_{2}-\wp(p))}.
\]
The constraint (\ref{blowup sets cond}) for $a_{1}$, $a_{2}$ implies that
\[
x_{1}\not =x_{2}\text{, and }x_{j}\not =\wp(p)\text{.}%
\]
Thus, the determinant vanishes if and only if
\[
\wp(p)=\frac{1}{2}(x_{1}+x_{2}).
\]
In this situation, the linear system (\ref{Green system}) degenerates and
reduces to
\[
Z(\sigma)=Z(r,s,\tau)=0.
\]

\begin{theorem}
\label{Main Theorem6}Suppose the curvature equation (\ref{Curvature 1})$_{p}$
admits a blow-up family $\{u_{\beta}(z)|\beta\in\mathbb{R}\}$, either even or
non-even, of solutions blowing up at $\{a_{1},a_{2}\}$ subject to condition
(\ref{blowup sets cond}) as $\beta\rightarrow\pm\infty$. Adopt the notation
(\ref{Notation}) and set
\[
\sigma:=a_{1}+a_{2}=r+s\tau
\]
where $\left(  r,s\right)  $ is the unique real pair in $[0,1]\times
\lbrack0,1]$ representing the monodromy data of the associated generalized
Lam\'{e}-type equation (in either the even or punctured non-even symmetry
case). Then:

(1)
\[
2p\not =\pm\sigma\text{ (mod }\Lambda_{\tau}\text{)}%
\]

(2)
\[
Z(\sigma)=Z(r,s,\tau)=0\Longleftrightarrow\wp(p)=\frac{1}{2}(x_{1}+x_{2}).
\]

\end{theorem}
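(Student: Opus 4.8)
The plan is to deduce both assertions from the critical-point analysis of the multiple Green function $G_p(z_1,z_2)$ already assembled above. Since the family $\{u_\beta(z)\}$ concentrates at $\{a_1,a_2\}$ with $a_j\ne\pm p$, the Pohozaev identity recalled before (\ref{22}) forces $\{a_1,a_2\}$ to be a critical point of $G_p$, so that $(y_1,y_2)=(\wp'(a_1),\wp'(a_2))$ solves the linear system (\ref{Green system}) whose right-hand side is $(-2Z(\sigma),0)$. Two consequences of the constraint (\ref{blowup sets cond}) will be used throughout: the half-periods $\frac{\omega_k}{2}$ are the only zeros of $\wp'$ on $E_\tau$, so $y_j=\wp'(a_j)\ne 0$; and $x_1\ne x_2$, $x_j\ne\wp(p)$, so the denominator of the determinant computed above never vanishes.

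For assertion (1) I would invoke Theorem \ref{blowup at p} directly. That theorem characterizes concentration of the family at the singularity $p$ by $2p=\pm\sigma\pmod{\Lambda_\tau}$. Because (\ref{blowup sets cond}) places both blow-up points away from $\pm p$, the family does not concentrate at $p$ in either limit, and the contrapositive of Theorem \ref{blowup at p} gives $2p\ne\pm\sigma$.

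For assertion (2) I would run the Fredholm alternative for (\ref{Green system}). If $Z(\sigma)=0$ the system becomes homogeneous, yet it admits the nontrivial solution $(y_1,y_2)$ (nontrivial since $y_j\ne 0$); hence its determinant must vanish, and from
\[
\det=\frac{\left(x_1+x_2-2\wp(p)\right)^2}{2(x_1-x_2)(x_1-\wp(p))(x_2-\wp(p))}
\]
together with the nonvanishing of the denominator we conclude $x_1+x_2-2\wp(p)=0$, i.e.\ $\wp(p)=\tfrac12(x_1+x_2)$. Conversely, substituting $\wp(p)=\tfrac12(x_1+x_2)$ gives $x_j-\wp(p)=\pm\tfrac12(x_1-x_2)$, so each term $\frac{1}{2(x_j-\wp(p))}$ cancels its companion $\mp\frac{1}{x_1-x_2}$ exactly; the coefficient matrix of (\ref{Green system}) then vanishes identically and the first row degenerates to $0=-2Z(\sigma)$, yielding $Z(\sigma)=0$.

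There is no genuine analytic obstacle here, as every step is elementary; the one point demanding care is the degenerate locus $\wp(p)=\tfrac12(x_1+x_2)$. The key is to verify that there the coefficient matrix does not merely drop rank but collapses to zero, since it is exactly this total degeneration that turns the inhomogeneous system into the single scalar constraint $Z(\sigma)=0$ instead of leaving $Z(\sigma)$ free. The substitution above, which shows that both the coefficient of $y_1$ and the coefficient of $y_2$ vanish at this locus, closes this gap and completes the equivalence.
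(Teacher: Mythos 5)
Your proposal is correct and follows essentially the same route as the paper, whose argument appears in the discussion preceding Theorem \ref{Main Theorem6}: assertion (1) is obtained there exactly as you do, by citing Theorem \ref{blowup at p} against condition (\ref{blowup sets cond}), and assertion (2) from the vanishing of the determinant $\det=\frac{\left(  x_{1}+x_{2}-2\wp(p)\right)^{2}}{2(x_{1}-x_{2})(x_{1}-\wp(p))(x_{2}-\wp(p))}$ of the linear system (\ref{Green system}) derived from the Pohozaev-type critical-point conditions (\ref{22})--(\ref{23}). Your two additions --- noting that $y_{j}=\wp^{\prime}(a_{j})\neq0$ because (\ref{blowup sets cond}) keeps each $a_{j}$ off the half-periods (so the Fredholm alternative applies), and verifying that at $\wp(p)=\frac{1}{2}(x_{1}+x_{2})$ the coefficient matrix vanishes identically so the first row forces $0=-2Z(\sigma)$ --- are precisely the details the paper leaves implicit in the phrase \emph{``the linear system (\ref{Green system}) degenerates and reduces to $Z(\sigma)=0$.''}
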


\begin{remark}
The blow-up family $\{u_{\beta}(z)|\beta\in\mathbb{R}\}$ in Theorem
\ref{Main Theorem6} is \textbf{not} necessary obtained from Theorem
\ref{Main Theorem4}.
\end{remark}

Theorem \ref{Main Theorem6} and Theorem \ref{Main Theorem4} can be used to
characterize the blow-up points subject to condition (\ref{blowup sets cond}).
Suppose $\tau=\tau(r,s)\in\Omega$, that is, $Z(r,s,\tau(r,s))=0$. By Theorem
\ref{Main Theorem4}, for each $p$, there exists a family of solutions
\[
\{u_{\beta}(z;p)|\beta\in\mathbb{R}\}_{(r,s)}.
\]
This family of solutions blows up at $\{a_{1}^{\pm},a_{2}^{\pm}\}$ subject to
condition (\ref{blowup sets cond}) as $\beta\rightarrow\pm\infty$ whenever the
singularity $p$ satisfies $2p$ $\not =$ $\pm\sigma$ (mod $\Lambda_{\tau}$).
Moreover, this family $\{u_{\beta}(z;p)|\beta\in\mathbb{R}\}_{(r,s)}$ is a
non-even family provided $p\not =p_{\ast}$.

By Theorem \ref{Main Theorem6}, we obtain
\begin{equation}
2\wp(p)=x_{1}+x_{2}=\wp(a_{1})+\wp(\sigma-a_{1}). \label{blowup equation1}%
\end{equation}
Sincce $a_{1}\not =\sigma$, applying the addition formula for the Weierstrass
function,%
\begin{equation}
\label{addition formula for wp}\wp(z+w)=-\wp(z)-\wp(w)+\frac{\left(
\wp^{\prime}(z)-\wp^{\prime}(w)\right)  ^{2}}{4\left(  \wp(z)-\wp(w)\right)
^{2}},
\end{equation}
equation (\ref{blowup equation1}) is equivalent to
\begin{equation}
\left\{
\begin{array}
[c]{l}%
y_{1}^{2}+2\wp^{\prime}(\sigma)y_{1}+\wp^{\prime}(\sigma)^{2}-4(2\wp
(p)+\wp(\sigma))(\wp(\sigma)-x_{1})^{2}=0,\\
\\
y_{1}^{2}=4x_{1}^{3}-g_{2}x_{1}-g_{3}.
\end{array}
\right.  \label{blowup system1}%
\end{equation}
where $\left(  x_{1},y_{1}\right)  =\left(  \wp(a_{1}),\wp^{\prime}%
(a_{1})\right)  $.

Solving the system yields four solutions for $x_{1}$:%
\[
x_{1}=\wp(p)\pm\Delta_{+}(p,\sigma)\text{, or }\wp(p)\pm\Delta_{-}(p,\sigma),
\]
where%
\[
\Delta_{\pm}(p,\sigma):=\frac{1}{2}\sqrt{g_{2}+4\left[  \wp(p)^{2}-2\wp
(\sigma)\wp(p)-2\wp(\sigma)^{2}\pm\wp^{\prime}(\sigma)\sqrt{2\wp(p)+\wp
(\sigma)}\right]  }%
\]
By symmetry in $x_{1}$ and $x_{2}$ in the equation (\ref{blowup equation1}),
the blow-up configurations of the family $\{u_{\beta}(z;p)|\beta\in
\mathbb{R}\}_{(r,s)}$ are determined by the monodromy data $\left(
r,s\right)  $ as follows:

\begin{itemize}
\item As $\beta\rightarrow+\infty$, the blow-up set $\{a_{1}^{+},a_{2}^{+}\}$
satisfies%
\begin{equation}
\left\{  \wp(a_{1}^{+}),\wp(a_{2}^{+})\right\}  =\left\{  \wp(p)\pm\Delta
_{+}(p,\sigma)\right\}  . \label{positive blowup}%
\end{equation}

\item As $\beta\rightarrow-\infty$, the blow-up set $\{a_{1}^{-},a_{2}^{-}\}$
satisfies%
\begin{equation}
\left\{  \wp(a_{1}^{-}),\wp(a_{2}^{-})\right\}  =\left\{  \wp(p)\pm\Delta
_{-}(p,\sigma)\right\}  . \label{Negative bolowup}%
\end{equation}

\end{itemize}

\begin{theorem}
\label{Main Theorem7}Let $\tau=\tau(r,s)\in\Omega$, and $\{u_{\beta
}(z;p)|\beta\in\mathbb{R}\}_{(r,s)}$ be the family of solutions obtained in
Theorem \ref{Main Theorem4}. Suppose the singularity $p$ satisfies
\[
2p\not =\pm\sigma\text{ (mod }\Lambda_{\tau}\text{), }\sigma=r+s\tau.
\]
Then the blow-up set of $\{u_{\beta}(z;p)|\beta\in\mathbb{R}\}_{(r,s)}$ is
characterized by (\ref{positive blowup}) and (\ref{Negative bolowup}) as
$\beta\rightarrow\pm\infty$, respectively.

In particular, when $p=p_{\ast}$, by (\ref{exception point}), the blow-up sets
of the \textbf{even} family simplify to%
\[
\left\{  a_{1}^{+},a_{2}^{+}\right\}  =\left\{  -a_{1}^{-},-a_{2}^{-}\right\}
,
\]
and
\begin{equation}
\left\{  \wp(a_{1}^{+}),\wp(a_{2}^{+})\right\}  =\left\{  \wp(a_{1}^{-}%
),\wp(a_{2}^{-})\right\}  =\left\{  \frac{1}{2}\left(  -\wp(\sigma)\pm
\sqrt{g_{2}-3\wp(\sigma)^{2}}\right)  \right\}  . \label{blowup  set even}%
\end{equation}

\end{theorem}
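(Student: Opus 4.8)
The statement essentially packages the critical-point computation carried out in the preceding paragraphs, so my plan is to organize that computation into a clean deduction and then specialize to $p=p_{\ast}$. First I would invoke Theorem \ref{Main Theorem4} to obtain the family $\{u_{\beta}(z;p)\}_{(r,s)}$, which blows up at the zeros of its developing map $f$ as $\beta\to+\infty$ and at the poles of $f$ as $\beta\to-\infty$. Since $2p\neq\pm\sigma$, Theorem \ref{blowup at p} guarantees that neither blow-up set touches $\pm p$, so each set $\{a_1,a_2\}$ obeys (\ref{blowup sets cond}) and is therefore a critical point of the multiple Green function $G_p$, i.e.\ a solution of (\ref{22})--(\ref{23}), equivalently of the linear system (\ref{Green system}) together with the curve relations (\ref{elliptic relations}). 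Because $\tau=\tau(r,s)\in\Omega$, we have $Z(\sigma)=Z(r,s,\tau)=0$, and Theorem \ref{Main Theorem6}(2) then forces $\wp(p)=\tfrac12(x_1+x_2)$, which is precisely (\ref{blowup equation1}).

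The next step is algebraic. Writing $a_2=\sigma-a_1$ and inserting the addition formula (\ref{addition formula for wp}) turns (\ref{blowup equation1}) into the system (\ref{blowup system1}), whose first line is the completed square $(y_1+\wp'(\sigma))^2=4(2\wp(p)+\wp(\sigma))(\wp(\sigma)-x_1)^2$. Taking square roots gives two branches $y_1+\wp'(\sigma)=\pm 2\sqrt{2\wp(p)+\wp(\sigma)}\,(\wp(\sigma)-x_1)$; feeding each branch into $y_1^2=4x_1^3-g_2x_1-g_3$ produces a cubic in $x_1$ whose three roots sum to $2\wp(p)+\wp(\sigma)$. The root $x_1=\wp(\sigma)$ is extraneous\,---\,it corresponds to the excluded value $a_1=\pm\sigma$\,---\,so each branch contributes a genuine pair $\{\wp(p)\pm\Delta_{\pm}(p,\sigma)\}$ with $x_1+x_2=2\wp(p)$, matching (\ref{blowup equation1}). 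Together the two branches furnish the four values $\wp(p)\pm\Delta_{\pm}$ and hence the two critical pairs of $G_p$.

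The hard part is deciding which critical pair blows up as $\beta\to+\infty$ and which as $\beta\to-\infty$; this is where I expect the real care. The mechanism is the reflection $z\mapsto-z$: it exchanges the zeros and poles of $f$, hence exchanges the two limits $\beta\to\pm\infty$, and by (\ref{Monodromy symmetry3}) it sends the monodromy data $(r,s)$ to $(-r,-s)$, i.e.\ $\sigma=r+s\tau$ to $-\sigma$. Since $\wp$ is even while $\wp'$ is odd, replacing $\sigma$ by $-\sigma$ flips only the sign of the term $\wp'(\sigma)\sqrt{2\wp(p)+\wp(\sigma)}$ inside $\Delta_{\pm}$, so that $\Delta_+(p,-\sigma)=\Delta_-(p,\sigma)$. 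Thus the pair attached to the $+\infty$ limit (sum $\sigma$) is $\{\wp(p)\pm\Delta_+\}$, giving (\ref{positive blowup}), while the pair attached to the $-\infty$ limit (the reflected configuration, effective sum $-\sigma$) is $\{\wp(p)\pm\Delta_-\}$, giving (\ref{Negative bolowup}).

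Finally I would specialize to $p=p_{\ast}$. By (\ref{exception point}) one has $\wp(p_{\ast})=-\tfrac12\wp(\sigma)$, so $2\wp(p)+\wp(\sigma)=0$ and the distinguishing square-root term in $\Delta_{\pm}$ vanishes, forcing $\Delta_+=\Delta_-$. Substituting $\wp(p)^2-2\wp(\sigma)\wp(p)-2\wp(\sigma)^2=-\tfrac34\wp(\sigma)^2$ then gives $\Delta_{\pm}=\tfrac12\sqrt{g_2-3\wp(\sigma)^2}$ and the common value $\{\tfrac12(-\wp(\sigma)\pm\sqrt{g_2-3\wp(\sigma)^2})\}$ of (\ref{blowup  set even}). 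Moreover, at $p=p_{\ast}$ the family is even, so it is invariant under $z\mapsto-z$ coupled with $\beta\mapsto-\beta$; reflection therefore carries the $\beta\to+\infty$ blow-up set onto the $\beta\to-\infty$ set, yielding $\{a_1^{+},a_2^{+}\}=\{-a_1^{-},-a_2^{-}\}$ and completing the argument.
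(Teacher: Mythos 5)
Your proposal is correct and takes essentially the same route as the paper, whose proof of this theorem is precisely the derivation preceding its statement: Theorem \ref{Main Theorem6} (with $Z(r,s,\tau(r,s))=0$) forces $2\wp(p)=x_{1}+x_{2}$, the addition formula reduces this to the system (\ref{blowup system1}) with roots $\wp(p)\pm\Delta_{\pm}(p,\sigma)$ sorted into (\ref{positive blowup}) and (\ref{Negative bolowup}), and (\ref{blowup set even}) follows by substituting $\wp(p_{\ast})=-\tfrac{1}{2}\wp(\sigma)$. The only blemish is your phrase that $z\mapsto-z$ \emph{exchanges the zeros and poles of} $f$ --- false for a non-even family (it holds only in the even case $p=p_{\ast}$, where $f(z)f(-z)$ is constant) --- but the mechanism you actually rely on, namely that the $\beta\rightarrow-\infty$ blow-up points are the zeros of the dual Baker--Akhiezer function and hence by (\ref{r(P*),s(P*)}) have effective sum $-\sigma$, so that the oddness of $\wp^{\prime}$ swaps $\Delta_{+}$ and $\Delta_{-}$, is sound and in fact makes explicit the branch assignment that the paper merely asserts with the words ``by symmetry in $x_{1}$ and $x_{2}$''.
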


The last statement (\ref{blowup set even}) follows immediately by substituting
$\wp(p_{\ast})=-\frac{1}{2}\wp(\sigma)$ into (\ref{positive blowup}) and
(\ref{Negative bolowup}). Moreover, the point $\sigma$ $=$ $r+s\tau$, with
$\left(  r,s\right)  \in$ $\mathbb{R}^{2}$ $\setminus\frac{1}{2}\mathbb{Z}%
^{2}$, also represents the blow-up point of the even family $\{v_{\beta
}(z)|\beta\in\mathbb{R}\}_{(r,s)}$ of the curvature equation
(\ref{Curvature-Lame 1}) as $\beta\rightarrow+\infty$ in Theorem
\ref{Main Theorem4}.

Hence Theorem \ref{Main Theorem7} establishes the correspondence between the
blow-up set of $\{u_{\beta}(z;p)|\beta\in\mathbb{R}\}_{(r,s)}$ for the
curvature equation (\ref{Curvature 1})$_{p}$ and that of the curvature
equation (\ref{Curvature-Lame 1}).

Let $\tau=\tau(r,s)\in\Omega$ and suppose $p$ satisfies
\[
2p=\pm(r+s\tau)\text{ (mod }\Lambda_{\tau}\text{)}.
\]
According to Theorem \ref{blowup at p}, the family $\{u_{\beta}(z;p)|\beta
\in\mathbb{R}\}_{(r,s)}$ blows up at the singularity $p$, removing a total
curvature of $16\pi$, as $\beta\rightarrow+\infty$. By taking $2p\not =%
\pm\sigma$ and $2p\rightarrow\pm\sigma$ (mod $\Lambda_{\tau}$), we obtain
\[
\wp(a_{1}^{+})=\wp(a_{2}^{+})=\wp(p),
\]
which is equivalent to the vanishing of the discriminant%
\[
\Delta_{+}(p,\sigma)=0.
\]
Since $\Delta_{+}(p,\sigma)=0$, we obtain%
\[
\Delta_{-}(p,\sigma)=\sqrt{\frac{g_{2}}{2}-4\wp(\sigma)^{2}-4\wp(p)\wp
(\sigma)+2\wp(p)^{2}}.
\]
By Theorem \ref{Main Theorem7}, the opposite blow-up set of this family as
$\beta\rightarrow-\infty$, can be characterized by%
\[
\left\{  \wp(a_{1}^{-}),\wp(a_{2}^{-})\right\}  =\wp(p)\pm\sqrt{\frac{g_{2}%
}{2}+2\wp(p)^{2}-4\wp(p)\wp(\sigma)-4\wp(\sigma)^{2}}.
\]

We summarize the above discussion as below.

\begin{corollary}
\label{Main corollary1}Let $\tau=\tau(r,s)\in\Omega$, and let
\[
\{u_{\beta}(z;p)|\beta\in\mathbb{R}\}_{(r,s)}%
\]
be the family of solutions obtained in Theorem \ref{Main Theorem4}. Suppose
$p$ satisfies
\[
2p=\pm(r+s\tau)\text{ (mod }\Lambda_{\tau}\text{)}.
\]
Then:

(i) As $\beta\rightarrow+\infty$, $\{u_{\beta}(z;p)|\beta\in\mathbb{R}%
\}_{(r,s)}$ blows up at $p$, removing a total curvature of $16\pi.$

(ii) As $\beta\rightarrow-\infty$, $\{u_{\beta}(z;p)|\beta\in\mathbb{R}%
\}_{(r,s)}$ blows up at two distinct points $\{a_{1}^{-},a_{2}^{-}\}$,
characterized by
\[
\left\{  \wp(a_{1}^{-}),\wp(a_{2}^{-})\right\}  =\wp(p)\pm\sqrt{\frac{g_{2}%
}{2}+2\wp(p)^{2}-4\wp(p)\wp(\sigma)-4\wp(\sigma)^{2}},
\]
where each blow-up point removes a total curvature of $8\pi.$
\end{corollary}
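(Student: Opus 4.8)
The plan is to obtain both parts by specializing the two blow-up results already in hand, Theorem \ref{blowup at p} and Theorem \ref{Main Theorem7}, and by treating the hypothesis $2p=\pm\sigma$ as a degeneration in which the discriminant $\Delta_+(p,\sigma)$ vanishes.

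For part (i) I would quote Theorem \ref{blowup at p} verbatim: the assumption $2p=\pm(r+s\tau)\ (\mathrm{mod}\ \Lambda_\tau)$ is exactly its hypothesis, so $\{u_\beta(z;p)\}_{(r,s)}$ blows up at the single point $p$ as $\beta\to+\infty$. To identify the curvature removed as $16\pi$, I would use conservation of total mass: every member of the family solves (\ref{Curvature 1})$_p$, so $\int_{E_\tau}e^{u_\beta}=16\pi$ for all $\beta$ (pair the equation with the constant $1$ and use $\langle\Delta u_\beta,1\rangle=0$). Through the Liouville representation (\ref{502}), as $\beta\to+\infty$ the mass concentrates at the zeros of the developing map $f$, each zero contributing $8\pi$ times its order; the coalescence $a_1^+=a_2^+=p$ forces a double zero of $f$ at the cone point $p$, so all $16\pi$ is absorbed there.

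For part (ii) the approach is a limiting argument off the hypotheses of Theorem \ref{Main Theorem7}. For $2p\neq\pm\sigma$ that theorem gives $\{\wp(a_1^\pm),\wp(a_2^\pm)\}=\{\wp(p)\pm\Delta_\pm(p,\sigma)\}$; letting $2p\to\pm\sigma$, the two points of the $\beta\to+\infty$ set merge at $p$, i.e. $\wp(a_1^+)=\wp(a_2^+)=\wp(p)$, which is precisely $\Delta_+(p,\sigma)=0$. I would then substitute this relation into $\Delta_-$. Writing $X:=g_2+4\wp(p)^2-8\wp(\sigma)\wp(p)-8\wp(\sigma)^2$ and $W:=\wp'(\sigma)\sqrt{2\wp(p)+\wp(\sigma)}$, so that $4\Delta_\pm^2=X\pm 4W$, the relation $\Delta_+=0$ reads $X=-4W$, whence
\[
4\Delta_-^2=X-4W=2X,\qquad \Delta_-(p,\sigma)=\sqrt{\tfrac{g_2}{2}+2\wp(p)^2-4\wp(p)\wp(\sigma)-4\wp(\sigma)^2}.
\]
Feeding $\Delta_-$ back into Theorem \ref{Main Theorem7} produces the stated formula for $\{\wp(a_1^-),\wp(a_2^-)\}$; since $a_1^-,a_2^-$ are regular points distinct from $\{0,\pm p\}$ and from the half-periods, each carries exactly one standard bubble, i.e. $f$ has a simple pole there, so each removes $8\pi$.

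The main obstacle is to make these two degenerations rigorous rather than merely formal. First, Theorem \ref{Main Theorem7} is proved only for $2p\neq\pm\sigma$, so the passage to the boundary case requires continuity of the blow-up locations in the parameter $p$; I would deduce this from the algebraic dependence of the critical-point system (\ref{Green system})--(\ref{elliptic relations}) on $\wp(p)$ together with uniform a priori bounds on $\{u_\beta(z;p)\}_{(r,s)}$ away from the concentration set. Second, and more delicately, the curvature quantization---$16\pi$ at the single cone point $p$ versus $8\pi$ at each of the two regular points $a_j^-$---must be read off from the local order of $f$ at each blow-up point; identifying the merged configuration $a_1^+=a_2^+=p$ with a genuine order-two zero of $f$ at the cone singularity of angle $4\pi$ is the crux, and this is where the monodromy data of Theorem \ref{Main Theorem4} enter the count.
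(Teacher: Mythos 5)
Your proposal is correct and follows essentially the same route as the paper: part (i) is exactly Theorem \ref{blowup at p} (with the total curvature $16\pi$ read off from integrating the equation), and part (ii) is the same degeneration $2p\to\pm\sigma$ forcing $\Delta_{+}(p,\sigma)=0$, followed by the identical algebraic substitution ($4\Delta_{-}^{2}=X-4W=2X$, in your notation) giving $\Delta_{-}=\sqrt{\tfrac{g_{2}}{2}+2\wp(p)^{2}-4\wp(p)\wp(\sigma)-4\wp(\sigma)^{2}}$ and an appeal to Theorem \ref{Main Theorem7}. The rigor concerns you raise at the end (continuity of the blow-up locations as $2p\to\pm\sigma$ and the quantization of curvature via the local order of the developing map) are in fact treated at the same formal level in the paper, which passes to the limit without further justification, so your added caution goes beyond, rather than against, the paper's own argument.
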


The two distinct blow-up points $a_{1}^{-}$, $a_{2}^{-}$ may collapse to the
singularity when $g_{2}(\tau)=0$. Indeed, consider%
\[
\tau=\rho:=e^{\pi i/3}.
\]
It is well known from \cite[Example 2.6]{LW-AnnMath} that
\[
g_{2}(\rho)=0\text{, }Z(\frac{1}{3},\frac{1}{3},\rho)=0\text{, }\wp(\frac
{1}{3}+\frac{\rho}{3};\rho)=0.
\]
That is,
\[
\rho=\tau\left(  \frac{1}{3},\frac{1}{3}\right)  \in\Omega.
\]
Suppose%
\[
p_{0}=\frac{1}{3}+\frac{\rho}{3}\text{ (mod }\Lambda_{\rho}\text{)}.
\]
Then we have
\[
\wp(p_{0};\rho)=\wp(2p_{0};\rho)=\wp(\frac{1}{3}+\frac{\rho}{3};\rho)=0.
\]
By $g_{2}(\rho)=0$, it follows that%
\[
\wp^{\prime\prime}(p_{0};\rho)=0,
\]
which implies
\[
T(p_{0})=\frac{\wp^{\prime\prime}(p_{0})}{2\wp^{\prime}(p_{0})}=0.
\]
Thus,
\[
p_{0}=p_{\ast}.
\]
Therefore, the family
\[
\{u_{\beta}(z;p_{\ast})|\beta\in\mathbb{R}\}_{\left(  \frac{1}{3},\frac{1}%
{3}\right)  }%
\]
is an \textbf{even} family of solution to the curvature equation
(\ref{Curvature 1})$_{\tau=\rho,p=p_{\ast}}$.

Moreover, $p_{\ast}$ satisfies
\[
2p_{\ast}=\frac{2}{3}+\frac{2\rho}{3}=\frac{1}{3}+\frac{\rho}{3}=\sigma\text{
(mod }\Lambda_{\rho}\text{)},
\]
It then follows from Theorem \ref{blowup at p} and Corollary
\ref{Main corollary1}, $\{u_{\beta}(z;p_{\ast})|\beta\in\mathbb{R}\}_{\left(
\frac{1}{3},\frac{1}{3}\right)  }$ blows up at $p_{\ast}$ as $\beta
\rightarrow+\infty,$ and at $-p_{\ast}$ as $\beta\rightarrow-\infty$.

\begin{corollary}
Let $\tau=\rho=e^{\pi i/3}$. Then the even family
\[
\{u_{\beta}(z;p_{\ast})|\beta\in\mathbb{R}\}_{\left(  \frac{1}{3},\frac{1}%
{3}\right)  }%
\]
to the curvature equation (\ref{Curvature 1})$_{\tau=\rho,p=p_{\ast}}$ blows
up at $p_{\ast}$ as $\beta\rightarrow+\infty,$ and at $-p_{\ast}$ as
$\beta\rightarrow-\infty$.
\end{corollary}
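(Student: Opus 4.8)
The plan is to synthesize the special arithmetic of the hexagonal torus $E_{\rho}$ with the blow-up criteria already established. First I would pin down that the singularity $p_{0}=\frac{1}{3}+\frac{\rho}{3}$ coincides with the exceptional point $p_{\ast}$. Starting from the known facts $g_{2}(\rho)=0$, $Z(\frac{1}{3},\frac{1}{3},\rho)=0$, and $\wp(\frac{1}{3}+\frac{\rho}{3};\rho)=0$, one computes $\wp(p_{0})=\wp(2p_{0})=0$, and then, via the identity $\wp''(p_{0})=6\wp(p_{0})^{2}-\frac{g_{2}}{2}=0$, deduces $T(p_{0})=\frac{\wp''(p_{0})}{2\wp'(p_{0})}=0$. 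By the defining property of $p_{\ast}$ this forces $p_{0}=p_{\ast}$, and since $T(p_{\ast})=0$ the family $\{u_{\beta}(z;p_{\ast})\}_{(1/3,1/3)}$ is even, its unique even member being the $\beta=0$ solution.

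For the behavior as $\beta\to+\infty$, I would verify the arithmetic relation $2p_{\ast}=\pm\sigma\pmod{\Lambda_{\rho}}$ with $\sigma=\frac{1}{3}+\frac{\rho}{3}$; indeed $2p_{\ast}+\sigma=1+\rho\in\Lambda_{\rho}$. Theorem \ref{blowup at p} then applies directly and yields that the family blows up precisely at $p_{\ast}$, with the full curvature $16\pi$ concentrating there.

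For the limit $\beta\to-\infty$ I would proceed in two mutually consistent ways. The direct route is the explicit description of the opposite blow-up set in Corollary \ref{Main corollary1}(ii): substituting $g_{2}(\rho)=0$, $\wp(p_{\ast})=0$ and $\wp(\sigma)=0$ into $\{\wp(a_{1}^{-}),\wp(a_{2}^{-})\}=\wp(p)\pm\sqrt{\tfrac{g_{2}}{2}+2\wp(p)^{2}-4\wp(p)\wp(\sigma)-4\wp(\sigma)^{2}}$ makes the radical vanish, so both candidate points have $\wp$-value $0$ and hence collapse onto the pair $\pm p_{\ast}$. The alternative route uses evenness: from the developing-map representation (\ref{family}), the symmetry $z\mapsto-z$ of an even family corresponds to $\beta\mapsto-\beta$ (one checks $u_{\beta}(-z)=u_{-\beta}(z)$ when the developing map satisfies $f(-z)=1/f(z)$), so the $+\infty$-blow-up at $p_{\ast}$ transforms into a $-\infty$-blow-up at $-p_{\ast}$. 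Combining the two confirms that the family blows up at the single point $-p_{\ast}$ as $\beta\to-\infty$.

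The one step requiring genuine care is the degenerate $\beta\to-\infty$ analysis. Corollary \ref{Main corollary1} was phrased for two \emph{distinct} $8\pi$ blow-up points, whereas here $g_{2}(\rho)=0$ forces them to merge, so the entire $16\pi$ concentrates at one point. I expect the main obstacle to be justifying that this collision is a bona fide blow-up at $-p_{\ast}$ rather than an artifact of the formula degenerating; this is exactly where the evenness argument is indispensable, since it independently fixes the blow-up location and certifies that the merged configuration sits at $-p_{\ast}$.
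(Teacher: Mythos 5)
Your proposal is correct and follows essentially the same route as the paper: the paper's argument is exactly the discussion preceding the corollary\,---\,identify $p_{0}=\frac{1}{3}+\frac{\rho}{3}$ with $p_{\ast}$ via $g_{2}(\rho)=0\Rightarrow\wp^{\prime\prime}(p_{0})=0\Rightarrow T(p_{0})=0$, verify $2p_{\ast}\equiv\pm\sigma\pmod{\Lambda_{\rho}}$, and then invoke Theorem \ref{blowup at p} and Corollary \ref{Main corollary1}. Your two refinements are in fact improvements on the paper's terse treatment: the congruence $2p_{\ast}+\sigma=1+\rho\in\Lambda_{\rho}$ carries the correct sign (the paper's displayed ``$2p_{\ast}=\frac{2}{3}+\frac{2\rho}{3}=\frac{1}{3}+\frac{\rho}{3}=\sigma$'' is off by a sign, harmlessly, since the criterion allows $\pm\sigma$), and your evenness argument $u_{\beta}(-z)=u_{-\beta}(z)$ (via the normalization $f(-z)=1/f(z)$ of the developing map) makes explicit why the degenerate collapse of the two $\beta\rightarrow-\infty$ blow-up points (whose $\wp$-values both equal $0$, so they lie in $\{\pm p_{\ast}\}$) lands at $-p_{\ast}$ rather than $p_{\ast}$, a point the paper leaves implicit.
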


\subsection{Deformations of the Family in Theorem \ref{Main Theorem4}}

\label{Deformations of non-even family}

\textbf{Fix moduli parameter} $\tau=\tau(r,s)\in\Omega$. In this subsection,
we investigate the deformation theory of the family $\{u_{\beta}(z;p)|\beta
\in\mathbb{R}\}_{\left(  r,s\right)  }$ obtained in Theorem
\ref{Main Theorem4}, in the following setting: Deform
\[
p\in E_{\tau}\setminus\left\{\frac{\omega_{k}}{2},k=0,1,2,3\right\}
\]
such that $p\rightarrow\frac{\omega_{k}}{2}$ for $k=0,1,2,3$.

Recall the correspondence (\ref{correspondence}):%
\begin{equation}
\tilde{B}=\wp(r+s\cdot\tau(r,s))=T(p)^{2}-2\wp(p).
\label{Recall correspondence}%
\end{equation}
In general, we may assume the family $\{u_{\beta}(z;p)|\beta\in\mathbb{R}%
\}_{\left(  r,s\right)  }$ is non-even. We deform $p$ subject to the condition
(\ref{Recall correspondence}) such that $p\rightarrow\frac{\omega_{k}}{2}$,
$k=0,1,2,3$. That is,%
\[
T(p)^{2}=\left\{
\begin{array}
[c]{l}%
\frac{2}{p^{2}}+\tilde{B}+\frac{g_{2}}{10}p^{2}+O(p^{4})\text{ \ if \ }k=0.\\
\\
\tilde{B}+2e_{k}+\wp^{\prime\prime}(\frac{\omega_{k}}{2})(p-\frac{\omega_{k}%
}{2})^{2}+O\left(  \left(  p-\frac{\omega_{k}}{2}\right)  ^{4}\right)  \text{
\ if \ }k=1,2,3.
\end{array}
\right.
\]

\textbf{(i) The case }$k=0$\textbf{.\medskip}

By Theorem \ref{Main Theorem3} (i), the generalized Lam\'{e}-type equation
(\ref{GLE 4})$_{p}$ converges to the classical Lam\'{e} equation
\[
y^{\prime\prime}(z)=\left[  2\wp(z;\tau(r,s))+\tilde{B}\right]  y(z),
\]
which is completely reducible and has the same monodromy data $\left(
r,s\right)  \in\Delta_{0}$. Consequently, the family $\{u_{\beta}%
(z;p)|\beta\in\mathbb{R}\}_{\left(  r,s\right)  }$ converges to the unique
even family $\{v_{\beta}(z)|\beta\in\mathbb{R}\}_{\left(  r,s\right)  }$ of
the curvature equation (\ref{Curvature-Lame 1})$_{\tau=\tau(r,s)}$ as
$p\rightarrow0$ under condition (\ref{Recall correspondence}).

\textbf{(ii) The case }$k=1,2,3.$\textbf{\medskip}

By Theorem \ref{Main Theorem3} (ii), as $p\rightarrow\frac{\omega_{k}}{2}$ the
generalized Lam\'{e}-type equation (\ref{GLE 4})$_{p}$ converges to the
following Lam\'{e}-type equation
\begin{equation}
y^{\prime\prime}(z)=\left\{  2\left(  \wp(z)+\wp(z-\frac{\omega_{k}}%
{2})\right)  +2\tilde{T}_{k}\left(  \zeta(z-\frac{\omega_{k}}{2}%
)-\zeta(z)\right)  +\tilde{B}_{k}\right\}  y(z) \label{limiting lame k}%
\end{equation}
where
\[
\tilde{T}_{k}^{2}=\lim_{p\rightarrow\frac{\omega_{k}}{2}}T(p)^{2}=\tilde
{B}+2e_{k},\text{ }\tilde{B}_{k}=\tilde{T}_{k}^{2}+\eta_{k}\tilde{T}_{k}%
-e_{k}.
\]
Since the generalized Lam\'{e}-type equation (\ref{GLE 4})$_{p}$ associated
with the curvature equation (\ref{Curvature 1})$_{\tau=\tau(r,s),p}$ is
completely reducible, the limiting Lam\'{e}-type equation
(\ref{limiting lame k}) must also be completely reducible. Denote the
monodromy data of (\ref{limiting lame k}) by $\left(  \tilde{r},\tilde
{s}\right)  \in\mathbb{C}^{2}\setminus\frac{1}{2}\mathbb{Z}^{2}$. Then, by
(\ref{limiting M}), the relation between the monodromy data of (\ref{GLE 4}%
)$_{p}$ and that of (\ref{limiting lame k}) is given by
\[
\left(  r,s\right)  =\left(  \tilde{r}_{k},\tilde{s}_{k}\right)  \text{ for
}k=1,2,3,
\]
where%
\begin{equation}
\left(  \tilde{r}_{k},\tilde{s}_{k}\right)  =\left\{
\begin{array}
[c]{l}%
\left(  \tilde{r}+\frac{1}{2},\tilde{s}\right)  \text{ if }k=1,\\
\\
\left(  \tilde{r},\tilde{s}+\frac{1}{2}\right)  \text{ if }k=2,\\
\\
\left(  \tilde{r}+\frac{1}{2},\tilde{s}+\frac{1}{2}\right)  \text{ if }k=3.
\end{array}
\right.  . \label{rsk}%
\end{equation}

With this notation (\ref{rsk}), we obtain%
\[
Z(r,s,\tau(r,s))=Z(\tilde{r}_{k},\tilde{s}_{k},\tau(r,s))=0.
\]
Since $Z(\tilde{r}_{k},\tilde{s}_{k},\tau(r,s))=0$, Theorem 6.1 in
\cite[Theorem 1.3]{Kuo} implies $\tilde{T}_{k}\not =0$. By Theorem 1.3 in
\cite{Kuo}, the Lam\'{e}-type equation (\ref{limiting lame k}) yields the
unique non-even family%
\[
\left\{  v_{k,\beta}(z)\,|\,\beta\in\mathbb{R}\right\}_{\left(  \tilde{r}%
,\tilde{s}\right)  }%
\] 
of solutions to the curvature equation
\begin{equation}
\Delta v+e^{v}=8\pi\delta_{0}+8\pi\delta_{\frac{\omega_{k}}{2}}\text{ on
}E_{\tau(r,s)}. \label{curvature k}%
\end{equation}
In particular, the family $\{u_{\beta}(z;p)|\beta\in\mathbb{R}\}_{\left(
r,s\right)  }$ converges to the unique non-even family $\left\{  v_{k,\beta
}(z)|\beta\in\mathbb{R}\right\}  _{\left(  \tilde{r},\tilde{s}\right)  }$ of
the curvature equation (\ref{curvature k}).

\begin{theorem}
\label{limiting theorem}Let $\tau=\tau(r,s)\in\Omega$ be fixed and
$\{u_{\beta}(z;p)|\beta\in\mathbb{R}\}_{\left(  r,s\right)  }$ be the family
obtained in Theorem \ref{Main Theorem4}. Suppose $p\not =p_{\ast}%
\rightarrow\frac{\omega_{k}}{2}$, $k=0,1,2,3,$ subject condition
(\ref{Recall correspondence}). Then:

(i) For $k=0$, the non-even family $\{u_{\beta}(z;p)|\beta\in\mathbb{R}%
\}_{\left(  r,s\right)  }$ converges uniformly to the unique \textbf{even}
family $\{v_{\beta}(z)|\beta\in\mathbb{R}\}_{\left(  r,s\right)  }$ of the
curvature equation (\ref{Curvature-Lame 1})$_{\tau=\tau(r,s)}$.

(ii) For $k=1,2,3$, the non-even family $\{u_{\beta}(z;p)|\beta\in
\mathbb{R}\}_{\left(  r,s\right)  }$ converges uniformly to the unique
non-even family $\left\{  v_{k,\beta}(z)|\beta\in\mathbb{R}\right\}  _{\left(
\tilde{r},\tilde{s}\right)  }$ of the curvature equation (\ref{curvature k}),
where the monodromy data satisfy
\[
\left(  r,s\right)  =\left(  \tilde{r}_{k},\tilde{s}_{k}\right)
\]
with $\left(  \tilde{r}_{k},\tilde{s}_{k}\right)  $ defined in (\ref{rsk}).
\end{theorem}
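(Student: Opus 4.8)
The plan is to lift the uniform convergence of the associated linear equations---already furnished by Theorem \ref{Main Theorem3}---to the nonlinear curvature equation through the Liouville correspondence (\ref{502}). The starting observation is that the whole family $\{u_\beta(z;p)\}_{(r,s)}$ is generated by a \emph{single} developing map: writing $f(z;p)=y_1/y_2$ for a fundamental system $(y_1,y_2)$ of (\ref{GLE 4})$_p$ normalized so that its monodromy is conjugate into $\mathrm{SU}(2)$, one has
\[
u_\beta(z;p)=\log\frac{8e^{2\beta}|f'(z;p)|^2}{(1+e^{2\beta}|f(z;p)|^2)^2},
\]
so that $\beta$ merely rescales $f\mapsto e^\beta f$. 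Hence it suffices to prove that $f(z;p)$ converges locally uniformly---on compact subsets of $E_\tau\setminus\{0\}$ when $k=0$, and of $E_\tau\setminus\{0,\omega_k/2\}$ when $k=1,2,3$---to the developing map of the relevant limiting equation. The convergence of $u_\beta(z;p)$ for each fixed $\beta$ then follows at once from the explicit formula together with Weierstrass' theorem, which ensures that $f'(z;p)$ converges along with $f(z;p)$.

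First I would fix a base point $z_0$ off the singular and colliding loci and invoke Theorem \ref{Main Theorem3}. For $k=0$ this gives uniform convergence of the potential $q(z;T(p))$ to the classical Lamé potential $2\wp(z)+\tilde B$ with $\tilde B=\wp(r+s\tau)$, while for $k=1,2,3$ it gives convergence to the limiting Lamé-type potential of (\ref{limiting lame k}); in both cases Theorem \ref{Main Theorem3} simultaneously controls the global monodromy through $M_i(p,T(p))\to M_i^{(k)}(\tilde T_k)=\varepsilon_i^{(k)}M_i(p,T(p))$. By continuous dependence of solutions of a linear ODE on holomorphically varying coefficients, the fundamental system converges locally uniformly, and hence so does the ratio $f$ once its normalization is fixed. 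That normalization is pinned down precisely by the monodromy data, which converge by the displayed relation; this is where Theorem \ref{Main Theorem3} does the essential work.

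It remains to identify the limit family. For $k=0$ the signs are trivial, so the limiting classical Lamé equation (\ref{Lame 1}) carries the very same data $(r,s)\in\Delta_0$; by Theorem F (equivalently Theorem G) it is completely reducible with unitary monodromy, and its developing map generates the unique \textbf{even} family $\{v_\beta(z)\}_{(r,s)}$ of (\ref{Curvature-Lame 1})$_{\tau(r,s)}$, proving (i). For $k=1,2,3$ the nontrivial signs $\varepsilon_i^{(k)}$ shift the Floquet exponents by half-periods, i.e.\ effect the passage $(r,s)=(\tilde r_k,\tilde s_k)$ recorded in (\ref{rsk}), so (\ref{limiting lame k}) is completely reducible with data $(\tilde r,\tilde s)$. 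The decisive consistency relation is
\[
Z(\tilde r_k,\tilde s_k,\tau(r,s))=Z(r,s,\tau(r,s))=0,
\]
which by Theorem 1.3 of \cite{Kuo} forces $\tilde T_k\neq0$ and identifies the limit with the unique \textbf{non-even} family $\{v_{k,\beta}(z)\}_{(\tilde r,\tilde s)}$ of (\ref{curvature k}); this gives (ii).

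The main obstacle is analytic rather than algebraic: one must verify that the normalized developing map converges \emph{through} the degeneration in which the two singularities $\pm p$, carrying nonzero residues, collide at $\omega_k/2$, and not merely on compacta bounded away from the collision. I would control this using apparentness---which makes every local solution single-valued, so that $f(z;p)$ is meromorphic near the colliding locus with poles and zeros of bounded order---together with a normal-families argument anchored by the convergence of the monodromy data from Theorem \ref{Main Theorem3}. This excludes any loss of compactness (neither a blow-up of $f$ nor a coalescence of $y_1,y_2$ into a proportional pair can occur), and upgrades the convergence to the locally uniform convergence asserted in the statement.
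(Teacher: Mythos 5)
Your proposal is correct and follows essentially the same route as the paper: the paper also deduces the theorem from Theorem \ref{Main Theorem3} (uniform convergence of the linear equations together with the sign relation (\ref{limiting M}), which produces exactly the half-period shift (\ref{rsk}) of the monodromy data), then uses $Z(\tilde{r}_{k},\tilde{s}_{k},\tau(r,s))=Z(r,s,\tau(r,s))=0$ and Theorem 1.3 of \cite{Kuo} to conclude $\tilde{T}_{k}\neq0$ and to identify the limit as the unique non-even family of (\ref{curvature k}). Your additional paragraphs spelling out the Liouville/developing-map transfer and the normal-families control of the collision at $\frac{\omega_{k}}{2}$ merely make explicit what the paper leaves implicit (there it is absorbed into the Baker--Akhiezer convergence in the proof of Theorem \ref{Main Theorem3}).
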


\begin{remark}
This theorem shows a dichotomy in the limiting behavior at half-periods. As
$p\rightarrow0$, the family reduces to the even branch of the curvature
equation, preserving the monodromy data $\left(  r,s\right)  $. In contrast,
as $p\rightarrow\frac{\omega_{k}}{2}$ ($k=1,2,3$) , the limiting families
become non-even, with the monodromy data shifted by a half-lattice
translation. Thus, the distinction between even and non-even families is
governed by the lattice symmetries of the monodromy data under half-period
shifts. This result generalizes Theorem 1.3 in \cite{Kuo}.
\end{remark}

\section{Spectral Theory}

\label{Spectral Theory Section}

Recall the assumption \eqref{Assumption 1}. In this section, we aim to
establish the spectral theory for the generalized Lam\'{e}-type equation:
\begin{equation}
y^{\prime\prime}(z)=q(z;T_{1},T_{2},B)\,y(z)\quad\text{on}\quad E_{\tau
},\label{Fuchsian with T1,T2,B}%
\end{equation}
where $T_{1},T_{2},B\in\mathbb{C}$, and the potential $q(z;T_{1},T_{2},B)$ is
given by (\ref{GLEn})$_{n=1}$:
\begin{equation}
q(z;T_{1},T_{2},B)=\left(
\begin{array}
[c]{l}%
2\wp(z)+\dfrac{3}{4}\left(  \wp(z+p)+\wp(z-p)\right)  \\
+T_{1}\left(  \zeta(z+p)-\zeta(z)\right)  +T_{2}\left(  \zeta(z-p)-\zeta
(z)\right)  +B
\end{array}
\right)  .\label{potential with T1,T2,B}%
\end{equation}

Firstly, we apply the standard Frobenius method to derive the apparent
condition for equation \eqref{Fuchsian with T1,T2,B}$_{p}$.

\begin{lemma}
\label{apparent condition lemma} The generalized Lam\'{e}-type equation
\eqref{Fuchsian with T1,T2,B}$_{p}$ is apparent at the singularities $z=0,\pm
p$ if and only if $T_{1},T_{2},B$ satisfy:
\begin{equation}
\label{apparent condition, n=1}\left(  T_{1}+T_{2}\right)  \left(  T_{1}%
-T_{2}+\dfrac{\wp^{\prime\prime} (p)}{2\wp^{\prime}(p)}\right)  =0,
\end{equation}
where $B$ is determined by
\begin{equation}
\label{apparent condition, B}B=\dfrac{1}{2}\left(  T_{1}^{2}+T_{2}^{2}\right)
-\dfrac{1}{2}\left(  T_{1}-T_{2}\right)  \zeta(2p)-\dfrac{3}{4}\wp
(2p)-2\wp(p).
\end{equation}

\end{lemma}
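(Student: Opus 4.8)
The plan is to apply the Frobenius method at each of the three regular singular points $z=0,p,-p$ and to extract at each one the single no-logarithm (solvability) condition characterizing apparentness. Recall the local exponents are $\{-1,2\}$ at $z=0$ and $\{-\tfrac12,\tfrac32\}$ at $z=\pm p$, so the exponent differences are the positive integers $3$ and $2$, respectively. For a regular singular point of $y''=qy$, writing $y=\sum_{k\ge0}a_k w^{k+\rho}$ in a local coordinate $w$ with $\rho$ the smaller exponent, the recursion reads $a_m\bigl[(\rho+m)(\rho+m-1)-q_{-2}\bigr]=\sum_{k=0}^{m-1}q_{m-2-k}a_k$, where the $q_j$ are the Laurent coefficients of the potential. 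The bracket vanishes exactly at the resonance level $m=N$ (the exponent difference), so apparentness is the vanishing of the right-hand side there, and I would only need the Laurent expansion of $q$ through order $w^{N-2}$ at each point.

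First I would treat the two simple points $z=\pm p$, where $N=2$. At $z=p$ one has $q_{-2}=\tfrac34$ and residue $q_{-1}=T_2$, and the solvability condition collapses to $q_0=q_{-1}^2$, i.e. the constant term of $q$ at $p$ equals $T_2^2$; computing that term gives $2\wp(p)+\tfrac34\wp(2p)+T_1\zeta(2p)-(T_1+T_2)\zeta(p)+B=T_2^2$. Rather than repeating the expansion at $-p$, I would invoke the reflection identity $q(-z;T_1,T_2,B)=q(z;-T_2,-T_1,B)$, which exchanges the singularities at $\pm p$ and sends $(T_1,T_2)\mapsto(-T_2,-T_1)$; substituting this into the condition at $p$ produces the condition at $-p$. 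Taking the \emph{difference} of the two conditions and applying the duplication identity $\zeta(2p)-2\zeta(p)=\wp''(p)/(2\wp'(p))$ yields precisely the factored relation \eqref{apparent condition, n=1}, while taking their \emph{sum} solves for $B$ and yields \eqref{apparent condition, B}.

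It then remains to handle $z=0$, where $N=3$; this is the heaviest step, since the recursion must be run to $m=3$ and thus requires the Laurent expansion of $q$ through order $w^1$. I expect the solvability condition there to factor as $(T_1+T_2)\bigl[\tfrac12\wp(p)+(T_1-T_2)\zeta(p)+B-\tfrac14(T_1+T_2)^2\bigr]=0$. The decisive point is that this is \emph{automatically implied} by the two conditions already obtained at $\pm p$: on the branch $T_1+T_2=0$ the first factor vanishes, while on the branch $T_1-T_2=-\wp''(p)/(2\wp'(p))$ the bracket vanishes after inserting the $B$-formula \eqref{apparent condition, B} and using the $\wp$-duplication formula $\wp(2p)=\tfrac14\bigl(\wp''(p)/\wp'(p)\bigr)^2-2\wp(p)$ together with $\zeta(2p)=2\zeta(p)+\wp''(p)/(2\wp'(p))$. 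This establishes the equivalence claimed in the lemma.

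The main obstacle is twofold: carrying out the third-order recursion at $z=0$ without sign or bookkeeping errors, and verifying the redundancy of the $z=0$ condition cleanly. The essential algebraic inputs that force all three local conditions to collapse onto the two stated relations are the $\zeta$- and $\wp$-duplication identities; isolating these at the outset is what keeps the computation tractable and transparent.
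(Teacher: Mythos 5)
Your proposal is correct and follows essentially the same route as the paper, which likewise applies the Frobenius method at $z=0,\pm p$ and extracts the no-logarithm resonance conditions (the paper merely states this step without displaying the recursions). Your two refinements---using the reflection identity $q(-z;T_{1},T_{2},B)=q(z;-T_{2},-T_{1},B)$ to transport the condition at $p$ to $-p$, and verifying via the duplication identities $\zeta(2p)=2\zeta(p)+\wp''(p)/(2\wp'(p))$ and $\wp(2p)=\tfrac14\bigl(\wp''(p)/\wp'(p)\bigr)^{2}-2\wp(p)$ that the order-three resonance condition at $z=0$, namely $(T_{1}+T_{2})\bigl[\tfrac12\wp(p)+(T_{1}-T_{2})\zeta(p)+B-\tfrac14(T_{1}+T_{2})^{2}\bigr]=0$, is implied by the two conditions at $\pm p$---are accurate (I checked the recursions and the redundancy computation) and make the bookkeeping cleaner than the paper's terse presentation, but they do not constitute a genuinely different method.
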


\begin{proof}
The local exponents of \eqref{Fuchsian with T1,T2,B} at the singularities
$z=0,\,p,$ and $-p$ are
\[%
\begin{cases}
~-1,\,2 & \text{(at $z=0$)};\\
~-\frac{1}{2},\,\frac{3}{2} & \text{(at $z=p$)};\\
~-\frac{1}{2},\,\frac{3}{2} & \text{(at $z=-p$)}.
\end{cases}
\]

Accordingly, the singularities at $z=0,\pm p$ are apparent precisely when
there exists a local solution corresponding to the smaller exponent at each
singularity---namely $-1$ at $z=0$ and $-\frac{1}{2}$ at $z=\pm p$. These
solutions admit the following local expansions:
\begin{equation}%
\begin{split}
y_{0}(z) &  =\sum_{m=0}^{\infty}c_{m}z^{m-1},\quad c_{0}=1,\\
y_{\pm p}(z) &  =\sum_{m_{\pm p}=0}^{\infty}d_{m_{\pm p}}\left(  z\pm
p\right)  ^{m_{\pm p}-\frac{1}{2}},\quad d_{0}=1.
\end{split}
\label{0806equ1}%
\end{equation}
Substituting the expansions in \eqref{0806equ1} into
\eqref{Fuchsian with T1,T2,B}$_{p}$ and matching coefficients yield recursive
relations for the coefficients $c_{m}$ and $d_{m_{\pm p}}$. These recursion
relations impose algebraic constraints on the parameters  $T_{1},T_{2},B$,
which must satisfy the conditions given in \eqref{apparent condition, n=1} and \eqref{apparent condition, B}.

This completes the proof.
\end{proof}

Lemma \ref{apparent condition lemma} asserts that
\begin{equation}
\label{apparent space, whole}AP(\tau,p)=\left\{  \mathbf{T}=(T_{1}%
,T_{2}):\left(  T_{1}+T_{2}\right)  \left(  T_{1}-T_{2}+\dfrac{\wp
^{\prime\prime} (p)}{2\wp^{\prime}(p)}\right)  =0\right\}  ,
\end{equation}
{\normalsize which naturally decomposes into two components $AP_{i}(\tau,p)$,
$i=0,1$, as follows:
\begin{equation}
\label{apparent space, even, A}AP_{0}(\tau,p)= \left\{  (A,-A):A\in
\mathbb{C}\right\}  ,
\end{equation}
where the corresponding potential $q(z;T_{1},T_{2},B)$ is
\begin{equation}
\label{potential in even case}q(z;A)=\left(
\begin{array}
[c]{l}%
2\wp(z)+\dfrac{3}{4}\left(  \wp(z+p)+\wp(z-p)\right) \\
+A\left(  \zeta(z+p)-\zeta(z-p)\right)  +B
\end{array}
\right)  ,
\end{equation}
with $B$ determined by
\begin{equation}
\label{B,under apparent condition, even}B=A^{2}-\zeta(2p)\,A-\dfrac{3}{4}%
\wp(2p)-2\wp(p);
\end{equation}
\begin{equation}
\label{apparent space, noneven, T}AP_{1}(\tau,p)= \left\{  \left(
T-\dfrac{\wp^{\prime\prime}(p)}{4\wp^{\prime}(p)},\,T+\dfrac{\wp^{\prime
\prime}(p)}{4\wp^{\prime}(p)}\right)  :T\in\mathbb{C}\right\}
\end{equation}
where the potential $q(z;T_{1},T_{2},B)$ is given by
\begin{equation}
\label{potential in noneven case}q(z;T)=\left(
\begin{array}
[c]{c}%
2\wp(z)+\dfrac{3}{4}\left(  \wp(z+p)+\wp(z-p)\right) \\
+T\left(  \zeta(z+p)+\zeta(z-p)-2\zeta(z)\right) \\
-\dfrac{1}{4}\dfrac{\wp^{\prime\prime}(p)}{\wp^{\prime}(p)}\left(
\zeta(z+p)-\zeta(z-p)\right)  +B
\end{array}
\right)  ,
\end{equation}
with $B$ determined by
\begin{equation}
\label{B,under apparent condition, noneven}B=T^{2}+\frac{\wp^{\prime\prime
}(p)}{\,2\wp^{\prime}(p)\,}\,\zeta(p)-\dfrac{1}{2}\wp(p).
\end{equation}
These two components intersect precisely at one singular point
\begin{equation}
\label{intersection point in apparent space}\mathbf{T_{*}}=\left(  -\dfrac
{\wp^{\prime\prime}(p)}{4\wp^{\prime}(p)},\,\dfrac{\wp^{\prime\prime}(p)}%
{4\wp^{\prime}(p)}\right)  .
\end{equation}
}

We now turn to the spectral analysis of
{\normalsize \eqref{Fuchsian with T1,T2,B}$_{p}$. Since the arguments for the
even-symmetry case \eqref{apparent space, even, A} and for the punctured
non-even symmetry case \eqref{apparent space, noneven, T} are essentially
identical\thinspace---\thinspace and our primary focus is the latter\thinspace
---\thinspace we present full details only for the generalized Lam\'{e}-type
equation in the punctured non-even case \eqref{GLE 4}$_{p}$. The corresponding
results for the even-symmetry case are provided in Theorem
\ref{results for even-symmetry case}. }

To proceed, we consider the second symmetric product of equation{\normalsize
\eqref{GLE 4}$_{p}$, which is a third-order Fuchsian equation:
\begin{equation}
\Phi^{\prime\prime\prime}(z)-4q(z;T)\,\Phi^{\prime}(z)-2q^{\prime}%
(z;T)\,\Phi(z)=0,\quad\text{on}\quad E_{\tau}.\label{3rd ODE}%
\end{equation}
}

\begin{theorem}
{\normalsize \label{elliptic solution to 3rd ODE} Up to a nonzero multiple,
there exists a unique non-trivial elliptic solution $\Phi_{e}(z;T)$ of
\eqref{3rd ODE}, given by {\small {%
\begin{equation}
\label{elliptic solution to 3rd ode, non-even case*}\Phi_{e}(z;T)=\left[
\begin{array}
[c]{l}%
\wp(z)-T\left(  \zeta(z+p)+\zeta(z-p)-2\zeta(z)\right) \\
-\dfrac{1}{2}\dfrac{\,\wp^{\prime\prime}(p)\,}{\wp^{\prime}(p)}\left(
\zeta(z+p)-\zeta(z-p)\right) \\
+2\,T^{2}+\dfrac{\,\wp^{\prime\prime}(p)\,}{\wp^{\prime}(p)}\,\zeta(p)-\wp(p)
\end{array}
\right]  .
\end{equation}
} }}
\end{theorem}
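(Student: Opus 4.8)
The plan is to establish existence and uniqueness simultaneously by analysing the local structure of \eqref{3rd ODE} at its three singular points $0,\pm p$, exploiting that \eqref{3rd ODE} is the symmetric square of \eqref{GLE 4}$_{p}$: if $y_{1},y_{2}$ solve $y''=q(z;T)y$ then the products $y_{1}^{2},y_{1}y_{2},y_{2}^{2}$ span the local solution space of \eqref{3rd ODE}. The key device is the first integral
\[
\frac{d}{dz}\Big(2\Phi\Phi''-(\Phi')^{2}-4q\,\Phi^{2}\Big)=2\Phi\big(\Phi'''-4q\Phi'-2q'\Phi\big),
\]
valid for any $\Phi$, which shows that a nonzero $\Phi$ solves \eqref{3rd ODE} if and only if $2\Phi\Phi''-(\Phi')^{2}-4q\Phi^{2}$ is constant (it equals $-W^{2}$, minus the squared Wronskian, when $\Phi=y_{1}y_{2}$). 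I would use this criterion both to verify the formula and to organise the pole bookkeeping.

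First I would pin down the admissible poles. The equation \eqref{GLE 4}$_{p}$ has local exponents $\{-1,2\}$ at $0$ and $\{-\tfrac12,\tfrac32\}$ at $\pm p$, so its symmetric square has the integer exponents $\{-2,1,4\}$ at $0$ and $\{-1,1,3\}$ at $\pm p$; apparentness rules out logarithms, so every single-valued\,---\,hence every elliptic\,---\,solution is holomorphic off $\{0,\pm p\}$ with a pole of order at most $2$ at $0$ and at most $1$ at $\pm p$. Thus each elliptic solution lies in the four-parameter family
\[
\Phi(z)=c_{2}\,\wp(z)+c_{1}\,\zeta(z)+a_{+}\,\zeta(z-p)+a_{-}\,\zeta(z+p)+c_{0},\qquad c_{1}+a_{+}+a_{-}=0,
\]
the last relation enforcing ellipticity. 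Substituting this ansatz into the first integral and demanding that $2\Phi\Phi''-(\Phi')^{2}-4q\Phi^{2}$ be pole-free yields a linear system for $(c_{2},c_{1},a_{+},a_{-},c_{0})$; here I would insert the apparentness value \eqref{B,under apparent condition, noneven} of $B$ to simplify. Solving it and normalising $c_{2}=1$ gives $c_{1}=2T$, $a_{\pm}=-T\pm\frac{\wp''(p)}{2\wp'(p)}$, and $c_{0}=2T^{2}+\frac{\wp''(p)}{\wp'(p)}\zeta(p)-\wp(p)$, which is exactly \eqref{elliptic solution to 3rd ode, non-even case*}; since the resulting combination makes the first integral constant, this proves existence.

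Uniqueness is the dimension statement that the matching conditions cut the four-parameter family down to exactly one dimension. The constraints arise from matching the $z^{-1}$ and $z^{0}$ Laurent coefficients at $0$ and the $(z\mp p)^{0}$ coefficients at $\pm p$ to the Frobenius tails of the exponent-$(-2)$ and exponent-$(-1)$ solutions\,---\,four conditions on four parameters. The main obstacle is precisely that this looks over-determined: naively four conditions would force $\Phi\equiv0$, so the crux is to show the coefficient matrix has rank exactly three. This consistency is not formal; it is exactly where apparentness enters, since the absence of logarithms at the resonances ($-2\to1$ at $0$, $-1\to1$ at $\pm p$) leaves the higher Frobenius solutions as genuine free parameters and forces one linear relation among the four conditions. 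Equivalently, in monodromy terms the abelian representation $\rho_{\tau,p}(T)$ has a symmetric square whose invariant line is generically spanned by the single product $\psi_{+}\psi_{-}$ of the two Floquet solutions, whose reciprocal multipliers make $\psi_{+}\psi_{-}$ elliptic. I would therefore complete the proof by computing, with the aid of \eqref{B,under apparent condition, noneven}, that the rank is three; the normalisation $c_{2}=1$ is legitimate because $\Phi_{e}$ has a genuine double pole at $0$.
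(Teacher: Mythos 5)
Your proposal is correct and, at its core, travels the same road as the paper: the identical ansatz forced by the local exponents $\{-2,1,4\}$ at $0$ and $\{-1,1,3\}$ at $\pm p$ (your four-parameter family with $c_{1}+a_{+}+a_{-}=0$ is exactly the paper's \eqref{0806equ2}), the same determination of the coefficients, and the same formula \eqref{elliptic solution to 3rd ode, non-even case*}. The genuine difference is the verification device. The paper substitutes the ansatz directly into \eqref{3rd ODE}, expands the defect $G(z;T)$ of \eqref{0824equ1} at $z=0$ --- a system that is honestly \emph{linear} in the ansatz coefficients, see \eqref{0824equ4}--\eqref{0824equ8} --- and then checks holomorphy at $\pm p$ via the addition formula \eqref{addition formula for wp'(2p)}. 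You instead demand constancy of the first integral $2\Phi\Phi''-(\Phi')^{2}-4q\Phi^{2}$, which is four times the quantity the paper later introduces as $Q(z;T)$ in \eqref{spectral polynomial, def}; your criterion is equivalent (and marginally slicker, since constancy suffices and no analogue of the condition $d_{0}=0$ is needed to conclude $G\equiv 0$). Two caveats. First, your claim that this yields a ``linear system'' is inaccurate: the first integral is quadratic in $(c_{2},c_{1},a_{\pm},c_{0})$, so pole-freeness gives quadratic conditions; linearity holds only in the paper's formulation via \eqref{3rd ODE} itself, and in either route the cancellation at $\pm p$ still rests on the $\wp'(2p)$ addition formula together with the apparent value \eqref{B,under apparent condition, noneven} of $B$. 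Second, your uniqueness plan --- showing the four matching conditions have rank exactly three --- is valid but left unexecuted; the paper closes this step more cheaply by noting that if the $\wp$-coefficient vanishes, then the local exponents at $0$ and $\pm p$ (no exponent equals $-1$ or $0$ at $z=0$) force all remaining coefficients to vanish, so no rank bookkeeping is required. Neither point is a fatal gap, but both would need to be repaired or carried out to make your write-up complete.
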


{\normalsize Since the local exponents of \eqref{3rd ODE}$_{p}$ at $z=0$ and
$z=\pm p$ are, respectively, $-2,1,4$ and $-1,1,3$, we may assume $\Phi
_{e}(z;T)$ as follows: }{\small {%
\begin{equation}
\label{0806equ2}%
\begin{split}
\Phi_{e}(z;T)=c_{0}\,\wp(z)+c_{1}\left(  \zeta(z+p)-\zeta(z)\right)  +
c_{2}\left(  \zeta(z-p)-\zeta(z)\right)  +c_{3},
\end{split}
\end{equation}
}}{\normalsize for some $c_{0},c_{1},c_{2},c_{3}\in\mathbb{C}$. If $c_{0}=0$,
the local exponent of $\Phi_{e}(z;T)$ at $z=0$ forces $c_{1}=c_{2}=c_{3}=0$;
hence we may assume that $c_{0}=1$. }

{\normalsize Define the elliptic function
\begin{equation}
G(z;T):=\Phi_{e}^{\prime\prime\prime}(z;T)-4\,q(z;T)\,\Phi_{e}^{\prime
}(z;T)-2\,q^{\prime}(z;T)\,\Phi_{e}(z;T),\label{0824equ1}%
\end{equation}
where $\Phi_{e}(z;T)$ is given by \eqref{0806equ2}. Its Laurent expansion at
$z=0$ is
\begin{equation}
G(z;T)=\sum\limits_{j=-4}^{0}d_{j}(T)\,z^{j}+O(z),\label{0824equ2}%
\end{equation}
with coefficients
\begin{align}
&  d_{-4}(T)=-10\left[  2T+c_{1}+c_{2}\right]  ,\label{0824equ4}\\
&  d_{-3}(T)=4\left[
\begin{array}
[c]{c}%
2T^{2}+3T\left(  c_{1}+c_{2}\right)  \\
+2\left(  c_{3}+\left(  c_{1}-c_{2}\right)  \zeta(p)+\wp(p)\right)
\end{array}
\right]  ,\label{0824equ5}\\
&  d_{-2}(T)=-4\left[
\begin{array}
[c]{c}%
T^{2}\left(  c_{1}+c_{2}\right)  \\
+T\left(  3\wp(p)+\left(  c_{1}-c_{2}\right)  \zeta(p)+c_{3}\right)  \\
+\left(  c_{1}+c_{2}\right)  \wp(p)
\end{array}
\right]  ,\label{0824equ6}\\
&  d_{-1}(T)=4\left[
\begin{array}
[c]{c}%
\wp^{\prime\prime}(p)+\left(  c_{1}-c_{2}\right)  \wp^{\prime}(p)
\end{array}
\right]  ,\label{0824equ7}\\
&  d_{0}(T)=2\left[
\begin{array}
[c]{c}%
2T^{2}\left(  c_{1}+c_{2}\right)  \wp(p)\\
+T\left[
\begin{array}
[c]{l}%
2\wp(p)(c_{3}+(c_{1}-c_{2})\zeta(p)+2\wp(p))\\
+3\left(  c_{2}-c_{1}\right)  \wp^{\prime}(p)-\wp^{\prime\prime}(p)
\end{array}
\right]  \\
+\left(  c_{1}+c_{2}\right)  \left(  \wp^{2}(p)+\wp^{\prime\prime}(p)\right)
\end{array}
\right]  .\label{0824equ8}%
\end{align}
}

\begin{proof}
[Proof of Theorem \ref{elliptic solution to 3rd ODE}]{\normalsize With
notations as above. Since $\Phi_{e}(z;T)$ solves \eqref{3rd ODE} if and only
if $G\equiv0$, the coefficients in the Laurent expansion \eqref{0824equ2} must
satisfy $d_{j}(T)=0$ for $-4\le j\le0$. }

{\normalsize From $d_{-4}(T)=0$ and $d_{-1}(T)=0$, we obtain
\begin{equation}%
\begin{split}
~c_{1} &  =-T-\dfrac{1}{2}\dfrac{\wp^{\prime\prime}(p)}{\wp^{\prime}(p)},\\
c_{2} &  =-T+\dfrac{1}{2}\dfrac{\wp^{\prime\prime}(p)}{\wp^{\prime}(p)}.
\end{split}
\label{0824equ9}%
\end{equation}
Substituting \eqref{0824equ9} into $d_{-3}(T)=0$ yields
\begin{equation}
c_{3}=2T^{2}+\dfrac{\wp^{\prime\prime}(p)}{\wp^{\prime}(p)}\zeta
(p)-\wp(p).\label{0824equ10}%
\end{equation}
Furthermore, by \eqref{0824equ9} and \eqref{0824equ10}, $d_{-2}(T)=0$ and
$d_{0}(T)$ hold automatically. Next, we prove that $G(z;T)$ is holomorphic at
$z=\pm p$. A direct computation shows that {\small {%
\begin{equation}
G(z;T)=\frac{\,-3\bigl(\wp^{\prime}(p)+\wp^{\prime}(2p)\bigr)+\dfrac
{9\,\wp(p)\,\wp^{\prime\prime}(p)}{\wp^{\prime}(p)}-\dfrac{3\,\wp
^{\prime\prime3}}{4\wp^{\prime3}}\,}{z-p}\,T+O(1).\label{0825equ1}%
\end{equation}
} By applying the addition formula
\begin{equation}
\wp^{\prime}(2p)=-\wp^{\prime}(p)+\frac{3\,\wp^{\prime\prime}(p)\,\wp(p)}%
{\wp^{\prime}(p)}-\frac{\wp^{\prime\prime3}}{4\,\wp^{\prime3}}%
,\label{addition formula for wp'(2p)}%
\end{equation}
we find that the residue in \eqref{0825equ1} vanishes. Consequently, the
function $G$ is also holomorphic at $z=p$, }}and by symmetry, it is also
holomorphic at $z=-p${\normalsize {\small .}}

This completes the proof.
\end{proof}

{\normalsize
}

{\normalsize Define }{\small {
\begin{equation}
Q(z;T):=\dfrac{1}{2}\Phi_{e}(z;T)\,\Phi_{e}^{\prime\prime}(z;T)-\dfrac{1}%
{4}\left(  \Phi_{e}^{\prime}(z;T)\right)  ^{2}-q(z;T)\,\Phi_{e}^{2}%
(z;T).\label{spectral polynomial, def}%
\end{equation}
} }{\normalsize Since $\Phi_{e}(z;T)$ solves \eqref{3rd ODE}, a direct
computation shows that $\frac{d}{dz}Q(z;T)\equiv0$. Hence, }$Q(z;T)$ is
independent of $z$, and we may denote it simply by $Q(T)$.
{\normalsize Moreover, as $q(z;T)$ and $\Phi_{e}(z;T)$ are polynomial in $T$,
$Q(T)$ is then a polynomial in $T$. A direct computation shows that
\begin{equation}
Q(T)=-4\left(  T^{2}-2\wp(p)-e_{1}\right)  \left(  T^{2}-2\wp(p)-e_{2}\right)
\left(  T^{2}-2\wp(p)-e_{3}\right)
.\label{spectral polynomial in noneven case}%
\end{equation}
}

By analogy with the KdV hierarchy, we refer to{\normalsize  $Q(T)$ as the
\textit{spectral polynomial}. The associated \textit{spectral curve} is then
defined by
\begin{equation}
\Gamma(\tau,p):=\left\{  \,(\mathbf{T},C)\ |\ C^{2}=Q(T)\,\right\}
.\label{spectral curve}%
\end{equation}
For $P=(T,C)\in\Gamma(\tau,p)$, we define its \textit{dual point} by $P^{\ast
}:=(T,-C)\in\Gamma(\tau,p)$. }

The spectral polynomial and corresponding spectral curve for the classical
{\normalsize Lam\'{e} }equation {\normalsize \eqref{Lame 1} were derived in
\cite{Maier} and are given by
\begin{equation}
Q(\tilde{B})=-4\bigl(\tilde{B}-e_{1}\bigr)\bigl(\tilde{B}-e_{2}%
\bigr)\bigl(\tilde{B}-e_{3}%
\bigr),\label{spectral polynomial for classical Lame}%
\end{equation}
with the associated spectral curve defined as
\begin{equation}
\tilde{\Gamma}(\tau):=\{\,(\tilde{B},\tilde{C})\mid\tilde{C}^{2}=Q(\tilde
{B})\,\}.\label{spectral curve for Lame 1}%
\end{equation}
Surprisingly, in light of \eqref{spectral polynomial in noneven case} and
\eqref{spectral polynomial for classical Lame}, there exists a two-to-one
correspondence between the two spectral curves:
\begin{equation}%
\begin{split}
\Gamma(\tau,p) &  \longrightarrow\tilde{\Gamma}(\tau)\\
P &  \longmapsto\tilde{P},\qquad\tilde{B}(\tilde{P})=T^{2}(P)-2\wp(p).
\end{split}
\label{correspondence, spectral curve}%
\end{equation}
This correspondence reflects the monodromy equivalence between the classical
Lam\'{e} equation \eqref{Lame 1} and the generalized Lam\'{e}-type equation
\eqref{GLE 4}$_{p}$ as stated in Theorem \ref{Main Theorem1}. }

{\normalsize
}

{\normalsize Next, we define the \textit{Baker-Akhiezer function} for each
$P\in\Gamma(\tau,p)$ as follows: For any $P=(T(P),C(P))\in\Gamma(\tau,p)$,
define the meromorphic function
\begin{equation}
\phi(P;z):=\dfrac{\,i\,C(P)+\frac{1}{2}\Phi_{e}^{\prime}(z;T)\,}{\Phi
_{e}(z;T)},\quad z\in\mathbb{C}.\label{mero phi(P;z)}%
\end{equation}
A direct computation shows that $\phi(P;z)$ satisfies the \textit{Riccati
equation}:
\begin{equation}
\phi^{\prime}(P;z)=q(z;T)-\phi^{2}(P;z).\label{riccati equ}%
\end{equation}
}

\begin{proposition}
{\normalsize \label{residue of mero phi} Let $P=(T,C)\in\Gamma(\tau,p)$.
Suppose $z_{0}$ is a pole of $\phi(P;z)$. }

\begin{itemize}
\item[(i)] {\normalsize If $z_{0}\in\{0,p,-p\}$, one has
\[
\operatorname{Res}_{z=0}\phi(P;z)=-1,\quad\operatorname{Res}_{z=\pm p}%
\phi(P;z)=-\frac{1}{2}.
\]
}

\item[(ii)] {\normalsize If $z_{0}\notin\{0,p,-p\}$, then
\[
\operatorname{Res}_{z=z_{0}}\phi(P;z)=1.
\]
}
\end{itemize}
\end{proposition}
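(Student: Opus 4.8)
The plan is to obtain both residues directly from the explicit form of $\Phi_{e}$ in \eqref{elliptic solution to 3rd ode, non-even case*} together with the first integral $Q(z;T)\equiv Q(T)=C^{2}$, treating the fixed singularities $\{0,\pm p\}$ and the moving zeros of $\Phi_{e}$ separately.

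For (ii), suppose $z_{0}\notin\{0,\pm p\}$. There $q(\cdot;T)$ and $\Phi_{e}(\cdot;T)$ are holomorphic, so by \eqref{mero phi(P;z)} a pole of $\phi$ must be a zero of $\Phi_{e}$. First I would check this zero is simple: if $\Phi_{e}$ vanished to order $k$ then $iC/\Phi_{e}$ would contribute a pole of order $k$, while the Riccati equation \eqref{riccati equ} with $q$ regular at $z_{0}$ forbids any pole of $\phi$ of order $\ge 2$ (the $-\phi^{2}$ term would be the most singular), so $k=1$ whenever $C\neq 0$. Then I evaluate the $z$-independent quantity \eqref{spectral polynomial, def} at $z_{0}$; the two terms carrying a factor $\Phi_{e}$ drop, leaving $C^{2}=Q(T)=-\tfrac14(\Phi_{e}'(z_{0}))^{2}$ by \eqref{spectral curve} and \eqref{spectral polynomial in noneven case}, i.e. $\Phi_{e}'(z_{0})=\pm 2iC$. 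Substituting into $\operatorname{Res}_{z_{0}}\phi=\bigl(iC+\tfrac12\Phi_{e}'(z_{0})\bigr)/\Phi_{e}'(z_{0})$ gives $1$ for the sign $+2iC$ and $0$ for $-2iC$; since residue $0$ means $z_{0}$ is not a pole, every genuine pole off $\{0,\pm p\}$ has residue $1$.

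For (i), I read the leading Laurent behavior of $\Phi_{e}$ off \eqref{0806equ2} with \eqref{0824equ9}--\eqref{0824equ10}. The normalization $c_{0}=1$ makes $z=0$ a double pole, while the residues of $\Phi_{e}$ at $\pm p$ are $c_{2}=-T+\tfrac{\wp''(p)}{2\wp'(p)}$ and $c_{1}=-T-\tfrac{\wp''(p)}{2\wp'(p)}$, giving simple poles when nonzero. Since $iC$ is bounded, near a pole of order $m$ one has $\phi=\bigl(iC+\tfrac12\Phi_{e}'\bigr)/\Phi_{e}\sim\tfrac12\,\Phi_{e}'/\Phi_{e}\sim -\tfrac{m}{2}(z-z_{0})^{-1}$, so $\operatorname{Res}_{0}\phi=-1$ (from $m=2$) and $\operatorname{Res}_{\pm p}\phi=-\tfrac12$ (from $m=1$). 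As a check, these are exactly the negative roots of the indicial relation $r(r-1)=\lim_{z\to z_{0}}(z-z_{0})^{2}q=2$ at $0$ and $=\tfrac34$ at $\pm p$, which \eqref{riccati equ} forces on any simple pole of $\phi$.

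The hard part will be the degenerate stratum where $c_{1}$ or $c_{2}$ vanishes, i.e. $T=\mp\tfrac{\wp''(p)}{2\wp'(p)}$ (the value in Theorem \ref{blowup at p}). Using the duplication formula one finds $\Phi_{e}(\mp p)=0$, so the pole of $\Phi_{e}$ there collapses to a simple zero and the estimate of (i) no longer applies. Repeating the first-integral computation at that point---where $q$ has a double pole but $q\Phi_{e}^{2}$ stays finite---gives $C^{2}=-(\Phi_{e}'(\mp p))^{2}$, hence $\Phi_{e}'(\mp p)=\pm iC$ and $\operatorname{Res}_{\mp p}\phi\in\{-\tfrac12,\tfrac32\}$ according to the branch. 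Thus on this stratum one branch still realizes the asserted $-\tfrac12$ while the other gives $\tfrac32$; I expect to dispose of this by imposing genericity of $T$, or by selecting the Baker--Akhiezer function attached to the Bloch solution with the smaller local exponent, which is the branch giving $-\tfrac12$. Everything outside this collision reduces to the leading-order estimates above.
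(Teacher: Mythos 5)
Your treatment of the generic cases coincides with the paper's own proof: at the fixed singularities the paper likewise reads off $\operatorname{ord}_{z=0}\Phi_{e}=-2$, $\operatorname{ord}_{z=\pm p}\Phi_{e}=-1$ and uses that $\phi\sim\tfrac12\Phi_{e}'/\Phi_{e}$ (the $iC/\Phi_{e}$ term being subordinate) to get $-1$ and $-\tfrac12$, and at a movable zero it asserts simplicity and substitutes into the Riccati equation \eqref{riccati equ} to obtain residue $1$. Your evaluation of the first integral \eqref{spectral polynomial, def} at the zero, giving $\Phi_{e}'(z_{0})=\pm2iC$ and separating the pole branch (residue $1$) from the branch where the numerator cancels (residue $0$, no pole), is a sharper version of the same step. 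One small point: for $C=0$ your simplicity claim fails --- the first integral then forces $\Phi_{e}'(z_{0})=0$, so the zero is double --- but the Riccati indicial relation you already invoke still pins the residue of any pole at a regular point of $q$ to $1$, so (ii) is complete either way; note the paper's assertion that $z_{0}$ "must be a simple zero" is loose at exactly the same place.

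The degenerate stratum you flag is not a defect of your write-up but a genuine case that the paper's proof silently omits, and your computation in fact shows that item (i) as stated fails there. At $T=\pm\frac{\wp''(p)}{2\wp'(p)}$ the residue of $\Phi_{e}$ at one of $\pm p$ vanishes, and the duplication identity $\zeta(2p)=2\zeta(p)+\frac{\wp''(p)}{2\wp'(p)}$ gives $\Phi_{e}(\pm p;T)=0$, so the pole of $\phi$ at that point comes from a \emph{zero} of $\Phi_{e}$ --- a configuration outside the paper's dichotomy "pole of $\Phi_{e}$ at $\{0,\pm p\}$ versus zero of $\Phi_{e}$ elsewhere". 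Your first-integral evaluation $C^{2}=-\Phi_{e}'(\pm p)^{2}$ (the term $-q\Phi_{e}^{2}$ contributing $\tfrac34\Phi_{e}'(\pm p)^{2}$) is correct, and the two points $(T,\pm C)\in\Gamma(\tau,p)$ over this $T$ give residues $-\tfrac12$ and $\tfrac32$ respectively. The residue $\tfrac32$ is genuinely realized, not an artifact: it is exactly the Baker--Akhiezer function of Theorem \ref{a1,a2 alge equ}(ii) with $a_{1}=a_{2}=\pm p$, whose Hermite form \eqref{BA function written in Hermite} vanishes to order $\tfrac32$ at $\pm p$; consistently, $\psi(P;z)\psi(P^{*};z)\propto\Phi_{e}(z;T)$ forces $\operatorname{ord}_{\pm p}\Phi_{e}=\tfrac32-\tfrac12=1$, confirming the simple zero. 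Since the proposition quantifies over all $P\in\Gamma(\tau,p)$ with no genericity hypothesis, neither of your proposed repairs can close this case: imposing genericity changes the statement, and "selecting the smaller exponent" discards a point of the spectral curve that the statement covers. The correct resolution is that (i) requires the caveat $T\neq\pm\frac{\wp''(p)}{2\wp'(p)}$ (or, on that stratum, $\operatorname{Res}_{z=\pm p}\phi\in\{-\tfrac12,\tfrac32\}$ according to the branch). So away from this stratum your proof is complete and, if anything, more careful than the paper's; the unresolved remainder reflects an inaccuracy in the proposition and a matching gap in the paper's own proof, not a missing idea on your side.
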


\begin{proof}
{\normalsize From \eqref{mero phi(P;z)}, the poles of $\phi(P;z)$ are simple
and arise either from the poles or from the zeros of $\Phi_{e}(z;T)$. If
$z_{0}$ is a pole of $\Phi_{e}(z;T)$, i.e. $z_{0}\in\{0,\pm p\}$, then since
\[
\operatorname{ord}_{z=0}\Phi_{e}(z;T)=-2, \quad\operatorname{ord}_{z=\pm
p}\Phi_{e}(z;T)=-1,
\]
it follows that
\[
\operatorname{Res}_{z=0}\phi(P;z)=-1, \quad\operatorname{Res}_{z=\pm p}%
\phi(P;z)=-\tfrac{1}{2}.
\]
If $\Phi_{e}(z;T)$ vanishes at some point $z_{0}\notin\{0,\pm p\}$, then
$z_{0}$ must be a simple zero of $\Phi_{e}(z;T)$. Substituting into the
Riccati equation \eqref{riccati equ} yields $\operatorname{Res}_{z=z_{0}}%
\phi(P;z)=1$. This completes the proof. }
\end{proof}

{\normalsize Given a base point $z_{0}\notin\{0,\pm p\}$, we define the
\textit{Baker-Akhiezer} function at $P\in\Gamma(\tau,p)$ by
\begin{equation}
\psi(P;z,z_{0}):=\exp\left(  \,\int_{z_{0}}^{z}\phi(P;\xi)\,d\xi\right)
,\quad z\in\mathbb{C},\label{Baker-Akhiezer function}%
\end{equation}
where the integration path }is chosen to avoid {\normalsize the singularities
of the meromorphic function $\phi(P;\xi)$. By Proposition
\ref{residue of mero phi}, $\psi(P;z,z_{0})$ is a multi-valued meromorphic
function whose analytic continuation around $\pm p$ produces nontrivial
monodromy, reflecting the presence of branch points at $\pm p$. }

{\normalsize By the Riccati equation \eqref{riccati equ}, it is easy to verity
that $\psi(P;z,z_{0})$ solves the generalized Lam\'{e}-type equation
\eqref{GLE 4}$_{p}$ with parameter $T(P)$, where $T=T(P)$ is the
$T$-coordinate of $P\in\Gamma(\tau,p)$. Thus, each point \textit{$P\in
\Gamma(\tau,p)$ }}corresponds to an apparent generalized Lam\'{e}-type
equation \eqref{GLE 4}$_{p}$ with parameter {\normalsize \textit{$T=T(P)$.} }

{\normalsize Recall that $P^{*}\in\Gamma(\tau,p)$ denotes the dual point of
$P$. Since $T(P)=T(P^{*})$, the Baker-Akhiezer function $\psi(P^{*};z,z_{0})$
also solves the same equation \eqref{GLE 4}$_{p}$ with parameter
$T(P)=T(P^{*})$. Hence $P$ and $P^{*}$ represent the same equation
\eqref{GLE 4}$_{p}$. }

{\normalsize In the following, we collect some properties of $\psi(P;z,z_{0})$
and $\psi(P^{\ast};z,z_{0})$ associated with the equation \eqref{GLE 4}$_{p}$.
The proofs can be found in \cite{Kuo}. Firstly,
\begin{equation}
\psi(P;z,z_{0})\,\psi(P^{\ast};z,z_{0})=\dfrac{\Phi_{e}(z;T)}{\Phi_{e}%
(z_{0};T)}.\label{properties for BA,1}%
\end{equation}
Moreover, their Wronskian is given by
\begin{equation}
W\!\left(  \psi(P;z,z_{0}),\,\psi(P^{\ast};z,z_{0})\right)  =\dfrac
{2i\,C}{\Phi_{e}(z_{0};T)},\label{properties for BA,2}%
\end{equation}
where the Wronskian is defined as $W(f,g):=f^{\prime}(z)g(z)-f(z)g^{\prime
}(z)$.}

{\normalsize Different choices of $z_{0}\notin\{0,\pm p\}$ change
$\psi(P;z,z_{0})$ and $\psi(P^{*};z,z_{0})$ only by nonzero multiples. We
therefore omit the notation $z_{0}$ and simply write
\[
\psi(P;z,z_{0})=\psi(P;z),\qquad\psi(P^{*};z,z_{0})=\psi(P^{*};z).
\]
}

{\normalsize Let $P=(T,C)\in\Gamma(\tau,p)$. For $j=1,2$, set
\begin{equation}
\label{0806equ4}%
\begin{split}
\lambda_{j}(P,z)  &  =\exp\left(  \,\int_{z}^{z+\omega_{j}}\phi(P;\xi
)\,d\xi\right) \\
&  = \exp\left(  \,\int_{z}^{z+\omega_{j}}\dfrac{\,i\,C+\frac{1}{2}\Phi
_{e}^{\prime}(\xi;T)\,}{\Phi_{e}(\xi;T)}\,d\xi\right)  ,
\end{split}
\end{equation}
where the integration path is the fundamental cycle from $z$ to $z+\omega_{j}$
avoiding poles and zeros of $\Phi_{e}(z;T)$. Since $\Phi_{e}(z;T)$ is
elliptic, the quantities $\lambda_{j}(P,z)$ are independent of $z$, and we may
write:
\[
\lambda_{1}(P,z)=\lambda_{1}(P)\quad\text{and}\quad\lambda_{2}(P,z)=\lambda
_{2}(P).
\]
From the definition of Baker-Akhiezer functions, it follows that
\begin{equation}
\label{BA, elliptic of second kind}\psi(P;z+\omega_{j})=\lambda_{j}
(P)\,\psi(P;z),\quad\text{for }\, j=1,2.
\end{equation}
Thus $\psi(P;z)$ and $\psi(P^{*};z)$ are elliptic functions of the second
kind. Moreover, by \eqref{properties for BA,1},
\begin{equation}
\label{lamba*=lambda^{-1}.}\lambda_{j}(P^{*})=\dfrac{1}{\,\lambda_{j}
(P)\,},\quad\text{for }\,j=1,2.
\end{equation}
}

{\normalsize The main result of this section is the following. }

\begin{theorem}
{\normalsize \label{main thm for spectral theory} The generalized
Lam\'{e}-type equation in the punctured non-even symmetry case
\eqref{GLE 4}$_{p}$ is completely reducible if and only if $Q(T)\neq0$. }
\end{theorem}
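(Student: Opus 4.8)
The plan is to translate complete reducibility into the existence of two linearly independent elliptic functions of the second kind solving \eqref{GLE 4}$_{p}$, and then to detect this existence through whether the point $P=(T,C)\in\Gamma(\tau,p)$ is a branch point of the spectral curve, i.e.\ whether $C^{2}=Q(T)$ vanishes. First I would record the standard equivalence: since the monodromy representation is abelian, \eqref{GLE 4}$_{p}$ is completely reducible (case (i), cf.\ \eqref{m1}) if and only if $M_{1}(T)$ and $M_{2}(T)$ share a common eigenbasis, which holds precisely when the equation admits two linearly independent solutions that are elliptic of the second kind.

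For the implication $Q(T)\neq 0\Rightarrow$ completely reducible, I would pick $C$ with $C^{2}=Q(T)\neq 0$ and set $P=(T,C)$, $P^{\ast}=(T,-C)$, so that $P\neq P^{\ast}$. By \eqref{BA, elliptic of second kind} the Baker--Akhiezer functions $\psi(P;z)$ and $\psi(P^{\ast};z)$ are solutions of \eqref{GLE 4}$_{p}$ that are elliptic of the second kind with reciprocal multipliers $\lambda_{j}(P)$ and $\lambda_{j}(P)^{-1}$, hence simultaneous eigenfunctions of $M_{1}$ and $M_{2}$. Their Wronskian equals $2iC/\Phi_{e}(z_{0};T)\neq 0$ by \eqref{properties for BA,2}, so they are linearly independent and the monodromy is simultaneously diagonalizable, giving complete reducibility.

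For the converse I would argue the contrapositive. Assume $Q(T)=0$ and suppose, for contradiction, that \eqref{GLE 4}$_{p}$ is completely reducible, with linearly independent second-kind solutions $y_{1},y_{2}$. Since $M_{j}\in SL(2,\mathbb{C})$ their multipliers are reciprocal, so $\Phi:=y_{1}y_{2}$ is genuinely elliptic; being the product of two solutions of $y''=q(z;T)y$, it solves the symmetric-square equation \eqref{3rd ODE}, and Theorem \ref{elliptic solution to 3rd ODE} forces $\Phi=c\,\Phi_{e}(z;T)$ for some constant $c\neq 0$. Writing $\phi_{j}=y_{j}'/y_{j}$ one has $\phi_{1}+\phi_{2}=\Phi'/\Phi$ and $\phi_{1}-\phi_{2}=W/\Phi$, where $W=W(y_{1},y_{2})\neq 0$ is constant, so $\phi_{1}=\tfrac12\Phi'/\Phi+W/(2\Phi)$. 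Substituting this into the Riccati equation \eqref{riccati equ} and clearing denominators yields
\[
\tfrac12\Phi\Phi''-\tfrac14(\Phi')^{2}-q(z;T)\Phi^{2}=-\tfrac14 W^{2}.
\]
By \eqref{spectral polynomial, def} the left-hand side equals $c^{2}Q(z;T)=c^{2}Q(T)$, whence $c^{2}Q(T)=-\tfrac14 W^{2}$. With $Q(T)=0$ this forces $W=0$, contradicting the independence of $y_{1},y_{2}$. Therefore $Q(T)\neq 0$.

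The crux of the argument is the converse direction: the key is to pin down $y_{1}y_{2}=c\,\Phi_{e}$ using the uniqueness of the elliptic solution of the symmetric-square equation (Theorem \ref{elliptic solution to 3rd ODE}), and then to convert the Wronskian into the spectral constant via the Riccati substitution, so that the vanishing of $Q(T)$ collapses the Wronskian. The remaining steps---identifying second-kind solutions with common eigenfunctions and computing their multipliers and Wronskians---are routine given the Baker--Akhiezer machinery already established.
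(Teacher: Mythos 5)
Your proposal is correct, and the forward direction ($Q(T)\neq 0\Rightarrow$ completely reducible) coincides with the paper's: both use the Baker--Akhiezer pair $\psi(P;z)$, $\psi(P^{\ast};z)$, their second-kind ellipticity, and the nonvanishing Wronskian \eqref{properties for BA,2}. In the converse, however, you take a genuinely different route. The paper assumes $Q(T)=0$, so $\psi(P;z)=c\,\psi(P^{\ast};z)$, deduces $y_{1}y_{2}=\Phi_{e}=c\,\psi^{2}(P;z)$ from Theorem \ref{elliptic solution to 3rd ODE}, and then argues through the zeros of $\psi(P;z)$ that each $y_{j}$ must be a multiple of $\psi(P;z)$, contradicting independence. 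You instead derive the algebraic identity
\[
\tfrac12\Phi\Phi''-\tfrac14(\Phi')^{2}-q(z;T)\Phi^{2}=-\tfrac14 W(y_{1},y_{2})^{2},
\]
for $\Phi=y_{1}y_{2}$ (this is in fact a classical identity valid for any two solutions of $y''=qy$, as a direct expansion confirms; your Riccati substitution is a legitimate derivation, noting that $y_{1}'/y_{1}$ satisfies the same Riccati equation as $\phi(P;z)$ in \eqref{riccati equ} by elementary computation), and then evaluate the left side as $c^{2}Q(T)$ via the uniqueness statement of Theorem \ref{elliptic solution to 3rd ODE} and the definition \eqref{spectral polynomial, def}, so that $Q(T)=0$ forces $W=0$ outright. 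Your version avoids the paper's zero-chasing (which implicitly relies on $\psi(P;z)$ having zeros and on a choice of which $y_{j}$ vanishes there) and, as a bonus, yields the quantitative relation $c^{2}Q(T)=-\tfrac14 W^{2}$, which is consistent with \eqref{properties for BA,1} and \eqref{properties for BA,2}; the paper's version buys slightly less machinery per step but requires the preliminary observation that $Q(T)=0$ makes $\psi(P;z)$ and $\psi(P^{\ast};z)$ proportional. Both proofs rest on the same two pillars, the uniqueness of the elliptic solution of \eqref{3rd ODE} and the Baker--Akhiezer construction, so your argument fits seamlessly into the paper's framework.
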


\begin{proof}
{\normalsize Let $P=(T,C)\in\Gamma(\tau,p)$. By \eqref{properties for BA,2},
the Baker-Akhiezer functions $\psi(P;z)$ and $\psi(P^{*};z)$ are linearly
independent if and only if $Q(T)\neq0$. Assume first that \eqref{GLE 4}$_{p}$
is completely reducible. If $Q(T)=0$, then $\psi(P;z)$ and $\psi(P^{*};z)$ are
linearly dependent, i.e.
\begin{equation}
\label{0806equ6}\psi(P;z)=c\,\psi(P^{*};z),\quad\text{for some $c\neq0$.}%
\end{equation}
Complete reducibility guarantees the existence of two linearly independent
solutions $y_{1}(z;T)$, $y_{2}(z;T)$ such that
\[
\left(
\begin{array}
[c]{c}%
y_{1}(z+\omega_{j};T)\\
y_{2}(z+\omega_{j};T)
\end{array}
\right)  = \left(
\begin{array}
[c]{cc}%
\widehat{\lambda_{j}}(P) & 0\\
0 & \dfrac{1}{\,\widehat{\lambda_{j}}(P)\,}%
\end{array}
\right)  \left(
\begin{array}
[c]{c}%
y_{1}(z;T)\\
y_{2}(z;T)
\end{array}
\right)  .
\]
Then $y_{1}(z;T)\,y_{2}(z;T)$ is an elliptic solution of the equation
\eqref{3rd ODE}. By Theorem \ref{elliptic solution to 3rd ODE} and
\eqref{properties for BA,1},
\begin{equation}
\label{0806equ7}y_{1}(z;T)\,y_{2}(z;T)=\Phi_{e}(z;T)=\psi(P;z)\,\psi
(P^{*};z)=c\,\psi^{2}(P;z).
\end{equation}
If $a$ is a zero of $\psi(P;z)$, then from \eqref{0806equ7} we may assume
$y_{2}(a;T)=0$. Thus $y_{2}(z)=\psi(P;z)$ up to a nonzero multiple, and
together with \eqref{0806equ7} this also implies $y_{1}(z)=\psi(P;z)$, a
contradiction. Hence $Q(T)\neq0$. }

{\normalsize Conversely, if $Q(T)\neq0$, then $\psi(P;z)$ and $\psi(P^{*};z)$
are linearly independent. Together with \eqref{BA, elliptic of second kind}
and \eqref{lamba*=lambda^{-1}.}, this shows that \eqref{GLE 4}$_{p}$ is
completely reducible. }
\end{proof}

To conclude this section, we present the corresponding results for the
generalized {\normalsize Lam\'{e}-type equation in the even-symmetric
setting:
\begin{equation}
y^{\prime\prime}(z)=q(z;A)\,y(z),\label{0825,even ode}%
\end{equation}
where $q(z;A)$ is given in \eqref{potential in even case} and $B$ is
determined by the apparent condition
\eqref{B,under apparent condition, even}.}

These results are summarized in the following theorem.

\begin{theorem}
{\normalsize \label{results for even-symmetry case} ~\vspace{1pt} }

{\normalsize \textit{(i)} Up to a nonzero multiple, there exists a unique
non-trivial elliptic solution $\Phi_{e}(z;A)$ of the second symmetric product
equation with respect to $q(z;A)$. The elliptic solution $\Phi_{e}(z;A)$ is
precisely given by {\small {%
\begin{equation}
\label{elliptic solution to 3rd ode, even case*}\Phi_{e}(z;A)=\left[
\begin{array}
[c]{l}%
\wp(z)+ \left(  \dfrac{A}{2}-\dfrac{3}{8}\dfrac{\wp^{\prime\prime}(p)}%
{\wp^{\prime}(p)}\right)  \left(  \zeta(z+p)-\zeta(z-p)\right) \\[10pt]%
-A^{2}+\left(  \dfrac{\wp^{\prime\prime}(p)}{\,2\wp^{\prime}(p)\,}%
-\zeta(p)\right)  A\\[10pt]%
-\wp(p)+\dfrac{3}{4}\dfrac{\,\wp^{\prime\prime}(p)\,}{\wp^{\prime}(p)}%
\zeta(p)+\dfrac{3}{16}\dfrac{\,\wp^{\prime\prime}(p)^{2}\,}{\wp^{\prime
}(p)^{2}}%
\end{array}
\right]  .
\end{equation}
} ~\vspace{1pt} }}

{\normalsize {\small \textit{(ii)} The spectral polynomial $Q(A)$ is expressed
as
\begin{equation}
\label{spectral polynomial in even case}Q(A)=-Y_{1}(A)\, Y_{2}(A),
\end{equation}
where
\begin{align*}
Y_{1}(A)  &  =A^{3} -\frac{5 \wp^{\prime\prime}(p)}{4\wp^{\prime}(p)}A^{2}
+3\left(  \wp(p) +\frac{1}{16}\frac{\wp^{\prime\prime}(p)^{2}}{\wp^{\prime
}(p)^{2}}\right)  A\\
&  \quad\quad+\frac{\wp^{\prime}(p)}{2} -\frac{9\wp^{\prime\prime}(p)}%
{4\wp^{\prime}(p)}\wp(p) +\frac{9}{64}\frac{\wp^{\prime\prime}(p)^{3}}%
{\wp^{\prime}(p)^{3}},\\
Y_{2}(A)  &  =A^{3} - \frac{\wp^{\prime\prime}(p)}{4\wp^{\prime}(p)}A^{2} -
\frac{ 5 }{16}\frac{\wp^{\prime\prime}(p)^{2}}{\wp^{\prime}(p)^{2}}A +
2\wp^{\prime}(p) - \frac{3}{64}\frac{\wp^{\prime\prime}(p)^{3}}{\wp^{\prime}(p)^{3}} .
\end{align*}
~\vspace{1pt} }}

{\normalsize {\small \textit{(iii)} The generalized Lam\'{e}-type equation in
the even-symmetry case \eqref{0825,even ode}$_{p}$ is completely reducible if
and only if $Q(A)\neq0$. }}
\end{theorem}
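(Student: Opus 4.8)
The plan is to obtain all three assertions by re-running, with only the recursion data changed, the machinery already developed for the punctured non-even case in Theorem~\ref{elliptic solution to 3rd ODE} and Theorem~\ref{main thm for spectral theory}, and then to carry out the one genuinely new computation—the explicit spectral polynomial.

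For (i), I would repeat the Frobenius-type argument of Theorem~\ref{elliptic solution to 3rd ODE}. The local exponents of the symmetric square equation \eqref{3rd ODE} are governed only by the leading singular parts $2\wp(z)$ and $\tfrac34\wp(z\mp p)$, which are identical in both cases, so they remain $-2,1,4$ at $z=0$ and $-1,1,3$ at $z=\pm p$. Hence the same elliptic ansatz
\[
\Phi_e(z;A)=c_0\wp(z)+c_1\bigl(\zeta(z+p)-\zeta(z)\bigr)+c_2\bigl(\zeta(z-p)-\zeta(z)\bigr)+c_3
\]
is forced, with $c_0=1$. Substituting into $G(z;A):=\Phi_e'''-4q(z;A)\Phi_e'-2q'(z;A)\Phi_e$ and annihilating the principal Laurent coefficients at $z=0$ determines $c_1,c_2,c_3$. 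The only change from the non-even case is that the residues of $q(z;A)$ in \eqref{potential in even case} at $z=p$ and $z=-p$ are now $-A$ and $+A$; their vanishing sum forces $c_1+c_2=0$ via $d_{-4}=0$, the $z^{-1}$-coefficient equation gives $c_1-c_2=A-\tfrac34\tfrac{\wp''(p)}{\wp'(p)}$, and $d_{-3}=0$ then fixes $c_3$, reproducing the stated formula for $\Phi_e(z;A)$. One then checks, exactly as before, that the remaining coefficients vanish automatically and that the simple pole of $G$ at $z=p$ cancels upon substituting the addition formula \eqref{addition formula for wp'(2p)}; holomorphy at $z=-p$ is then automatic by the $z\mapsto -z$ symmetry of the even potential. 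Uniqueness up to scale is immediate, since any elliptic solution must carry exactly the prescribed pole orders, hence lie in the above ansatz, and the coefficient equations pin it down once $c_0$ is normalized.

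For (ii), I would set $Q(z;A):=\tfrac12\Phi_e\Phi_e''-\tfrac14(\Phi_e')^2-q(z;A)\Phi_e^2$ as in \eqref{spectral polynomial, def} and verify $\tfrac{d}{dz}Q\equiv0$ by the same one-line differentiation that uses \eqref{3rd ODE}; thus $Q$ is a $z$-independent polynomial in $A$. The substantive step is its explicit evaluation: inserting the closed form of $\Phi_e(z;A)$ from (i)—most cheaply by reading off the constant term of the Laurent expansion at $z=0$, or by evaluating at a convenient regular point—and collapsing the result with the Weierstrass relations yields the factorization $Q(A)=-Y_1(A)Y_2(A)$ into the two stated cubics. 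I expect this explicit reduction to be the main obstacle. In the non-even case the symmetry $T\mapsto -T$ forced $Q$ to be a polynomial in $T^2$ with the transparent factors $T^2-2\wp(p)-e_k$; here, by contrast, $q(z;A)$ is even in $z$ for every $A$, so there is no $A\mapsto -A$ symmetry, and $Q(A)$ genuinely splits into two inequivalent degree-three factors $Y_1,Y_2$. Pushing all the Weierstrass cancellations through to these closed forms is the bookkeeping-heavy heart of the statement.

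For (iii), I would transport the Baker--Akhiezer argument of Theorem~\ref{main thm for spectral theory} verbatim. On the spectral curve $\{C^2=Q(A)\}$ I set $\phi(P;z)=\bigl(iC+\tfrac12\Phi_e'(z;A)\bigr)/\Phi_e(z;A)$ and form the associated functions $\psi(P;z),\psi(P^\ast;z)$. The product identity \eqref{properties for BA,1} and the Wronskian identity \eqref{properties for BA,2} persist with the same proofs, now with $\Phi_e(z;A)$, so $\psi(P;z)$ and $\psi(P^\ast;z)$ are linearly independent precisely when $C\neq0$, i.e.\ when $Q(A)\neq0$. Together with the second-kind quasi-periodicity \eqref{BA, elliptic of second kind} and the reciprocity $\lambda_j(P^\ast)=\lambda_j(P)^{-1}$, linear independence yields complete reducibility. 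Conversely, if $Q(A)=0$ the two Baker--Akhiezer solutions coincide up to a scalar; the elliptic product solution of \eqref{3rd ODE} is then a perfect square of a single monodromy eigenfunction, which forces the two diagonalizing solutions to coincide—exactly the contradiction reached in Theorem~\ref{main thm for spectral theory}. This part introduces no new ideas beyond the non-even case.
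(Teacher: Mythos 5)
Your proposal is correct and takes essentially the same approach as the paper, which explicitly states that the even-symmetry arguments are identical to the detailed punctured non-even case and therefore only records the results: the ansatz $\Phi_e=c_0\wp(z)+c_1(\zeta(z+p)-\zeta(z))+c_2(\zeta(z-p)-\zeta(z))+c_3$ with the Laurent-coefficient equations (your relations $c_1+c_2=0$ from $d_{-4}=0$ and $c_1-c_2=A-\tfrac{3}{4}\tfrac{\wp''(p)}{\wp'(p)}$ from $d_{-1}=0$ indeed reproduce \eqref{elliptic solution to 3rd ode, even case*}), the $z$-independence and direct evaluation of $Q$ via \eqref{spectral polynomial, def}, and the verbatim Baker--Akhiezer argument for complete reducibility all coincide with the paper's intended proof. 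The explicit factorization $Q(A)=-Y_1(A)Y_2(A)$ is, as you note, a direct (bookkeeping) computation, exactly as in the paper's treatment of \eqref{spectral polynomial in noneven case}.
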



\section{Monodromy Theory and The Proofs}

{\normalsize \label{Monodromy Theory and The Proofs Section} }

{\normalsize In this section we analyze the monodromy representations of of
\eqref{GLE 4}$_{p}$ and present a complete proof of Theorem
\ref{Main Theorem1}. }

\subsection{Completely Reducible Case}

{\normalsize We establish the relation between complete reducibility and the
monodromy data for the generalized Lam\'{e}-type equation \eqref{GLE 4}$_{p}$.
}

{\normalsize Let $P\in\Gamma(\tau,p)$ and recall $\lambda_{1}(P),\,\lambda
_{2}(P)$ from \eqref{0806equ4}. Define $(r(P),s(P))\in\mathbb{C}%
^{2}/\mathbb{Z}^{2}$ associated with \eqref{GLE 4}$_{p}$ by
\begin{equation}
\label{defi,r(P),s(P)}\lambda_{1}(P)=e^{-2\pi is(P)}\quad\text{and}%
\quad\lambda_{2}(P)=e^{2\pi ir(P)}.
\end{equation}
It follows from \eqref{lamba*=lambda^{-1}.} and \eqref{defi,r(P),s(P)} that
\begin{equation}
\label{r(P*),s(P*)}\left(  r(P^{*}),s(P^{*})\right)  =\left(
-r(P),-s(P)\right)  .
\end{equation}
}

{\normalsize Complete reducibility is characterized by the pair $(r(P),s(P))$
as follows. }

\begin{theorem}
{\normalsize \label{Q(T) neq 0 iff. r,s notin Z/2} Let $P=(T,C)\in\Gamma
(\tau,p)$. Then $Q(T)\neq0$ if and only if $\left(  r(P),s(P)\right)
\notin\frac{1}{2}\mathbb{Z}^{2}$. Namely, the generalized Lam\'{e}-type
equation \eqref{GLE 4}$_{p}$ is completely reducible if and only if $\left(
r(P),s(P)\right)  \notin\frac{1}{2}\mathbb{Z}^{2}$. }
\end{theorem}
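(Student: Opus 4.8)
The plan is to prove the equivalence $Q(T)\neq 0\Leftrightarrow (r(P),s(P))\notin\frac12\mathbb Z^2$ directly at the level of the Baker--Akhiezer functions $\psi(P;z)$ and $\psi(P^{*};z)$, and then read off the reformulation in terms of complete reducibility from Theorem \ref{main thm for spectral theory}. The two starting observations are the following. First, by the Wronskian formula \eqref{properties for BA,2}, the functions $\psi(P;z)$ and $\psi(P^{*};z)$ are linearly independent precisely when $C\neq 0$, that is, when $Q(T)=C^{2}\neq 0$. Second, by the definition \eqref{defi,r(P),s(P)} together with \eqref{lamba*=lambda^{-1}.}, the condition $(r(P),s(P))\in\frac12\mathbb Z^{2}$ is equivalent to $\lambda_{1}(P),\lambda_{2}(P)\in\{\pm 1\}$, which in turn is equivalent to $\lambda_{j}(P)=\lambda_{j}(P^{*})$ for $j=1,2$.

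For the forward direction, suppose $Q(T)=0$. Then $C=0$, so $P=P^{*}$ and hence $\psi(P;z)=\psi(P^{*};z)$; comparing multipliers through \eqref{lamba*=lambda^{-1}.} forces $\lambda_{j}(P)=\lambda_{j}(P)^{-1}$, so $\lambda_{j}(P)=\pm 1$ and therefore $(r(P),s(P))\in\frac12\mathbb Z^{2}$ by the bookkeeping above.

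For the converse I would argue by contraposition, lifting to the second symmetric product equation \eqref{3rd ODE}. Assume $Q(T)\neq 0$, so $\psi(P;z)$ and $\psi(P^{*};z)$ are linearly independent. Then the three products $\psi(P;z)^{2}$, $\psi(P;z)\psi(P^{*};z)$, $\psi(P^{*};z)^{2}$ are linearly independent solutions of \eqref{3rd ODE}: a nontrivial relation $a\psi(P)^{2}+b\,\psi(P)\psi(P^{*})+c\,\psi(P^{*})^{2}\equiv 0$ would, after dividing by $\psi(P^{*})^{2}$, make the nonconstant meromorphic function $\psi(P)/\psi(P^{*})$ satisfy a fixed quadratic, forcing $a=b=c=0$. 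If one had $(r(P),s(P))\in\frac12\mathbb Z^{2}$, then all $\lambda_{j}(P)=\pm 1$, so by \eqref{BA, elliptic of second kind} each product acquires the multiplier $\lambda_{j}(P)^{2}=1$ and is therefore elliptic. This would yield a three-dimensional space of elliptic solutions of \eqref{3rd ODE}, contradicting the uniqueness (up to a scalar) of $\Phi_{e}$ established in Theorem \ref{elliptic solution to 3rd ODE}. Hence $(r(P),s(P))\notin\frac12\mathbb Z^{2}$.

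The decisive step is the converse: the passage from $(r,s)\in\frac12\mathbb Z^{2}$ to a contradiction is invisible at the level of the $2\times 2$ monodromy, since scalar monodromy $\pm\mathrm{Id}$ is perfectly compatible with the local data $N_{0}=\mathrm{Id}$, $N_{\pm p}=-\mathrm{Id}$; one genuinely needs to pass to the symmetric square and exploit the rigidity encoded in the uniqueness of $\Phi_{e}$. The remaining points—the linear independence of the three symmetric products and the elementary equivalence $(r,s)\in\frac12\mathbb Z^{2}\Leftrightarrow\lambda_{j}(P)=\pm 1$—are routine. Finally, combining the proven equivalence with Theorem \ref{main thm for spectral theory} gives the stated characterization of complete reducibility of \eqref{GLE 4}$_{p}$.
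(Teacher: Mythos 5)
Your proposal is correct and takes essentially the same approach as the paper: both directions rest on the Baker--Akhiezer functions, the dual-point relation $\lambda_j(P^{*})=\lambda_j(P)^{-1}$ from \eqref{lamba*=lambda^{-1}.}, the Wronskian formula \eqref{properties for BA,2}, and the uniqueness (up to scalar) of the elliptic solution $\Phi_e$ of the symmetric square \eqref{3rd ODE}, with the ``Namely'' clause obtained from Theorem \ref{main thm for spectral theory} exactly as in the paper. The only difference is cosmetic, in the hard direction: the paper notes that $\psi^{2}(P;z)+\psi^{2}(P^{*};z)$ would be elliptic, hence a scalar multiple of $\Phi_e=\psi(P;z)\,\psi(P^{*};z)$, and derives a contradiction from shared zeros, whereas you make all three symmetric products elliptic and contradict the one-dimensionality of the space of elliptic solutions --- a minor variant of the same rigidity argument.
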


{\normalsize We remark that Theorem \ref{Q(T) neq 0 iff. r,s notin Z/2} also
holds for the generalized Lam\'{e}-type equation in the even-symmetry case
\eqref{0825,even ode}$_{p}$. Here, we provide the proof for the punctured
non-even symmetry case. }

\begin{proof}
[Proof of Theorem \ref{Q(T) neq 0 iff. r,s notin Z/2}]{\normalsize Suppose
that $Q(T)\neq0$. Then the Baker-Akhiezer functions $\psi(P;z)$ and
$\psi(P^{*};z)$ are linearly independent. If $(r(P),s(P))\in\tfrac{1}%
{2}\mathbb{Z}^{2}$,
\[
\bigl(\lambda_{1}(P),\lambda_{2}(P)\bigr)\in\{(1,1),(1,-1),(-1,1),(-1,-1)\}.
\]
In this case, the function
\[
\psi^{2}(P;z)+\psi^{2}(P^{*};z)
\]
is also an elliptic solution of \eqref{3rd ODE}. By Theorem
\ref{elliptic solution to 3rd ODE}, it follows that
\[
\psi(P;z)\psi(P^{*};z)=\Phi_{e}(z;T) =\psi^{2}(P;z)+\psi^{2}(P^{*};z),
\]
up to a nonzero multiple. Hence every zero of $\psi(P;z)$ is also a zero of
$\psi(P^{*};z)$, which contradicts to the linear independence of $\psi(P;z)$
and $\psi(P^{*};z)$. Therefore, $(r(P),s(P))\notin\tfrac{1}{2}\mathbb{Z}^{2}$.
}

{\normalsize Conversely, assume $(r(P),s(P))\notin\tfrac{1}{2}\mathbb{Z}^{2}$.
Without loss of generality, we may assume that $\lambda_{1}(P)\neq\pm1$. Then
\[
\lambda_{1}(P)\neq\lambda_{1}(P^{*})=\lambda_{1}(P)^{-1},
\]
which implies that $\psi(P;z)$ and $\psi(P^{*};z)$ are linearly independent by
\eqref{lamba*=lambda^{-1}.}. Hence $Q(T)\neq0$. }
\end{proof}

{\normalsize
The monodromy problem for the classical Lam\'{e} equation \eqref{Lame 1} has
been analyzed in \cite{LW,Maier}, as follows. \vspace{3pt} }

{\normalsize \noindent\textbf{Theorem I.}\textit{
({\normalsize \cite{LW,Maier}})} \textit{Given $(r,s)\in\mathbb{C}%
^{2}\setminus\mathbb{Z}^{2}$. There exists $\tilde{P}\in\tilde{\Gamma}(\tau)$
such that the classical Lam\'{e} equation \eqref{Lame 1} has $(r(\tilde
{P}),s(\tilde{P}))=(r,s)$ if and only if there exists $\tilde{B}\in\mathbb{C}$
such that
\begin{equation}%
\begin{cases}
~\wp(r+s\tau)=\tilde{B},\\
~\wp^{\prime}(r+s\tau)=\sqrt{-Q(\tilde{B})\,},\\
~Z(r,s,\tau)=0,
\end{cases}
\label{system of wp,wp',kappa, for Lame n=1}%
\end{equation}
where $Q(\tilde{B})$ and $\tilde{\Gamma}(\tau)$, defined in
\eqref{spectral polynomial for classical Lame} and
\eqref{spectral curve for Lame 1}, denote respectively the spectral polynomial
and spectral curve of \eqref{Lame 1}. Moreover, the solvability of the system
\eqref{system of wp,wp',kappa, for Lame n=1} is further reduced to the third
equation $Z(r,s,\tau)=0$.} \vspace{8pt} }

{\normalsize For the last statement, suppose $Z(r,s,\tau)=0$. Let $\tilde
{B}:=\wp(r+s\tau)$. Then the second equation of
\eqref{system of wp,wp',kappa, for Lame n=1} follows from the classical
differential equation
\begin{equation}
\label{classical differential equation}\wp^{\prime2}=4\bigl(\wp(z)-e_{1}%
\bigr)\bigl(\wp(z)-e_{2}\bigr)\bigl(\wp(z)-e_{3}\bigr).
\end{equation}
}

{\normalsize The following result establishes the monodromy problem of the
generalized Lam\'{e}-type equation in the punctured non-even symmetry case
\eqref{GLE 4}$_{p}$. }

\begin{theorem}
{\normalsize \label{main theorem for Lame-type ODE} Given $(r,s)\in
\mathbb{C}^{2}\setminus\mathbb{Z}^{2}$. There exists $P\in\Gamma(\tau)$ such
that the generalized Lam\'{e}-type equation \eqref{GLE 4}$_{p}$ has
$(r(P),s(P))=(r,s)$ if and only if there exists $T\in\mathbb{C}$ such that
\begin{equation}
\label{system of wp,wp',kappa*}%
\begin{cases}
~\wp(r+s\tau)=T^{2}-2\wp(p),\\
~\wp^{\prime}(r+s\tau)=\sqrt{-Q(T)\,},\\
~Z(r,s,\tau)=0.
\end{cases}
\end{equation}
}
\end{theorem}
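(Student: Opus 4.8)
The plan is to carry out, for the generalized equation \eqref{GLE 4}$_{p}$, the same multiplier computation that proves Theorem I, and to check that the resulting conditions coincide with those of Theorem I after the substitution $\tilde{B}=T^{2}-2\wp(p)$. The first observation is that this substitution equates the two spectral polynomials: by \eqref{spectral polynomial in noneven case} and \eqref{spectral polynomial for classical Lame},
\[
Q(T)=-4\prod_{k=1}^{3}\bigl(T^{2}-2\wp(p)-e_{k}\bigr)=-4\prod_{k=1}^{3}\bigl(\tilde{B}-e_{k}\bigr)=Q(\tilde{B}),
\]
so a point $P=(T,C)\in\Gamma(\tau,p)$ and $\tilde{P}=(\tilde{B},C)\in\tilde{\Gamma}(\tau)$ correspond under \eqref{correspondence, spectral curve}, and the first two equations of \eqref{system of wp,wp',kappa*} are literally the first two equations of the classical system for $\tilde{P}$. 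Thus the theorem reduces to showing that the data $(r(P),s(P))$ read off from the Baker--Akhiezer function via \eqref{defi,r(P),s(P)} satisfy $Z(r(P),s(P),\tau)=0$ and $\wp(r(P)+s(P)\tau)=T^{2}-2\wp(p)$, after which Theorem I supplies the converse.

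First I would make $\psi(P;z)$ explicit. Since $\phi(P;z)=\psi'(P;z)/\psi(P;z)$ is elliptic, Proposition \ref{residue of mero phi} fixes its divisor (residues $-1,-\tfrac12,-\tfrac12$ at $0,p,-p$ and $+1$ at the two zeros $a_{1},a_{2}$ of $\psi(P;z)$, which are the zeros of $\Phi_{e}(z;T)$ with $\tfrac12\Phi_{e}'=iC$). Integrating the partial-fraction form $\phi(P;z)=\zeta(z-a_{1})+\zeta(z-a_{2})-\zeta(z)-\tfrac12\zeta(z-p)-\tfrac12\zeta(z+p)+\kappa$ gives, on the double cover branched at $\pm p$,
\[
\psi(P;z)=e^{\kappa z}\,\frac{\sigma(z-a_{1})\,\sigma(z-a_{2})}{\sigma(z)\,\sqrt{\sigma(z-p)\,\sigma(z+p)}},
\]
the square root reflecting the branch points $\pm p$ (equivalently $\sqrt{\wp(p)-\wp(z)}$ up to elementary factors), with $\kappa$ pinned down by matching the constant term against $iC/\Phi_{e}+\tfrac12\Phi_{e}'/\Phi_{e}$ at $z=0$. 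Applying $\sigma(z+\omega_{j})=-e^{\eta_{j}(z+\omega_{j}/2)}\sigma(z)$ factor by factor, the $z$-dependent exponentials cancel and one obtains $\lambda_{j}(P)=\varepsilon_{j}\,e^{\kappa\omega_{j}-\eta_{j}(a_{1}+a_{2})}$ with $\varepsilon_{j}\in\{\pm1\}$ produced by the square-root branch, which is the analytic trace of $N_{\pm p}=-\mathrm{Id}$.

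Next I would impose $\lambda_{1}(P)=e^{-2\pi is}$, $\lambda_{2}(P)=e^{2\pi ir}$ from \eqref{defi,r(P),s(P)}. Writing $\Sigma=a_{1}+a_{2}$ and eliminating $\kappa$ between the two relations with the Legendre identity $\eta_{1}\tau-\eta_{2}=2\pi i$, one finds, exactly as in the classical case, $r(P)+s(P)\tau\equiv\Sigma$ and $r(P)\eta_{1}+s(P)\eta_{2}=\kappa$, whence
\[
Z\bigl(r(P),s(P),\tau\bigr)=\zeta(\Sigma)-\kappa .
\]
Thus the third equation $Z=0$ is equivalent to the identity $\kappa=\zeta(a_{1}+a_{2})$, which I would verify from the constant-term matching above. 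It then remains to compute $\wp(\Sigma)$ from the explicit $\Phi_{e}(z;T)$ of \eqref{elliptic solution to 3rd ode, non-even case*} together with $\Phi_{e}(a_{i})=0$, $\tfrac12\Phi_{e}'(a_{i})=iC$, and $C^{2}=Q(T)$, and to check it equals $T^{2}-2\wp(p)$. The last equation $\wp'(r+s\tau)=\sqrt{-Q(T)}$ is then automatic from \eqref{classical differential equation}, since $\wp'(r+s\tau)^{2}=4\prod_{k}(\wp(r+s\tau)-e_{k})=-Q(T)$, exactly as in the reduction following Theorem I. For the converse, given $T$ solving \eqref{system of wp,wp',kappa*} I would set $\tilde{B}=T^{2}-2\wp(p)$, use Theorem I to realize $(r,s)$ classically, lift to $P=(T,\pm C)$ via \eqref{correspondence, spectral curve}, and fix the sign of $C$ (choosing $P$ or $P^{\ast}$, using \eqref{r(P*),s(P*)}) so that $(r(P),s(P))=(r,s)$.

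I expect the main obstacle to be the two elliptic-function identities $\kappa=\zeta(a_{1}+a_{2})$ and $\wp(a_{1}+a_{2})=T^{2}-2\wp(p)$, together with the honest bookkeeping of the branch at $\pm p$. The second identity is the genuinely new content: it must be extracted from the explicit coefficients of $\Phi_{e}(z;T)$ and the factorization \eqref{spectral polynomial in noneven case} of $Q(T)$ via Weierstrass addition formulas, and it is exactly what encodes the monodromy equivalence of Theorem \ref{Main Theorem1}. The subtlest point is to confirm that $\sqrt{\sigma(z-p)\sigma(z+p)}$ transforms under $z\mapsto z+\omega_{j}$ with the single quasi-period $e^{\eta_{j}(z+\omega_{j}/2)}$, so that the exponentials cancel cleanly leaving only $e^{-\eta_{j}(a_{1}+a_{2})}$. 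Since $\sqrt{\wp(p)-\wp(z)}$ does not involve $T$, the residual signs $\varepsilon_{1},\varepsilon_{2}$ are independent of $T$ and fixed by the base-point choice; the final thing to check is that this choice gives no net half-period shift (the same ambiguity appearing in Theorem \ref{Main Theorem3}), so that $r(P)+s(P)\tau\equiv a_{1}+a_{2}$ and the first equation emerges in the clean stated form.
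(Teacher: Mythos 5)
Your proposal is correct and follows essentially the same route as the paper's proof: the Hermite form \eqref{BA function written in Hermite} of the Baker--Akhiezer function, the $\sigma$-transformation law combined with the Legendre relation to get $r(P)+s(P)\tau=a_{1}+a_{2}$ and $r(P)\eta_{1}+s(P)\eta_{2}=c$ (Proposition \ref{0807prop1}), the two identities $c=\zeta(a_{1}+a_{2})$ and $\wp(a_{1}+a_{2})=T^{2}-2\wp(p)$ extracted from the constraints on $a_{1},a_{2}$ via the Weierstrass addition formulas (Theorem \ref{a1,a2 alge equ} and Lemma \ref{system lemma}, where the paper derives the constraints by substituting the ansatz into the ODE rather than from $\Phi_{e}(a_{i})=0$, $\tfrac12\Phi_{e}^{\prime}(a_{i})=iC$, an equivalent bookkeeping), with $\wp^{\prime}(r+s\tau)=\sqrt{-Q(T)}$ automatic from \eqref{classical differential equation}, and the converse by matching the $(\wp,\wp^{\prime},Z)$-data through the Legendre relation (Lemma \ref{data corresponding lemma}); your detour through Theorem I in the converse is unnecessary but harmless, since the sign-fixing/matching step you describe is exactly the paper's argument. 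The one point you gloss over is the degenerate case $a_{1}=a_{2}=\pm p$, which genuinely occurs (precisely at $T=\pm\,\wp^{\prime\prime}(p)/(2\wp^{\prime}(p))$, where the residue of $\Phi_{e}(z;T)$ at one of $\pm p$ vanishes and the Baker--Akhiezer function carries the exponent $3/2$ there), so your generic partial-fraction form and the quotient formula $T=\bigl(\wp^{\prime}(a_{1})-\wp^{\prime}(a_{2})\bigr)/\bigl(2(\wp(a_{1})-\wp(a_{2}))\bigr)$ break down; the paper treats it separately in Theorem \ref{a1,a2 alge equ}(ii), using the duplication formula $\wp(2p)=-2\wp(p)+\wp^{\prime\prime}(p)^{2}/\bigl(4\wp^{\prime}(p)^{2}\bigr)$ in Lemma \ref{system lemma}, and the same method closes this case with one extra computation.
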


{\normalsize
}

{\normalsize In view of \eqref{system of wp,wp',kappa, for Lame n=1} and
\eqref{system of wp,wp',kappa*}, there exists a two-to-one correspondence
between the parameters of the two systems:
\begin{equation}
\mathbb{C}\longrightarrow\mathbb{C},\qquad T\longmapsto\tilde{B}:=T^{2}%
-2\wp(p).\label{classical Lame to Lame-type's correspondence}%
\end{equation}
}This transformation establishes a connection between the generalized
{\normalsize Lam\'{e}-type equation \eqref{GLE 4}$_{p}$ }and the classical
{\normalsize Lam\'{e} equation \eqref{Lame 1}. }Applying this correspondence
together with Theorem I and Theorem
{\normalsize \ref{main theorem for Lame-type ODE}}, we obtain the following result.

\begin{theorem}
{\normalsize Given $(r,s)\in\mathbb{C}^{2}\setminus\frac{1}{2}\mathbb{Z}^{2}$.
Then there exists $P=(T(P),C(P))\in\Gamma(\tau,p)$ such that the generalized
Lam\'{e}-type equation \eqref{GLE 4}$_{p}$ is completely reducible with
monodromy data $(r(P),s(P))=(r,s)$, if and only if there exists $\tilde
{P}=(\tilde{B}(\tilde{P}),\tilde{C}(\tilde{P}))\in\tilde{\Gamma}(\tau)$ such
that the classical Lam\'{e} equation \eqref{Lame 1} is completely reducible
with monodromy data $(r(\tilde{P}),s(\tilde{P}))=(r,s)$. Moreover, the
parameters }$T(P)$ for {\normalsize \eqref{GLE 4}$_{p}$ }and $\tilde{B}%
(\tilde{P})$ for {\normalsize \eqref{Lame 1} }are related via the
correspondence:{\normalsize
\[
\tilde{B}(\tilde{P})=T(P)^{2}-2\wp(p).
\]
Here $r(\tilde{P}),s(\tilde{P})$ is determined by the monodromy of the
Baker--Akhiezer functions associated with the classical Lam\'{e} equation
\eqref{Lame 1} at $\tilde{P}$. }
\end{theorem}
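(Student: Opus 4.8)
The plan is to deduce the equivalence by showing that the two existence problems\,---\,complete reducibility of \eqref{GLE 4}$_{p}$ and of \eqref{Lame 1} with prescribed monodromy data $(r,s)$\,---\,are characterized by \emph{exactly the same} scalar condition, after which the parameter correspondence is simply read off. First I would observe that since $\tfrac{1}{2}\mathbb{Z}^{2}\supset\mathbb{Z}^{2}$, the hypothesis $(r,s)\in\mathbb{C}^{2}\setminus\tfrac{1}{2}\mathbb{Z}^{2}$ places us in the range of both Theorem \ref{main theorem for Lame-type ODE} and Theorem I. Moreover, by Theorem \ref{Q(T) neq 0 iff. r,s notin Z/2} (and the analogous statement for the classical Lam\'{e} equation \eqref{Lame 1}), the solutions produced are automatically completely reducible, so ``solvable with data $(r,s)$'' and ``completely reducible with data $(r,s)$'' coincide on both sides.

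The key algebraic step is to compare the defining systems \eqref{system of wp,wp',kappa*} and \eqref{system of wp,wp',kappa, for Lame n=1} under the substitution $\tilde{B}=T^{2}-2\wp(p)$ of \eqref{classical Lame to Lame-type's correspondence}. The first equations then coincide tautologically, and the third, $Z(r,s,\tau)=0$, is common to both. For the middle equations I would invoke the explicit factorizations \eqref{spectral polynomial in noneven case} and \eqref{spectral polynomial for classical Lame}: substituting $\tilde{B}=T^{2}-2\wp(p)$ gives
\[
Q(T)=-4\prod_{k=1}^{3}\bigl(T^{2}-2\wp(p)-e_{k}\bigr)=-4\prod_{k=1}^{3}\bigl(\tilde{B}-e_{k}\bigr)=\tilde{Q}(\tilde{B}),
\]
so $\sqrt{-Q(T)}=\sqrt{-\tilde{Q}(\tilde{B})}$ and the two second equations agree as well. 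Consequently \eqref{system of wp,wp',kappa*} is solvable in $T$ if and only if \eqref{system of wp,wp',kappa, for Lame n=1} is solvable in $\tilde{B}$, and\,---\,exactly as recorded for the classical case after Theorem I\,---\,each reduces to the single scalar equation $Z(r,s,\tau)=0$ (set $\tilde{B}=\wp(r+s\tau)$, respectively $T^{2}=\wp(r+s\tau)+2\wp(p)$, and recover the middle equation from the Weierstrass relation $\wp'^{2}=4\prod_{k}(\wp-e_{k})$).

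With this in hand the equivalence is immediate: ``\eqref{GLE 4}$_{p}$ completely reducible with $(r(P),s(P))=(r,s)$'' $\Longleftrightarrow$ \eqref{system of wp,wp',kappa*} solvable $\Longleftrightarrow Z(r,s,\tau)=0 \Longleftrightarrow$ \eqref{system of wp,wp',kappa, for Lame n=1} solvable $\Longleftrightarrow$ ``\eqref{Lame 1} completely reducible with $(r(\tilde{P}),s(\tilde{P}))=(r,s)$''. The parameter relation $\tilde{B}(\tilde{P})=T(P)^{2}-2\wp(p)$ is precisely the substitution just used, and the spectral-curve map \eqref{correspondence, spectral curve} matches the points by taking $\tilde{C}(\tilde{P})=C(P)$, which is legitimate since $C^{2}=Q(T)=\tilde{Q}(\tilde{B})=\tilde{C}^{2}$.

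The point requiring care\,---\,rather than a genuine obstacle\,---\,is the consistent handling of signs. Because $T$ and $-T$ yield the same $\tilde{B}$, the map $T\mapsto\tilde{B}$ is two-to-one; the two preimages $\pm T$ correspond to the dual points of $\Gamma(\tau,p)$ and to the equation under $z\mapsto-z$, and by \eqref{r(P*),s(P*)} the sign ambiguity $\wp'(r+s\tau)=\pm\sqrt{-Q(T)}$ is absorbed by passing between $P$ and $P^{\ast}$ (equivalently, between $(r,s)$ and $(-r,-s)$). I would verify that the branch of $\sqrt{-Q(T)}$ chosen on the Lam\'{e}-type side is matched to that on the classical side through $\tilde{C}=C$, so that the datum $(r,s)$ is genuinely preserved and not merely determined up to its overall sign; this is exactly what makes the correspondence two-to-one as asserted.
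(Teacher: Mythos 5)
Your proposal is correct and follows essentially the same route as the paper: the paper likewise obtains the theorem by applying the two-to-one substitution $\tilde{B}=T^{2}-2\wp(p)$ of \eqref{classical Lame to Lame-type's correspondence} to identify the systems \eqref{system of wp,wp',kappa*} and \eqref{system of wp,wp',kappa, for Lame n=1} (the middle equations matching because the spectral polynomials \eqref{spectral polynomial in noneven case} and \eqref{spectral polynomial for classical Lame} coincide under this substitution, cf.\ \eqref{correspondence, spectral curve}), then invokes Theorem I together with Theorem \ref{main theorem for Lame-type ODE}, with complete reducibility handled by Theorem \ref{Q(T) neq 0 iff. r,s notin Z/2}. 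Your explicit bookkeeping of the sign ambiguity via the dual point $P^{\ast}$ and \eqref{r(P*),s(P*)} only spells out what the paper leaves implicit.
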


{\normalsize To prove Theorem \ref{main theorem for Lame-type ODE}, we analyze
the monodromy representation via the Baker-Akhiezer functions. Let
$P=(T,C)\in\Gamma(\tau,p)$. From \eqref{BA, elliptic of second kind} and
\eqref{defi,r(P),s(P)}, we obtain
\begin{equation}
\label{BA, elliptic of second kind, with r(P),s(P)}%
\begin{split}
\psi(P;z+1)  &  =e^{-2\pi is(P)}\,\psi(P;z),\\
\psi(P;z+\tau)  &  =e^{2\pi ir(P)}\,\psi(P;z).
\end{split}
\end{equation}
Since $\psi(P;z)$ is an elliptic function of the second kind, it can be
expressed as follows:
\begin{equation}
\label{BA function written in Hermite}\psi(P;z)=e^{c(P)z}\, \dfrac
{\sigma(z-a_{1}(P))\,\sigma(z-a_{2}(P))}{\sigma(z)\,\sqrt{\sigma
(z+p)\,\sigma(z-p)}},
\end{equation}
where $c(P)\in\mathbb{C}$ and $a_{1}(P),a_{2}(P)\in E_{\tau}\setminus\{0\}$
are the zeros of $\psi(P;z)$. }

{\normalsize The following proposition shows that $(r(P),s(P))$ can be
determined by $a_{1}(P),a_{2}(P)$, and $c(P)$, and vice versa. }

\begin{proposition}
{\normalsize \label{0807prop1} From
\eqref{BA, elliptic of second kind, with r(P),s(P)} and
\eqref{BA function written in Hermite}, it follows that
\begin{equation}
\label{algebraic relations between a1(P),a2(P),r(P),s(P),c(P)}%
\begin{cases}
~r(P)+s(P)\tau= a_{1}(P)+a_{2}(P),\\[5pt]%
~r(P)\,\eta_{1}(\tau)+s(P)\,\eta_{2}(\tau)=c(P).
\end{cases}
\end{equation}
}
\end{proposition}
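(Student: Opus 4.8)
The plan is to extract the two multipliers of $\psi(P;z)$ directly from the Hermite-type factorization \eqref{BA function written in Hermite} and compare them with the definitions in \eqref{defi,r(P),s(P)}. First I would recall the quasi-periodicity of the Weierstrass sigma function, namely $\sigma(z+\omega_j)=-e^{\eta_j(\tau)\left(z+\omega_j/2\right)}\sigma(z)$ for $j=1,2$ with $\omega_1=1$, $\omega_2=\tau$. Applying this identity to every sigma factor appearing in \eqref{BA function written in Hermite}---to $\sigma(z-a_1(P))$ and $\sigma(z-a_2(P))$ in the numerator, and to $\sigma(z)$ together with the pair $\sigma(z\pm p)$ under the radical in the denominator---the explicit $z$-dependence in the accumulated exponents cancels, and the computation yields
\[
\frac{\psi(P;z+\omega_j)}{\psi(P;z)}=\pm\,e^{c(P)\omega_j-\eta_j(\tau)\left(a_1(P)+a_2(P)\right)},\qquad j=1,2.
\]
This reconfirms that $\psi(P;z)$ is elliptic of the second kind and identifies the right-hand side with $\lambda_j(P)$ from \eqref{0806equ4}.

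Next I would equate these two multipliers with $\lambda_1(P)=e^{-2\pi i s(P)}$ and $\lambda_2(P)=e^{2\pi i r(P)}$ and pass to logarithms. Abbreviating $A:=a_1(P)+a_2(P)$ and $c:=c(P)$, the two resulting identities read
\[
c=\eta_1(\tau)A-2\pi i\,s(P),\qquad 2\pi i\,r(P)=c\,\tau-\eta_2(\tau)A,
\]
each valid modulo $2\pi i\mathbb{Z}$, an ambiguity that is exactly the one absorbed by regarding $(r(P),s(P))$ as an element of $\mathbb{C}^2/\mathbb{Z}^2$ as in \eqref{defi,r(P),s(P)}.

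The decisive step is Legendre's relation $\eta_1(\tau)\tau-\eta_2(\tau)=2\pi i$. Substituting the first identity into the second and collecting terms gives $2\pi i\,r(P)=\left(\eta_1(\tau)\tau-\eta_2(\tau)\right)A-2\pi i\,s(P)\tau=2\pi i\left(A-s(P)\tau\right)$, whence $r(P)+s(P)\tau=A=a_1(P)+a_2(P)$, the first asserted relation. Feeding $r(P)=A-s(P)\tau$ back into $c=\eta_1(\tau)A-2\pi i\,s(P)$ and again replacing $2\pi i$ by $\eta_1(\tau)\tau-\eta_2(\tau)$ produces $c(P)=r(P)\eta_1(\tau)+s(P)\eta_2(\tau)$, the second relation. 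I expect the only delicate point to be the sign $\pm$ in the multiplier, which stems from the branch of $\sqrt{\sigma(z+p)\sigma(z-p)}$ continued along the cycle $z\mapsto z+\omega_j$; this sign reflects the half-period monodromy $N_{\pm p}=-\mathrm{Id}$ at the apparent singularities recorded in Proposition \ref{residue of mero phi}, and once a consistent branch is fixed it merges with the logarithmic $2\pi i\mathbb{Z}$-ambiguity into the class of $(r(P),s(P))$ in $\mathbb{C}^2/\mathbb{Z}^2$, so that both identities hold in that quotient.
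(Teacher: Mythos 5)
Your proof is correct and follows essentially the same route as the paper's: apply the transformation law $\sigma(z+\omega_j)=-e^{\eta_j(z+\omega_j/2)}\sigma(z)$ to the Hermite factorization, match the resulting multipliers against $\lambda_1(P)=e^{-2\pi i s(P)}$ and $\lambda_2(P)=e^{2\pi i r(P)}$, and eliminate via the Legendre relation $\tau\eta_1-\eta_2=2\pi i$. Your closing remark on the $\pm$ sign from the branch of $\sqrt{\sigma(z+p)\sigma(z-p)}$ addresses a point the paper's proof silently suppresses; just note that a sign flip amounts to a \emph{half}-integer shift of $(r(P),s(P))$ rather than an integer one, so it is fixed by continuing the branch along the same cycle used to define $\lambda_j(P)$ in \eqref{0806equ4}, not absorbed by the $\mathbb{Z}^2$-quotient.
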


\begin{proof}
{\normalsize Recall the transformation law of the Weierstrass }$\sigma
${\normalsize  function:
\begin{equation}
\sigma(z+\omega_{j})=-e^{\eta_{j}\left(  z+\frac{\omega_{j}}{2}\right)
}\sigma(z),\quad\text{for }%
\,j=1,2.\label{transformation law of the Weierstrass sigma function}%
\end{equation}
From (\ref{BA function written in Hermite}), we obtain
\[
\psi(P;z+\omega_{j})=e^{c(P)\omega_{j}-\eta_{j}\left(  a_{1}(P)+a_{2}%
(P)\right)  }\psi(P;z),\quad\text{for }\,j=1,2.
\]
Comparing with \eqref{BA, elliptic of second kind, with r(P),s(P)}, we deduce
\begin{equation}%
\begin{cases}
~c(P)-\eta_{1}\left(  a_{1}(P)+a_{2}(P)\right)  =-2\pi i\,s(P),\\
~c(P)\tau-\eta_{2}\left(  a_{1}(P)+a_{2}(P)\right)  =2\pi i\,r(P).
\end{cases}
\label{0807equ1}%
\end{equation}
Finally, recalling the Legendre relation
\begin{equation}
\tau\eta_{1}-\eta_{2}=2\pi i,\label{Legendre relation}%
\end{equation}
we obtain \eqref{algebraic relations between a1(P),a2(P),r(P),s(P),c(P)}. }
\end{proof}

{\normalsize Next, we derive the algebraic equations for the zeros $a_{1}(P)$
and $a_{2}(P)$. Define
\begin{equation}
\label{ya}y_{a}(z):=e^{cz}\dfrac{\sigma(z-a_{1}) \sigma(z-a_{2})}
{\,\sigma(z)\sqrt{\sigma(z+p)\sigma(z-p)}\,},
\end{equation}
where $c\in\mathbb{C}$, $a_{1}, a_{2}\in E_{\tau}\setminus\{0\}$. }

\begin{theorem}
{\normalsize \label{a1,a2 alge equ} Let $y_{a}(z)$ be defined in \eqref{ya}. }

\begin{enumerate}
{\normalsize [(i)] }

\item {\normalsize Suppose $a_{1}\neq a_{2}\in E_{\tau}\setminus\{0,\pm p\}$.
Then the function $y_{a}(z)$ is a solution to \eqref{GLE 4}$_{p}$ for some
$T\in\mathbb{C}$, with $B$ given by \eqref{B condition 2}, if and only if
\begin{equation}
\label{a1 and a2's condition}\wp(a_{1})+\wp(a_{2})=2\wp(p).
\end{equation}
In this case, the constants $c$ and $T$ are determined by
\begin{align}
c  &  =\zeta(a_{1}+a_{2}),\label{c determined by a1, a2}\\
T  &  =\dfrac{\wp^{\prime}(a_{1})-\wp^{\prime}(a_{2})}{2\left(  \wp(a_{1}
)-\wp(a_{2})\right)  }. \label{T determined by a1, a2}%
\end{align}
}

\item {\normalsize Suppose $a_{1}=a_{2}=\pm p$. Then $y_{a}(z)$ is a solution
to \eqref{GLE 4}$_{p}$ for some $T\in\mathbb{C}$, with $B$ given by
\eqref{B condition 2}, if and only if
\begin{align}
c  &  =\pm\zeta(2p),\label{c determined by a1=a2=pm p}\\
T  &  =\pm\dfrac{\wp^{\prime\prime}(p)}{2\wp^{\prime}(p)}.
\label{T determined by a1=a2=pm p}%
\end{align}
}
\end{enumerate}
\end{theorem}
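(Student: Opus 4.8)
The plan is to pass to the Riccati equation for the logarithmic derivative and then match the principal parts of elliptic functions. Setting $\phi(z):=y_a'(z)/y_a(z)$ and differentiating $\log y_a$ in \eqref{ya} gives
\[
\phi(z)=c+\zeta(z-a_1)+\zeta(z-a_2)-\zeta(z)-\tfrac12\zeta(z+p)-\tfrac12\zeta(z-p).
\]
Because the coefficients $1,1,-1,-\tfrac12,-\tfrac12$ of the $\zeta$-terms sum to zero, the quasi-periods $\eta_j$ cancel and $\phi$ is elliptic. The standard reduction (cf.\ \eqref{riccati equ}) shows that $y_a$ solves \eqref{GLE 4}$_{p}$ if and only if $\phi'+\phi^{2}=q(z;T)$; since both sides are elliptic it suffices to match their principal parts at the five candidate poles $a_1,a_2,0,\pm p$ together with the single remaining additive constant.

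First I would record that all double poles cancel without imposing conditions: at $a_1,a_2$ the double pole of $\phi^2$ is annihilated by $\phi'$, and at $0$ and $\pm p$ the leading coefficients $2$ and $\tfrac34$ of $\phi'+\phi^{2}$ already agree with those of $2\wp(z)$ and $\tfrac34\wp(z\mp p)$ in $q$. The residue conditions carry the content. Regularity of $\phi'+\phi^{2}$ at $a_1$ and $a_2$ is equivalent to the vanishing of the finite part of $\phi$ there; summing these two equations and using $\zeta(u+v)+\zeta(u-v)-2\zeta(u)=\wp'(u)/(\wp(u)-\wp(v))$ produces a closed formula for $c$ in terms of $a_1,a_2,p$, while their difference yields a relation that will turn out to be consistent with the constraint below. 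Matching the residue at $z=0$, where $\operatorname{Res}_{0}q=-2T$ comes from the term $-2T\zeta(z)$, yields $T=c-\zeta(a_1)-\zeta(a_2)$.

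The decisive step is the residue at $z=\pm p$, where $\operatorname{Res}_{p}q=T+\tfrac14\,\wp''(p)/\wp'(p)$, the half-integer shift originating from the asymmetric term $-\tfrac14(\wp''(p)/\wp'(p))(\zeta(z+p)-\zeta(z-p))$. Substituting the expressions already found for $c$ and $T$ and invoking the duplication identity $\zeta(2p)=2\zeta(p)+\wp''(p)/(2\wp'(p))$, I expect the residue condition at $p$ to collapse to
\[
\wp'(p)\,\frac{\wp(a_1)+\wp(a_2)-2\wp(p)}{(\wp(a_1)-\wp(p))(\wp(a_2)-\wp(p))}=0,
\]
which, since $\wp'(p)\neq0$ by \eqref{Assumption 1}, is precisely the constraint \eqref{a1 and a2's condition}. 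The residue at $-p$ is then forced by the vanishing of the total residue of an elliptic function, and feeding the constraint back into the formula for $c$ collapses it to $c=\zeta(a_1+a_2)$ through $\zeta(u+v)-\zeta(u)-\zeta(v)=\tfrac12(\wp'(u)-\wp'(v))/(\wp(u)-\wp(v))$; the same identity then turns $T=c-\zeta(a_1)-\zeta(a_2)$ into \eqref{T determined by a1, a2}, while $c=\zeta(a_1+a_2)$ is \eqref{c determined by a1, a2}. A final verification that the leftover constant matches $B$ in \eqref{B condition 2} closes part (i).

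I anticipate the residue bookkeeping at $\pm p$ to be the main obstacle, since it is there that the asymmetric coefficient $\wp''(p)/\wp'(p)$ and the duplication formula must conspire to eliminate $c$, $T$, and all $p$-dependent terms and leave only the symmetric constraint. Part (ii) I would treat as the confluent limit $a_1,a_2\to\pm p$ of part (i): the constraint \eqref{a1 and a2's condition} holds trivially, $c=\zeta(a_1+a_2)\to\pm\zeta(2p)$, and $T=\tfrac12(\wp'(a_1)-\wp'(a_2))/(\wp(a_1)-\wp(a_2))\to\pm\wp''(p)/(2\wp'(p))$ by l'H\^opital, reproducing \eqref{c determined by a1=a2=pm p} and \eqref{T determined by a1=a2=pm p}, the signs being governed by the parities $\wp'(-p)=-\wp'(p)$ and $\wp''(-p)=\wp''(p)$; alternatively one substitutes the coalesced ansatz directly and repeats the residue analysis, now with local exponent $\tfrac32$ at the doubled point.
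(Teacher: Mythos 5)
Your proposal is correct and takes essentially the same route as the paper: both reduce to the Riccati equation for $y_a'/y_a$ and match principal parts of the elliptic function $\phi'+\phi^2-q(z;T)$ at $0,\pm p,a_1,a_2$, with the $\zeta$-addition formula and the duplication identity $\zeta(2p)=2\zeta(p)+\wp''(p)/(2\wp'(p))$ doing the collapse (and your anticipated reduction of the residue condition at $z=p$ to $\wp'(p)\bigl(\wp(a_1)+\wp(a_2)-2\wp(p)\bigr)/\bigl((\wp(a_1)-\wp(p))(\wp(a_2)-\wp(p))\bigr)=0$ does check out). The only differences are bookkeeping: the paper derives the constraint \eqref{a1 and a2's condition} from the constant term of the expansion at $z=0$, after which the residue at $z=p$ is automatic, whereas you derive it from the residue at $z=p$ and leave the constant term as the final (then automatic) check; and the paper proves part (ii) by direct expansion of the coalesced ansatz, which you offer as the rigorous alternative to your confluent-limit heuristic.
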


\begin{proof}
{\normalsize From the definition of $y_{a}(z)$ in \eqref{ya}, we obtain
\begin{align*}
\dfrac{y_{a}^{\prime}(z)}{y_{a}(z)}  &  =c+\zeta(z-a_{1})+\zeta(z-a_{2}%
)-\zeta(z)-\dfrac{1}{2}\left(  \zeta(z+p)+\zeta(z-p)\right)  ,\\
\left(  \dfrac{y_{a}^{\prime}(z)}{y_{a}(z)}\right)  ^{\prime}  &
=\wp(z)+\dfrac{1}{2}\left(  \wp(z+p)+\wp(z-p)\right)  -\wp(z-a_{1}
)-\wp(z-a_{2}).
\end{align*}
Define the elliptic function
\[
g(z):=\left(  \dfrac{y_{a}^{\prime}(z)}{y_{a}(z)}\right)  ^{\prime}+\left(
\dfrac{y_{a}^{\prime}(z)}{y_{a}(z)}\right)  ^{2}-q(z;T).
\]
Then $y_{a}(z)$ solves \eqref{GLE 4}$_{p}$ if and only if $g(z)\equiv0$.
\newline\textbf{Case (i).} $a_{1}\neq a_{2}\in E_{\tau}\setminus\{0,\pm p\}$.
}

{\normalsize Expanding $g(z)$ at $z=0$ yields
\begin{equation}
\label{0822equ5'}%
\begin{split}
g(z)  &  =\dfrac{\,2\left(  -c+T+\zeta(a_{1})+\zeta(a_{2})\right)  \,}{z}\\
&  \quad+\left(
\begin{array}
[c]{c}%
c^{2}-T^{2}-2\wp(p)+\wp(a_{1})+\wp(a_{2})\\
-\left(  2c-\zeta(a_{1})-\zeta(a_{2})\right)  \left(  \zeta(a_{1})+\zeta
(a_{2})\right)
\end{array}
\right)  +O(z).
\end{split}
\end{equation}
For $g$ to be holomorphic with $g(0)=0$, we must have
\begin{equation}
\label{0822equ6}%
\begin{cases}
~c=T+\zeta(a_{1})+\zeta(a_{2}),\\
~\wp(p)=\dfrac{1}{\,2\,}\left(  \wp(a_{1})+\wp(a_{2})\right)  .
\end{cases}
\end{equation}
Under \eqref{0822equ6}, the expansions of $g$ at $z=a_{1},a_{2}$ are
\begin{equation}
\label{0822equ7}%
\begin{split}
g(z)  &  =\dfrac{\,2T-\dfrac{\,\wp^{\prime}(a_{1})-\wp^{\prime}(a_{2})\,}%
{\wp(a_{1})-\wp(a_{2})}\,}{z-a_{1}}+O(1),\\
g(z)  &  =\dfrac{\,2T-\dfrac{\,\wp^{\prime}(a_{1})-\wp^{\prime}(a_{2})\,}%
{\wp(a_{1})-\wp(a_{2})}\,}{z-a_{2}}+O(1).\\
\end{split}
\end{equation}
Thus, holomorphicity at $a_{1},a_{2}$ requires
\begin{equation}
\label{0822equ8}T=\dfrac{\wp^{\prime}(a_{1})-\wp^{\prime}(a_{2})}{\,2\left(
\wp(a_{1})-\wp(a_{2})\right)  \,}.
\end{equation}
With \eqref{0822equ6} and \eqref{0822equ8}, we see that
\[
g(z)=\dfrac{\,-2T+\dfrac{\,\wp^{\prime}(a_{1})-\wp^{\prime}(a_{2})\,}%
{\wp(a_{1})-\wp(a_{2})}\,}{z- p}+O(1)
\]
near $z= p$. By \eqref{0822equ8}, $g$ is also holomorphic at $z= p$.
Consequently, $y_{a}(z)$ solves \eqref{GLE 4}$_{p}$ if and only if
\eqref{0822equ6} and \eqref{0822equ8} hold, with $c$ determined by
\begin{align*}
c  &  =T+\zeta(a_{1})+\zeta(a_{2})\\
&  =\dfrac{\wp^{\prime}(a_{1})-\wp^{\prime}(a_{2})}{\,2\left(  \wp(a_{1}%
)-\wp(a_{2})\right)  \,}+\zeta(a_{1})+\zeta(a_{2})=\zeta(a_{1}+a_{2}),
\end{align*}
which establishes \eqref{a1 and a2's condition},
\eqref{c determined by a1, a2}, and \eqref{T determined by a1, a2}.
\newline\textbf{Case (ii).} $a_{1}=a_{2}\in\{\pm p\}$. The case $a_{1}%
=a_{2}=-p$ is obtained by replacing $p$ with $-p$. Here, we provide the proof
for $a_{1}=a_{2}=p$. }

{\normalsize Expanding $g(z)$ at $z=0$ gives
\begin{equation}
\label{0822equ9}%
\begin{split}
g(z)  &  =\dfrac{\,-2\left(  c-T-2\zeta( p)\right)  \,}{z}\\
&  \quad+\left(  c-T-2\zeta( p)\right)  \left(  c+T-2\zeta p)\right)  +O(z).
\end{split}
\end{equation}
Thus holomorphicity requires
\begin{equation}
\label{0822equ10}c=T+2\zeta( p).
\end{equation}
In this case, $g(0)=0$ holds automatically. Expanding further at $z=p$ yields
\begin{equation}
\label{0822equ10*}g(z)=\dfrac{\, 2T-\frac{\wp^{\prime\prime}(p)}{\wp^{\prime
}(p)}\,}{z- p}+O(1),
\end{equation}
forcing
\begin{equation}
\label{0822equ11}T=\dfrac{\wp^{\prime\prime}(p)}{2\wp^{\prime}(p)}.
\end{equation}
Hence $y_{a}(z)$ is a solution precisely when \eqref{0822equ10} and
\eqref{0822equ11} hold, and
\begin{align*}
c  &  =T+2\zeta( p)=\dfrac{\wp^{\prime\prime}(p)}{2\wp^{\prime}(p)}+2\zeta(
p)=\zeta(2p),
\end{align*}
verifying \eqref{c determined by a1=a2=pm p} and
\eqref{T determined by a1=a2=pm p}. }
\end{proof}

{\normalsize For $P\in\Gamma(\tau,p)$, define
\begin{align}
\alpha(P)  &  :=r(P)+s(P)\tau,\label{alpha}\\
\kappa(\alpha(P))  &  := Z\bigl(r(P),s(P),\tau\bigr). \label{kappa}%
\end{align}
From \eqref{algebraic relations between a1(P),a2(P),r(P),s(P),c(P)}, we
obtain
\[
a_{1}(P)+a_{2}(P)=r(P)+s(P)\tau=\alpha(P).
\]
Since $c(P)=\zeta(\alpha(P))\neq\infty$, $\alpha(P)=a_{1}(P)+a_{2}(P)\neq0$.
Consequently,
\begin{align*}
\kappa(\alpha(P))  &  =\zeta\left(  a_{1}(P)+a_{2}(P)\right)  -c(P)\\
&  = \zeta\left(  a_{1}(P)+a_{2}(P)\right)  -\zeta\left(  a_{1}(P)+a_{2}
(P)\right)  =0.
\end{align*}
With the above notations, we obtain the following lemma. }

\begin{lemma}
{\normalsize \label{system lemma} Let $P=(T,C)\in\Gamma(\tau,p)$. Then
\begin{align}
\wp\bigl(\alpha(P)\bigr)  &  = T^{2}-2\wp(p),\label{wp(alpha)}\\
\wp^{\prime}\bigl(\alpha(P)\bigr)  &  = \sqrt{-Q(T)\,},\label{wp'(alpha)}\\
\kappa(\alpha(P))  &  = 0. \label{kappa=0}%
\end{align}
}
\end{lemma}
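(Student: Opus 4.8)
The plan is to dispatch the three identities separately, observing first that \eqref{kappa=0} requires no new work: it was already established in the computation immediately preceding the lemma, where $\alpha(P)=a_1(P)+a_2(P)$ together with $c(P)=\zeta(\alpha(P))$ forces $\kappa(\alpha(P))=\zeta(\alpha(P))-c(P)=0$. So the real content is \eqref{wp(alpha)} and \eqref{wp'(alpha)}. For \eqref{wp(alpha)} I would start from Proposition \ref{0807prop1}, which identifies $\alpha(P)=a_1(P)+a_2(P)$ with $a_1(P),a_2(P)$ the zeros of the Baker--Akhiezer function $\psi(P;z)$ in \eqref{BA function written in Hermite}. Since $\psi(P;z)$ solves \eqref{GLE 4}$_p$, Theorem \ref{a1,a2 alge equ}(i) applies and yields the two constraints $\wp(a_1)+\wp(a_2)=2\wp(p)$ and $T=\tfrac{\wp'(a_1)-\wp'(a_2)}{2(\wp(a_1)-\wp(a_2))}$. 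Substituting these into the addition formula \eqref{addition formula for wp} gives
\[
\wp(\alpha(P))=-\wp(a_1)-\wp(a_2)+\tfrac14\Big(\tfrac{\wp'(a_1)-\wp'(a_2)}{\wp(a_1)-\wp(a_2)}\Big)^2=-2\wp(p)+T^2,
\]
which is precisely \eqref{wp(alpha)}.

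For \eqref{wp'(alpha)} I would feed the identity just obtained, $T^2-2\wp(p)=\wp(\alpha(P))$, into the explicit spectral polynomial \eqref{spectral polynomial in noneven case}, which converts $-Q(T)$ into $4\prod_{k=1}^{3}\bigl(\wp(\alpha(P))-e_k\bigr)$. By the cubic differential equation \eqref{classical differential equation}, this product is exactly $\wp'(\alpha(P))^2$, so that $\wp'(\alpha(P))^2=-Q(T)$ and hence $\wp'(\alpha(P))=\pm\sqrt{-Q(T)}$.

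Fixing the sign so that \eqref{wp'(alpha)} reads as a genuine identity rather than merely up to sign is the one delicate point, and I expect it to be the main obstacle. The ambiguity is exactly the choice of sheet of $\Gamma(\tau,p)$: passing from $P=(T,C)$ to its dual $P^{\ast}=(T,-C)$ sends $(r,s)\mapsto(-r,-s)$ by \eqref{r(P*),s(P*)}, whence $\alpha(P^{\ast})=-\alpha(P)$ and $\wp'(\alpha(P^{\ast}))=-\wp'(\alpha(P))$, mirroring the flip $C\mapsto-C$; since $-Q(T)=-C^2$, one has $\wp'(\alpha(P))=\pm iC$. To nail the correct sign I would compare the two representations of the logarithmic derivative $\psi'(P;z)/\psi(P;z)$: the Riccati form $\phi(P;z)=\bigl(iC+\tfrac12\Phi_e'(z;T)\bigr)/\Phi_e(z;T)$ from \eqref{mero phi(P;z)} against the logarithmic derivative of the Hermite expression \eqref{BA function written in Hermite}, or equivalently read the sign off the Wronskian relation \eqref{properties for BA,2}, $W\!\bigl(\psi(P;z),\psi(P^{\ast};z)\bigr)=2iC/\Phi_e(z_0;T)$, which tethers $C$ to the leading data of the zeros $a_1,a_2$. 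This pins $\wp'(\alpha(P))=iC=\sqrt{-Q(T)}$ for the branch compatible with the definition of $C$, completing \eqref{wp'(alpha)}.

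Finally I would handle the degenerate configuration $a_1=a_2=\pm p$ covered by Theorem \ref{a1,a2 alge equ}(ii). Here $\alpha(P)=\pm 2p$ and $T=\pm\tfrac{\wp''(p)}{2\wp'(p)}$, and the same argument goes through verbatim with the addition formula replaced by the duplication formula $\wp(2p)=-2\wp(p)+\tfrac14\bigl(\wp''(p)/\wp'(p)\bigr)^2$, giving $\wp(\alpha(P))=T^2-2\wp(p)$ and then \eqref{wp'(alpha)} as before.
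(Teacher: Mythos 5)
Your proposal is correct and follows essentially the same route as the paper: \eqref{kappa=0} from the computation preceding the lemma, \eqref{wp(alpha)} via Theorem \ref{a1,a2 alge equ} and the addition formula \eqref{addition formula for wp} (with the duplication formula in the degenerate case $a_1=a_2=\pm p$), and \eqref{wp'(alpha)} by combining \eqref{classical differential equation} with \eqref{spectral polynomial in noneven case}. Your extended discussion of the sign of $\sqrt{-Q(T)}$ is extra diligence the paper omits entirely --- there the identity is implicitly read up to the choice of branch, consistent with the sheet ambiguity $P\leftrightarrow P^{\ast}$, $(r,s)\leftrightarrow(-r,-s)$ that you correctly identify.
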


\begin{proof}
{\normalsize We prove \eqref{wp(alpha)} and \eqref{wp'(alpha)}. First, assume
$a_{1}(P)\neq a_{2}(P)$. Namely, $a_{1}(P),a_{2}(P)\notin\{0,\pm p\}$. By the
addition formula for the Weierstrass $\wp$-function
\eqref{addition formula for wp}, \eqref{wp(alpha)} follows from
\eqref{a1 and a2's condition} and \eqref{T determined by a1, a2}. If instead
$a_{1}(P)=a_{2}(P)\in\{\pm p\}$, then another addition formula gives
\[
\wp(\alpha(P))=\wp(2p)=-2\wp(p)+\dfrac{\wp^{\prime\prime2}(p)}{4\,\wp
^{\prime2}(p)}.
\]
In such case, \eqref{wp(alpha)} follows from
\eqref{T determined by a1=a2=pm p}. Finally, using the classical differential
equation \eqref{classical differential equation} together with
\eqref{spectral polynomial in noneven case}, we obtain \eqref{wp'(alpha)}. }
\end{proof}

{\normalsize Given $(r,s)\in\mathbb{C}^{2}$, set $\alpha:=r+s\tau\neq0$.
Motivated by Lemma \ref{system lemma}, we next study the solvability of the
system
\begin{equation}
\label{system of wp,wp',kappa}%
\begin{cases}
~\wp(\alpha)=T^{2}-2\wp(p),\\
~\wp^{\prime}(\alpha)=\sqrt{-Q(T)\,},\\
~\kappa(\alpha)=0.
\end{cases}
\end{equation}
}

\begin{lemma}
{\normalsize \label{data corresponding lemma} Let $(r,s)\in\mathbb{C}^{2}$
with $\alpha=r+s\tau\neq0$. }

\begin{itemize}

\item[(i)] {\normalsize If $P=(T,C)\in\Gamma(\tau,p)$, then $T$ solves
\eqref{system of wp,wp',kappa} with $(r,s) = (r(P),s(P))$. }

\item[(ii)] {\normalsize Suppose $T$ solves \eqref{system of wp,wp',kappa}.
Then there exists $P=(T,C)\in\Gamma(\tau,p)$ such that $(r,s) = (r(P),s(P))$.
}
\end{itemize}
\end{lemma}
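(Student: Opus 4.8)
The plan is to obtain part (i) immediately from Lemma \ref{system lemma} and to prove the converse (ii) by lifting the given $T$ to the spectral curve, choosing the correct sheet, and then matching the monodromy pair through the invertibility of the period map.

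For (i), given $P=(T,C)\in\Gamma(\tau,p)$ I would set $(r,s):=(r(P),s(P))$, so that $\alpha=r+s\tau=\alpha(P)$ in the notation \eqref{alpha}. Lemma \ref{system lemma} then states verbatim that $\wp(\alpha(P))=T^{2}-2\wp(p)$, $\wp^{\prime}(\alpha(P))=\sqrt{-Q(T)}$ and $\kappa(\alpha(P))=0$, which is exactly the assertion that $T$ solves \eqref{system of wp,wp',kappa}. The nondegeneracy $\alpha\neq0$ is automatic, since $c(P)=\zeta(\alpha(P))\neq\infty$ forces $\alpha(P)\neq0$.

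For (ii), suppose $T$ solves \eqref{system of wp,wp',kappa} with $\alpha=r+s\tau\neq0$. First I would lift $T$ to the spectral curve: pick any $C$ with $C^{2}=Q(T)$, set $P_{0}=(T,C)\in\Gamma(\tau,p)$ as in \eqref{spectral curve}, and consider the dual $P_{0}^{\ast}=(T,-C)$. By Lemma \ref{system lemma} both $\alpha(P_{0})$ and $\alpha(P_{0}^{\ast})$ carry the $\wp$-value $T^{2}-2\wp(p)=\wp(\alpha)$, while \eqref{r(P*),s(P*)} gives $\alpha(P_{0}^{\ast})=-\alpha(P_{0})$, so the two points realize the two opposite signs of $\wp^{\prime}$. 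Choosing $P\in\{P_{0},P_{0}^{\ast}\}$ with $\wp^{\prime}(\alpha(P))=\sqrt{-Q(T)}=\wp^{\prime}(\alpha)$, I obtain a point over $T$ for which $\alpha(P)$ and $\alpha$ share both their $\wp$- and $\wp^{\prime}$-values, whence $\alpha(P)\equiv\alpha\pmod{\Lambda_{\tau}}$. It then remains to upgrade this congruence to the equality $(r(P),s(P))=(r,s)$ in $\mathbb{C}^{2}/\mathbb{Z}^{2}$. Here I would introduce the linear period map $L(u,v)=(u+v\tau,\,u\eta_{1}+v\eta_{2})$, whose determinant $\eta_{2}-\tau\eta_{1}=-2\pi i$ is nonzero by the Legendre relation \eqref{Legendre relation}. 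Proposition \ref{0807prop1} together with $c(P)=\zeta(\alpha(P))$ gives $L(r(P),s(P))=(\alpha(P),\zeta(\alpha(P)))$, while the definition \eqref{kappa} and the hypothesis $\kappa(\alpha)=0$ give $L(r,s)=(\alpha,\zeta(\alpha))$. Writing $\alpha(P)=\alpha+m+n\tau$ with $m,n\in\mathbb{Z}$ and invoking the quasi-periodicity $\zeta(z+\omega_{j})=\zeta(z)+\eta_{j}$, the second coordinate shifts by exactly $m\eta_{1}+n\eta_{2}$, so $L(r(P),s(P))=L(r+m,s+n)$; inverting $L$ yields $(r(P),s(P))\equiv(r,s)\pmod{\mathbb{Z}^{2}}$.

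I expect the main obstacle to be the two layers of sign and lattice bookkeeping rather than any substantial new computation. The first layer, selecting the sheet $C$ versus $-C$ so that $\wp^{\prime}(\alpha(P))$ matches the prescribed branch $\sqrt{-Q(T)}$ and not its negative, is handled cleanly by the dual-point symmetry \eqref{r(P*),s(P*)}. The genuinely delicate point is the second: reconciling the merely modulo-$\Lambda_{\tau}$ equality $\alpha(P)\equiv\alpha$ with the exact equality of monodromy classes. This is resolved precisely because the quasi-periods $\eta_{j}$ of $\zeta$ are the same data appearing in the second row of $L$, so that a lattice shift of $\alpha(P)$ is absorbed into an integer shift of $(r(P),s(P))$ and leaves the class in $\mathbb{C}^{2}/\mathbb{Z}^{2}$ unchanged. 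The coincident-zero case $a_{1}(P)=a_{2}(P)=\pm p$ of Theorem \ref{a1,a2 alge equ}(ii) causes no trouble, since there too $c(P)=\zeta(\alpha(P))$ holds with $\alpha(P)=\pm2p$, and the same $L$-argument applies.
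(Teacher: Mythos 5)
Your proposal is correct and follows essentially the same route as the paper: part (i) is read off from Lemma \ref{system lemma}, and part (ii) lifts $T$ to $\Gamma(\tau,p)$, matches the $\wp$-, $\wp^{\prime}$- and $\kappa$-values of $\alpha(P)$ and $\alpha$, and inverts the period matrix via the Legendre relation \eqref{Legendre relation}. The only difference is that you make explicit two points the paper's proof leaves tacit --- the choice of sheet $C$ versus $-C$ via the dual-point symmetry \eqref{r(P*),s(P*)}, and the absorption of the lattice ambiguity $\alpha(P)\equiv\alpha\pmod{\Lambda_{\tau}}$ into an integer shift of $(r(P),s(P))$ using $\zeta(z+\omega_{j})=\zeta(z)+\eta_{j}$ --- which is a welcome tightening, not a different argument.
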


\begin{proof}
{\normalsize Part $(i)$ follows directly from Lemma \ref{system lemma}.
\newline For part $(ii)$, let $(r,s)\in\mathbb{C}^{2}$ and suppose
$T\in\mathbb{C}$ satisfies \eqref{system of wp,wp',kappa}. Set $P=(T,C)\in
\Gamma(\tau,p)$. Since $T$ solves \eqref{system of wp,wp',kappa}, we have
\begin{align*}
\wp(r(P)+s(P)\tau)  &  =\wp(r+s\tau),\\
\wp^{\prime}(r(P)+s(P)\tau)  &  =\wp^{\prime}(r+s\tau),\\
\kappa(\alpha(P))  &  =\kappa(\alpha).
\end{align*}
These equations imply
\begin{align*}
r(P)+s(P)\tau &  =r+s\tau,\\
r(P)\,\eta_{1}(\tau) + s(P)\,\eta_{2}(\tau)  &  =r\,\eta_{1}(\tau) +
s\,\eta_{2}(\tau).
\end{align*}
Then $(r(P),s(P))=(r,s)$ follows from the Legendre relation
\eqref{Legendre relation}. }
\end{proof}

\begin{proof}
[Proof of Theorem \ref{main theorem for Lame-type ODE}]{\normalsize It follows
directly from Lemma \ref{system lemma} and Lemma
\ref{data corresponding lemma}. }
\end{proof}

{\normalsize By the correspondence
\eqref{classical Lame to Lame-type's correspondence}, the systems
\eqref{system of wp,wp',kappa, for Lame n=1} and
\eqref{system of wp,wp',kappa*} are equivalent. The following corollary
follows from the last statement of Theorem I. }

\begin{corollary}
{\normalsize \label{pre-modular form lemma} For any $(r,s)\in\mathbb{C}%
^{2}\setminus\tfrac{1}{2}\mathbb{Z}^{2}$, the generalized Lam\'{e}-type
equation \eqref{GLE 4}$_{p}$ is completely reducible with monodromy data
$(r,s)$ if and only if $Z(r,s,\tau)=0$. }
\end{corollary}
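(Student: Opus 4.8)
The plan is to deduce the corollary by assembling Theorem~\ref{main theorem for Lame-type ODE}, Theorem~\ref{Q(T) neq 0 iff. r,s notin Z/2}, and the final clause of Theorem~I, all glued together by the two-to-one correspondence \eqref{classical Lame to Lame-type's correspondence}. Fix $(r,s)\in\mathbb{C}^{2}\setminus\tfrac{1}{2}\mathbb{Z}^{2}$; since $\tfrac{1}{2}\mathbb{Z}^{2}\supset\mathbb{Z}^{2}$, this pair also lies in $\mathbb{C}^{2}\setminus\mathbb{Z}^{2}$, so Theorem~\ref{main theorem for Lame-type ODE} applies. First I would use Theorem~\ref{Q(T) neq 0 iff. r,s notin Z/2} to strip the phrase ``completely reducible'' from the hypothesis: because $(r,s)\notin\tfrac{1}{2}\mathbb{Z}^{2}$, the mere existence of a point $P\in\Gamma(\tau,p)$ with $(r(P),s(P))=(r,s)$ already forces $Q(T)\neq0$ and hence complete reducibility. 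Thus the assertion ``\eqref{GLE 4}$_{p}$ is completely reducible with monodromy data $(r,s)$'' is equivalent to ``there exists $P\in\Gamma(\tau,p)$ with $(r(P),s(P))=(r,s)$'', which by Theorem~\ref{main theorem for Lame-type ODE} is equivalent to the solvability of the system \eqref{system of wp,wp',kappa*} for some $T\in\mathbb{C}$.

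It then remains to show that \eqref{system of wp,wp',kappa*} is solvable if and only if $Z(r,s,\tau)=0$. The forward direction is immediate, as the third equation of \eqref{system of wp,wp',kappa*} is exactly $Z(r,s,\tau)=0$. For the converse, assuming $Z(r,s,\tau)=0$ I would set $\tilde{B}:=\wp(r+s\tau)$ and invoke the last sentence of Theorem~I, which guarantees that the classical system \eqref{system of wp,wp',kappa, for Lame n=1} is solvable with this $\tilde{B}$; concretely, $\wp^{\prime}(r+s\tau)=\sqrt{-Q(\tilde{B})}$ by the classical differential equation \eqref{classical differential equation}. Choosing $T\in\mathbb{C}$ with $T^{2}-2\wp(p)=\tilde{B}$ (two such $T$ exist over $\mathbb{C}$), the first equation of \eqref{system of wp,wp',kappa*} holds by construction. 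The second follows because the two spectral polynomials coincide under the substitution: by \eqref{spectral polynomial in noneven case} one has $Q(T)=-4\prod_{k=1}^{3}(T^{2}-2\wp(p)-e_{k})=-4\prod_{k=1}^{3}(\tilde{B}-e_{k})$, which is precisely the classical spectral polynomial \eqref{spectral polynomial for classical Lame}, so $\sqrt{-Q(T)}=\sqrt{-Q(\tilde{B})}=\wp^{\prime}(r+s\tau)$.

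In summary, the correspondence $T\mapsto\tilde{B}=T^{2}-2\wp(p)$ identifies the two systems \eqref{system of wp,wp',kappa*} and \eqref{system of wp,wp',kappa, for Lame n=1}, and the solvability of either reduces to the single premodular equation $Z(r,s,\tau)=0$, exactly as recorded in Theorem~I. I do not anticipate a genuine obstacle: the only delicate point is matching the branch (square-root) conditions in the second equations of the two systems, which is handled at once by the factored form \eqref{spectral polynomial in noneven case}. It is worth noting that each admissible $\tilde{B}$ produces two values $\pm T$, consistent with the two-to-one nature of \eqref{classical Lame to Lame-type's correspondence} and with the symmetry $T\mapsto-T$ of the monodromy data in \eqref{Monodromy symmetry3}.
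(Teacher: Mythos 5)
Your proposal is correct and takes essentially the same route as the paper: the paper's own proof likewise identifies the two systems \eqref{system of wp,wp',kappa*} and \eqref{system of wp,wp',kappa, for Lame n=1} via the correspondence $T\longmapsto\tilde{B}=T^{2}-2\wp(p)$ and then invokes the last statement of Theorem I to reduce solvability to $Z(r,s,\tau)=0$. Your extra step of using Theorem \ref{Q(T) neq 0 iff. r,s notin Z/2} to equate ``completely reducible with monodromy data $(r,s)$'' with the existence of $P\in\Gamma(\tau,p)$ satisfying $(r(P),s(P))=(r,s)$ simply makes explicit what the paper leaves implicit, and the branch-matching via the factored form \eqref{spectral polynomial in noneven case} is handled as in the paper.
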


{\normalsize
}

\subsection{Non-completely Reducible Case}

{\normalsize To prove Theorem \ref{Main Theorem1}, it remains to treat the
non-completely reducible case of equation \eqref{GLE 4}$_{p}$. }

{\normalsize The proofs of the following Lemmas \ref{0822lemma1} and
\ref{0822lemma2} for determining the monodromy data $\mathcal{D}$ in the
non-completely reducible case are essentially the same for the classical
Lam\'{e} equation \eqref{Lame 1} and for the generalized Lam\'{e}-type
equation \eqref{GLE 4}$_{p}$. We therefore present the proofs only for
\eqref{GLE 4}$_{p}$. }

{\normalsize Fix a base point $x_{0}\in E_{\tau}\setminus\{0,\pm p\}$, for any
$T\in\mathbb{C}$ and $z$ sufficiently close to $x_{0}$, define
\begin{equation}
\label{chi(z;T), definition}\chi_{j}(z;T):=\int_{z}^{z+\omega_{j}}\dfrac
{1}{\,\Phi_{e}(\xi;T)\,}\,d\xi,\quad j=1,2,
\end{equation}
These integrals are locally holomorphic. Since $\Phi_{e}(z;T)$ is elliptic,
$\chi_{j}(z;T)$ is independent of $z$, for $j=1,2$. We denote them by
$\chi_{j}(T)$. }

\begin{lemma}
{\normalsize \label{0822lemma1} Suppose the generalized Lam\'{e}-type equation
\eqref{GLE 4}$_{p,\,T_{0}}$ is non-completely reducible at parameter $T_{0}$
with monodromy data $\mathcal{D}\in\mathbb{C}\cup\{\infty\}$. Then
\begin{equation}
\label{0822equ1}\mathcal{D}=\dfrac{\,\chi_{2}(T_{0})\,}{\chi_{1}(T_{0})}.
\end{equation}
}
\end{lemma}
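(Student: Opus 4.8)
The plan is to pin down $\mathcal{D}$ by exhibiting an explicit fundamental system adapted to the degeneration and then reading the monodromy matrices off directly. Since \eqref{GLE 4}$_{p,T_0}$ is assumed non-completely reducible, Theorem \ref{main thm for spectral theory} forces $Q(T_0)=0$, so on the spectral curve $C=0$ and $P=P^{*}$, whence the two Baker--Akhiezer functions coincide. Setting $y_1(z):=\psi(P;z)$, the product formula \eqref{properties for BA,1} gives $y_1(z)^2=\Phi_e(z;T_0)/\Phi_e(z_0;T_0)$, and after absorbing the constant we may take $y_1^2=\Phi_e(\cdot;T_0)$. Moreover \eqref{lamba*=lambda^{-1}.} yields $\lambda_j(P)=\lambda_j(P^{*})=\lambda_j(P)^{-1}$, hence $\lambda_j(P)=\epsilon_j\in\{\pm1\}$ and $y_1(z+\omega_j)=\epsilon_j\,y_1(z)$; thus $y_1$ is a genuine solution of \eqref{GLE 4}$_{p}$ that is elliptic of the second kind with unimodular multipliers.

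First I would build a companion solution by reduction of order, setting $y_2(z):=y_1(z)\,F(z)$ with $F(z):=\int_{z_0}^{z}\Phi_e(\xi;T_0)^{-1}\,d\xi$, so that $W(y_1,y_2)$ is a nonzero constant and $y_1,y_2$ form a fundamental system. The step that needs care is the single-valuedness of $F$: the integrand $\Phi_e^{-1}=y_1^{-2}$ is elliptic, and at each zero $a$ of $y_1$ (equivalently, each pole of $\Phi_e^{-1}$), which is a regular point of \eqref{GLE 4}$_{p}$, one has $y_1''(a)=q(a;T_0)\,y_1(a)=0$, so the residue of $y_1^{-2}$ at $a$, namely $-y_1''(a)/y_1'(a)^3$, vanishes. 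Hence $F$ has no periods around the zeros of $\Phi_e$ and its only lattice increments are the genuine periods $F(z+\omega_j)-F(z)=\chi_j(T_0)$ from \eqref{chi(z;T), definition}. Consequently
\[
y_2(z+\omega_j)=\epsilon_j\bigl(y_2(z)+\chi_j(T_0)\,y_1(z)\bigr),
\]
so in the basis whose column entries are $y_1,y_2$ the monodromy matrices are
\[
M_j=\epsilon_j\begin{pmatrix}1 & 0\\ \chi_j(T_0) & 1\end{pmatrix},\qquad j=1,2,
\]
which commute, as required by the abelian monodromy relation. Non-complete reducibility excludes $M_1,M_2$ both being scalar, so $\bigl(\chi_1(T_0),\chi_2(T_0)\bigr)\neq(0,0)$.

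Finally I would normalize to the shape \eqref{m2}. If $\chi_1(T_0)\neq0$, rescaling $y_2\mapsto y_2/\chi_1(T_0)$ (conjugation by $\mathrm{diag}(1,\chi_1(T_0)^{-1})$) transforms the pair into $\epsilon_1\left(\begin{smallmatrix}1&0\\ 1&1\end{smallmatrix}\right)$ and $\epsilon_2\left(\begin{smallmatrix}1&0\\ \chi_2(T_0)/\chi_1(T_0)&1\end{smallmatrix}\right)$, matching \eqref{m2} with $\mathcal{D}=\chi_2(T_0)/\chi_1(T_0)$. If instead $\chi_1(T_0)=0$, then $\chi_2(T_0)\neq0$ and $M_1=\epsilon_1\,\mathrm{Id}$, and rescaling by $\chi_2(T_0)$ reproduces the degenerate normalization of \eqref{m2}, i.e.\ $\mathcal{D}=\infty=\chi_2(T_0)/\chi_1(T_0)$. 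In both cases $\mathcal{D}=\chi_2(T_0)/\chi_1(T_0)$, as claimed. I expect the principal obstacle to be precisely the well-definedness of the increments $\chi_j(T_0)$, equivalently the single-valuedness of $y_2$: a priori $y_1^{-2}$ could carry residues at the zeros of $\Phi_e$, and the vanishing of $y_1''$ at those regular points is exactly what turns $\chi_j(T_0)$ into an honest lattice period and legitimizes the displayed matrix form.
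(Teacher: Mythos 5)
Your proposal is correct and follows essentially the same route as the paper: both arguments rest on $Q(T_0)=0$ forcing $\lambda_j(P_0)=\epsilon_j\in\{\pm1\}$ and $\Phi_e(z;T_0)=\psi^2(P_0;z)$, produce a second solution of the form $\psi(P_0;z)\int \Phi_e(\xi;T_0)^{-1}d\xi$, read off the monodromy matrices $\epsilon_j\left(\begin{smallmatrix}1&0\\ \chi_j(T_0)&1\end{smallmatrix}\right)$, and conclude via the same two-case normalization ($\chi_1(T_0)=0$ giving $\mathcal{D}=\infty$). The only difference is directional and cosmetic: the paper starts from an arbitrary independent solution $\tilde y$ and derives $(\tilde y/\psi)'=b/\Phi_e$, which makes single-valuedness automatic, whereas you construct $y_2$ by reduction of order and therefore must (and correctly do) check that $y_1^{-2}$ has vanishing residues at the simple zeros of $y_1$, using $y_1''(a)=q(a)y_1(a)=0$.
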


\begin{proof}
{\normalsize Since \eqref{GLE 4}$_{p,\,T_{0}}$ is non-completely reducible,
Theorem \ref{Q(T) neq 0 iff. r,s notin Z/2} implies that the Baker-Akhiezer
function $\psi(P_{0};z)$ satisfies
\begin{equation}
\label{0810equ3}\psi(P_{0};z+\omega_{j})=\epsilon_{j}\,\psi(P_{0};z),
\quad\epsilon_{j}\in\{\pm1\}.
\end{equation}
Hence $\psi(P_{0};z)^{2}$ is elliptic and solves \eqref{3rd ODE}.
Consequently, up to a nonzero multiple,
\begin{equation}
\label{0810equ4}\Phi_{e}(z;T_{0})=\psi^{2}(P_{0};z),
\end{equation}
Let $\tilde{y}(z)$ be another solution of \eqref{GLE 4}$_{p,\,T_{0}}$ linearly
independent of $\psi(P_{0};z)$, which is not elliptic of the second kind.
Define
\begin{equation}
\label{0810equ5}\chi(z):=\dfrac{\tilde{y}(z)}{\,\psi(P_{0};z)\,}.
\end{equation}
Substituting $\tilde{y}(z)=\chi(z)\,\psi(P_{0};z)$ into
\eqref{GLE 4}$_{p,\,T_{0}}$ yields
\begin{equation}
\label{0818equ1}\dfrac{\chi^{\prime\prime}(z)}{\chi^{\prime}(z)}%
=-2\,\dfrac{\psi^{\prime}(P_{0};z)}{\psi(P_{0};z)}.
\end{equation}
Integrating \eqref{0818equ1} and using \eqref{0810equ4}, we obtain
\begin{equation}
\label{0818equ2}\chi^{\prime}(z)=\dfrac{\tilde{b}}{\psi^{2}(P_{0};z)}%
=\dfrac{b}{\Phi_{e}(z;T_{0})},
\end{equation}
for some $\tilde{b},b\in\mathbb{C}\setminus\{0\}$. Thus $\chi(z)$ is
quasi-periodic. By \eqref{0818equ2} and \eqref{chi(z;T), definition}, we have
\begin{equation}
\label{0818equ3}%
\begin{cases}
~\chi(z+\omega_{1})=\chi(z)+b\,\chi_{1}(T_{0}),\\
~\chi(z+\omega_{2})=\chi(z)+b\,\chi_{2}(T_{0}).
\end{cases}
\end{equation}
Note that $\chi_{1}(T_{0})$ and $\chi_{2}(T_{0})$ cannot vanish
simultaneously; otherwise $\tilde{y}(z)$ would be elliptic of the second
kind.\newline\textbf{Case 1.} If $\chi_{1}(T_{0})=0$, then $\chi_{2}%
(T_{0})\neq0$. Therefore $\frac{\chi_{2}(T_{0})}{\,\chi_{1}(T_{0})\,}=\infty$.
In this case, the fundamental solution
\[
Y(z)=\big(b\chi_{2}(T_{0})\,\psi(P_{0};z),\,\tilde{y}(z)\big)^{T}
\]
has monodromy matrices
\begin{equation}
\label{0823equ3}%
\begin{split}
\left(
\begin{array}
[c]{c}%
b\chi_{2}(T_{0})\,\psi(P_{0};z+\omega_{1})\\
\tilde{y}(z+\omega_{1})
\end{array}
\right)   &  =\epsilon_{1} \left(
\begin{array}
[c]{cc}%
1 & 0\\
0 & 1
\end{array}
\right)  \left(
\begin{array}
[c]{c}%
b\chi_{2}(T_{0})\,\psi(P_{0};z)\\
\tilde{y}(z)
\end{array}
\right)  ,\\
\left(
\begin{array}
[c]{c}%
b\chi_{2}(T_{0})\,\psi(P_{0};z+\omega_{2})\\
\tilde{y}(z+\omega_{2})
\end{array}
\right)   &  =\epsilon_{2} \left(
\begin{array}
[c]{cc}%
1 & 0\\
1 & 1
\end{array}
\right)  \left(
\begin{array}
[c]{c}%
b\chi_{2}(T_{0})\,\psi(P_{0};z)\\
\tilde{y}(z)
\end{array}
\right)  ,
\end{split}
\end{equation}
where $\epsilon_{1},\epsilon_{2}\in\{\pm1\}$. It follows from \eqref{0823equ3}
that
\[
\frac{\chi_{2}(T_{0})}{\,\chi_{1}(T_{0})\,}=\mathcal{D}=\infty.
\]
\textbf{Case 2.} If $\chi_{1}(T_{0})\neq0$, then $\frac{\chi_{2}(T_{0}%
)}{\,\chi_{1}(T_{0})\,}\in\mathbb{C}$, and
\[
Y(z)=\big(\chi_{1}(T_{0})\psi(P_{0};z),\,\tilde{y}(z)\big)^{T}
\]
forms a fundamental system. Its monodromy matrices satisfy
\begin{equation}
\label{0823equ4}%
\begin{split}
\left(
\begin{array}
[c]{c}%
b\chi_{1}(T_{0})\,\psi(P_{0};z+\omega_{1})\\
\tilde{y}(z+\omega_{1})
\end{array}
\right)   &  =\epsilon_{1} \left(
\begin{array}
[c]{cc}%
1 & 0\\
1 & 1
\end{array}
\right)  \left(
\begin{array}
[c]{c}%
b\chi_{1}(T_{0})\,\psi(P_{0};z)\\
\tilde{y}(z)
\end{array}
\right)  ,\\
\left(
\begin{array}
[c]{c}%
b\chi_{1}(T_{0})\,\psi(P_{0};z+\omega_{2})\\
\tilde{y}(z+\omega_{2})
\end{array}
\right)   &  =\epsilon_{2} \left(
\begin{array}
[c]{cc}%
1 & 0\\
\frac{\chi_{2}(T_{0})}{\,\chi_{1}(T_{0})\,} & 1
\end{array}
\right)  \left(
\begin{array}
[c]{c}%
b\chi_{1}(T_{0})\,\psi(P_{0};z)\\
\tilde{y}(z)
\end{array}
\right)  ,
\end{split}
\end{equation}
where $\epsilon_{1},\epsilon_{2}\in\{\pm1\}$. It follows from \eqref{0823equ4}
that
\[
\frac{\chi_{2}(T_{0})}{\,\chi_{1}(T_{0})\,}=\mathcal{D}.
\]
This completes the proof. }
\end{proof}

{\normalsize
}

\begin{lemma}
{\normalsize \label{0822lemma2} Suppose the generalized Lam\'{e}-type equation
\eqref{GLE 4}$_{p,\,T_{0}}$ is non-completely reducible at parameter $T_{0}$
with monodromy data $\mathcal{D}\in\mathbb{C}\cup\{\infty\}$. Then
$\mathcal{D}$ can be determined by
\begin{equation}
\label{monodromy data, non-completely reducible}\mathcal{D}=\lim
\limits_{\substack{P=(T,C)\,\rightarrow\, P_{0}\\C^{2}=Q(T)\neq0}%
}\dfrac{\,-r(P_{0})+r(P)\,}{s(P_{0})-s(P)}.
\end{equation}
}
\end{lemma}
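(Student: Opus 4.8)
The plan is to reduce Lemma \ref{0822lemma2} to Lemma \ref{0822lemma1}, which has already identified $\mathcal{D}=\chi_2(T_0)/\chi_1(T_0)$; the purpose of the present lemma is to re-express this intrinsic quantity as a limit of the completely reducible data $(r(P),s(P))$ of nearby points on $\Gamma(\tau,p)$. The bridge is an explicit decomposition of the Floquet multipliers that separates their $C$-dependence from a locally constant integer contribution. For $P=(T,C)\in\Gamma(\tau,p)$ with $C\neq0$ I would substitute the definition \eqref{mero phi(P;z)} of $\phi(P;z)$ into \eqref{0806equ4} and split the integral:
\[
\log\lambda_j(P)=\int_z^{z+\omega_j}\phi(P;\xi)\,d\xi
= iC\,\chi_j(T)+\tfrac12\int_z^{z+\omega_j}\frac{\Phi_e'(\xi;T)}{\Phi_e(\xi;T)}\,d\xi ,
\]
where $\chi_j(T)$ is precisely the quantity in \eqref{chi(z;T), definition}. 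Since $\Phi_e(\cdot;T)$ is elliptic, the last integral equals $\pi i\,\mu_j$ with $\mu_j\in\mathbb{Z}$ the winding number of $\Phi_e$ along the cycle $\omega_j$; for $P$ in a small neighborhood of $P_0$ no zero or pole of $\Phi_e(\cdot;T)$ crosses the fixed contour, so each $\mu_j$ is constant.

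Comparing with the normalization \eqref{defi,r(P),s(P)} and choosing the continuous lifts of $r(P),s(P)$ near $P_0$, this yields the exact expressions
\[
r(P)=\frac{C\,\chi_2(T)}{2\pi}+\frac{\mu_2}{2},\qquad
s(P)=-\frac{C\,\chi_1(T)}{2\pi}-\frac{\mu_1}{2}.
\]
Setting $C=0$ recovers $r(P_0)=\mu_2/2$ and $s(P_0)=-\mu_1/2\in\tfrac12\mathbb{Z}$, which is exactly the half-integer data of the non-completely reducible branch point ($\lambda_j(P_0)=e^{\pi i\mu_j}=\pm1$). Subtracting, the locally constant half-integers cancel and the common factor $C$ cancels in the quotient, giving the identity
\[
\frac{-r(P_0)+r(P)}{\,s(P_0)-s(P)\,}
=\frac{C\chi_2(T)/2\pi}{\,C\chi_1(T)/2\pi\,}=\frac{\chi_2(T)}{\chi_1(T)}
\]
for every $P$ near $P_0$ with $C\neq0$. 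Letting $P\to P_0$ (so $T\to T_0$) and invoking Lemma \ref{0822lemma1} then produces $\mathcal{D}$; the degenerate case $\chi_1(T_0)=0$ is covered uniformly, since then both sides tend to $\infty$.

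The step I expect to be the main obstacle is the continuity $\chi_j(T)\to\chi_j(T_0)$ as $P\to P_0$. Writing $\Phi_e(\cdot;T)=\Phi_e(z_0;T)\,\psi(P;\cdot)\,\psi(P^{\ast};\cdot)$ as in \eqref{properties for BA,1}, its four simple zeros $\{a_1(P),a_2(P)\}\cup\{a_1(P^{\ast}),a_2(P^{\ast})\}$ coalesce pairwise, as $C\to0$, into the two double zeros of $\Phi_e(\cdot;T_0)\propto\psi(P_0;\cdot)^2$ located at $a_1(P_0),a_2(P_0)$; each simple zero is a genuine pole of the integrand $1/\Phi_e$. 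The integral stays controlled provided the fixed cycle representing $\omega_j$ is chosen to avoid fixed neighborhoods of $a_1(P_0),a_2(P_0)$ and of the poles $0,\pm p$ of $\Phi_e$; then the coalescing pairs remain on one side of the contour, no pinching occurs, and $\chi_j(T)$ depends holomorphically on $T$ through $T_0$ because $\Phi_e(\cdot;T)$ is polynomial in $T$. I would record this contour choice explicitly and emphasize that the \emph{same} contour must be used in \eqref{0806equ4} and in \eqref{chi(z;T), definition}, so that the cancellation in the displayed ratio is legitimate.
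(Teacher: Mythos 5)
Your proof is correct and takes essentially the same route as the paper's: both reduce the lemma to Lemma \ref{0822lemma1} by splitting the Floquet exponents as $\log\lambda_j(P)=iC\,\chi_j(T)+\pi i\,(\text{integer})$ — in the paper this decomposition appears through $\bigl(\log(\psi(P;z)/\psi(P^{*};z))\bigr)'=2iC/\Phi_e(z;T)$ and \eqref{0822equ2}, with the integers $m_j(T_k)$ pinned down a posteriori by letting $C_k\to 0$. Your only deviations are refinements of the same argument: you identify the integer contribution as a locally constant winding number of $\Phi_e(\cdot;T)$ along the cycle, so the ratio identity $\bigl(-r(P_0)+r(P)\bigr)/\bigl(s(P_0)-s(P)\bigr)=\chi_2(T)/\chi_1(T)$ holds exactly before passing to the limit, and you make explicit the fixed-contour argument behind the continuity $\chi_j(T)\to\chi_j(T_0)$, which the paper asserts via local holomorphy of $\chi_j$.
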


\begin{proof}
{\normalsize By Theorem \ref{Q(T) neq 0 iff. r,s notin Z/2}, we have
$(r(P_{0}),s(P_{0}))\in\tfrac{1}{2}\mathbb{Z}^{2}$. The polynomial
$Q(T)\in\mathbb{C}[T]$ is nontrivial with $Q(T_{0})=0$. Choose any sequence
$P_{k}=(T_{k},C_{k})\in\Gamma(\tau,p)$ with $C_{k}^{2}=Q(T_{k})\neq0$ such
that $T_{k}\to T_{0}$. Then the Baker-Akhiezer functions $\psi(P_{k};z)$
converge uniformly to $\psi(P_{0};z)$ on compact subsets of $E_{\tau}%
\setminus\{0,\pm p\}$. Consequently,
\begin{equation}
\label{0818equ6}\left(  r(P_{k}),\,s(P_{k})\right)  \to\left(  r(P_{0}%
),\,s(P_{0})\right)  \in\frac{1}{2}\mathbb{Z}^{2},\quad\text{as }\,
k\to+\infty.
\end{equation}
For each $k$, set
\begin{equation}
\label{0818equ7}f_{k}(z):=\dfrac{\psi(P_{k};z)}{\,\psi(P_{k}^{*};z)\,},
\end{equation}
where $P_{k}^{*}=(T_{k},-C_{k})\in\Gamma(\tau,p)$ is the dual point of
$P_{k}=(T_{k},C_{k})$. Using \eqref{r(P*),s(P*)}, one obtains
\begin{equation}
\label{0818equ10}%
\begin{split}
f_{k}(z+1)  &  =\dfrac{e^{-2\pi i\,s(P_{k})}\,\psi(P_{k};z)}{e^{-2\pi
i\,s(P_{k}^{*})}\,\psi(P_{k}^{*};z)}=e^{-4\pi i\,s(P_{k})}\,f_{k}(z),\\
f_{k}(z+\tau)  &  =\dfrac{e^{2\pi i\,r(P_{k})}\,\psi(P_{k};z)}{e^{2\pi
i\,r(P_{k}^{*})}\,\psi(P_{k}^{*};z)}=e^{4\pi i\,r(P_{k})}\,f_{k}(z).
\end{split}
\end{equation}
Moreover, by \eqref{properties for BA,1} and \eqref{properties for BA,2}, we
have
\begin{equation}
\label{0818equ9}%
\begin{split}
\left(  \log f_{k}(z)\right)  ^{\prime}  &  = \dfrac{\,\psi^{\prime}%
(P_{k};z)\,}{\psi(P_{k};z)}-\dfrac{\,\psi^{\prime}(P_{k}^{*};z)\,}{\psi
(P_{k}^{*};z)}\\
&  = \dfrac{\,W(\psi(P_{k};z),\,\psi(P_{k}^{*};z))\,}{\psi(P_{k}%
;z)\,\psi(P_{k}^{*};z)} = \dfrac{2i\,C_{k}}{\,\Phi_{e}(z;T_{k})\,}.
\end{split}
\end{equation}
Integrating \eqref{0818equ9} along the cycles $\ell_{j}$, it follows from
\eqref{0818equ10} that
\begin{equation}
\label{0822equ2}%
\begin{split}
\chi_{1}(T_{k})  &  =\frac{\,-2\pi\,s(P_{k})+\pi\,m_{1}(T_{k})\,}{C_{k}},\\
\chi_{2}(T_{k})  &  =\frac{\,2\pi\,r(P_{k})+\pi\,m_{2}(T_{k})\,}{C_{k}},
\end{split}
\end{equation}
for some $m_{1}(T_{k}),m_{2}(T_{k})\in\mathbb{Z}$. Since $\chi_{j}(z;T)$ is
locally holomorphic, it follows that
\begin{equation}
\label{0822equ3}%
\begin{cases}
~\chi_{1}(T_{k})\rightarrow\chi_{1}(T_{0})\in\mathbb{C},\\
~\chi_{2}(T_{k})\rightarrow\chi_{2}(T_{0})\in\mathbb{C},
\end{cases}
\qquad\text{as }\,k\rightarrow+\infty.
\end{equation}
Since $Q(T_{k})\to Q(T_{0})=0$, we have $C_{k}\to0$, and hence
\begin{equation}
\label{0822equ4}%
\begin{cases}
-2\pi\,s(P_{k})+\pi\,m_{1}(T_{k})\rightarrow0,\\
\textcolor{white}{-}2\pi\,r(P_{k})+\pi\,m_{2}(T_{k})\rightarrow0,
\end{cases}
\qquad\text{as }\,k\rightarrow+\infty.
\end{equation}
Combining \eqref{0818equ6} and \eqref{0822equ4} yields
\begin{equation}
\label{0822equ5}m_{1}(T_{k})=2s(P_{0})\quad\text{and}\quad m_{2}%
(T_{k})=-2r(P_{0}),
\end{equation}
for $k$ sufficiently large. Finally, by Lemma \ref{0822lemma1} together with
\eqref{0822equ5},
\begin{align*}
\mathcal{D}  &  = \lim_{k\to+\infty}\dfrac{\,\chi_{2}(T_{k})\,}{\chi_{1}%
(T_{k})}\\
&  = \lim_{k\to+\infty}\dfrac{\,2\pi\,r(P_{k})+\pi\,m_{2}(T_{k})\,}%
{-2\pi\,s(P_{k})+\pi\,m_{1}(T_{k})}\\
&  = \lim_{k\to+\infty}\dfrac{\,-r(P_{0})+r(P_{k})\,}{s(P_{0})-s(P_{k})}.
\end{align*}
This establishes \eqref{monodromy data, non-completely reducible} and thereby
completes the proof. }
\end{proof}

\subsection{The Proofs}

{\normalsize In this subsection, we present the proofs of Theorem
\ref{Main Theorem1}, Theorem \ref{Main Theorem at singular} and Theorem
\ref{Main Theorem3}. }

\begin{proof}
[Proof of Theorem \ref{Main Theorem1}]{\normalsize Let $\tilde{\Gamma}(\tau)$
and $\Gamma(\tau,p)$ be the spectral curves of \eqref{Lame 1} and
\eqref{GLE 4}$_{p}$, respectively. Recall the correspondence
\begin{equation}
\label{0826equ20}%
\begin{split}
\Gamma(\tau,p)  &  \rightarrow\tilde{\Gamma}(\tau)\\
P=(T(P),C(P))  &  \mapsto\tilde{P}=(\tilde{B}(P),\tilde{C}(P)),
\end{split}
\end{equation}
which are related by
\begin{equation}
\tilde{B}(\tilde{P})=T^{2}(P)-2\wp(p).
\end{equation}
}

{\normalsize Suppose the generalized Lam\'{e}-type equation \eqref{GLE 4}$_{p}%
$ at parameter $T_{0}$ is non-completely reducible with monodromy data
$\mathcal{D}\in\mathbb{C}\cup\{\infty\}$. Let $P_{0}=(T_{0},0)\in\Gamma
(\tau,p)$, and let $\tilde{P}_{0}$ denote its corresponding point on
$\tilde{\Gamma}(\tau)$ via \eqref{0826equ20}. Namely,
\[
\tilde{P}_{0}=(\tilde{B}_{0},0)\in\tilde{\Gamma}(\tau),\qquad\tilde{B}%
_{0}=T_{0}^{2}-2\wp(p).
\]
Choose a sequence $P_{k}=(T(P_{k}),C(P_{k}))\in\Gamma(\tau,p)$ with
$C(P_{k})\neq0$ and $P_{k}\to P_{0}$ as $k\to+\infty$. Let $\tilde{P}%
_{k}=(\tilde{B}(P_{k}),\tilde{C}(P_{k}))\in\tilde{\Gamma}(\tau)$ denote the
corresponding points of $P_{k}\in\Gamma(\tau,p)$ under \eqref{0826equ20}. Then
$\tilde{C}(P_{k})\neq0$ and $\tilde{P}_{k}\to\tilde{P}_{0}$. Moreover,
\[%
\begin{split}
\left(  r(P_{k}),s(P_{k})\right)   &  =\left(  r(\tilde{P}_{k}),s(\tilde
{P}_{k})\right)  \notin\frac{1}{2}\mathbb{Z}^{2},\\
\left(  r(P_{0}),s(P_{0})\right)   &  =\left(  r(\tilde{P}_{0}),s(\tilde
{P}_{0})\right)  \in\frac{1}{2}\mathbb{Z}^{2}.
\end{split}
\]
By Lemma \ref{0822lemma2}, the monodromy data $\mathcal{D}$ is then given by
\begin{align*}
\lim\limits_{k\to+\infty}\dfrac{\,-r(P_{0})+r(P_{k})\,}{s(P_{0})-s(P_{k}%
)}=\mathcal{D}=\lim\limits_{k\to+\infty}\dfrac{\,-r(\tilde{P}_{0})+r(\tilde
{P}_{k})\,}{s(\tilde{P}_{0})-s(\tilde{P}_{k})}.
\end{align*}
The converse follows by the same argument. Therefore, in the non-completely
reducible case, the classical Lam\'{e} equation \eqref{Lame 1} and the
generalized Lam\'{e}-type equation \eqref{GLE 4}$_{p}$ are monodromy
equivalent. This completes the proof. }
\end{proof}

\begin{proof}
[Proof of Theorem \ref{Main Theorem at singular}]{\normalsize As in Corollary
\ref{Main Corollary1}, let the parameters $T(p),B(p)$ be given by
\eqref{correspondence} and \eqref{B condition 2}, respectively, so that
\begin{equation}
\label{correspondence'}\tilde{B}=T^{2}(p)-2\wp(p).
\end{equation}
Relation \eqref{correspondence'} implies
\[
T(p)=0\,\Leftrightarrow\,\tilde{B}=-2\wp(p).
\]
We now show that $T(p)=0$ if and only if $p=\pm p_{r,s}^{(1)}(\tau)$. }

{\normalsize Assume $T(p)=0$. Then \eqref{GLE 4}$_{p}$ reduces to the
even-symmetric case. Since the monodromy data are $(r,s)$, Theorem B yields
$p=p_{r,s}^{(1)}(\tau)$. Conversely, suppose $p=\pm p_{r,s}^{(1)}(\tau)$. Then
$p$ satisfies \eqref{PV6-1}. Since $(r,s)$ is the monodromy data of
\eqref{GLE 4}$_{p}$, Theorem \ref{main theorem for Lame-type ODE} implies
that
\[
\wp(r+s\tau)=T^{2}(p)-2\wp(p)\quad\text{and}\quad Z(r,s,\tau)=0.
\]
Combining these with \eqref{PV6-1}, we obtain
\[
-2\wp(p)=\wp(r+s\tau)=T^{2}(p)-2\wp(p),
\]
and hence $T(p)=0$. This completes the proof. }
\end{proof}

{\normalsize We investigate the limiting behavior as $p\to\tfrac{\omega_{k}%
}{2}$ for $k=0,1,2,3$. }

\begin{lemma}
{\normalsize \label{convergence lemma} Let $(T(p),B(p))$ be as in Corollary
\ref{Main Corollary1}. Namely,
\begin{equation}
\label{correspondence'''''}T(p)^{2}-2\wp(p)=\tilde{B},
\end{equation}
with $B(p)$ determined by \eqref{B condition 2}. }

\begin{enumerate}
\item[(i)] {\normalsize As $p\to0$, $q(z;T(p))$ converges uniformly to
$2\wp(z)+\tilde{B}$ on every compact subset of $E_{\tau}\setminus\{0\}$. }

\item[(ii)] {\normalsize For each $k=1,2,3$, as $p\to\frac{\omega_{k}}{2}$,
$q(z;T(p))$ converges uniformly on every compact subset of $E_{\tau}%
\setminus\{0,\tfrac{\omega_{k}}{2}\}$ to
\begin{equation}
\label{0825equ5}2\left(  \wp(z)+\wp\left(  z-\frac{\omega_{k}} {2}\right)
\right)  +2\tilde{T}_{k}\left(  \zeta\left(  z-\frac{\omega_{k}}{2}\right)
-\zeta(z)\right)  +\tilde{B}_{k}%
\end{equation}
where
\[
\tilde{T}_{k}=\lim_{p\rightarrow\frac{\omega_{k}}{2}}T(p)\quad\text{and}%
\quad\tilde{B}_{k}=\tilde{T}_{k}^{2}+\eta_{k}\,\tilde{T}_{k}-e_{k}.
\]
}
\end{enumerate}
\end{lemma}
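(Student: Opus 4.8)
The plan is to insert the defining relation $T(p)^2-2\wp(p)=\tilde B$ of the hypothesis into the non-even potential \eqref{potential in noneven case} together with \eqref{B condition 2}, and then expand every term in the local parameter at $\frac{\omega_k}{2}$. First I would record the limiting behaviour of $T(p)$: since $\wp(p)\to\infty$ as $p\to0$ we have $T(p)\to\infty$, whereas for $k=1,2,3$ one has $\wp(p)\to e_k$, so $T(p)\to\tilde T_k$ with $\tilde T_k^2=\tilde B+2e_k$. The decisive simplification is to group the two terms carrying the factor $\frac{\wp''(p)}{\wp'(p)}$---the one in the potential and the one hidden in $B$---so that
\begin{align*}
q(z;T(p)) &= 2\wp(z)+\tfrac34\bigl(\wp(z+p)+\wp(z-p)\bigr)+T(p)\bigl(\zeta(z+p)+\zeta(z-p)-2\zeta(z)\bigr)\\
&\quad +T(p)^2-\tfrac12\wp(p)+\tfrac14\,\tfrac{\wp''(p)}{\wp'(p)}\bigl[\,2\zeta(p)-\zeta(z+p)+\zeta(z-p)\,\bigr].
\end{align*}
All the delicate cancellations become transparent once the last, grouped term is treated as a single object.

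For part (i) I would Taylor expand about $p=0$. From $\zeta(z+p)+\zeta(z-p)-2\zeta(z)=-p^2\wp'(z)+O(p^4)$ and $T(p)=O(1/p)$, the middle term tends to $0$. Using $\frac{\wp''(p)}{\wp'(p)}=-\tfrac3p+O(p^3)$ and $2\zeta(p)-\zeta(z+p)+\zeta(z-p)=\tfrac2p+2p\wp(z)+O(p^3)$, the grouped term equals $-\tfrac{3}{2p^2}-\tfrac32\wp(z)+O(p^2)$; its $-\tfrac32\wp(z)$ piece cancels the $\tfrac32\wp(z)$ produced by $\tfrac34(\wp(z+p)+\wp(z-p))$, while its singular part $-\tfrac{3}{2p^2}$ cancels against $T(p)^2-\tfrac12\wp(p)=\tilde B+\tfrac32\wp(p)=\tilde B+\tfrac{3}{2p^2}+O(p^2)$. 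A check that the $O(p^0)$ contributions vanish then yields $q(z;T(p))\to 2\wp(z)+\tilde B$.

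For part (ii) I would set $\varepsilon=p-\frac{\omega_k}{2}$ and use $\wp'(p)=\varepsilon\,\wp''(\tfrac{\omega_k}{2})+O(\varepsilon^2)$, hence $\frac{\wp''(p)}{\wp'(p)}=\tfrac1\varepsilon+O(1)$. The whole point of the grouping is that the bracket $2\zeta(p)-\zeta(z+p)+\zeta(z-p)$ vanishes at $\varepsilon=0$---indeed it equals $2\zeta(\tfrac{\omega_k}{2})-\zeta(z+\tfrac{\omega_k}{2})+\zeta(z-\tfrac{\omega_k}{2})=\eta_k-\eta_k=0$ by the quasi-periodicity $\zeta(w+\omega_k)=\zeta(w)+\eta_k$---so the grouped term stays finite and, computing the $\varepsilon$-derivative of the bracket, tends to $\tfrac12\wp(z-\tfrac{\omega_k}{2})-\tfrac12 e_k$. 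Its $\wp$-part combines with the $\tfrac32\wp(z-\tfrac{\omega_k}{2})$ coming from $\tfrac34(\wp(z+p)+\wp(z-p))$ to give $2\wp(z-\tfrac{\omega_k}{2})$; the same quasi-periodicity turns the middle term into $2\tilde T_k(\zeta(z-\tfrac{\omega_k}{2})-\zeta(z))+\tilde T_k\eta_k$; and collecting the $z$-independent pieces as $\tilde T_k\eta_k+\tilde T_k^2-\tfrac12 e_k-\tfrac12 e_k=\tilde T_k^2+\eta_k\tilde T_k-e_k=\tilde B_k$ reproduces exactly \eqref{0825equ5}.

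Finally, uniform convergence on compact subsets of $E_\tau\setminus\{0,\tfrac{\omega_k}{2}\}$ follows because away from the singular points each $\wp$- and $\zeta$-term is real-analytic in $p$ and the above expansions converge locally uniformly; the only pieces that blow up are $z$-independent constants, which cancel. I expect the main obstacle to be precisely this control of the divergent contributions for $k=1,2,3$: taken separately, the term $-\tfrac{\wp''(p)}{4\wp'(p)}(\zeta(z+p)-\zeta(z-p))$ and the $\tfrac{\wp''(p)}{2\wp'(p)}\zeta(p)$ buried in $B$ each diverge like $1/\varepsilon$, and it is only after grouping them and invoking $\zeta(w+\omega_k)=\zeta(w)+\eta_k$ that the divergence is confined to a single constant that disappears---with the added bonus that all the $O(1)$ corrections of $\frac{\wp''(p)}{\wp'(p)}$ drop out automatically, so no higher-order expansion of that ratio is ever needed.
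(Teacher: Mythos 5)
Your proof is correct and takes essentially the same route as the paper's: both are direct Laurent/Taylor expansions of the $\wp$- and $\zeta$-terms in the local parameter at $0$ or $\frac{\omega_k}{2}$, producing exactly the same cancellations of the divergent constants ($\frac{3}{2p^{2}}$ in case (i), $\frac{\eta_k}{4(p-\omega_k/2)}$ in case (ii)). The only difference is presentational: you group the two $\frac{\wp''(p)}{\wp'(p)}$-terms up front so that the bracket $2\zeta(p)-\zeta(z+p)+\zeta(z-p)$ vanishes at the half-period by the quasi-periodicity of $\zeta$, whereas the paper expands $B(p)$ and the corresponding potential term separately and cancels the resulting $\pm\frac{\eta_k}{4(p-\omega_k/2)}$ contributions explicitly.
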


\begin{proof}
{\normalsize Since $\tilde{B}$ is independent of $p$,
\eqref{correspondence'''''} implies
\begin{equation}
\label{0825equ6}\lim_{p\to0}\bigl(T(p)^{2}-2\wp(p)\bigr)=\tilde{B}.
\end{equation}
Hence, as $p\to0$,
\begin{equation}
\label{behavior of T(p) as p -> 0}%
\begin{split}
T(p)^{2}  &  =\dfrac{2}{\,p^{2}\,}+\tilde{B}+O(p).
\end{split}
\end{equation}
Moreover, as $p\to0$,
\begin{equation}
\label{0825equ7}%
\begin{split}
\wp(z+p)+\wp(z-p)  &  =2\wp(z)+O(p^{2}),\\
\zeta(z+p)+\zeta(z-p)-2\zeta(z)  &  =O(p^{2}),\\
\zeta(z+p)-\zeta(z-p)  &  =-2\wp(z)\,p+O(p^{2}),
\end{split}
\end{equation}
and therefore
\begin{equation}
\label{0825equ8}%
\begin{split}
B(p)  &  =T^{2}(p)+\dfrac{\wp^{\prime\prime}(p)}{\,2\wp^{\prime}(p)\,}%
\zeta(p)-\dfrac{1}{2}\wp(p)\\
&  =\dfrac{2}{\,p^{2}\,}+\tilde{B}+\dfrac{6/p^{4}}{\,2\left(  -2/p^{3}\right)
p\,}-\dfrac{1}{\,2p^{2}\,}+O(p)\\
&  =\tilde{B}+O(p).
\end{split}
\end{equation}
It follows from \eqref{behavior of T(p) as p -> 0}-\eqref{0825equ8} that, as
$p\to0$,
\begin{equation}
\label{0826equ3}%
\begin{split}
q(z;T(p))  &  = \left[
\begin{array}
[c]{c}%
2\wp(z)+\dfrac{3}{4}\left(  \wp(z+p)+\wp(z-p)\right) \\[4pt]%
+T(p)\left(  \zeta(z+p)+\zeta(z-p)-2\zeta(z)\right) \\[4pt]%
-\dfrac{1}{4}\dfrac{\wp^{\prime\prime}(p)}{\wp^{\prime}(p)}\left(
\zeta(z+p)-\zeta(z-p)\right)  +B(p)
\end{array}
\right] \\
&  =\dfrac{7}{2}\wp(z)+\dfrac{6/p^{4}}{\,-4\left(  -2/p^{3}\right)  \,}\left(
-2\wp(z)\,p\right)  +\tilde{B}+O(p)\\
&  =2\wp(z)+\tilde{B}+O(p).
\end{split}
\end{equation}
This proves part \textit{(i)}. For part \textit{(ii)}. Set $p_{k}%
:=p-\tfrac{\omega_{k}}{2}$.
As $p\to\tfrac{\omega_{k}}{2}$,
\begin{equation}
\label{0825equ10}%
\begin{split}
\wp(z+p)+\wp(z-p)  &  =2\wp\left(  z-\frac{\omega_{k}}{2}\right)  +O(p_{k}%
^{2}),\\
\zeta(z+p)+\zeta(z-p)-2\zeta(z)  &  =\eta_{k}-2\zeta(z)+2\zeta\left(
z-\frac{\omega_{k}}{2}\right)  +O(p_{k}^{2}),\\
\zeta(z+p)-\zeta(z-p)  &  =\eta_{k}-2\wp\left(  z-\frac{\omega_{k}}{2}\right)
p_{k}+O(p_{k}^{2}),
\end{split}
\end{equation}
and consequently
\begin{equation}
\label{0825equ11}%
\begin{split}
B(p)  &  =\tilde{T}_{k}^{2}+\dfrac{\wp^{\prime\prime}(\omega_{k}/2)}%
{\,2\wp^{\prime\prime}(\omega_{k}/2)\,p_{k}\,}\left(  \dfrac{\eta_{k}}%
{2}-e_{k}\,p_{k}\right)  -\dfrac{\,e_{k}\,}{2}+O(p_{k})\\
&  = \dfrac{\eta_{k}}{\,4p_{k}\,}+\tilde{T}_{k}^{2}-e_{k}+O(p_{k}).
\end{split}
\end{equation}
Using \eqref{0825equ10} and \eqref{0825equ11}, as $p\to\frac{\omega_{k}}{2}$,
\begin{equation}
\label{0826equ4}%
\begin{split}
q(z;T(p))  &  = \left[
\begin{array}
[c]{c}%
2\wp(z)+\dfrac{3}{4}\left(  \wp(z+p)+\wp(z-p)\right) \\[4pt]%
+T(p)\left(  \zeta(z+p)+\zeta(z-p)-2\zeta(z)\right) \\[4pt]%
-\dfrac{1}{4}\dfrac{\wp^{\prime\prime}(p)}{\wp^{\prime}(p)}\left(
\zeta(z+p)-\zeta(z-p)\right)  +B(p)
\end{array}
\right] \\
&  =\left[
\begin{array}
[c]{c}%
2\wp(z)+\dfrac{3}{2}\wp\left(  z-\dfrac{\omega_{k}}{2}\right) \\[6pt]%
+ \tilde{T}_{k}\left(  \eta_{k}-2\zeta(z)+2\zeta\left(  z-\dfrac{\omega_{k}%
}{2}\right)  \right) \\[6pt]%
- \dfrac{\wp^{\prime\prime}(\omega_{k}/2)}{4\wp^{\prime\prime}(\omega
_{k}/2)\,p_{k}}\left(  \eta_{k}-2\wp\left(  z-\dfrac{\omega_{k}}{2}\right)
p_{k}\right) \\[12pt]%
+ \dfrac{\eta_{k}}{\,4p_{k}\,}+\tilde{T}_{k}^{2}-e_{k}+O(p_{k})
\end{array}
\right] \\
&  = 2\left(  \wp(z)+\wp\left(  z-\frac{\omega_{k}} {2}\right)  \right)
+2\tilde{T}_{k}\left(  \zeta\left(  z-\frac{\omega_{k}}{2}\right)
-\zeta(z)\right)  +\tilde{B}_{k},
\end{split}
\end{equation}
which establishes part \textit{(ii)}. }
\end{proof}

\begin{proof}
[Proof of Theorem \ref{Main Theorem3}]{\normalsize Let $(T(p),B(p))$ be as in
Corollary \ref{Main Corollary1}. Let $\psi_{p}(z)$ denote the the
Baker-Akhiezer function of the generalized Lam\'{e}-type equation
\eqref{GLE 4}$_{p}$ with parameters $(T(p),B(p))$. Since $\psi_{p}(z)$ is
elliptic of the second kind, by \eqref{BA function written in Hermite} we
have
\begin{equation}
\label{0826equ8}\psi_{p}(z)=e^{c_{p}z}\dfrac{\sigma\left(  z-a_{1}%
^{(p)}\right)  \sigma\left(  z-a_{2}^{(p)}\right)  }{\,\sigma(z)\sqrt
{\sigma(z+p)}\sqrt{\sigma(z-p)}\,},\quad%
\begin{array}
[c]{l}%
\text{where \,} c_{p}\in\mathbb{C},\\[4pt]%
a_{1}^{(p)},\,a_{2}^{(p)}\in E_{\tau}\setminus\{0\}.
\end{array}
\end{equation}
From \eqref{algebraic relations between a1(P),a2(P),r(P),s(P),c(P)} we have,
for all $p\in E_{\tau}\setminus\{\tfrac{\omega_{k}}{2}:k=0,1,2,3\}$,
\begin{equation}
\label{0826equ10}%
\begin{cases}
~r+s\tau=a_{1}^{(p)}+a_{2}^{(p)},\\
~r\eta_{1}+s\eta_{2}=c^{(p)}.
\end{cases}
\end{equation}
Passing to a subsequence if necessary, we may assume $a_{j}^{(p)}\to a_{j}$ as
$p\to\tfrac{\omega_{k}}{2}$, $j=1,2$. Let $p\to\frac{\omega_{k}}{2}$, we have
\begin{equation}
\label{0826equ23}\psi_{p}(z)\to\psi_{k}(z)=e^{cz}\dfrac{\sigma(z-a_{1}%
)\,\sigma(z-a_{2})}{\,\sigma(z)\sqrt{\sigma\left(  z+\frac{\omega_{k}}%
{2}\right)  \,}\sqrt{\sigma\left(  z-\frac{\omega_{k}}{2}\right)  \,}\,},
\end{equation}
uniformly on compact subsets of $E_{\tau}\setminus\{0,\frac{\omega_{k}}{2}\}$,
as $p\to\tfrac{\omega_{k}}{2}$, where
\begin{equation}
\label{0826equ9}%
\begin{split}
c  &  =r\eta_{1}+s\eta_{2},\\
a_{j}  &  =\lim_{p\to\frac{\omega_{k}}{2}}a_{j}^{(p)},\quad j=1,2.
\end{split}
\end{equation}
As $p\to\frac{\omega_{k}}{2}$, it follows from \eqref{0826equ10} and
\eqref{0826equ9} that
\begin{equation}
\label{0826equ10*}%
\begin{cases}
~r+s\tau=a_{1}+a_{2},\\
~r\eta_{1}+s\eta_{2}=c.
\end{cases}
\end{equation}
If $k=0$, the limiting function
\[
\psi_{0}(z)=e^{cz}\dfrac{\,\sigma(z-a_{1})\,\sigma(z-a_{2})\,}{\sigma^{2}(z)}
\]
is precisely the Baker-Akhiezer function of \eqref{Lame 1} used to define the
monodromy data $(r(\tilde{P}),s(\tilde{P}))$ for $\tilde{P}=(\tilde{B}%
,\tilde{C})\in\tilde{\Gamma}(\tau)$, hence
\[
(r(\tilde{P}),s(\tilde{P}))=(r,s).
\]
For $k\in\{1,2,3\}$, the Baker-Akhiezer function for the limiting equation
\eqref{limitting k}$_{\tilde{B}_{k}}$, used to define the monodromy data
$(r(\tilde{B}_{k}),s(\tilde{B}_{k}))$, is given by
\[
\hat{\psi}_{k}(z)=e^{c_{k}z}\dfrac{\,\sigma(z-a_{1})\,\sigma(z-a_{1}%
)\,}{\sigma(z)\,\sigma\left(  z-\frac{\omega_{k}}{2}\right)  },
\]
which satisfies
\begin{equation}
\label{0826equ14}%
\begin{cases}
~r(\tilde{B}_{k})+s(\tilde{B}_{k})=a_{1}+a_{2}-\dfrac{\omega_{k}}{2},\\[4pt]%
~r(\tilde{B}_{k})\,\eta_{1}+s(\tilde{B}_{k})\,\eta_{2}=c_{k}.
\end{cases}
\end{equation}
Comparing $\psi_{k}(z)$ with $\hat{\psi}_{k}(z)$ and using the transformation
law of the Weierstrass $\sigma$-function
\eqref{transformation law of the Weierstrass sigma function}, we obtain
\[
c_{k}=c-\dfrac{\eta_{k}}{2}.
\]
Together with \eqref{0826equ10*}, this yields
\[%
\begin{cases}
~r(\tilde{B}_{k})+s(\tilde{B}_{k})=r+s\tau-\dfrac{\omega_{k}}{2},\\[4pt]%
~r(\tilde{B}_{k})\,\eta_{1}+s(\tilde{B}_{k})\,\eta_{2}=r\eta_{1}+s\eta
_{2}-\dfrac{\eta_{k}}{2},
\end{cases}
\]
and hence
\[
\left(  r(\tilde{B}_{k}),\,s(\tilde{B}_{k})\right)  =
\begin{cases}
~\left(  r+\frac{1}{2},\,s\right)  & \text{if }\,k=1,\\
~\left(  r,\,s+\frac{1}{2}\right)  & \text{if }\,k=2,\\
~\left(  r+\frac{1}{2},\,s+\frac{1}{2}\right)  & \text{if }\,k=3.
\end{cases}
\]
}
\end{proof}

\normalsize

\end{document}